\def\BIBand{and}%
\newcommand\aseq{\overset{\mathrm{a.s.}}{=}}
\newcommand\asl{\overset{\mathrm{a.s.}}{<}}
\newcommand\asleq{\overset{\mathrm{a.s.}}{\leq}} 
\newtheorem{lemma}{Lemma}
\newtheorem{theorem}{Theorem}
\newtheorem{proposition}{Proposition}
\newtheorem{corollary}{Corollary}
\theoremstyle{remark}
\newtheorem{example}{Example}
\newtheorem{remark}{Remark}
\begin{document}

\title{\textbf{Randomized Routing to Remote Queues}}

\author{Shuangchi He\thanks{National University of Singapore, heshuangchi@nus.edu.sg}\and
Yunfang Yang\thanks{National University of Singapore, yang.yf@nus.edu.sg}\and
Yao Yu\thanks{Amazon, yyu15@ncsu.edu}}
\date{May 6, 2025}
\maketitle

\begin{abstract}
	We study load balancing for a queueing system where parallel stations are distant from customers. In the presence of traveling delays, the join-the-shortest-queue (JSQ) policy induces queue length oscillations and prolongs the mean waiting time. A variant of the JSQ policy, dubbed the randomized join-the-shortest-queue (RJSQ) policy, is devised to mitigate the oscillation phenomenon. By the RJSQ policy, customers are sent to each station with a probability approximately proportional to its service capacity; only a small fraction of customers are purposely routed to the shortest queue. The additional probability of routing a customer to the shortest queue, referred to as the balancing fraction, dictates the policy's performance. When the balancing fraction is within a certain range, load imbalance between the stations is negligible in heavy traffic, so that complete resource pooling is achieved. We specify the optimal order of magnitude for the balancing fraction, by which heuristic formulas are proposed to fine-tune the RJSQ policy. A joint problem of capacity planning and load balancing is considered for geographically separated stations. With well planned service capacities, the RJSQ policy sends all but a small fraction of customers to the nearest stations, rendering the system asymptotically equivalent to an aggregated single-server system with all customers having minimum traveling delays. If each customer's service requirement does not depend on the station, the RJSQ policy is asymptotically optimal for reducing workload.
\end{abstract}
\section{Introduction}
\label{sec:Introduction}

A \emph{remote queue} refers to a queueing system that is distant from customers when they decide to get service there. It takes time for a customer to reach a remote queue. The amount of time from departing for until arriving at the remote queue is referred to as the customer's \emph{traveling delay}. If a customer can get service from multiple remote queues, both the traveling delays to these queues and the waiting times for service should be considered to determine the destination.

We study a service system consisting of multiple stations, where customers can be served by any station. The stations are away from customers when they join the system; there is a \emph{dispatcher} who sends each customer to one of the stations. Given the presence of traveling delays, a routing decision must be made based on information available before the customer's departure for a station, although it may become obsolete when the customer arrives there. In this sense, the dispatcher has to rely on \emph{delayed} information to determine destinations. As shown below, an optimal routing policy for queues without information delays may not work well for remote queues.
\begin{figure}
	\caption{Oscillation Phenomenon Caused by JSQ}
	\centering
	\begin{subfigure}[b]{0.495\textwidth}
		\includegraphics[width=3.2in]{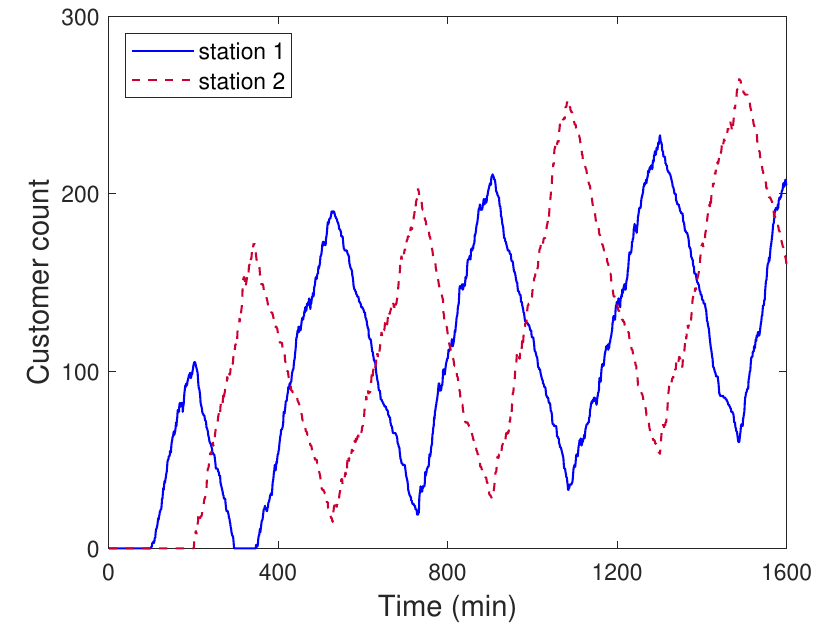}
		\caption{{\footnotesize Select station~1 if customer counts are equal}}
		\label{fig:oscillation-1}
	\end{subfigure}
	\begin{subfigure}[b]{0.495\textwidth}
		\includegraphics[width=3.2in]{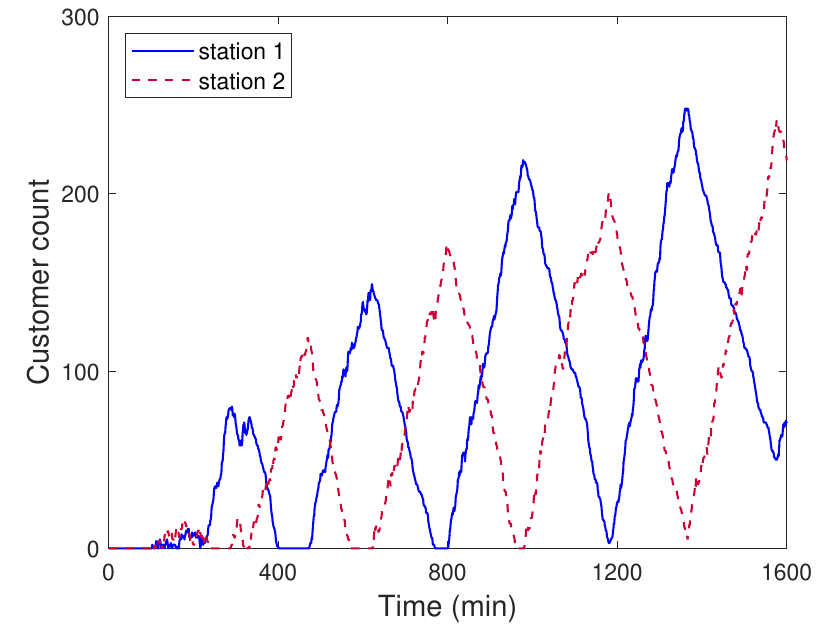}
		\caption{{\footnotesize Select a station at random if customer counts are equal}}
		\label{fig:oscillation-random}
	\end{subfigure}\label{fig:oscillations}
\end{figure}

Consider a system of two stations. It takes a customer 100 minutes to reach either station, which has a single server and unlimited waiting space, serving customers on the first-come, first-served (FCFS) basis. The service times are exponentially distributed with mean 1 minute in both stations. Customers join the system according to a Poisson process with rate 1.98 per minute; hence, the traffic intensity is 0.99. When a customer joins the system, the dispatcher checks the numbers of customers (i.e., \emph{customer counts}) in the two stations and sends the customer to the station having fewer customers---that is, the \emph{join-the-shortest-queue} (JSQ) policy is used. For the time being, we assume the customer is sent to station~1 if the customer counts are equal. 

Suppose both stations are initially empty. Within the first 100 minutes, all customers will be sent to station~1, making its customer count keep increasing during minutes 100--200. When the first customer arrives at station~1, incoming customers begin to head for station~2. Since station~2 is empty before minute 200, almost all incoming customers during minutes 100--200 will be sent to station~2, making its customer count keep increasing during minutes 200--300. The customer count in station~1 will begin decreasing after minute 200, but not until the customer count in station~2 becomes greater, will customers be sent to station~1. In this way, the customer count processes in the two stations will \emph{oscillate} in opposite directions, as is evident in Figure~\ref{fig:oscillation-1}. This phenomenon poses operational issues, because there could be many customers waiting in one station while the other is empty; consequently, the system's mean waiting time will be prolonged. One may question whether the oscillation issue is caused by the dispatcher's preference for station~1 when the customer counts are equal. In Figure~\ref{fig:oscillation-random}, we plot a pair of sample paths, assuming the dispatcher selects a station at random when equal customer counts are observed. Though not showing up at the beginning, the oscillation phenomenon is triggered once the difference between the customer counts gets noticeable; then, the customer count processes repeat the same pattern as in Figure~\ref{fig:oscillation-1}.
\begin{figure}[t]
	\caption{Average Customer Count under JSQ with Different Traveling Delays}
	\centering
	\includegraphics[width=3.2in]{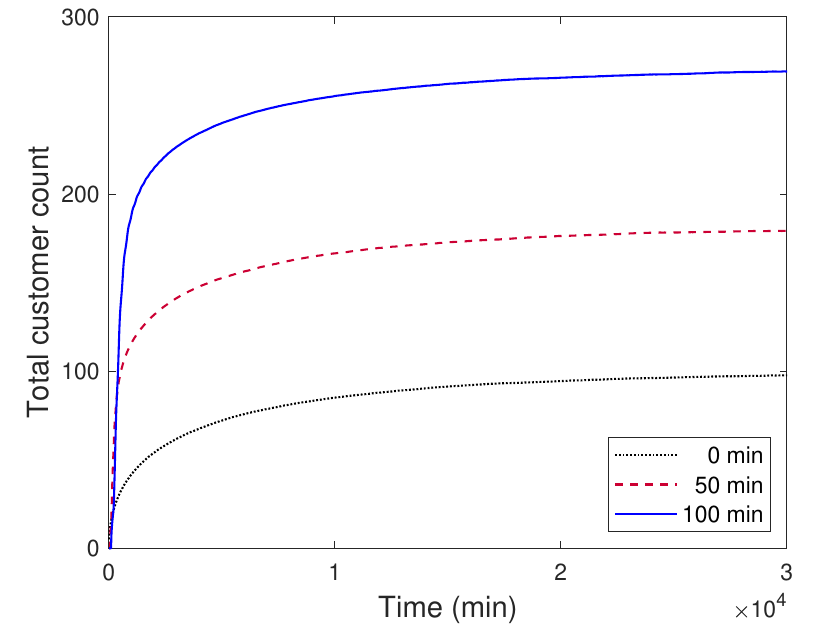}
	\label{fig:mean-paths}
\end{figure}

Without traveling delays, the JSQ policy can minimize both the system's mean waiting time and individual customers' expected waiting times, when the service time distribution has a nondecreasing hazard rate \citep{Winston.1977,Weber.1978, Whitt.1986}. The above example, however, illustrates that the JSQ policy induces queue length oscillations in the presence of traveling delays. Figure~\ref{fig:mean-paths} depicts how the total customer count (averaged over $ 3.0 \times 10^5 $ sample paths) in this two-station system evolves under the JSQ policy. When the traveling delays are 100 minutes, the average total customer count in the steady state is nearly three times greater than that without delays. By Little's law, the mean waiting time is also nearly three times longer.

As pointed out by \citet{Mitzenmacher.2000}, the issue of queue length oscillations arises because information delays lead to \emph{herd behavior}---that is, herds of customers move toward the same locations. Nowadays, mobile apps allow customers to access real-time information about service systems away from them. Since they rely on delayed information, herd behavior may cause operational inefficiency. For instance, a major bank in Singapore allows its app users to check queue lengths at its branches across the island.\footnote{Also available at \url{https://queue.uob.com/}.} With this information, a customer may debate which branch to visit so as to avoid excessive waiting. If the traveling delay is considerable, it may not be best to choose the shortest queue, because there could be many customers on the way to this branch where a long queue could form soon. The oscillation issue has been observed in hospitals, where delay announcements based on moving averages in the past few hours may cause waiting time oscillations across emergency departments within a region \citep{Dong.et.al.2019}. Herd behavior induced by information delays also emerges in bike-sharing networks, where customers search for available bikes through an app; getting to a nearby stop at peak hours, they occasionally find all bikes gone \citep{Shu.et.al.2013,Kabra.et.al.2020}. In ride-hailing systems, many empty cars may drive toward the same region to find passengers, resulting in a temporary imbalance between supply and demand \citep{Braverman.et.al.2019,Ozkan.Ward.2020,Hosseini.et.al.2024,Alwan.et.al.2004}.

An effective routing policy for remote queues must mitigate the oscillation issue. Ideally, each station's workload should be balanced so that incoming customers' waiting times would be equalized across the stations. Then, all the servers would be busy as long as there are a number of customers in the system, as if there was only a single queue with all the servers pooled together as an aggregated server---this characteristic is known as \emph{complete resource pooling}. The purpose of this study is to devise, analyze, and optimize a routing policy that retains complete resource pooling in the presence of traveling delays.

The policy under consideration is dubbed the \emph{randomized join-the-shortest-queue} (RJSQ) policy, by which only a small fraction of customers are \emph{purposely} sent to the shortest queue: the dispatcher sends customers to each station with a probability roughly proportional to its service capacity, while the routing probability is \emph{slightly} elevated for the shortest queue. The additional probability of routing a customer to the shortest queue is referred to as the \emph{balancing fraction}. When the system is in heavy traffic and the balancing fraction is within a certain range, \emph{load imbalance} between the stations will be negligible (Theorem~\ref{theorem:load-balancing}), and the system will be asymptotically equivalent to an aggregated single-server system with synchronized customer arrivals (Theorem~\ref{theorem:workload-difference}). The latter result implies complete resource pooling; consequently, the scaled multidimensional queue length process will converge to a reflected Brownian motion in a one-dimensional subspace (Theorem~\ref{theorem:CRP}).

Owing to the system's complex dynamics, it is difficult to analyze the RJSQ policy, either exactly or asymptotically under a scaling scheme finer than the standard diffusion scaling. Hence, we cannot identify the optimal balancing fraction. Since complete resource pooling can be attained when the balancing fraction is within a range, the RJSQ policy's performance should be insensitive to the balancing fraction around its optimal value. It would be useful for fine-tuning the RJSQ policy, if the \emph{optimal order of magnitude} can be specified for the balancing fraction. We establish an upper bound for the load imbalance (Theorem~\ref{theorem:load-balancing-gap}) and obtain the optimal order of magnitude for the balancing fraction that minimizes the upper bound (Corollary~\ref{corollary:optimal-gap}). Using these results, we propose heuristic formulas to specify near-optimal balancing fractions (see \eqref{eq:root-excess} and \eqref{eq:reciprocal-root}).

If the stations are geographically separated, the \emph{time to service} (i.e., the sum of the traveling delay and the waiting time) would be a customer's main concern. We study a joint problem of capacity planning and load balancing where times to service are minimized by the RJSQ policy. Assuming the dispatcher is aware of each customer's origin, we solve a linear program to determine the service capacities of the stations, along with a \emph{routing plan} that allows most customers to be served by their nearest stations. Sending all but a small fraction of customers to the nearest stations, the RJSQ policy renders the system asymptotically equivalent to an aggregated single-server system with all customers having minimum traveling delays (Theorem~\ref{theorem:workload-optimality}); the additional customers sent to the shortest queue can be chosen from nearby origins, which add minimally to their traveling delays. If each customer's service requirement does not depend on the station, the RJSQ policy is  asymptotically optimal for reducing workload (Corollary~\ref{corollary:workload-optimality}).

To establish these results, one essential step is to analyze load imbalance between the stations (Theorems~\ref{theorem:load-balancing} and~\ref{theorem:load-balancing-gap}). Although it is possible to use weak convergence results to prove Theorem~\ref{theorem:load-balancing}, it is challenging to specify the order of magnitude of the load imbalance. We decompose each customer arrival process at a station into four parts (see \eqref{eq:Rmnk-decomposition}), which enable us to analyze randomness from different sources separately, and exploit strong approximations to estimate their variations and increments. Using Freedman's inequality, we prove a lemma on the magnitude of variations and increments of a square-integrable martingale (Lemma~\ref{lemma:martingale}). Both this lemma and a result on increments of the standard Brownian motion (Lemma~\ref{lemma:Wiener-intervals}) are vital for estimating how much the load imbalance may change over an arbitrary time interval. With these tools, we obtain the upper bound in Theorem~\ref{theorem:load-balancing-gap}, which is sufficiently tight for our heuristic purposes (Proposition~\ref{prop:lower-bound}).

Besides the proposed approach to load imbalance analysis, we construct two auxiliary systems that provide lower bounds for the system of remote queues (which is referred to as the \emph{distributed} system in Sections~\ref{sec:Diffusion-Limit} and~\ref{sec:Routing-Capacity-Planning}). Both lower bound systems have a single-server station with the aggregate service capacity of the distributed system. Customer arrivals at the station in the first system are synchronized with the distributed system; its \emph{stationed workload} can never exceed that in the distributed system (Proposition~\ref{prop:stationed-workload-comparision}). Customers in the second system arrive at the station according to their traveling delays to the nearest stations in the distributed system; its \emph{system workload} can never exceed that in the distributed system (Proposition~\ref{prop:JNQ-workload-comparision}). We prove the heavy-traffic limit in Theorem~\ref{theorem:CRP} and the asymptotic optimality result in Corollary~\ref{corollary:workload-optimality} by establishing asymptotic equivalence between the distributed system and the lower bound systems.

For ease of analysis, we assume customers must go to destinations specified by the dispatcher. This assumption is \emph{not} essential in practice, because the RJSQ policy works well for a range of balancing fractions: In a practical system, the ``dispatcher'' may just be a mobile app or a website that provides real-time information and recommends service facilities, not knowing whether a customer would follow the recommendation. Thankfully, as long as the dispatcher could induce some additional customers to choose the shortest queue, the system workload would be balanced and the oscillation phenomenon would be barely seen.

The RJSQ policy relies on the system's history only through queue length information upon each customer's appearance, not using past decisions. As mentioned above, customers may not follow the dispatcher's recommendations in practice, and the dispatcher may not know each customer's true destination. Hence, past recommendations may not be that useful. Even if customers would strictly follow the dispatcher's recommendations, incorporating historical information will result in a high-dimensional state space, an optimal routing policy in which would be complex. Such a policy, however, may not bring substantial improvement over the RJSQ policy. This is because when traveling delays are considerable, differences between the service completion processes will make up an uncontrollable part of the load imbalance, which cannot be reduced by a routing policy without knowing exact service times in advance (see Example~\ref{example:historical-information}).

The contributions of this study are twofold: First, we establish a tractable analytical framework, exploiting both weak convergence and strong approximation techniques, for queueing systems with information delays. The proposed approach enables us to analyze the order of magnitude of load imbalance between the stations, and thus to fine-tune the routing policy. We construct lower bound systems, which have much simpler dynamics, for the system of remote queues; they are useful for both performance analysis and optimization purposes. Second, we devise a simple and near-optimal routing policy, along with heuristic formulas for its key parameter. Without needing past information, the RJSQ policy is insensitive to random traveling delays when the balancing fraction is around the optimal value. The RJSQ policy is also convenient to implement in practice, especially for service systems with multiple geographically separated facilities.

The rest of this paper is organized as follows: Section~\ref{sec:Literature} is devoted to literature review. To introduce the RJSQ policy, we present the system model in Section~\ref{sec:Remote-Queues} and the heavy-traffic formulation in Section~\ref{sec:Heavy-Traffic}. Section~\ref{sec:Balancing-Fraction} illustrates two conditions the balancing fraction should satisfy in heavy traffic. In Section~\ref{sec:Load-Balancing}, we analyze load imbalance between the stations and obtain the optimal order of magnitude for the balancing fraction. These results are used in Section~\ref{sec:Diffusion-Limit} to prove complete resource pooling and obtain a diffusion limit. A joint problem of capacity planning and load balancing is studied in Section~\ref{sec:Routing-Capacity-Planning}. In Section~\ref{sec:Numerical}, we conduct numerical experiments and propose heuristic formulas for the balancing fraction. While the proofs of the theorems are sketched out in Section~\ref{sec:Sketches}, the complete proofs are collected in the appendices. The paper concludes in Section~\ref{sec:Conclusion}, where future research topics are discussed.

\section{Related Literature}
\label{sec:Literature}

We outline relevant studies to position our work in the voluminous literature in related fields. Some preliminary results of this study can be found in two PhD theses, \citet{Yu.2017} and \citet{Yang.2023}.

It is well known that herd behavior may appear in sequential decision making where individuals observe previous decisions and rationally ignore their private information---that is, an \emph{information cascade} occurs. Because the majority's decision does not reflect most individuals' private information, herd behavior is often error-prone \citep{Banerjee.1992,Bikhchandani.et.al.1992}. When customers can get service from multiple facilities, they may use queues to infer each facility's value of service and join longer queues regardless of their private information \citep{Veeraraghavan.Debo.2009,Veeraraghavan.Debo.2011}. In contrast to these studies, our work is concerned with herd behavior induced by information delays, rather than information cascades.

Queueing control using delayed information has been studied in the context of telecommunications networks and distributed computing \citep{Mirchandaney.et.al.1989,Mitzenmacher.2000,Ying.Shakkottai.2011}. Analyzing a queueing system with information delays is generally difficult under the Markovian framework, because keeping track of the system's history over delays may result in a high-dimensional state space. To circumvent the curse of dimensionality, most studies are focused on either optimization of simplistic models \citep{Altman.Nain.1992,Altman.Koole.1995,Kuri.Kumar.1995,Artiges.1995} or performance analysis under specific control policies \citep{Altman.et.al.1995,Litvak.Yechiali.2003}. \citet{Kuri.Kumar.1995} proved that in a two-station system with Bernoulli arrivals and geometrically distributed service times, the \emph{join-the-shortest-expected-queue} policy minimizes the discounted cost when information delays are one unit of time; this policy, however, is not optimal when information delays are longer. Since optimal policies had been largely unknown, load balancing using delayed information was proposed as an open problem in applied probability \citep{Lipshutz.2019}.

To allow for more general assumptions, recent studies have been focused on approximate models and asymptotic analysis subject to information delays. \citet{Pender.et.al.2017} analyzed a fluid model for a two-station, infinite-server system where customers choose their stations according to a multinomial logit model; they identified the range of delays under which the queue lengths oscillate indefinitely. \citet{Pender.et.al.2020} extended the result to an arbitrary number of stations and justified the fluid model by proving heavy-traffic limits. \citet{Anselmi.Dufour.2020} analyzed the \emph{power-of-$d$-choice} policy with memory, which leverages the most recent queue lengths in sampled servers; they proved a fluid limit for an underloaded system with many servers. In these studies, customers' waiting times are either zero or relatively short. In contrast, our work considers a critically loaded system with a fixed number of single-server stations, where both a long traveling delay and a long waiting time could be a customer's concerns.

\citet{Atar.Lipshutz.2021} analyzed the \emph{join-the-shortest-estimated-queue} policy that exploits past decisions; they proved a heavy-traffic limit for the queue length process, which is the solution to a constrained stochastic delay equation. The asymptotic regime in their paper is different from ours: Their mean service times are $O(1/n)$ unit of time in the $n$th system, while information delays are $O(1)$ (see their Assumptions~1 and~2). Under the heavy-traffic condition, a customer's waiting time would be $O(1/\sqrt{n})$---on a \emph{lower} order of magnitude than the information delay. In their regime, differences between the service completion processes over a delay interval would be $O(\sqrt{n})$, rendering the imbalance between the stations' customer counts on the same order; hence, their limiting queue length process does not undergo \emph{state space collapse}. In our paper, the mean service times are $O(1)$, while the traveling delays are $O(\sqrt{n})$ in the $n$th system. A customer's waiting time would be on the \emph{same} order of magnitude as the traveling delay. Since differences between the service completion processes are much smaller in our regime, the RJSQ policy can equalize workload across the stations, without needing past information.

Our study complements the literature on asymptotic analysis of the JSQ policy, particularly when the number of servers is fixed \citep{Foschini.Salz.1978,Reiman.1984a,Turner.2000,Eryilmaz.Srikant.2012,Banerjee.et.al.2024,Jhunjhunwala.et.al.2025}. The RJSQ policy is similar to the policies studied by \citet{Turner.2000} and \citet{Banerjee.et.al.2024}. \citet{Turner.2000} considered a system with two identical stations and three arrival streams: While the two dedicated streams are sent to their respective stations, the third stream is routed to the shorter queue. When the arrival rate of the third stream is $O(1/\sqrt{n})$, the two-dimensional scaled queue length process converges in distribution to a reflected Brownian motion in the positive quadrant as $n$ goes large. \citet{Banerjee.et.al.2024} analyzed a policy that sends customers to each station with a probability depending on the ranking of its queue length; the additional fraction of customers routed to the shortest queue is also $O(1/\sqrt{n})$. They proved a multidimensional diffusion limit, along with an interchange-of-limits result, for the ranked queue length process. In comparison with these two papers, our study reveals how the RJSQ policy mitigates the oscillation phenomenon caused by information delays. More importantly, we develop an approach to fine-tuning the policy.

Our study is also relevant to \emph{spatial queues} where customers appear at random locations and either customers or servers need to move to initiate service \citep{Massey.Whitt.1993,Altman.Levy.1994,Cinlar.1995,Kroese.Schmidt.1996,Huang.Serfozo.1999,Breuer.2003,Simatos.Tibi.2010,Aldous.2017,Carlsson.et.al.2024}. While most of these studies use random measures to keep track of customers' locations, our work does not require such a representation. Instead, we prove asymptotic equivalence between the distributed system and two lower bound systems, thus establishing complete resource pooling and asymptotic optimality under the RJSQ policy. The joint problem of capacity planning and load balancing in Section~\ref{sec:Routing-Capacity-Planning} is similar to that studied by \citet{Carlsson.et.al.2024}: They considered a service system with geographically separated stations in a region, where customers choose stations based on both travel distances and waiting times. In their system, real-time information is \emph{unavailable} to customers, who have to use mean waiting times to decide their destinations. \citet{Carlsson.et.al.2024} characterized the system's demand equilibria as a partition of the region into certain service zones; if the stations are appropriately capacitated, the best-possible social welfare can be achieved through decentralized customer behavior. In comparison with their study, our work takes real-time information into account. Using the RJSQ policy, their system may achieve not only the best-possible social welfare but also complete resource pooling.

Our approach relies on strong approximation techniques to fine-tune the RJSQ policy. A strong approximation usually requires more restrictive conditions than the corresponding weak convergence result, but in return, it provides pathwise convergence at a certain rate. Strong approximations have been used in asymptotic queueing analysis to obtain limit processes with rates of convergence \citep{Rosenkrantz.1980,Csorgo.et.al.1987b,Zhang.et.al.1990,Glynn.Whitt.1991,Horvath.1992,Chen.Mandelbaum.1994,Chen.1996,Zhang.1997,Guo.Liu.2015}. By strong approximations, one may exploit sample path arguments to prove limit theorems for complex systems such as nonstationary queues and networks \citep{Mandelbaum.Massey.1995,Mandelbaum.et.al.1998,Yin.Zhang.2007,Pender.Ko.2017,Chakraborty.Honnappa.2022}, state-dependent queues \citep{Mandelbaum.Pats.1998b}, and queues under non-standard temporal and spatial scalings \citep{Glynn.1998,Chen.Shen.2000}. Although a diffusion limit is obtained for the queue length process in Theorem~\ref{theorem:CRP}, we do not intend to quantify and optimize the rate of convergence, because how it affects the RJSQ policy's performance is unclear. We use strong approximations to quantify the load imbalance and fine-tune the balancing fraction (Corollary~\ref{corollary:optimal-gap}) so that the performance gap between the RJSQ policy and a universal lower bound is minimized (Corollary~\ref{corollary:workload-optimality}). Hence, our study enriches the literature on asymptotic queueing control.

\section{A Randomized Routing Policy}
\label{sec:Remote-Queues}

Consider a system of $s$ stations ($s \geq 2$), each of which has a single server and unlimited waiting space, serving customers on the FCFS basis. A dispatcher sends each customer to one of the stations, and the customer arrives after a random time. The servers cannot be idle if there are customers waiting, and the customers leave the system upon service completion. We assume service times in each station are independent and identically distributed (i.i.d.), with mean $ 1/\mu_{k} $ in station~$ k $ and $ 0 < \mu_{1} \leq \cdots \leq \mu_{s} $. The system's total service capacity is
\begin{equation}\label{eq:capacity}
	\mu = \sum_{k = 1}^{s} \mu_{k} .
\end{equation}
Let $ \{ \boldsymbol{\gamma}(j): j \in \mathbb{N} \} $ be a sequence of i.i.d.\ random vectors with $ \boldsymbol{\gamma}(j) = ( \gamma_{1}(j), \ldots, \gamma_{s}(j) ) $, where $ \gamma_{k}(j) $ is the $ j $th customer's traveling delay to station~$ k $. Assume $ \boldsymbol{\gamma}(j) $ follows a \emph{discrete} distribution,
\begin{equation}\label{eq:delay-distribution}
	\mathbb{P}[\boldsymbol{\gamma}(j) = \boldsymbol{d}_{m}] = p_{m} > 0 \quad \mbox{for $ m = 1, \ldots, b $,}
\end{equation}
with $ \boldsymbol{d}_{m} = (d_{m,1}, \ldots, d_{m,s}) \in \mathbb{R}_{+}^{s} $ and $ \sum_{m = 1}^{b} p_{m} = 1 $. This discrete distribution could be an approximation of a general distribution on $ \mathbb{R}_{+}^{s} $; we may also envision customers randomly appearing from $ b $ locations, with $ d_{m,k} $ being the traveling delay from location~$ m $ to station~$ k $. For ease of exposition, a customer is said to be from \emph{origin}~$ m $ if the traveling delay vector takes value $ \boldsymbol{d}_{m} $.

The dispatcher's routing policy is based on a \emph{routing plan}, $\{ r_{m,k} : m = 1,\ldots, b; k = 1, \ldots, s \}$, where $ r_{m,k} $ is the long-run proportion of customers from origin~$ m $ that are sent to station~$ k $. Clearly, 
\begin{equation}\label{eq:rmk-inequality}
	0 \leq r_{m,k} \leq 1 \quad \mbox{for $ m = 1, \ldots, b $ and $ k = 1, \ldots, s $,}
\end{equation}
and
\begin{equation}\label{eq:rmk-sum}
	\sum_{k = 1}^{s} r_{m,k} = 1 \quad \mbox{for $ m = 1, \ldots, b$.}
\end{equation}
In heavy traffic, the routing plan also satisfies
\begin{equation}\label{eq:rmk-heavy-traffic}
	\mu_{k} = \sum_{m = 1}^{b} p_{m} r_{m,k}\mu \quad\mbox{for $ k = 1, \ldots, s $.}
\end{equation} 
A customer from origin~$ m $ should be sent to station~$ k $ with a probability close to $ r_{m,k} $.

We allow the dispatcher to be either \emph{aware} or \emph{unaware} of the origins of customers. If the origins are unknown, the dispatcher has to rely on a routing plan that does not differentiate the origins---that is, $ r_{1,k} = \cdots = r_{b,k} $ for all $ k = 1, \ldots, s $. The only routing plan that satisfies both this requirement and \eqref{eq:rmk-heavy-traffic} is given by $ r_{m,k} = \mu_{k}/\mu $. If the origins are known, the dispatcher may use a routing plan either estimated from historical data or determined by certain criteria. For instance, to minimize the mean traveling delay, the dispatcher may obtain a routing plan by solving the following linear program:
\[
\begin{array}{l@{\quad}l@{}}
	\textnormal{minimize} & \displaystyle \sum_{k = 1}^{s} \sum_{m = 1}^{b} p_{m} r_{m,k} d_{m,k} \\
	\textnormal{subject to} & \textnormal{\eqref{eq:rmk-inequality}--\eqref{eq:rmk-heavy-traffic}.}
\end{array}
\]

Let $ Q_{k}(t) $ be the customer count in station~$ k $ at time $ t $. The dispatcher monitors customer counts in the stations to determine each customer's destination. The dispatcher may use the \emph{weighted queue length} (or simply the \emph{queue length}), defined by $ L_{k}(t) = Q_{k}(t)/\mu_{k} $, to estimate the waiting time of a (virtual) customer who arrives at station~$ k $ at time~$ t $. When the JSQ policy is used, every customer is sent to the shortest queue---that is, the $ j $th customer is sent to station~$ k $ only if $ L_{k}(a(j)-) \leq L_{\ell}(a(j)-) $ for all $ \ell \neq k $, where $ a(j) $ is the time the $ j $th customer appears. 

As illustrated by Figure~\ref{fig:oscillations}, the JSQ policy causes the oscillation issue because many customers are sent to the same station within a traveling delay. To mitigate this problem, the dispatcher should send part of the customers, rather than all of them, to the shortest queue. If station~$ k $ has the shortest queue upon a customer's appearance from origin~$ m $, the dispatcher may send the customer there with a probability \emph{slightly} greater than $ r_{m,k} $ (with $ r_{m,k} = \mu_{k}/\mu $ if the origin is unknown) so that workload may still be balanced across the stations. We call such a routing policy the \emph{randomized join-the-shortest-queue} (RJSQ) policy. Specifically, upon the $ j $th customer's appearance, the dispatcher ranks the stations in increasing order of queue length and obtains $ ( \zeta_{1}(j), \ldots, \zeta_{s}(j) ) $, a permutation of $ \{1, \ldots, s\} $, such that $ L_{\zeta_{1}(j)}(a(j)-) \leq \cdots \leq L_{\zeta_{s}(j)}(a(j)-) $. (The dispatcher may follow any rule to break ties when two or more stations have equal queue lengths.) In other words, station~$ \zeta_{\ell}(j) $ has the $ \ell $th shortest queue. We use $ \pi_{k}(j) $ to denote the position of $ k $ in $ ( \zeta_{1}(j), \ldots, \zeta_{s}(j) ) $---that is, $ \pi_{k}(j) = \ell $ if and only if $ \zeta_{\ell}(j) = k $. If the customers' origins are unknown, the dispatcher randomly chooses $ \xi(j)  $, the $ j $th customer's destination, according to the following conditional distribution:
\begin{equation}\label{eq:conditional-distribution}
	\mathbb{P}[\xi(j) = k|\pi_{1}(j) = \ell_{1}, \ldots, \pi_{s}(j) = \ell_{s}] = \frac{\mu_{k}}{\mu} + \varepsilon_{\ell_{k}} \quad\mbox{for $ k = 1, \ldots, s $,}
\end{equation}
where $ ( \ell_{1}, \ldots, \ell_{s} ) $ is a permutation of $ \{1, \ldots, s\} $ and $ \varepsilon_{\ell} $ is the perturbation coefficient for routing a customer to the $ \ell $th shortest queue. Clearly, $ \sum_{\ell = 1}^{s} \varepsilon_{\ell} = 0 $. Moreover, we assume the perturbation coefficients satisfy the following two conditions:
\begin{enumerate}[label=(\roman*),ref=\roman*]
	\item \label{item:cond-1} $ \varepsilon_{1} = \chi > 0 $ and $ \varepsilon_{2}, \ldots, \varepsilon_{s} < 0 $;
	\item \label{item:cond-2} $ 0 \leq \mu_{k}/\mu + \varepsilon_{\ell} \leq 1 $ for $ k, \ell = 1, \ldots, s $.
\end{enumerate}
By this policy, the $ j $th customer will be sent to station~$ \zeta_{1}(j) $, which has the shortest queue, with probability $ \mu_{\zeta_{1}(j)}/\mu + \chi $, and to station~$ \zeta_{\ell}(j) $ with a probability less than $ \mu_{\zeta_{\ell}(j)}/\mu $ for $ \ell = 2, \ldots, s $. We refer to the constant $ \chi $ as the \emph{balancing fraction}. 

If the customers' origins are known, the dispatcher may take this information into account and determine the $ j $th customer's destination by
\begin{equation}\label{eq:conditional-distribution-m}
	\mathbb{P}[\xi(j) = k | \pi_{1}(j) = \ell_{1}, \ldots, \pi_{s}(j) = \ell_{s}, \boldsymbol{\gamma}(j) = \boldsymbol{d}_{m}] = r_{m,k} + \varepsilon^{m}_{\ell_{k}}(j)
\end{equation}
for $ m = 1, \ldots, b $ and $ k = 1, \ldots, s $, where $ \varepsilon^{m}_{\ell}(j) $ is the perturbation coefficient for routing the $ j $th customer from origin~$ m $ to the $ \ell $th shortest queue, with 
\begin{equation}\label{eq:sum-epsilon-m}
	\sum_{\ell = 1}^{s} \varepsilon^{m}_{\ell}(j) = 0 . 
\end{equation}
Since $ \boldsymbol{\gamma}(j) $ is independent of $ \pi_{1}(j), \ldots, \pi_{s}(j) $, then by \eqref{eq:delay-distribution} and \eqref{eq:rmk-heavy-traffic}--\eqref{eq:conditional-distribution-m}, 
\begin{equation}\label{eq:perturbation-parameters}
	\varepsilon_{\ell} = \sum_{m = 1}^{b} p_{m}\varepsilon^{m}_{\ell}(j) \quad\mbox{for $ \ell = 1, \ldots, s $ and $ j \in \mathbb{N} $.}
\end{equation}
In addition, we assume that for $ m = 1, \ldots, s $ and $ j \in \mathbb{N} $, the perturbation coefficients for routing the $ j $th customer from origin~$ m $ satisfy the following two conditions:
\begin{enumerate}[label=(\roman*),ref=\roman*,resume]
	\item \label{item:cond-1-m} $ \varepsilon^{m}_{1}(j) \geq 0 $ and $ \varepsilon^{m}_{2}(j), \ldots, \varepsilon^{m}_{s}(j) \leq 0 $;
	\item \label{item:cond-2-m} $ \varepsilon^{m}_{1}(j), \ldots, \varepsilon^{m}_{s}(j) $ are determined by $ \pi_{1}(j), \ldots, \pi_{s}(j) $ subject to $ 0 \leq r_{m,k} + \varepsilon^{m}_{\pi_{k}(j)}(j) \leq 1 $ for $ k = 1, \ldots, s $.
\end{enumerate}
Given $ \varepsilon_{1}, \ldots, \varepsilon_{s} $ that satisfy conditions~\eqref{item:cond-1} and \eqref{item:cond-2}, it is simple to find $ \varepsilon^{m}_{1}(j), \ldots, \varepsilon^{m}_{s}(j) $ that satisfy conditions \eqref{item:cond-1-m} and \eqref{item:cond-2-m} subject to \eqref{eq:sum-epsilon-m} and \eqref{eq:perturbation-parameters}. By conditions \eqref{item:cond-1}, \eqref{item:cond-1-m}, and \eqref{eq:perturbation-parameters}, 
\begin{equation}\label{eq:pm-epsilon-chi}
	|p_{m}\varepsilon^{m}_{\ell}(j)| \leq \chi \quad\mbox{for $ m = 1, \ldots, b $, $ \ell = 1, \ldots, s $, and $ j \in \mathbb{N} $.}
\end{equation}

\section{Randomized Routing in Heavy Traffic}
\label{sec:Heavy-Traffic}

Because exact analysis is generally intractable, we analyze the RJSQ policy asymptotically by considering a sequence of systems in heavy traffic. The systems are indexed by $ n \in \mathbb{N} $, each having $ s $ stations and all sharing a common routing plan. All of them are initially empty and driven by a collection of mutually independent random variables and random vectors as described below. By convention, we add a subscript $ n $ to the quantities that may change with the system index.

For $ k = 1, \ldots, s $, let $ \{ w_{k}(i) : i \in \mathbb{N} \} $ be a sequence of i.i.d.\ nonnegative random variables with mean $ 1 $ and coefficient of variation $ c_{k} < \infty $, where $ w_{k}(i) $ is the amount of service required by the $ i $th customer served by station~$ k $. Let $ S_{k}(t) = \max \{ j \in \mathbb{N}_{0} : \sum_{i = 1}^{j} w_{k}(i)/\mu_{k} \leq t \} $ for $ t \geq 0 $, which is the number of service completions in station~$ k $ if the server has been working $ t $ units of time.

Customers join the $ n $th system according to a renewal process with rate $ \lambda_{n} > 0$. Let $ \{ z(j) : j \in \mathbb{N} \} $ be a sequence of i.i.d.\ nonnegative random variables with mean $ 1 $ and coefficient of variation $ c_{0} < \infty $, where $ z(j) $ is the normalized inter-appearance time between the $ (j-1) $st and $ j $th customers. Then, $ E_{n}(t) = \max \{ i \in \mathbb{N}_{0} : \sum_{j = 1}^{i} z(j)/\lambda_{n} \leq t \} $ is the number of customers that have joined the system by time $t$. Assume the sequence of systems is in heavy traffic, in the sense that
\begin{equation}\label{eq:heavy-traffic}
	\lim_{n \to \infty} \sqrt{n} (1 - \rho_{n}) = \beta \quad\mbox{for $ \beta \in \mathbb{R} $,}
\end{equation}
where $ \rho_{n} = \lambda_{n}/\mu $ is the traffic intensity of the $n$th system. The systems are stable only if $ \beta > 0 $. In this case, the mean waiting time is expected to be $ O(\sqrt{n}) $ units of time in the $ n $th system (i.e., on the same order of magnitude as $1/(1-\rho_{n})$) when an appropriate routing policy is used.

We focus on the case where customers' traveling delays are on the same order of magnitude as their potential waiting times, because otherwise the routing problem could be practically trivial: If a customer's traveling delays are significantly longer than the potential waiting times in the stations, the long trip would be the main concern and the customer would simply head for the nearest station. If the traveling delays are negligible in comparison with the potential waiting times, the best option would probably be joining the shortest queue. Since traveling delays are assumed to be $ O(\sqrt{n}) $ units of time in the $ n $th system, we scale up the primitive delay vector by
\begin{equation}\label{eq:commute-delays}
	\boldsymbol{\gamma}_{n}(j) = \sqrt{n} \boldsymbol{\gamma}(j) \quad\mbox{for $ j \in \mathbb{N} $,}
\end{equation}
where $ \boldsymbol{\gamma}_{n}(j) = ( \gamma_{n,1}(j), \ldots, \gamma_{n,s}(j) ) $ is the delay vector of the $ j $th customer in the $n$th system.

The RJSQ policy is used in the $ n $th system with the balancing fraction $ \chi_{n} > 0 $. When the $ j $th customer appears, the dispatcher ranks the $ s $ stations as $ ( \zeta_{n,1}(j), \ldots, \zeta_{n,s}(j) ) $, with station~$ \zeta_{n,\ell}(j) $ having the $ \ell $th shortest queue. Let $ \pi_{n,k}(j) $ be the position of $ k $ in $ ( \zeta_{n,1}(j), \ldots, \zeta_{n,s}(j) ) $. Depending on whether the customers' origins are known, the dispatcher may determine $ \xi_{n}(j) $, the $ j $th customer's destination, by either \eqref{eq:conditional-distribution} or \eqref{eq:conditional-distribution-m}, with the perturbation coefficients denoted by $ \varepsilon_{n,1}, \ldots, \varepsilon_{n,s} $ or $ \varepsilon^{m}_{n,1}(j), \ldots, \varepsilon^{m}_{n,s}(j) $, respectively. Specifically, the dispatcher generates a sequence of i.i.d.\ standard uniform random variables $ \{ u(j) : j \in \mathbb{N} \} $. If the origin is unknown, the dispatcher will send the $ j $th customer to station~$ k $ if and only if $ u(j) \in [\kappa_{n,k-1}(j), \kappa_{n,k}(j)) $, where
\begin{equation}\label{eq:kappa} 
	\kappa_{n,0}(j) = 0 \quad \mbox{and} \quad \kappa_{n,k}(j) = \sum_{\ell = 1}^{k} \Big(\frac{\mu_{\ell}}{\mu} + \varepsilon_{n, \pi_{n,\ell}(j)}\Big) \quad\mbox{for $ k = 1, \ldots, s $.}
\end{equation}
The $ j $th customer's destination is thus given by
\begin{equation}\label{eq:destination-n} 
	\xi_{n}(j) = \sum_{k = 1}^{s} k \cdot \mathbb{1}_{ \{ \kappa_{n,k-1}(j) \leq u(j) < \kappa_{n,k}(j) \} }.
\end{equation}
If the origin is known, the dispatcher will determine the $ j $th customer's destination by
\begin{equation}\label{eq:destination-n-m} 
	\xi_{n}(j) = \sum_{m = 1}^{b} \sum_{k = 1}^{s} k \cdot \mathbb{1}_{ \{ \kappa^{m}_{n,k-1}(j) \leq u(j) < \kappa^{m}_{n,k}(j), \boldsymbol{\gamma}(j) = \boldsymbol{d}_{m} \} },
\end{equation}
where 
\begin{equation}\label{eq:kappa-m} 
	\kappa^{m}_{n,0}(j) = 0 \quad \mbox{and} \quad \kappa^{m}_{n,k}(j) = \sum_{\ell = 1}^{k} \big(r_{m,\ell} + \varepsilon^{m}_{n, \pi_{n,\ell}(j)}(j)\big) \quad \mbox{for $ m = 1, \ldots, b $ and $ k = 1, \ldots, s $.}
\end{equation}
In addition to conditions~(\ref{item:cond-1}) and (\ref{item:cond-2}), we assume  
\begin{enumerate}[label=(\roman*),ref=\roman*,resume]
	\item \label{item:cond-1-n} there exists a positive number $ \delta_{0} $ such that $\varepsilon_{n,\ell} \leq - \delta_{0} \chi_{n}$ for $\ell = 2, \ldots, s$ and $n \in \mathbb{N}$.
\end{enumerate}

Let $ A_{n,k}(t) $ be the number of customers that have arrived at station~$ k $ by time~$t$, and $ B_{n,k}(t) $ be the server's cumulative busy time by time $t$. Then, the customer count is given by
\begin{equation}\label{eq:queue-length}
	Q_{n,k}(t) = A_{n,k}(t) - S_{k}(B_{n,k}(t)).
\end{equation}
We define the scaled customer count and the scaled weighted queue length by
\begin{equation}\label{eq:diffusion-scaled}
	\tilde{Q}_{n,k}(t) = \frac{1}{\sqrt{n}}Q_{n,k}(nt) \quad \mbox{and} \quad \tilde{L}_{n,k}(t) = \frac{1}{\mu_{k}}\tilde{Q}_{n,k}(t).
\end{equation}

\section{Balancing Fraction}
\label{sec:Balancing-Fraction}

In this section, we specify two conditions the balancing fraction should satisfy to mitigate queue length oscillations. Rather than making a tedious analysis, we illustrate how the balancing fraction may affect the oscillation phenomenon by a simple example.

Consider a sequence of systems that satisfies \eqref{eq:heavy-traffic}. Each system has two stations with  $ \mu_{1} = \mu_{2} = \mu/2 $ and $ c^{2}_{1} = c^{2}_{2} $. We wish to attain $ \tilde{Q}_{n,1}(t) \approx \tilde{Q}_{n,2}(t) $ when $n$ is large. For the time being, assume traveling delays are zero. Under the JSQ policy, the two-dimensional scaled customer count process converges in distribution to a reflected Brownian motion in a one-dimensional subspace:
\begin{equation}\label{eq:two-station-weak-convergence}
	(\tilde{Q}_{n,1}, \tilde{Q}_{n,2}) \Rightarrow \Big( \frac{\tilde{Q}}{2}, \frac{\tilde{Q}}{2} \Big) \quad \mbox{as $ n \to \infty $},
\end{equation}
where $ \tilde{Q} $ is a one-dimensional reflected Brownian motion with drift $ -\beta \mu $ and variance $ \mu(c^{2}_{0} + c^{2}_{1}) $ (\citealp{Reiman.1984a}). This phenomenon is known as \emph{state space collapse} and can be explained as follows: As seen from \eqref{eq:diffusion-scaled}, $\tilde{Q}_{n,1}$ and $\tilde{Q}_{n,2}$ are scaled and accelerated versions of $Q_{n,1}$ and $Q_{n,2}$, evolving $ n $ times faster. Suppose that at time~$ t $, the difference between the customer counts becomes ``noticeable''---that is, $ |Q_{n,1}(t) - Q_{n,2}(t)| \approx \delta \sqrt{n}$ for some $ \delta > 0 $. Sending all to the shorter queue, the JSQ policy can equalize $Q_{n,1}$ and $Q_{n,2}$ within $ O(\sqrt{n}) $ units of time---or equalize the accelerated processes $\tilde{Q}_{n,1}$ and $\tilde{Q}_{n,2}$ within $ O(1/\sqrt{n}) $ unit of time. As $n$ goes large, the JSQ policy ``instantaneously'' wipes out a noticeable difference between $\tilde{Q}_{n,1}$ and $\tilde{Q}_{n,2}$. Thus, the two-dimensional scaled customer count process converges to a one-dimensional process along a line.

As illustrated in Section~\ref{sec:Introduction}, the JSQ policy may send most customers to the same station within a traveling delay. If traveling delays are $ O(\sqrt{n}) $, from time to time, there are $ O(\sqrt{n}) $ more customers on trips to one station than to the other, which could be too many to equalize the customer counts. Hence, the queue that was shorter may become much longer than the other periodically. To mitigate this issue, we should send fewer customers to the shorter queue so that the difference between the customer counts will not change too much within a delay interval. Since the customer counts are $ O(\sqrt{n}) $, the additional number of customers sent to the shorter queue should be $ o(\sqrt{n}) $ within a traveling delay (i.e., within $ O(\sqrt{n}) $ units of time), which requires the balancing fraction to satisfy 
\begin{equation}\label{eq:epsilon-1}
	\lim_{n \to \infty} \chi_{n} = 0.
\end{equation}
On the other hand, the RJSQ policy should ``quickly'' equalize the customer counts. If $ |Q_{n,1}(t) - Q_{n,2}(t)| \approx \delta \sqrt{n} $, the RJSQ policy equalizes $Q_{n,1}$ and $Q_{n,2}$ within $ O(\sqrt{n}/\chi_{n}) $ units of time, or equalizes $\tilde{Q}_{n,1}$ and $\tilde{Q}_{n,2}$ within $ O(1/(\sqrt{n}\chi_{n})) $ unit of time. Taking $ 1/(\sqrt{n}\chi_{n}) = o(1) $, we have
\begin{equation}\label{eq:epsilon-2}
	\lim_{n \to \infty} \sqrt{n} \chi_{n} = \infty .
\end{equation}

According to the above discussion, we expect \eqref{eq:two-station-weak-convergence} still holds in the presence of traveling delays, if the RJSQ policy is used with balancing fractions satisfying \eqref{eq:epsilon-1} and \eqref{eq:epsilon-2}. Let us re-examine the numerical example in Section~\ref{sec:Introduction}: Rather than using the JSQ policy, each customer is now sent to the shorter queue with probability 0.55 and to the longer queue with probability 0.45 ($\chi = 0.05$). A pair of sample paths of the customer count processes is plotted in Figure~\ref{fig:paths-RJSQ}, where no clear pattern of oscillations is observed. Figure~\ref{fig:mean-paths-RJSQ} depicts how the total customer count (averaged over $ 3 \times 10^5 $ sample paths) evolves under the RJSQ policy with $\chi = 0.05$. Though slightly inferior to the JSQ policy when the traveling delays are zero, the RJSQ policy appears insensitive to longer delays. When the traveling delays are 100 minutes, the average total customer count in the steady state is about 11\% greater than that with zero delays.
\begin{figure}[t]
	\begin{minipage}{0.495\textwidth}
		\caption{Customer Counts under RJSQ}
		\centering
		\includegraphics[width=3.2in]{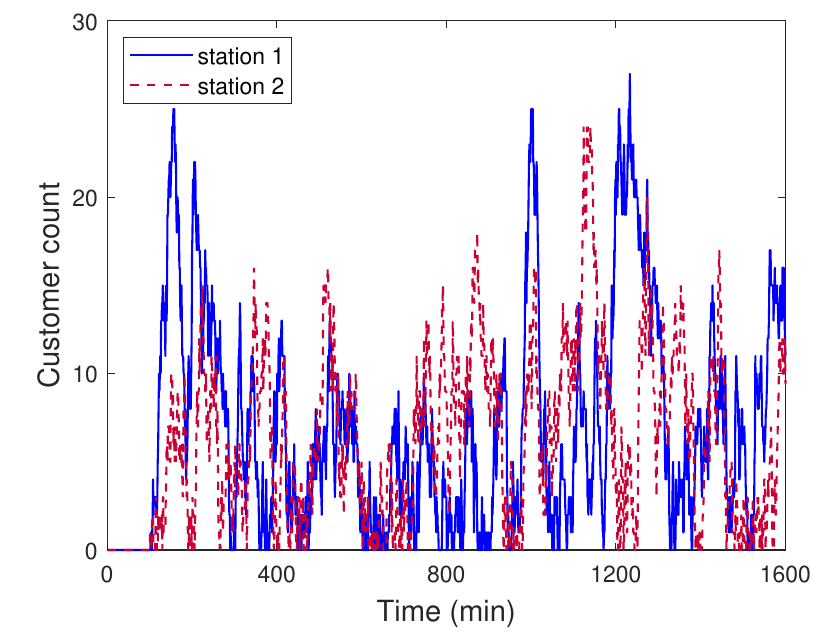}
		\label{fig:paths-RJSQ}
	\end{minipage}
	\begin{minipage}{0.495\textwidth}
		\caption{Average Customer Count under RJSQ}
		\centering
		\includegraphics[width=3.2in]{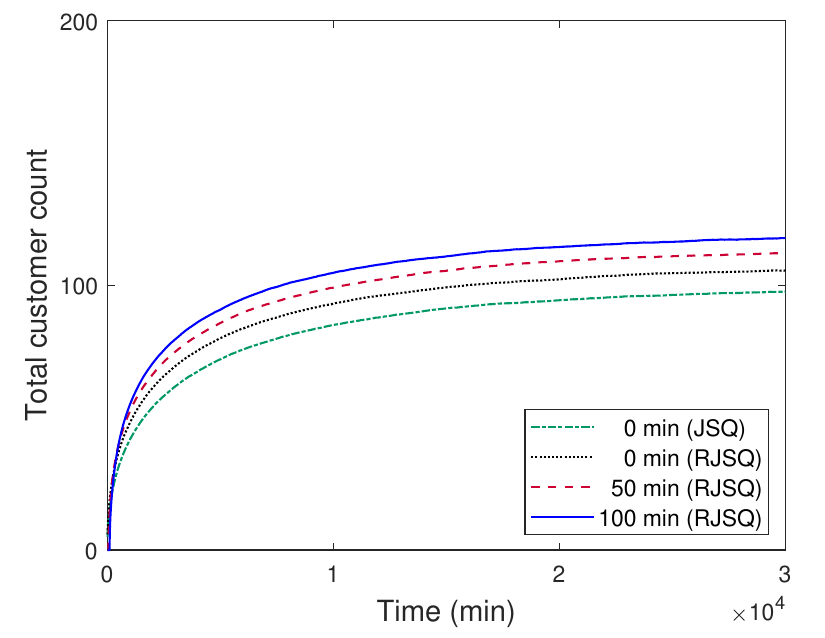}
		\label{fig:mean-paths-RJSQ}
	\end{minipage}
\end{figure}

\section{Analysis of Load Imbalance}
\label{sec:Load-Balancing}

Let us revisit the heavy-traffic setting introduced in Section~\ref{sec:Heavy-Traffic}, where the dispatcher follows either \eqref{eq:destination-n} or \eqref{eq:destination-n-m} to determine destinations, depending on whether the origins are known. Our analysis begins with an estimation of \emph{load imbalance} between the stations, measured by 
\begin{equation}\label{eq:load-imbalance}
	\sup_{0 \leq t \leq nT} \max_{k,\ell = 1, \ldots, s} | L_{n,k}(t) - L_{n,\ell}(t) | \quad \mbox{for $T > 0$.}
\end{equation}
To mitigate the oscillation phenomenon, the load imbalance must be reduced.

The first theorem justifies our previous argument: if the balancing fraction satisfies \eqref{eq:epsilon-1} and \eqref{eq:epsilon-2}, the scaled load imbalance will converge to zero. Since weighted queue lengths are estimates of waiting times, the RJSQ policy will equalize waiting times across the stations. 

\begin{theorem}\label{theorem:load-balancing}
	Assume the RJSQ policy is used in a sequence of systems that are initially empty under condition \eqref{eq:heavy-traffic}, with $ \chi_{n} $ satisfying \eqref{eq:epsilon-1} and \eqref{eq:epsilon-2}. If $ \varepsilon_{n,1}, \ldots, \varepsilon_{n,s} $ satisfy conditions~\eqref{item:cond-1}, \eqref{item:cond-2}, and \eqref{item:cond-1-n} (along with conditions~\eqref{item:cond-1-m} and \eqref{item:cond-2-m} when the origins are known), then for $ T > 0 $,
	\[  
	\sup_{0 \leq t \leq T} \max_{k,\ell = 1, \ldots, s} | \tilde{L}_{n,k}(t) - \tilde{L}_{n,\ell}(t) | \Rightarrow 0  \quad \mbox{as $ n \to \infty $.}
	\]
\end{theorem}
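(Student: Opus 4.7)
My plan is to reduce this weak-convergence statement to a uniform high-probability bound on the unscaled imbalance, and then establish that bound by a drift-versus-noise argument on an appropriate intermediate time scale. Since $\tilde L_{n,k}(t)=Q_{n,k}(nt)/(\sqrt n\mu_k)$, the conclusion is equivalent to showing that, for every $\delta>0$,
\[
\mathbb{P}\Big[\sup_{0\leq t\leq nT}\max_{k,\ell}|L_{n,k}(t)-L_{n,\ell}(t)|>\delta\sqrt n\Big]\to 0.
\]
Using (\ref{eq:queue-length}) together with the standard heavy-traffic fact that each server is busy for all but $o(\sqrt n)$ units of time on $[0,nT]$, so that $B_{n,k}(t)=t+o(\sqrt n)$, I would split the pairwise difference $L_{n,k}-L_{n,\ell}$ into a service part $S_\ell(t)/\mu_\ell-S_k(t)/\mu_k$ and an arrival part $A_{n,k}(t)/\mu_k-A_{n,\ell}(t)/\mu_\ell$.

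The service part is handled directly: a Koml\'os--Major--Tusn\'ady strong approximation replaces each $S_k$ by a Brownian motion, and the promised modulus-of-continuity estimate for Brownian motion (Lemma~\ref{lemma:Wiener-intervals}) bounds its uniform oscillation on short subintervals. For the arrival part I would set up the four-way decomposition hinted at in the introduction,
\[
A_{n,k}(t)\;=\;\mu_k\rho_n t\;+\;\tfrac{\mu_k}{\mu}\bigl(E_n(t)-\lambda_n t\bigr)\;+\;M_{n,k}(t)\;+\;D_{n,k}(t),
\]
where $M_{n,k}$ is the centered Bernoulli martingale arising from the routing randomization (\ref{eq:destination-n}) conditional on the rankings, and $D_{n,k}$ aggregates the perturbation coefficients $\varepsilon_{n,\pi_{n,k}(j)}$ over customers routed by time $t$ and thus encodes the RJSQ bias. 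In a suitably weighted pairwise difference the fluid term cancels, and the renewal-noise term contributes only a common Brownian-type fluctuation. The martingale $M_{n,k}$ is controlled uniformly on subintervals via Freedman's inequality through the promised Lemma~\ref{lemma:martingale}: its predictable quadratic variation grows at rate $O(\lambda_n)$, so it oscillates by $o(\sqrt\tau)$ up to logarithmic factors on subintervals of length $\tau$.

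The payoff step is a drift-versus-noise comparison on the intermediate time scale $\tau_n=\sqrt n/\chi_n$. Whenever the unscaled imbalance between two stations exceeds $\delta\sqrt n$, condition~(\ref{item:cond-1-n}) together with the ranking mechanism forces the pairwise difference $D_{n,k}-D_{n,\ell}$ to drift correctively at rate at least $\delta_0\chi_n\lambda_n$, accumulating order $\sqrt n$ over a window of length $\tau_n$; the noise on the same window is of order $\sqrt{\tau_n}$ up to logarithms, which by (\ref{eq:epsilon-2}) is $o(\sqrt n)$. Hence every excursion above $\delta\sqrt n$ is wiped out within one window, and (\ref{eq:epsilon-1}) guarantees $\tau_n=o(n)$ so a union bound over the $O(n/\tau_n)$ windows on $[0,nT]$ closes the argument; (\ref{eq:epsilon-1}) also keeps the RJSQ bias from shifting per-station traffic intensities by more than $o(1)$ from their fluid values, preserving the heavy-traffic regime. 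The genuine obstacle is the \emph{in-transit inertia}: at any time there are $\Theta(\sqrt n)$ customers whose destinations were chosen when the rankings looked different and who are no longer responsive to the RJSQ correction---the very mechanism behind the JSQ oscillations in Figure~\ref{fig:oscillations}. The remedy is that the discrete delay support in (\ref{eq:delay-distribution}) lets the in-transit mass be written as a finite sum of time-shifted copies of $M_{n,k}$, each controlled again by Lemma~\ref{lemma:martingale}, while the chosen window $\tau_n$ is, by (\ref{eq:epsilon-2}), strictly longer than the maximum delay $(\max_{m,k}d_{m,k})\sqrt n$, so the in-transit mass is fully refreshed within each window. Showing rigorously that past rankings cannot produce a persistent adversarial drift against the RJSQ correction is what I expect to be the most delicate calculation.
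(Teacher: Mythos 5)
Your overall strategy---a drift-versus-noise comparison over windows of length about $\sqrt n/\chi_n$, with the routing-randomization martingale controlled through Freedman's inequality (Lemma~\ref{lemma:martingale}) and the corrective drift supplied by condition~(\ref{item:cond-1-n})---is the same mechanism the paper exploits, but two steps of your reduction would fail as written. First, the claim that $B_{n,k}(t)=t+o(\sqrt n)$ uniformly on $[0,nT]$ is false: in heavy traffic the cumulative idle time of each station over $[0,nT]$ is of order $\sqrt n$ (the diffusion-scaled idle time converges to a nondegenerate regulator), so replacing $S_k(B_{n,k}(t))$ by $S_k(t)$ introduces an error of exactly the $\delta\sqrt n$ scale you are trying to control; in particular, the station that currently has the longest queue may owe part of its length to its own past idling, which your free-process decomposition of $L_{n,k}-L_{n,\ell}$ cannot see. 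The paper avoids this with an excursion construction: it compares the longest queue to the total via $\tilde H_n=\mu\tilde L_{n,\bar\ell(n)}-\sum_{\ell}\tilde Q_{n,\ell}$, brackets the rise of $\tilde H_n$ from $\mu_1\phi_n/2$ to $\mu_1\phi_n$ by stopping times $\tau_{2,n}\le\tau_{3,n}$, and notes that on this window the longest queue is strictly positive, so its busy time grows at rate one while the others grow at rate at most one; hence the busy-time contribution to $\tilde H_n$ is nonincreasing and no idle-time estimate is needed. The same construction fixes the identity of the long station and guarantees it is never the shortest throughout the window, which is exactly the point you flag as delicate: every dispatch during the window then carries perturbation at most $-\delta_0\chi_n$ toward that station, up to boundary terms of order $\chi_n$ caused by the delay $n^{-1/2}d_0$, so no ``adversarial drift from past rankings'' can persist.

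Second, your toolkit overshoots the hypotheses of Theorem~\ref{theorem:load-balancing}. A KMT-type strong approximation for $S_k$ and a union bound over the $O(n/\tau_n)$ windows require exponential or at least higher-order moment conditions, whereas Theorem~\ref{theorem:load-balancing} assumes only finite coefficients of variation. Under these assumptions the oscillations of the renewal, service, and routing noise over windows of vanishing diffusion-scaled length must be handled by C-tightness of $\tilde E_n$, $\tilde S_{n,k}$, $\tilde G^{m}_{n,k}$, and $\tilde M^{m}_{n,k}$, which is what the paper does for this theorem; the strong approximations and Lemma~\ref{lemma:Wiener-intervals} enter only in Theorem~\ref{theorem:load-balancing-gap} under condition~\eqref{eq:fourth-moments}. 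Two smaller points: the roles of \eqref{eq:epsilon-1} and \eqref{eq:epsilon-2} are swapped in your accounting ($\tau_n=o(n)$ follows from $\sqrt n\chi_n\to\infty$, while the window exceeding the maximal delay $d_0\sqrt n$ follows from $\chi_n\to0$), and the initial interval $[0,n^{-1/2}d_0]$ before any customer can reach a station needs a separate argument, supplied in the paper by Lemma~\ref{lemma:after-initial}.
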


Conditions~\eqref{eq:epsilon-1} and \eqref{eq:epsilon-2} provide a \emph{range} for the balancing fraction---for instance, we may take $\chi_{n} = n^{-\alpha}$ for $ \alpha \in (0, 1/2) $. Although the scaled load imbalance is negligible for any balancing fraction within the range, it would be useful if the order of magnitude of the load imbalance can be specified. To this end, we strengthen Theorem~\ref{theorem:load-balancing} by giving an \emph{almost sure} upper bound. The refined result requires two additional assumptions: First, both the normalized inter-appearance time distribution and the normalized service time distributions have finite fourth moments,
\begin{equation}\label{eq:fourth-moments}
	\mathbb{E}[z(1)^{4}] + \sum_{k = 1}^{s} \mathbb{E}[w_{k}(1)^{4}] < \infty.
\end{equation}
Second, the balancing fraction satisfies
\begin{equation}\label{eq:log-log-n-bound}
	\lim_{n \to \infty} \frac{\sqrt{n} \chi_{n}}{\sqrt{\log \log n}} = \infty.
\end{equation}

\begin{theorem}\label{theorem:load-balancing-gap}
	Assume the conditions of Theorem~\ref{theorem:load-balancing} hold. If conditions~\eqref{eq:fourth-moments} and \eqref{eq:log-log-n-bound} also hold, then there exist two positive numbers $ C $ and $ C' $ such that with probability 1,
	\begin{equation}\label{eq:upper-bound}
		\sup_{0 \leq t \leq T} \max_{k,\ell = 1, \ldots, s} | \tilde{L}_{n,k}(t) - \tilde{L}_{n,\ell}(t) | < \max \Big\{ C \chi_{n}, C' \frac{\log \log n + \log (n \chi_{n}^{2})}{\sqrt{n} \chi_{n}} \Big\}
	\end{equation}
	for $ T > 0 $ and $ n $ sufficiently large. If condition~\eqref{eq:fourth-moments} holds and $ \lim_{n \to \infty} \sqrt{n} \chi_{n}/\log \log n = \infty$, then with probability 1,
	\begin{equation}\label{eq:almost-sure-convergence}
		\lim_{n \to \infty} \sup_{0 \leq t \leq T} \max_{k,\ell = 1, \ldots, s} | \tilde{L}_{n,k}(t) - \tilde{L}_{n,\ell}(t) | = 0.
	\end{equation}
\end{theorem}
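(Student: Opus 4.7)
The plan is to bound the load imbalance by combining a strong-approximation representation of the primitive stochastic processes with a hitting-time argument that exploits the restoring drift produced by the balancing fraction $\chi_n$.

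First, I would write each arrival process $A_{n,k}$ using a four-way decomposition in the spirit of \eqref{eq:Rmnk-decomposition}: a nominal mean term proportional to $(\mu_k/\mu) E_n(t)$; a martingale $M^{(1)}_{n,k}$ coming from the random destination assignment given the rankings $(\pi_{n,1}(j), \ldots, \pi_{n,s}(j))$; a cumulative perturbation term $\sum_{j \leq E_n(t)} \varepsilon_{n, \pi_{n,k}(j)}$ that captures the balancing effect; and the centered counting process of $E_n$ scaled by $\mu_k/\mu$. Taking the difference $L_{n,k}(t) - L_{n,\ell}(t)$, the nominal mean and the scaled centered-$E_n$ components cancel by virtue of \eqref{eq:rmk-heavy-traffic}, leaving a martingale contribution from random routing, a drift contribution from the cumulative perturbation, and the differences from the service completion processes $S_k(B_{n,k})$. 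Whenever the rankings indicate that $k$ is ``out of order'' relative to $\ell$, condition~\eqref{item:cond-1-n} forces the drift to supply a restoring force of rate at least $\delta_0 \chi_n \lambda_n/\mu$ (in real time) on $|L_{n,k} - L_{n,\ell}|$.

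Next I would invoke a Koml\'os--Major--Tusn\'ady-type strong approximation to embed the centered renewal processes $E_n$ and $S_k$ into Brownian motions with remainder $O(\log n)$ almost surely; the fourth-moment hypothesis \eqref{eq:fourth-moments} is precisely what this embedding requires. After this step, $L_{n,k} - L_{n,\ell}$ decomposes as the sum of Brownian-motion-like noise terms with bounded quadratic variation per unit time, the ranking-dependent routing martingale whose predictable variation per arrival is $O(1)$, and the state-dependent restoring drift of rate $c\chi_n$. I would then partition $[0, nT]$ into windows of length $\Delta_n$ and apply Lemma~\ref{lemma:martingale} (Freedman) to the routing martingale and Lemma~\ref{lemma:Wiener-intervals} to the Brownian components, so that on each window the random part of $|L_{n,k} - L_{n,\ell}|$ changes by at most $O\bigl(\sqrt{\Delta_n (\log\log n + \log(nT/\Delta_n))}\bigr)$ outside an event whose probabilities are summable in $n$. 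Choosing $\Delta_n \asymp 1/\chi_n^2$ in real time so that the restoring displacement $\chi_n \Delta_n$ balances the random excursion, a hitting-time argument shows that any excursion of $|L_{n,k} - L_{n,\ell}|$ above the threshold built from these two competing effects is killed within one window. Rescaling by $1/\sqrt{n}$ produces the two-term bound in \eqref{eq:upper-bound}: the $C\chi_n$ piece captures the irreducible imbalance accrued on a single routing step, while the $(\log\log n + \log(n\chi_n^2))/(\sqrt{n}\chi_n)$ piece captures the maximum of the controlled reflected process over all windows. Borel--Cantelli then yields the almost-sure bound.

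For \eqref{eq:almost-sure-convergence}, I would work along a geometric subsequence $n_k = 2^k$ and tighten the window-count factor using the iterated-logarithm form of Lemma~\ref{lemma:Wiener-intervals}, which reduces the $\log(n\chi_n^2)$ penalty to $\log\log n$ and suffices under the weaker assumption $\sqrt{n}\chi_n/\log\log n \to \infty$. The main obstacle is handling the \emph{state-dependent} sign of the restoring drift: $\pi_{n,k}(j)$ flips every time two queues swap ranks, so $L_{n,k} - L_{n,\ell}$ cannot simply be treated as a drifted random walk with constant-sign drift. One must argue either that rank swaps are infrequent on the scale of one window, or, more robustly, that the cumulative perturbation process supplies a net restoring drift on the absolute imbalance $|L_{n,k} - L_{n,\ell}|$ regardless of reorderings---a local-time type identity requiring careful excursion analysis. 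Doing this while keeping the strong-approximation errors subordinate to the target rate $\log\log n/(\sqrt{n}\chi_n)$ is the delicate core of the proof.
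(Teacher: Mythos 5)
Your overall architecture (the decomposition in the spirit of \eqref{eq:Rmnk-decomposition}, strong approximations for $E_n$ and $S_k$, Freedman via Lemma~\ref{lemma:martingale}, Brownian increments via Lemma~\ref{lemma:Wiener-intervals}, and a window/hitting-time argument balancing the restoring drift $\chi_n$ against fluctuations of size $\sqrt{\Delta_n\log(\cdot)}$) is indeed the right flavor, but the step you yourself flag as ``the delicate core'' is exactly the missing idea, and the proof does not go through without it. Working with pairwise differences $L_{n,k}-L_{n,\ell}$ is problematic for $s\geq 3$: when neither $k$ nor $\ell$ currently holds the shortest queue, both receive negative perturbations and the sign of the drift on their difference is ambiguous, so there is no net restoring force on $|L_{n,k}-L_{n,\ell}|$ at the pair level, and rank swaps are not rare on the relevant time scale. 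The paper's resolution is not an excursion/local-time identity on pairwise differences but a change of functional: at the first exceedance time $\tau_{1,n}$ one fixes the station $\bar\ell(n)$ with the longest queue and tracks $\tilde{H}_n(t)=\mu\tilde{L}_{n,\bar\ell(n)}(t)-\sum_{\ell}\tilde{Q}_{n,\ell}(t)=\sum_{\ell}\mu_\ell(\tilde L_{n,\bar\ell(n)}(t)-\tilde L_{n,\ell}(t))$, together with the stopping times $\tau_{2,n}$ (last time below $\mu_1\phi_n/2$) and $\tau_{3,n}$ (first time at $\mu_1\phi_n$). On $[\tau_{2,n},\tau_{3,n}]$ one has $\tilde H_n\geq\mu_1\phi_n/2>0$, which forces station $\bar\ell(n)$ never to be the shortest queue there, so condition~(\ref{item:cond-1-n}) gives a perturbation of at most $-\delta_0\chi_n$ on \emph{every} arrival throughout the excursion: constant-sign drift is obtained by construction, with no need to control rank swaps. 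Without this (or an equivalent device) your window argument cannot be closed.

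Two further points would also need repair. First, your window choice $\Delta_n\asymp\chi_n^{-2}$ ignores the traveling-delay lag: a routing decision affects the queue only after $O(\sqrt n)$ units of time, so the window cannot be taken shorter than one delay (this is the floor $\psi_n\geq n^{-1/2}(2d_0\vee1)$ in \eqref{eq:psi-bound}, and the bookkeeping via the shifted time $\tau_{4,n}$). This delay floor, not ``the imbalance accrued on a single routing step,'' is what produces the $C\chi_n$ term: within one delay the policy can route $O(\sqrt n\chi_n)$ extra customers (i.e.\ $O(\chi_n)$ under diffusion scaling) toward the currently shortest queue, cf.\ Remark~\ref{remark:trade-off}. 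Second, condition~\eqref{eq:fourth-moments} does \emph{not} give a KMT remainder of order $\log n$; it gives $o(n^{1/4})$ (Lemmas~\ref{lemma:strong-approximations} and~\ref{lemma:renewal-strong-approximations}), the logarithmic rate requiring exponential moments as in \eqref{eq:exponential-bound}. The $o(n^{1/4})$ rate still suffices because the target unscaled imbalance is at least of order $n^{1/4}\sqrt{\log\log n}$, but your error accounting must be redone accordingly. Finally, the second assertion \eqref{eq:almost-sure-convergence} needs no subsequence or sharpened window count: since $\log(n\chi_n^2)/(\sqrt n\chi_n)=2\log(\sqrt n\chi_n)/(\sqrt n\chi_n)\to0$ automatically once $\sqrt n\chi_n\to\infty$, it follows directly from \eqref{eq:upper-bound}, \eqref{eq:epsilon-1}, and the assumption $\sqrt n\chi_n/\log\log n\to\infty$.
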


\begin{remark}\label{remark:trade-off}
	To better understand the almost sure bound, we need to analyze the causes of load imbalance: On the one hand, the arrival and service completion processes in a station induce random variations in workload; by sending more customers to the shortest queue, the RJSQ policy compensates for imbalance more quickly with a larger balancing fraction---this effect is captured by the second term on the right side of \eqref{eq:upper-bound}. On the other hand, without knowing past routing decisions, the RJSQ policy may send $O(\sqrt{n}\chi_{n})$ more customers (or $O(\chi_{n})$ more under the diffusion scaling) to the shortest queue within a traveling delay, possibly resulting in queue length oscillations of the same order---this effect is captured by the first term on the right side of \eqref{eq:upper-bound}.
\end{remark}

Theorem~\ref{theorem:load-balancing-gap} poses a trade-off for fine-tuning the RJSQ policy. By minimizing the almost sure bound, we find the optimal order of magnitude for the balancing fraction.

\begin{corollary}\label{corollary:optimal-gap}
	Assume the conditions of Theorem~\ref{theorem:load-balancing} hold. If conditions~\eqref{eq:fourth-moments} and \eqref{eq:log-log-n-bound} also hold, then the order of the upper bound on the right side of \eqref{eq:upper-bound} is minimized with $ \chi_{n} = C'' n^{-1/4}\sqrt{\log n} $ for $ C'' > 0 $, in which case with probability 1,
	\begin{equation}\label{eq:optimal-bound}
		\sup_{0 \leq t \leq T} \max_{k,\ell = 1, \ldots, s} | \tilde{L}_{n,k}(t) - \tilde{L}_{n,\ell}(t) | = O\big(n^{-1/4}\sqrt{\log n}\big) \quad \mbox{for $T > 0$}.
	\end{equation}
\end{corollary}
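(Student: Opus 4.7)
The plan is to treat the right-hand side of \eqref{eq:upper-bound} as a deterministic function
\[
F_{n}(\chi) = \max\Big\{ C\chi,\ C'\,\frac{\log \log n + \log(n\chi^{2})}{\sqrt{n}\,\chi} \Big\}
\]
of the tunable parameter $\chi$ and then minimize its order of magnitude. Within the feasible range specified by \eqref{eq:epsilon-1}, \eqref{eq:epsilon-2}, and \eqref{eq:log-log-n-bound}, the first argument grows with $\chi$ while the second eventually decays with $\chi$, so the minimum in order is attained by balancing them. Ignoring slowly varying factors, setting $C\chi$ against $C'\log n/(\sqrt{n}\chi)$ gives $\chi^{2}\asymp\log n/\sqrt{n}$, which motivates the choice $\chi_{n}=C''n^{-1/4}\sqrt{\log n}$.

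For this $\chi_{n}$, feasibility is immediate: $\chi_{n}\to 0$, $\sqrt{n}\chi_{n}=C''n^{1/4}\sqrt{\log n}\to\infty$, and $\sqrt{n}\chi_{n}/\sqrt{\log \log n}\to\infty$, so Theorem~\ref{theorem:load-balancing-gap} applies. The first argument of $F_{n}(\chi_{n})$ equals $CC''n^{-1/4}\sqrt{\log n}$. Since $n\chi_{n}^{2}=(C'')^{2}\sqrt{n}\log n$, one has $\log(n\chi_{n}^{2})=\tfrac{1}{2}\log n+O(\log \log n)$, and the second argument simplifies to $(C'/(2C''))\,n^{-1/4}\sqrt{\log n}\,(1+o(1))$. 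Both share the order $n^{-1/4}\sqrt{\log n}$, which yields \eqref{eq:optimal-bound} through Theorem~\ref{theorem:load-balancing-gap}.

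To see that no feasible sequence attains a strictly smaller order, write any feasible $\chi_{n}$ as $\chi_{n}=c_{n}n^{-1/4}\sqrt{\log n}$. Condition \eqref{item:cond-2} bounds $\chi_{n}$ by a constant from above, and \eqref{eq:log-log-n-bound} forbids $\chi_{n}$ from decaying faster than $\sqrt{\log \log n}/\sqrt{n}$; within this range $\log(n\chi_{n}^{2})$ remains of order $\log n$, so the two-argument structure of $F_{n}$ is preserved. If $c_{n}\to\infty$ along a subsequence, the first argument alone exceeds $n^{-1/4}\sqrt{\log n}$ in order there; if $c_{n}\to 0$ along a subsequence, the second argument, of order $c_{n}^{-1}n^{-1/4}\sqrt{\log n}$, likewise exceeds $n^{-1/4}\sqrt{\log n}$. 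Hence $F_{n}(\chi_{n})$ can stay on the order $n^{-1/4}\sqrt{\log n}$ only when $c_{n}$ is bounded away from both $0$ and $\infty$, confirming the claim. The main subtlety is the $\log(n\chi^{2})$ factor inside $F_{n}$, which keeps the minimization from being purely algebraic; the observation that this factor stays $\Theta(\log n)$ on the feasible range is what reduces the problem to a balance of two monomials in $\chi$ and pins down the slowly varying factor $\sqrt{\log n}$ appearing in the optimal $\chi_{n}$.
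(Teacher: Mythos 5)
Your proposal follows essentially the same route as the paper: view the right side of \eqref{eq:upper-bound} as the maximum of a term increasing in $\chi$ and a term (eventually) decreasing in $\chi$, balance the two to identify the optimal order, and then plug $\chi_{n} = C'' n^{-1/4}\sqrt{\log n}$ into Theorem~\ref{theorem:load-balancing-gap}, using $\log(n\chi_{n}^{2}) = \tfrac{1}{2}\log n + O(\log\log n)$, to get \eqref{eq:optimal-bound}. The paper's minimality argument is a cleaner version of your subsequence analysis: with $g(x)=Cx$ increasing and $g_{n}(x)=C'(\log\log n+\log(nx^{2}))/(\sqrt{n}x)$ decreasing for $x>\mathrm{e}/\sqrt{n}$, the maximum is minimized exactly where the two are equal, and solving the balance equation shows the minimized bound is asymptotically $n^{-1/4}\sqrt{CC'\log n/2}$.

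One auxiliary claim in your optimality paragraph is false as stated: on the full feasible range, $\log(n\chi_{n}^{2})$ need not be of order $\log n$. For instance $\chi_{n}=(\log\log n)/\sqrt{n}$ satisfies \eqref{eq:epsilon-1}, \eqref{eq:epsilon-2}, and \eqref{eq:log-log-n-bound}, yet $\log(n\chi_{n}^{2})=2\log\log\log n$; hence when $c_{n}\to 0$ the second term is not necessarily of order $c_{n}^{-1}n^{-1/4}\sqrt{\log n}$. This does not damage your conclusion, but the case needs a small patch: if $\log(n\chi_{n}^{2})$ is comparable to $\log n$ along the subsequence, your computation stands; otherwise $\sqrt{n}\chi_{n}$ grows subpolynomially, so the second term is at least $C'\log\log n/(\sqrt{n}\chi_{n})$, which decays more slowly than any fixed power of $n$ and thus still dominates $n^{-1/4}\sqrt{\log n}$. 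With that split (or by invoking the monotonicity argument above, which avoids the case analysis entirely), your proof is complete.
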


\begin{remark}\label{remark:general-distribution}
	We assume the traveling delays are discrete random variables for ease of analysis. Since a general distribution can be approximated by discrete distributions, the main results in this study should hold under more general traveling delay distributions. Please check Section~\ref{sec:Numerical} for a numerical example where traveling delays follow a continuous distribution.
\end{remark}

It is difficult to prove the asymptotic tightness of the minimized upper bound in \eqref{eq:optimal-bound}. However, by the example below, we can show that under certain conditions, there exist two positive numbers $ \check{C} $ and $ \check{C}' $ such that with probability 1,
\begin{equation}\label{eq:lower-bound}
	\sup_{0 \leq t \leq T} \max_{k,\ell = 1, \ldots, s} | \tilde{L}_{n,k}(t) - \tilde{L}_{n,\ell}(t) | > \check{C} \chi_{n} + \check{C}' n^{-1/4}\sqrt{\log \log n}
\end{equation}
for infinitely many $ n \in \mathbb{N} $ and any $\chi_{n}$ that satisfies \eqref{eq:epsilon-1} and \eqref{eq:log-log-n-bound}. The gap between the upper and lower bounds is thus within a sublogarithmic factor of $ \sqrt{\log n/\log \log n} $.

\begin{example}\label{example:lower-bound}
	Consider a sequence of systems, each having a sole customer origin and two stations. The respective traveling delays to the two stations are $ \sqrt{n} d_{1,1} $ and $ \sqrt{n} d_{1,2} $ in the $ n $th system, with $ 0 < d_{1,1} < d_{1,2} $. When the queue lengths are equal, the dispatcher sends a customer to station~1 with probability $ \mu_{1}/\mu + \chi_{n} $ because the traveling delay is shorter. Since the system is initially empty, customers arriving at station~1 form a renewal process with rate $ \lambda_{n}(\mu_{1}/\mu + \chi_{n}) $ during $ (\sqrt{n}d_{1,1}, 2\sqrt{n}d_{1,1}] $, while station~2 remains empty by time $ \sqrt{n}d_{1,2} $. Let $ d' = 2d_{1,1} \wedge d_{1,2} $. The lower bound in \eqref{eq:lower-bound} follows from the proposition below.
\end{example}

\begin{proposition}\label{prop:lower-bound}
	Assume the conditions of Theorem~\ref{theorem:load-balancing} hold for the sequence of systems in Example~\ref{example:lower-bound}. If conditions~\eqref{eq:fourth-moments} and \eqref{eq:log-log-n-bound} also hold, then there exist two positive numbers $ \check{C} $ and $ \check{C}' $ such that with probability 1, $ \tilde{L}_{n,1}(n^{-1/2}d') > \check{C}\chi_{n} + \check{C}' n^{-1/4}\sqrt{\log \log n} $ for infinitely many $n \in \mathbb{N}$.
\end{proposition}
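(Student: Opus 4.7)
The plan is to lower-bound $\tilde{L}_{n,1}(n^{-1/2}d') = Q_{n,1}(\sqrt{n}d')/(\mu_{1}\sqrt{n})$ by decomposing $Q_{n,1}(\sqrt{n}d')$ into a deterministic drift, which produces the $\chi_{n}$ contribution, and a random fluctuation, which produces the $n^{-1/4}\sqrt{\log\log n}$ contribution via an iterated-logarithm argument. Since station~1 is empty until its first arrival at time $\sqrt{n}d_{1,1}$, the cumulative busy time satisfies $B_{n,1}(\sqrt{n}d') \leq \sqrt{n}(d'-d_{1,1})$, so \eqref{eq:queue-length} yields
\[
Q_{n,1}(\sqrt{n}d') \geq A_{n,1}(\sqrt{n}d') - S_{1}\bigl(\sqrt{n}(d'-d_{1,1})\bigr).
\]
Since $d' - d_{1,1} \leq d_{1,1}$ in both cases $d' = 2 d_{1,1}$ and $d' = d_{1,2} < 2 d_{1,1}$, every dispatch contributing to $A_{n,1}(\sqrt{n}d')$ occurs during $(0, \sqrt{n}d_{1,1}]$ when both queues are empty; the tie-breaking rule then routes each customer to station~1 with probability $p_{n} = \mu_{1}/\mu + \chi_{n}$ via $u(j)$, giving $A_{n,1}(\sqrt{n}d') = \sum_{j = 1}^{E_{n}(\sqrt{n}(d'-d_{1,1}))} \mathbb{1}\{u(j) < p_{n}\}$, a clean Bernoulli thinning of the renewal input.

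For the drift, rewriting $\lambda_{n} p_{n} - \mu_{1} = \mu_{1}(\lambda_{n}/\mu - 1) + \lambda_{n}\chi_{n}$ and using \eqref{eq:heavy-traffic} yields
\[
\mathbb{E}\bigl[A_{n,1}(\sqrt{n}d')\bigr] - \mathbb{E}\bigl[S_{1}(\sqrt{n}(d'-d_{1,1}))\bigr] = (d'-d_{1,1})\bigl(\sqrt{n}\,\lambda_{n}\chi_{n} - \mu_{1}\beta\bigr) + O(1).
\]
Because \eqref{eq:log-log-n-bound} forces $\sqrt{n}\chi_{n} \to \infty$, this difference exceeds $\tfrac{1}{2}\mu(d'-d_{1,1})\sqrt{n}\chi_{n}$ for $n$ large, so dividing by $\mu_{1}\sqrt{n}$ shows that the drift alone contributes at least $\check{C}\chi_{n}$ to $\tilde{L}_{n,1}(n^{-1/2}d')$ for any constant $\check{C} < \mu(d'-d_{1,1})/(2\mu_{1})$.

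For the fluctuation, I would split $\mathbb{1}\{u(j) < p_{n}\} = \mathbb{1}\{u(j) < \mu_{1}/\mu\} + \mathbb{1}\{\mu_{1}/\mu \leq u(j) < p_{n}\}$; the second piece has variance $O(\sqrt{n}\chi_{n})$ at the scale of interest, so contributes only $o(n^{1/4}\sqrt{\log\log n})$ since $\chi_{n} \to 0$. Under the fourth-moment condition \eqref{eq:fourth-moments}, a KMT-type strong approximation couples the centered renewal process $E_{n}(\cdot) - \lambda_{n}\cdot$, the Bernoulli$(\mu_{1}/\mu)$ partial-sum process, and the centered service process $S_{1}(\cdot) - \mu_{1}\cdot$ to independent standard Brownian motions $W_{0}, W_{B}, W_{1}$ on a common probability space, with $O(\log n)$ error over the time horizon $O(\sqrt{n})$ of interest. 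Composing these couplings gives
\[
Q_{n,1}(\sqrt{n}d') - \mathbb{E}\bigl[A_{n,1}(\sqrt{n}d') - S_{1}(\sqrt{n}(d'-d_{1,1}))\bigr] \geq \sigma W\bigl(\sqrt{n}(d'-d_{1,1})\bigr) + o\bigl(n^{1/4}\sqrt{\log\log n}\bigr),
\]
where $W$ is a standard Brownian motion formed as a suitable linear combination of $W_{0}, W_{B}, W_{1}$ and $\sigma > 0$ pools the renewal, splitting and service variances.

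The main obstacle is converting the continuous-time law of the iterated logarithm for $W$ into a lower bound along the specific deterministic sequence $t_{n} = \sqrt{n}(d'-d_{1,1})$, which a generic LIL statement only supplies along a random unbounded set of times. I would extract a subsequence $\{n_{k}\}$ for which $t_{n_{k+1}}/t_{n_{k}} \to \infty$ (for example, $t_{n_{k}}$ close to $2^{k^{2}}$), so that the increments $\Delta_{k} = W(t_{n_{k+1}}) - W(t_{n_{k}})$ are independent centered Gaussians with variance $(1+o(1))t_{n_{k+1}}$. A standard Gaussian-tail estimate then shows that for all sufficiently small $c_{0} > 0$ the probabilities $\mathbb{P}\bigl(\Delta_{k} > c_{0}\sqrt{t_{n_{k+1}}\log\log t_{n_{k+1}}}\bigr)$ sum to infinity, so the second Borel--Cantelli lemma forces these events to occur infinitely often almost surely. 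Since $t_{n_{k}}/t_{n_{k+1}} \to 0$, the classical upper LIL bound yields $W(t_{n_{k}}) = o\bigl(\sqrt{t_{n_{k+1}}\log\log t_{n_{k+1}}}\bigr)$, and hence $W(t_{n_{k+1}}) > c_{1}\sqrt{t_{n_{k+1}}\log\log t_{n_{k+1}}}$ infinitely often; using $\log\log t_{n_{k+1}} = \log\log n_{k+1} + O(1)$ this rewrites as $W(t_{n_{k+1}}) > c_{2}\,n_{k+1}^{1/4}\sqrt{\log\log n_{k+1}}$ infinitely often, almost surely. Adding this fluctuation to the drift and dividing by $\mu_{1}\sqrt{n}$ produces $\tilde{L}_{n,1}(n^{-1/2}d') > \check{C}\chi_{n} + \check{C}'\,n^{-1/4}\sqrt{\log\log n}$ for infinitely many $n$, with $\check{C}' = \sigma c_{2}/\mu_{1}$.
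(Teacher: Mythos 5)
Your proposal is correct and reaches the proposition by essentially the paper's route: lower-bound $Q_{n,1}(\sqrt{n}d')$ by arrivals minus possible service completions, split into a drift of order $\sqrt{n}\chi_n$ (from the tie-break bias) and a Brownian-scale fluctuation of order $n^{1/4}\sqrt{\log\log n}$, and invoke strong approximations. Two execution differences from the paper: (1) the paper observes that the $\mu_1/\mu$-thinning $E'_n(t)=\sum_{j\le E_n(t)}\mathbb{1}\{u(j)<\mu_1/\mu\}$ is itself a renewal process (inter-renewal a geometric random sum of $z$'s) and applies Lemma~\ref{lemma:renewal-strong-approximations} to it directly, whereas you compose two couplings (for $E_n$ and for the Bernoulli partial sums) through a random time change — this produces the same variance parameter but is harder to justify cleanly; (2) for the LIL lower bound along $t_n=\sqrt{n}(d'-d_{1,1})$, the paper cites a time-changed Brownian LIL and implicitly uses that $t_{n+1}/t_n\to 1$ to transfer the continuous-time limsup to the deterministic sequence, whereas you re-derive the LIL lower bound from scratch via a sparse subsequence and the second Borel--Cantelli lemma — this is valid, but note that the denser sequence already inherits the full LIL constant, so the sparsification is unnecessary here (though harmless, since any positive $\check{C}'$ suffices). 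Two imprecisions worth tightening: (i) under the fourth-moment condition~\eqref{eq:fourth-moments} the KMT-type strong approximation error over $[0,T]$ is $o(T^{1/4})$, not $O(\log T)$ — the log rate requires exponential moments — so the error here is $o(n^{1/4})$, still negligible against $n^{1/4}\sqrt{\log\log n}$; (ii) the a.s.\ fluctuation of the thin-strip martingale $\sum_j\mathbb{1}\{\mu_1/\mu\le u(j)<p_n\}$ is, by Lemma~\ref{lemma:martingale}, of order $n^{1/4}\sqrt{\chi_n\log n\vee 1}$, which is \emph{not} $o(n^{1/4}\sqrt{\log\log n})$ when $\chi_n\log n$ grows; what does hold, and what you actually need, is that it is $o(\sqrt{n}\chi_n\vee n^{1/4}\sqrt{\log\log n})$ (as verified in the proof of Lemma~\ref{lemma:after-initial}), which still makes it negligible against the target $\check{C}\sqrt{n}\chi_n+\check{C}'n^{1/4}\sqrt{\log\log n}$.
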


The RJSQ policy retains the capability of load balancing, but it cannot equalize workload as quickly as the JSQ policy. The next proposition is concerned with the JSQ policy when traveling delays are zero. To obtain a tighter upper bound, we need a condition stronger than \eqref{eq:fourth-moments}:
\begin{equation}\label{eq:exponential-bound}
	\mathbb{E}[\exp(t z(1))] + \sum_{k = 1}^{s} \mathbb{E}[\exp(t w_{k}(1))] < \infty \quad \mbox{in a neighborhood of $t = 0$.}
\end{equation}
\begin{proposition}\label{prop:JSQ}
	Assume the JSQ policy is used in a sequence of systems with zero traveling delays that are initially empty under condition \eqref{eq:heavy-traffic}. If condition~\eqref{eq:exponential-bound} also holds, then with probability~1,
	\[
	\sup_{0 \leq t \leq T} \max_{k,\ell = 1, \ldots, s} | \tilde{L}_{n,k}(t) - \tilde{L}_{n,\ell}(t) | = O(n^{-1/2}\log n) \quad \mbox{for $T > 0$.}
	\]
\end{proposition}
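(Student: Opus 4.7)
The plan is to combine a Komlós--Major--Tusnády (KMT) strong approximation, made possible by the exponential moment condition \eqref{eq:exponential-bound}, with a Lyapunov/martingale analysis of the JSQ dynamics. Under \eqref{eq:exponential-bound}, the KMT theorem produces Brownian motions $W_0, W_1, \ldots, W_s$ on a common probability space with
\[
\sup_{0 \leq t \leq nT} \bigl| E_n(t) - \lambda_n t - c_0 \sqrt{\lambda_n}\, W_0(t) \bigr| = O(\log n) \quad \text{a.s.,}
\]
and an analogous bound for each $S_k$. Since $\tilde{L}_{n,k}(t) = L_{n,k}(nt)/\sqrt{n}$, it suffices to prove $\sup_{0 \leq s \leq nT} V_n(s) = O(\log n)$ almost surely, where $V_n(s) = \max_{k} L_{n,k}(s) - \min_{k} L_{n,k}(s)$.

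Next, I would analyze the jumps of $V_n$ under JSQ with zero traveling delays. Each arrival enters the current argmin $k^{\star}_{\min}$, reducing $V_n$ by the positive amount $\min\{1/\mu_{k^{\star}_{\min}},\, L_{(2)} - L_{(1)}\}$, where $L_{(1)} \leq L_{(2)}$ are the two smallest weighted queue lengths; each service completion at the argmax $k^{\star}_{\max}$ reduces $V_n$ by at most $1/\mu_{k^{\star}_{\max}}$; each service at the argmin $k^{\star}_{\min}$ raises $V_n$ by $1/\mu_{k^{\star}_{\min}}$; services at intermediate stations leave $V_n$ unchanged. An infinitesimal-drift calculation when all stations are busy and the argmin/argmax identities are stable shows that the service-at-argmin and service-at-argmax rate contributions cancel, so the net drift reduces to the arrival term and is bounded above by $-\lambda_n/\mu_{k^{\star}_{\min}} \leq -\rho_n \mu/\mu_s$, which is strictly negative and bounded away from zero for all large $n$ because $\rho_n \to 1$. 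Consequently $V_n$ is dominated by a pure-jump process with drift $-\Omega(1)$ and jumps bounded by $1/\min_k \mu_k$.

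Third, I would establish the excursion bound. Write $V_n(t) = V_n(0) + M_n(t) + A_n(t)$ with $A_n$ a drift process whose derivative is $\leq -\delta$ for some $\delta > 0$ whenever $V_n > 0$, and $M_n$ a square-integrable martingale whose predictable quadratic variation grows at rate $O(1)$ (since all event rates and jump magnitudes are bounded). Applying Freedman's inequality, as encapsulated in Lemma~\ref{lemma:martingale}, to $M_n$ over sub-intervals of length $c \log n$ yields, for $c$ sufficiently large and some $\alpha > 1$,
\[
\mathbb{P}\Bigl[\sup_{0 \leq t \leq c \log n} |M_n(t)| > \tfrac{\delta c}{2} \log n \Bigr] \leq n^{-\alpha} .
\]
Partitioning $[0, nT]$ into $O(n/\log n)$ such intervals and taking a union bound gives total exceedance probability $O(n^{1-\alpha})$, which is summable in $n$, so the Borel--Cantelli lemma delivers $\sup_{0 \leq s \leq nT} V_n(s) = O(\log n)$ almost surely and hence the stated diffusion-scaled bound $O(n^{-1/2} \log n)$.

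The main obstacle is the rigorous treatment of the transition and boundary effects that perturb the clean drift calculation above: the argmin and argmax identities change at arrival and service epochs, degenerate configurations arise with ties, and the argmin may have an empty queue (so the cancellation between the service-at-argmin and service-at-argmax rates requires modification). I would handle these by partitioning $[0, nT]$ at every change of the argmin/argmax configuration, verifying that the drift upper bound still applies within each regime (possibly with different numerical constants), and bounding the total number of regimes on $[0, nT]$ by the total event count, which is $O(n)$ almost surely by the KMT approximation. Idle periods at the argmin only strengthen the negative drift of $V_n$ and so do not obstruct the bound; they simply need to be shown not to accumulate, which follows from $\rho_n \to 1$ making system-level idleness rare on the diffusion time scale.
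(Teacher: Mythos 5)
Your approach is genuinely different from the paper's. The paper picks the station $\bar{\ell}(n)$ that is longest at the first-passage time $\tau_{1,n}$, traces it back to the last time $\tau_{2,n}$ it held the shortest weighted queue, and bounds the increment of $\tilde{H}_{n} = \mu\tilde{L}_{n,\bar{\ell}(n)} - \sum_{\ell}\tilde{Q}_{n,\ell}$ over $(\tau_{2,n},\tau_{1,n}]$; since $\bar{\ell}(n)$ receives no JSQ arrivals and stays busy on that interval, $\tilde{H}_{n}$ has a clean form that does not depend on which station is currently the argmin or argmax, and the exponential-moment KMT rates (Lemma~\ref{lemma:strong-approximations}) together with Lemma~\ref{lemma:Wiener-intervals} give the $O(\log n)$ control. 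You instead try a Lyapunov/drift argument directly on the range $V_{n}(s)=\max_{k}L_{n,k}(s)-\min_{k}L_{n,k}(s)$, combined with KMT and Freedman-plus-union-bound. The KMT and concentration pieces of your plan are fine in spirit (though the reference to Lemma~\ref{lemma:martingale} is misdirected — that lemma is about the RJSQ martingales $M^{m}_{n,k}$; the tool you actually want is Lemma~\ref{lemma:Freedman}).

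The gap is in the drift calculation, and it is not a boundary technicality. For $s\geq 3$, take all $\mu_{k}=1$ and a state with $L_{(1)}=a$, $L_{(2)}=a+\epsilon$ for small $\epsilon>0$, and $L_{(s)}=b$ with $b-a$ large. An arrival to the argmin lifts $L_{(1)}$ to $a+1$, but the \emph{new} minimum is $L_{(2)}=a+\epsilon$, so $V_{n}$ falls by only $\epsilon$, not by $1/\mu_{k^{\star}_{\min}}$. A service at the station holding $L_{(2)}$ drops it to $a+\epsilon-1<a$, which \emph{becomes} the new minimum, so $V_{n}$ rises by $1-\epsilon$; your claim that services at intermediate stations leave $V_{n}$ unchanged is false whenever a non-argmin station sits close to the minimum. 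Adding the contributions (argmin service $+1$, the $L_{(2)}$ service $+(1-\epsilon)$, argmax service $-1$, arrival $-\lambda_{n}\epsilon$) gives drift $\approx 1-(\lambda_{n}+1)\epsilon$, which is \emph{positive} for small $\epsilon$. These near-tied configurations at the bottom are not rare — JSQ routes every arrival to the argmin and thereby constantly drives $L_{(1)}$ up toward $L_{(2)}$ — so one cannot hope to "verify that the drift upper bound still applies within each regime, possibly with different numerical constants": the sign of the drift actually flips. This is precisely the difficulty that the paper's choice of functional $\mu L_{\bar{\ell}}-\sum_{\ell}Q_{\ell}$, with $\bar{\ell}$ fixed retroactively, is designed to sidestep. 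To close your argument you would need either a different Lyapunov function whose drift stays nonpositive through near-ties, or an averaging argument showing the positive-drift episodes cannot persist long enough to accumulate beyond $O(\log n)$ — neither of which appears in the proposal.
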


When traveling delays are zero, the JSQ policy maintains the load imbalance in \eqref{eq:load-imbalance} within a $O(\log n)$ bound; when traveling delays are $O(\sqrt{n})$, the RJSQ policy maintains the load imbalance within a $O(n^{1/4}\sqrt{\log n})$ bound. (The minimized upper bound in \eqref{eq:optimal-bound} will not change if condition~\eqref{eq:exponential-bound} is assumed in Corollary~\ref{corollary:optimal-gap}.) The difference between the two bounds is deemed the cost for mitigating queue length oscillations by sending fewer customers to the shortest queue.

The RJSQ policy relies on the system's history only through the queue length information upon each customer's appearance, not using any past decisions. If more historical information is taken into account, how much improvement can be made? The following example provides useful clues.

\begin{example}\label{example:historical-information}
	Consider a sequence of systems under condition \eqref{eq:heavy-traffic}. Each system has two stations with $\mu_{1} = \mu_{2} = 1$; all service times are exponentially distributed. In the $n$th system, customers appear from a sole origin and their traveling delays are $ \sqrt{n} d $ minutes to both stations, with $d > 0$. Suppose the dispatcher adopts a routing policy that may use historical information but \emph{cannot} know a customer's exact service time until completion. Assume $n$ is large and consider a moment~$\tau_{n}$ at which a customer joins the system and finds long queues in both stations---in the sense that both servers would be busy during $(\tau_{n}, \tau_{n} + \sqrt{n} \delta]$ for some $ \delta \in (0,d] $. By \eqref{eq:queue-length}, the difference between the queue lengths at time $\tau_{n} + \sqrt{n}\delta$ is
	\[
	L_{n,1}(\tau_{n}+\sqrt{n}\delta) - L_{n,2}(\tau_{n}+\sqrt{n}\delta) =  A_{n,1}(\tau_{n}+\sqrt{n}\delta) - A_{n,2}(\tau_{n}+\sqrt{n}\delta) - D_{n}(\tau_{n}) - D_{n}(\tau_{n}, \tau_{n}+\sqrt{n}\delta],
	\]
	where $ D_{n}(t) = S_{1}(B_{n,1}(t)) - S_{2}(B_{n,2}(t)) $ and $ D_{n}(t_{1}, t_{2}] = D_{n}(t_{2}) - D_{n}(t_{1})$ for $t \geq 0$ and $0 \leq t_{1} < t_{2}$. Because the servers would be busy, the respective service completions in the stations during $(\tau_{n}, \tau_{n} + \sqrt{n} \delta]$ are approximately two independent Poisson random variables, both having mean $\sqrt{n} \delta$ and being independent of the system's history until $\tau_{n}$. By the central limit theorem, $D_{n}(\tau_{n}, \tau_{n}+\sqrt{n}\delta] /(\sqrt{2\delta} n^{1/4})$ approximately follows a standard Gaussian distribution, and thus
	\[
	\mathbb{P}[D_{n}(\tau_{n}, \tau_{n}+\sqrt{n}\delta] > \sqrt{2\delta} n^{1/4}] \approx \mathbb{P}[D_{n}(\tau_{n}, \tau_{n}+\sqrt{n}\delta] < -\sqrt{2\delta} n^{1/4}] \approx 0.1587.
	\]
	With traveling delays of $\sqrt{n}d$ minutes, $ A_{n,1}(\tau_{n}+\sqrt{n}\delta) - A_{n,2}(\tau_{n}+\sqrt{n}\delta) $ is determined by the system's history until $ \tau_{n} - \sqrt{n}(d - \delta)$. Since the service completions during $(\tau_{n}, \tau_{n} + \sqrt{n} \delta]$ are approximately independent of the history until $\tau_{n}$, $D_{n}(\tau_{n}, \tau_{n}+\sqrt{n}\delta]$  constitutes an \emph{uncontrollable} part of the difference between the queue lengths at time $\tau_{n} + \sqrt{n}\delta$. Note that
	\begin{align*}
		& \mathbb{P}\big[|L_{n,1}(\tau_{n}+\sqrt{n}\delta) - L_{n,2}(\tau_{n}+\sqrt{n}\delta)| > \sqrt{2\delta} n^{1/4}\big] \\
		& \quad \geq \mathbb{P}\big[D_{n}(\tau_{n}, \tau_{n}+\sqrt{n}\delta] > \sqrt{2\delta} n^{1/4}, A_{n,1}(\tau_{n}+\sqrt{n}\delta) - A_{n,2}(\tau_{n}+\sqrt{n}\delta) - D_{n}(\tau_{n}) \leq 0 \big] \\
		& \qquad + \mathbb{P}\big[D_{n}(\tau_{n}, \tau_{n}+\sqrt{n}\delta] < - \sqrt{2\delta} n^{1/4}, A_{n,1}(\tau_{n}+\sqrt{n}\delta) - A_{n,2}(\tau_{n}+\sqrt{n}\delta) - D_{n}(\tau_{n}) > 0 \big] \\
		& \quad \approx \mathbb{P}\big[D_{n}(\tau_{n}, \tau_{n}+\sqrt{n}\delta] > \sqrt{2\delta} n^{1/4} \big] \mathbb{P}\big[A_{n,1}(\tau_{n}+\sqrt{n}\delta) - A_{n,2}(\tau_{n}+\sqrt{n}\delta) - D_{n}(\tau_{n}) \leq 0 \big] \\
		& \qquad + \mathbb{P}\big[D_{n}(\tau_{n}, \tau_{n}+\sqrt{n}\delta] < - \sqrt{2\delta} n^{1/4} \big] \mathbb{P}\big[ A_{n,1}(\tau_{n}+\sqrt{n}\delta) - A_{n,2}(\tau_{n}+\sqrt{n}\delta) - D_{n}(\tau_{n}) > 0 \big] \\
		& \quad \approx 0.1587.
	\end{align*}
	The load imbalance is at least $O(n^{1/4})$. In comparison with the $O(n^{1/4}\sqrt{\log n})$ bound by the RJSQ policy, the improvement by exploiting the complete history cannot exceed a sublogarithmic factor.
\end{example}

\section{Complete Resource Pooling}
\label{sec:Diffusion-Limit}

In this section, we prove the above sequence of systems (referred to as the \emph{distributed} systems hereafter) is asymptotically equivalent to a sequence of single-server systems. Consequently, the scaled queue length process converges to a reflected Brownian motion in a one-dimensional subspace. 

The single-server systems are constructed as follows: All of them are initially empty. Customers join the $ n $th distributed system and the $ n $th single-server system according to the same renewal process. The sole station in the latter system has a work-conserving server with service rate $ \mu $ and unlimited waiting space, serving customers on the FCFS basis. If the $ j $th customer in the distributed system is sent to station~$ k $, the traveling delay in the single-server system is also $ \gamma_{n,k}(j) $. Since the corresponding customers arrive at their stations simultaneously, we refer to the single-server system as the \emph{synchronized service pool} (SSP). A customer in the SSP is said to belong to \emph{class} $ k $ if the counterpart is sent to station~$ k $ in the distributed system. A customer's service requirement in the SSP is the same as that of the counterpart; the service time of the $ i $th class-$ k $ customer in the SSP is $ w_{k}(i)/\mu $, whereas the corresponding service time in the distributed system is $ w_{k}(i)/\mu_{k} $.

Let $ W_{n,k}(t) $ be the \emph{stationed workload} (i.e., the amount of unfinished service in a station) in station~$ k $ of the distributed system at time~$ t $, and $ W_{n,k}^{\star}(t) $ be that of class-$ k $ customers in the station of the SSP. The total stationed workloads are $ W_{n}(t) = \sum_{k = 1}^{s} W_{n,k}(t) $ and $ W_{n}^{\star}(t) = \sum_{k = 1}^{s} W_{n,k}^{\star}(t) $. Let $U_{n}(t)$ be the \emph{en route workload} (i.e., the total amount of service required by the en route customers) in the distributed system, and $U^{\star}_{n}(t)$ be that in the SSP. Clearly, $U_{n}^{\star}(t) = U_{n}(t)$. The sum of the total stationed workload and the en route workload is referred to as the \emph{system workload}. 

The difference between the system workloads is
\begin{equation}\label{eq:workload-difference}
	\Gamma^{\star}_{n}(t) = W_{n}(t) - W_{n}^{\star}(t),
\end{equation}
the scaled version of which is defined by $ \tilde{\Gamma}^{\star}_{n}(t) = \Gamma^{\star}_{n}(nt)/\sqrt{n}$. As the next proposition implies, the distributed system can never have less workload than the SSP, regardless of the routing policy.
\begin{proposition}\label{prop:stationed-workload-comparision}
	If both the distributed system and the SSP are initially empty, then $\Gamma^{\star}_{n}$ has continuous, piecewise linear sample paths, with $ \Gamma^{\star}_{n}(t) \geq 0 $ for all $ t \geq 0 $.
\end{proposition}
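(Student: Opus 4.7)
The plan is first to reduce $\Gamma^{\star}_n(t)$ to a clean algebraic expression in the cumulative busy times, from which continuity and piecewise linearity are immediate, and then to establish non-negativity by a first-passage argument on the a.e.\ right-derivative. Because arrivals at station $k$ occur at the same epochs in both systems and each arrival delivers the same primitive work $w_k(i)$ (the common unit that is consistent with $U^{\star}_n = U_n$), work-conserving FCFS gives
\[
W_n(t) = \sum_{k=1}^{s} \sum_{i=1}^{A_{n,k}(t)} w_k(i) - \sum_{k=1}^{s} \mu_k B_{n,k}(t), \qquad W^{\star}_n(t) = \sum_{k=1}^{s} \sum_{i=1}^{A_{n,k}(t)} w_k(i) - \mu B^{\star}_n(t).
\]
The input terms cancel in the subtraction, leaving
\[
\Gamma^{\star}_n(t) = \mu B^{\star}_n(t) - \sum_{k=1}^{s} \mu_k B_{n,k}(t).
\]
Each of $B^{\star}_n$ and $B_{n,k}$ is continuous, nondecreasing, and piecewise linear with slope $0$ or $1$, so $\Gamma^{\star}_n$ is automatically continuous and piecewise linear; the would-be upward jumps of $W_n$ and $W^{\star}_n$ at arrival epochs simply cancel.

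For non-negativity, the a.e.\ derivative is
\[
g(t) \;=\; \mu\,\mathbb{1}_{\{W^{\star}_n(t) > 0\}} \;-\; \sum_{k=1}^{s} \mu_k\,\mathbb{1}_{\{W_{n,k}(t) > 0\}}.
\]
Since both systems are initially empty, $\Gamma^{\star}_n(0) = 0$. Suppose for contradiction that $\tau := \inf\{t \geq 0 : \Gamma^{\star}_n(t) < 0\}$ is finite. Continuity forces $\Gamma^{\star}_n(\tau) = 0$, hence $W_n(\tau) = W^{\star}_n(\tau)$. If this common value is positive, then $W^{\star}_n$ remains positive on a small right-neighborhood of $\tau$ and at least one $W_{n,k}(\tau^+) > 0$; using $\mu = \sum_{k=1}^{s} \mu_k$,
\[
g(\tau^+) \;=\; \sum_{k=1}^{s} \mu_k\,\mathbb{1}_{\{W_{n,k}(\tau^+) = 0\}} \;\geq\; 0,
\]
so $\Gamma^{\star}_n$ is non-decreasing immediately after $\tau$, contradicting the definition of $\tau$. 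If the common value is $0$, then every $W_{n,k}(\tau) = 0$; nothing changes until the next arrival, at which $W_n$ and $W^{\star}_n$ jump by the same amount so that $\Gamma^{\star}_n$ remains at $0$, and we are thrown into the positive-workload case with $g \geq 0$. Either way $\Gamma^{\star}_n \geq 0$ on a right-neighborhood of $\tau$, a contradiction.

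The main obstacle is to keep the event-by-event bookkeeping honest: one must check that none of the three event types — an arrival, a station going idle, or the SSP going idle — can drive $\Gamma^{\star}_n$ strictly below $0$ between two consecutive slope changes. Three observations close this gap cleanly: arrivals inject identical work into $W_n$ and $W^{\star}_n$ and therefore produce no jump in $\Gamma^{\star}_n$; a station $k$ becoming idle only removes a non-positive term from $g$ and hence only increases the right-derivative; and whenever $\Gamma^{\star}_n$ reaches $0$ from above, $W_n = W^{\star}_n$ at that instant, reducing the analysis to the two subcases handled in the previous paragraph. Combined with $\Gamma^{\star}_n(0) = 0$, these observations propagate the inequality $\Gamma^{\star}_n(t) \geq 0$ to all $t \geq 0$.
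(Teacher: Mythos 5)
Your proof is correct and follows essentially the same strategy as the paper's: reduce $\Gamma^{\star}_n$ to $\mu B^{\star}_n - \sum_{k=1}^{s}\mu_k B_{n,k}$ via the shared-arrival/equal-work identity, then rule out a first downcrossing of zero by a slope comparison on the busy-time processes, using $\mu = \sum_k \mu_k$. The only cosmetic difference is the degenerate case $W^{\star}_n(\tau) = 0$, which you handle by propagating through the next arrival to re-enter the positive-workload case, whereas the paper observes directly that having $W^{\star}_n(t') = 0$ together with $\Gamma^{\star}_n(t') < 0$ would force $W_n(t') < 0$ and thereby contradict the workload conservation identity.
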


The RJSQ policy renders the workload in the distributed system asymptotically close to that in the SSP. Thus, the distributed system attains complete resource pooling.

\begin{theorem}\label{theorem:workload-difference}
	Assume the conditions of Theorem~\ref{theorem:load-balancing} hold. Then for $T > 0$, $ \sup_{0 \leq t \leq T} \tilde{\Gamma}^{\star}_{n}(t) \Rightarrow 0 $ as $ n \to \infty $. If conditions~\eqref{eq:fourth-moments} and \eqref{eq:log-log-n-bound} also hold, then with probability 1, 
	\[
	\sup_{0 \leq t \leq T} \tilde{\Gamma}^{\star}_{n}(t)< \max \Big\{ 2C \mu \chi_{n}, 2C' \mu \frac{\log \log n + \log (n \chi_{n}^{2})}{\sqrt{n} \chi_{n}} \Big\} \quad \mbox{for $n$ sufficiently large,}
	\]
	where $C$ and $C'$ are specified in Theorem~\ref{theorem:load-balancing-gap}. The order of this upper bound is minimized with $ \chi_{n} = C'' n^{-1/4}\sqrt{\log n} $ for $ C'' > 0 $, in which case $ \sup_{0 \leq t \leq T} \tilde{\Gamma}^{\star}_{n}(t) = O(n^{-1/4}\sqrt{\log n}) $ with probability~1.
\end{theorem}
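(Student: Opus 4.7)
The plan is to combine Proposition~\ref{prop:stationed-workload-comparision}, which already supplies $\Gamma^{\star}_{n}(t) \ge 0$, with a matching upper bound of the form $\Gamma^{\star}_{n}(t) \le 2\mu \cdot (\text{load imbalance})$, and then to extract the two claimed conclusions from Theorems~\ref{theorem:load-balancing} and~\ref{theorem:load-balancing-gap}, respectively.

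First I would derive a clean idle-time identity for the workload gap. Writing $R_{k}(t) = \sum_{i=1}^{A_{n,k}(t)} w_{k}(i)$ for the cumulative work delivered to station~$k$ and $X_{n}^{\star}(t) = \sum_{k=1}^{s} R_{k}(t) - \mu t$ for the common net-input process, the workload balance equations yield $W_{n,k}(t) = R_{k}(t) - \mu_{k} t + \mu_{k} I_{n,k}(t)$ at each station and $W_{n}^{\star}(t) = X_{n}^{\star}(t) + \mu I_{n}^{\star}(t)$ at the SSP, so that
\[
\Gamma^{\star}_{n}(t) = \sum_{k=1}^{s} \mu_{k} I_{n,k}(t) - \mu I_{n}^{\star}(t).
\]
By the Skorohod reflection, each term is a running supremum: $\mu_{k} I_{n,k}(t) = \sup_{s \le t}(\mu_{k} s - R_{k}(s))^{+}$ and $\mu I_{n}^{\star}(t) = \sup_{s \le t}(\mu s - \sum_{\ell} R_{\ell}(s))^{+}$.

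Second, I would convert the load-imbalance bound from Theorem~\ref{theorem:load-balancing-gap} into a bound on the right-hand side of this identity. Since $W_{n,k}(t)$ differs from $\mu_{k} L_{n,k}(t)$ only by a quantity of order one (the residual service time of the customer currently in service), control on $\max_{k,\ell}|L_{n,k} - L_{n,\ell}|$ translates into control on the pairwise workload discrepancies $W_{n,k}/\mu_{k} - W_{n}^{\star}/\mu$. Arguing that load balancing forces every station in the distributed system to be busy whenever the SSP is busy, up to an error of the order of the load imbalance, one obtains
\[
\sup_{0 \le t \le T} \Gamma^{\star}_{n}(nt) \le 2\mu \sup_{0 \le t \le T} \max_{k,\ell} \bigl| L_{n,k}(nt) - L_{n,\ell}(nt) \bigr| + \textnormal{(lower order)},
\]
with the prefactor $2\mu$ absorbing the $W$-to-$L$ conversion at each station and the displacement between the times at which the per-station Skorohod suprema are attained versus the aggregate one.

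Dividing by $\sqrt{n}$ and feeding in Theorem~\ref{theorem:load-balancing-gap} delivers the stated almost-sure bound; substituting Theorem~\ref{theorem:load-balancing} instead produces $\sup_{t} \tilde{\Gamma}^{\star}_{n}(t) \Rightarrow 0$; and plugging $\chi_{n} = C'' n^{-1/4} \sqrt{\log n}$ into the almost-sure bound recovers the optimal-order claim through Corollary~\ref{corollary:optimal-gap}. The hardest step is the second one: a naive inequality $\sum_{k} \sup_{s}(\mu_{k} s - R_{k}(s))^{+} \ge \sup_{s}(\mu s - \sum_{\ell} R_{\ell}(s))^{+}$ loses the factor needed because the per-station suprema may be attained at different times than the aggregate supremum. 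One must exploit the RJSQ dynamics to show that these argmax times nearly coincide under load balancing, and then absorb the residual order-one gap into the $W$-to-$L$ approximation.
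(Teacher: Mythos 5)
Your setup is fine and is in fact equivalent to the paper's starting point: your idle-time identity $\Gamma^{\star}_{n}(t)=\sum_{k}\mu_{k}I_{n,k}(t)-\mu I^{\star}_{n}(t)$ is just \eqref{eq:Gamma} rewritten, and your overall architecture (nonnegativity from Proposition~\ref{prop:stationed-workload-comparision}, an upper bound of size $2\mu$ times the load imbalance, then Theorems~\ref{theorem:load-balancing} and~\ref{theorem:load-balancing-gap} and the optimization of Corollary~\ref{corollary:optimal-gap}) matches the statement. But the central step is missing: the inequality $\sup_{t\le T}\Gamma^{\star}_{n}(nt)\lesssim 2\mu\sup_{t}\max_{k,\ell}|L_{n,k}(nt)-L_{n,\ell}(nt)|$ is exactly the content of the theorem, and you only assert it, conceding that the naive comparison of the per-station Skorokhod suprema with the aggregate one fails and appealing to an unproved claim that the argmax times ``nearly coincide'' under RJSQ. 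No mechanism is given for that, and it is not how the bound is actually obtained. The paper's route is different and avoids comparing argmax times altogether: since $\Gamma^{\star}_{n}$ is continuous and piecewise linear (Proposition~\ref{prop:stationed-workload-comparision}), its running maximum on $[0,T]$ is attained at a point where its left derivative is positive; at such a point the SSP server is busy while some station's server is idle, so some $Q_{n,k}(t-)=0$; the load-balancing bound then makes \emph{every} $\tilde{Q}_{n,k}(t-)$ at most $\mu_{k}$ times the imbalance bound; and finally $\Gamma^{\star}_{n}\le W_{n}=\sum_{k}W_{n,k}$ together with \eqref{eq:workload-inequality} converts small queues into small stationed workloads, giving $\sum_{k}2\sqrt{n}\tilde{\phi}_{n,k}$, i.e.\ the $2\mu$ factor. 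Without an argument of this type (or a genuine proof of your ``coinciding argmax'' claim), your step two is a gap, not a sketch.

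A secondary but substantive inaccuracy: you claim $W_{n,k}(t)$ differs from $\mu_{k}L_{n,k}(t)=Q_{n,k}(t)$ ``only by a quantity of order one (the residual service time of the customer currently in service).'' That is false for random service requirements: the difference is a centered sum of $Q_{n,k}(t)$ i.i.d.\ requirements, of magnitude roughly $\sqrt{Q_{n,k}(t)}$, which at generic times ($Q_{n,k}\sim\sqrt{n}$) is of order $n^{1/4}$ --- the same order as the bound you are trying to prove. It is only harmless because the comparison is made at the special times where the queues are already of order $\sqrt{n}\tilde{\phi}_{n,k}$, and even there the paper must control it via the strong approximation of $V_{k}$ and the Brownian-increment estimate (Lemmas~\ref{lemma:strong-approximations} and~\ref{lemma:Wiener-intervals}) in the almost-sure case, and via C-tightness of $\tilde{V}_{n,k}$ in the weak-convergence case; this control is precisely where the factor $2$ in $2C\mu,\,2C'\mu$ comes from. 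So both the key inequality and the error term you plan to ``absorb'' need real arguments that the proposal does not supply.
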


\begin{remark}\label{remark:not-minimum}
	We allow a customer's service requirement to depend on the station in the distributed system. Since the corresponding customer has the same service requirement and traveling delay, the workload in the SSP depends on the routing policy in the distributed system. Although it is known that complete resource pooling asymptotically minimizes workload at all times \citep{Laws.1992,Harrison.Lopez.1999}, proving this result requires a lower bound independent of the routing policy. Since the lower bound by the SSP may change with $\chi_{n}$, it is also unclear whether the RJSQ policy can be optimized for workload minimization by taking $ \chi_{n} = C'' n^{-1/4}\sqrt{\log n} $.\footnote{The routing policy has no influence on the SSP in certain special cases. For instance, if all the traveling delays are equal and each customer's service requirement does not depend on the station, the total stationed workload in the distributed system will be asymptotically minimized with $ \chi_{n} = C'' n^{-1/4}\sqrt{\log n} $. Please refer to Remark~\ref{remark:asymptotic-optimality}.} A similar issue is discussed in \citet{Harrison.Lopez.1999} for parallel-server systems without traveling delays.
\end{remark}

Because the workloads are asymptotically close, the customer counts in the two systems should also be close. In heavy traffic, the scaled customer count process in a single-server queue should converge to a reflected Brownian motion (Chapter~6 in \citealp{Chen.Yao.2001}). Using this result, we prove a diffusion limit for the queue length process in the distributed system:
\begin{theorem}\label{theorem:CRP}
	Assume the conditions of Theorem~\ref{theorem:load-balancing} hold. Then, $ (\tilde{L}_{n,1}, \ldots, \tilde{L}_{n,s}) \Rightarrow ( \tilde{Q}/\mu , \ldots, \tilde{Q}/\mu  ) $ as $ n \to \infty $, where $ \tilde{Q} $ is a one-dimensional reflected Brownian motion starting from $ 0 $ with drift $ -\beta \mu $ and variance $ \mu c^{2}_{0} + \sum_{k = 1}^{s} \mu_{k} c^{2}_{k} $.
\end{theorem}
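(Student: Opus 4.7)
The plan is to combine three ingredients: (i) the state-space collapse established in Theorem~\ref{theorem:load-balancing}, which forces all $\tilde{L}_{n,k}$ to share a common asymptotic path; (ii) the workload-coupling result of Theorem~\ref{theorem:workload-difference}, which identifies the scaled total stationed workload $\tilde{W}_{n}$ in the distributed system with that of the SSP; and (iii) a heavy-traffic diffusion limit for the SSP itself, which is a single-server queue with mixed service times.

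Step~(iii) first. The SSP's net-input process can be written as $X_{n}^{\star}(t)=V_{n}^{\star}(t)-\mu t$, where $V_{n}^{\star}$ is the cumulative work delivered to the SSP station by time~$t$. The class $\xi_{n}(j)$ of the $j$-th customer is chosen by the dispatcher, but the service requirement $w_{\xi_{n}(j)}(\cdot)$ is drawn from the i.i.d.\ pool $\{w_{k}(i)\}$ and is independent of the routing. I would exploit this separation by decomposing $V_{n}^{\star}$ into a class-counting component and a Wald-type martingale residual. The renewal FCLT for the appearance process contributes $\mu c_{0}^{2}$ to the limiting variance; a martingale FCLT applied to the class-$k$ residuals, whose predictable quadratic variation grows at a rate proportional to $c_{k}^{2}$ times the class-$k$ arrival rate, contributes $\sum_{k}\mu_{k}c_{k}^{2}$, since by \eqref{eq:rmk-heavy-traffic} the asymptotic class fraction is $\mu_{k}/\mu$. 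Hence $\tilde{X}_{n}^{\star}\Rightarrow\tilde{X}$, a Brownian motion with drift $-\mu\beta$ and infinitesimal variance $\mu c_{0}^{2}+\sum_{k}\mu_{k}c_{k}^{2}$. The Skorokhod reflection map is continuous, so $\tilde{W}_{n}^{\star}\Rightarrow\tilde{W}$, a one-dimensional reflected Brownian motion with these same parameters. Because $\mathbb{E}[w_{k}(i)]=1$, a uniform law of large numbers applied to $\sum_{i}w_{k}(i)$ over customers present at the SSP station gives $\sum_{k}\tilde{Q}_{n,k}^{\star}-\tilde{W}_{n}^{\star}\Rightarrow 0$, so the scaled total SSP queue length converges to $\tilde{Q}:=\tilde{W}$.

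Transferring the limit to the distributed system is then straightforward. Theorem~\ref{theorem:workload-difference} gives $\sup_{0\leq t\leq T}\tilde{\Gamma}_{n}^{\star}(t)\Rightarrow 0$, and the same uniform LLN bound applied to $W_{n,k}(t)=\sum_{i}w_{k}(i)$ summed over customers currently at station~$k$ yields $\sum_{k}\tilde{Q}_{n,k}-\tilde{W}_{n}\Rightarrow 0$; combining, $\sum_{k}\tilde{Q}_{n,k}\Rightarrow\tilde{Q}$. Theorem~\ref{theorem:load-balancing} now supplies the final piece: because $\sup_{t\leq T}\max_{k,\ell}|\tilde{L}_{n,k}(t)-\tilde{L}_{n,\ell}(t)|\Rightarrow 0$,
\[
\sum_{k=1}^{s}\tilde{Q}_{n,k}(t)=\sum_{k=1}^{s}\mu_{k}\tilde{L}_{n,k}(t)=\mu\tilde{L}_{n,k_{0}}(t)+o_{p}(1)
\]
for any fixed $k_{0}$. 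Solving for $\tilde{L}_{n,k_{0}}$ and invoking the continuous mapping theorem give $\tilde{L}_{n,k_{0}}\Rightarrow\tilde{Q}/\mu$; because every component shares the same limit, joint convergence to $(\tilde{Q}/\mu,\ldots,\tilde{Q}/\mu)$ follows automatically.

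The hardest part is the SSP's diffusion limit. The class sequence $\{\xi_{n}(j)\}$ is adapted to the distributed system's queue-length history, so the work-input process $V_{n}^{\star}$ is not a sum of i.i.d.\ summands and Donsker's theorem does not apply directly. The resolution is to exploit the structural separation between routing, which selects the class, and service, whose magnitudes given the class are i.i.d.\ and independent of routing: a Wald-type martingale decomposition of $V_{n}^{\star}$, combined with the LLN for empirical class fractions guaranteed by the heavy-traffic routing condition~\eqref{eq:rmk-heavy-traffic}, yields compactness and the correct limiting variance $\mu c_{0}^{2}+\sum_{k}\mu_{k}c_{k}^{2}$.
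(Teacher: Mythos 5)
Your high-level plan --- state-space collapse from Theorem~\ref{theorem:load-balancing}, the coupling $\tilde{\Gamma}_n^\star\Rightarrow 0$ from Theorem~\ref{theorem:workload-difference}, a scalar diffusion limit for the SSP, and an algebraic inversion at the end --- matches the paper's; the inversion you sketch is the paper's matrix argument in scalar form, and that part is fine. The route you take to the SSP limit, however, is genuinely different from Proposition~\ref{prop:star-limit}. You reflect the work-input process $V_n^\star(t)-\mu t$, with $V_n^\star(t)=\sum_k V_k(A_{n,k}(t))$, and propose a Wald/martingale FCLT for the class residuals. The paper instead reflects the SSP queue-count net input, writes class-$k$ departures as $S_k(\mu B_{n,k}^\star(t)/\mu_k)$, and extracts the variance $\mu c_0^2+\sum_k\mu_k c_k^2$ directly from the joint renewal FCLT~\eqref{eq:FCLT} once the deterministic fluid limit $\bar{B}_{n,k}^\star\Rightarrow\mu_k\mathcal{I}/\mu$ is in hand (Lemma~\ref{lemma:B-star}). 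That route is cleaner: because the random time change is asymptotically deterministic, an ordinary random time-change theorem replaces the martingale FCLT, and the adaptedness of the routing to queue-length history never enters. Your version would also need a \emph{uniform-in-time} workload-to-count conversion (your claims $\sum_k\tilde{Q}_{n,k}-\tilde{W}_n\Rightarrow 0$ and $\tilde{Q}_n^\star-\tilde{W}_n^\star\Rightarrow 0$), which is not automatic; it requires the stochastic boundedness of the queue counts (Lemma~\ref{lemma:stochastic-boundedness}) together with the C-tightness of $\tilde{V}_{n,k}$. The paper avoids this by proving Proposition~\ref{prop:asymptotic-equivalence} at the level of queue counts.

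The one step that fails as written is ``the Skorokhod reflection map is continuous, so $\tilde{W}_n^\star\Rightarrow\tilde{W}$.'' Your scaled net input $\tilde{X}_n^\star$ does \emph{not} converge in $\mathbb{D}$: for the first $O(n^{-1/2})$ units of scaled time the SSP station receives no work, because every customer dispatched near time $0$ is still en route, so $\tilde{X}_n^\star$ descends linearly by roughly $\mu\sum_{k,m}p_m r_{m,k}d_{m,k}$ before settling into its diffusive behavior; in the limit this becomes a discontinuity at $t=0$, and a sequence of processes cannot converge in the $J_1$-topology to something outside $\mathbb{D}$. The paper flags exactly this obstruction in the proof of Proposition~\ref{prop:star-limit} (``we cannot apply the continuous mapping theorem to $\{\tilde{Z}_n^\star\}$ directly'') and resolves it by separating off the ramp $\iota_n$, applying the reflection map to the well-behaved process $\tilde{Z}_n^\diamond=\tilde{Z}_n^\star+\iota_n$, and then proving separately via \eqref{eq:dagger-star-1}--\eqref{eq:dagger-star-4} that the two reflected processes stay uniformly close. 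Some argument of that type is unavoidable in your approach as well; without it, the passage from the net-input Brownian limit to the workload limit is a genuine gap.
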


\section{Capacity Planning and Load Balancing}
\label{sec:Routing-Capacity-Planning}

In designing a service system with geographically separated stations, the capacity of each station should be carefully planned so that most customers can visit their nearest stations and get service without excessive waiting. Fix the total service capacity $\mu$. To minimize the mean traveling delay, we may determine $\mu_{1}, \ldots, \mu_{s}$ along with a routing plan by solving
\[
\begin{array}{l@{\quad}l@{}}
	\textnormal{minimize} & \displaystyle \sum_{k = 1}^{s} \sum_{m = 1}^{b} p_{m} r_{m,k} d_{m,k} \\
	\textnormal{subject to} & \textnormal{\eqref{eq:capacity} and \eqref{eq:rmk-inequality}--\eqref{eq:rmk-heavy-traffic}.}
\end{array}
\]
An optimal routing plan given by this linear program must satisfy
\begin{equation}\label{eq:nearest-station}
	r_{m,k} > 0 \quad\mbox{only if $d_{m,k} = \underline{d}_{m}$},
\end{equation}
where $\underline{d}_{m} = \min\{ d_{m,\ell} : \ell = 1, \ldots, s \}$. A distributed system is said to be \emph{geographically best-capacitated} (GBC), if the service capacities are given by \eqref{eq:rmk-heavy-traffic} for a routing plan that satisfies \eqref{eq:nearest-station}. According to such a routing plan, the RJSQ policy will send all but a small fraction of customers to their nearest stations. Consequently, the system's mean time to service will be asymptotically minimized. In this section, we strengthen this observation by proving the asymptotic optimality of the RJSQ policy for reducing workload in a GBC system. 

Consider a sequence of distributed systems. As in Section~\ref{sec:Diffusion-Limit}, we use a sequence of single-server systems as a frame of reference: they are identical to the SSPs, except the traveling delays of customers from origin $m$ are all equal to  $\sqrt{n}\underline{d}_{m}$ in the $n$th system, as if the sole station is the nearest to all. We refer to this system as the \emph{minimum-delay service pool} (MDSP). The arrival process at the station of the MDSP does \emph{not} depend on the routing policy in the distributed system.

Let $W^{\dagger}_{n}(t)$ and $U^{\dagger}_{n}(t)$ be the stationed and en route workloads in the MDSP at time $t$. The workload difference between the two systems is
\begin{equation}\label{eq:workload-difference-JNQ}
	\Gamma^{\dagger}_{n}(t) = \big(W_{n}(t) + U_{n}(t)\big) - \big( W_{n}^{\dagger}(t) + U^{\dagger}_{n}(t) \big),
\end{equation}
the scaled version of which is $ \tilde{\Gamma}^{\dagger}_{n}(t) = \Gamma^{\dagger}_{n}(nt)/\sqrt{n}$. As the following proposition implies, the distributed system cannot have less workload than the MDSP.
\begin{proposition}\label{prop:JNQ-workload-comparision}
	If both the distributed system and the MDSP are initially empty, then $\Gamma^{\dagger}_{n}$ has continuous, piecewise linear sample paths, with $ \Gamma^{\dagger}_{n}(t) \geq 0 $ for all $ t \geq 0 $.
\end{proposition}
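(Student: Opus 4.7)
The plan is to track $\Gamma^{\dagger}_{n}$ pathwise, establishing its regularity from the dynamics of the two system workloads and then proving nonnegativity by a boundary argument in the same spirit as Proposition~\ref{prop:stationed-workload-comparision}. Let $V_{n}(t) := W_{n}(t) + U_{n}(t)$ and $V^{\dagger}_{n}(t) := W^{\dagger}_{n}(t) + U^{\dagger}_{n}(t)$, so that $\Gamma^{\dagger}_{n} = V_{n} - V^{\dagger}_{n}$. On each sample path, both $V_{n}$ and $V^{\dagger}_{n}$ evolve through three types of events: at an appearance epoch $a(j)$, each process jumps upward by the same amount, namely the service requirement of customer $j$ entering the en route component; at a station-arrival epoch, the same amount is transferred from the en route to a stationed component, leaving the total unchanged; and between events $V_{n}$ decreases at rate $\sum_{k}\mu_{k}\mathbb{1}_{\{W_{n,k}(t) > 0\}}$ while $V^{\dagger}_{n}$ decreases at rate $\mu\,\mathbb{1}_{\{W^{\dagger}_{n}(t) > 0\}}$. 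Because the appearance-time jumps are identical in the two systems, they cancel in the difference, so $\Gamma^{\dagger}_{n}$ is continuous; the piecewise-constant depletion rates then make its sample paths piecewise linear.

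The core step is to verify $\Gamma^{\dagger}_{n}(t) \geq 0$. Between events, the right derivative is
\[
\dot{\Gamma}^{\dagger}_{n}(t) = -\sum_{k=1}^{s}\mu_{k}\mathbb{1}_{\{W_{n,k}(t) > 0\}} + \mu\,\mathbb{1}_{\{W^{\dagger}_{n}(t) > 0\}},
\]
which can be negative whenever several stations in the distributed system are busy while the MDSP station is idle, so monotonicity of $\Gamma^{\dagger}_{n}$ alone will not close the argument. The decisive observation is that, at every $t \geq 0$, the set of en route customers in the MDSP is contained in the set of en route customers in the distributed system: customer $j$ from origin $m$ is en route in the MDSP throughout $[a(j),\, a(j) + \sqrt{n}\underline{d}_{m})$ and in the distributed system throughout $[a(j),\, a(j) + \sqrt{n}d_{m,\xi_{n}(j)})$, and $\underline{d}_{m} \leq d_{m,\xi_{n}(j)}$. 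Since each customer contributes the same service requirement in both systems, this inclusion yields $U^{\dagger}_{n}(t) \leq U_{n}(t)$ for every $t \geq 0$.

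Given continuity, $\Gamma^{\dagger}_{n}(0) = 0$, and piecewise linearity, it is enough to show $\dot{\Gamma}^{\dagger}_{n}(t) \geq 0$ whenever $\Gamma^{\dagger}_{n}(t) = 0$. When $W^{\dagger}_{n}(t) > 0$ this is immediate, since $\dot{\Gamma}^{\dagger}_{n}(t) = \mu - \sum_{k}\mu_{k}\mathbb{1}_{\{W_{n,k}(t) > 0\}} \geq 0$ using $\sum_{k}\mu_{k} = \mu$. When $W^{\dagger}_{n}(t) = 0$, one has $V^{\dagger}_{n}(t) = U^{\dagger}_{n}(t)$; combined with $\Gamma^{\dagger}_{n}(t) = 0$ and the chain $V_{n}(t) \geq U_{n}(t) \geq U^{\dagger}_{n}(t)$, this forces $U_{n}(t) = U^{\dagger}_{n}(t)$ and $W_{n}(t) = 0$, so every station indicator vanishes and $\dot{\Gamma}^{\dagger}_{n}(t) = 0$. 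The most delicate point is confirming that workload jumps at appearance epochs match across the two systems so that $\Gamma^{\dagger}_{n}$ is genuinely continuous; once that bookkeeping is done, the remainder is a short boundary-crossing argument powered by the elementary en-route inclusion, and no coupling between service completions across the two systems is needed.
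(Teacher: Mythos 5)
Your proof is correct and mirrors the paper's argument: both rely on $\Gamma^{\dagger}_{n}(0)=0$, the continuity and piecewise linearity of $\Gamma^{\dagger}_{n}$, and the en route containment $U^{\dagger}_{n}(t)\leq U_{n}(t)$ to prevent $\Gamma^{\dagger}_{n}$ from crossing zero downward. The paper phrases the nonnegativity step as a contradiction via the busy-time identity $\Gamma^{\dagger}_{n}(t)=\mu B^{\dagger}_{n}(t)-\sum_{k=1}^{s}\mu_{k}B_{n,k}(t)$ drawn from \eqref{eq:cumulative-system-workloads}, whereas you recast it as a one-sided differential inequality on the set $\{\Gamma^{\dagger}_{n}=0\}$, but the mathematical content is equivalent.
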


If the distributed system is GBC, the RJSQ policy renders its workload asymptotically close to that in the MDSP.

\begin{theorem} \label{theorem:workload-optimality}
	Assume the conditions of Theorem~\ref{theorem:load-balancing} hold for a sequence of GBC systems where the customers' origins are known. Then for $T > 0$, $ \sup_{0 \leq t \leq T} \tilde{\Gamma}^{\dagger}_{n}(t) \Rightarrow 0 $ as $ n \to \infty $. If conditions~\eqref{eq:fourth-moments} and \eqref{eq:log-log-n-bound} also hold, then there exist two positive numbers $ C^{\dagger} $ and $ C^{\ddagger} $ such that with probability 1,
	\[
	\sup_{0 \leq t \leq T} \tilde{\Gamma}^{\dagger}_{n}(t) < \max \Big\{ C^{\dagger} \chi_{n}, C^{\ddagger} \frac{\log \log n + \log (n \chi_{n}^{2})}{\sqrt{n} \chi_{n}} \Big\} \quad \mbox{for $n$ sufficiently large.}
	\]
	The order of this upper bound is minimized with $ \chi_{n} = C'' n^{-1/4}\sqrt{\log n} $ for $ C'' > 0 $, in which case $ \sup_{0 \leq t \leq T} \tilde{\Gamma}^{\dagger}_{n}(t) = O(n^{-1/4}\sqrt{\log n})$ with probability 1.
\end{theorem}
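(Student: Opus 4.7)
The plan is to bracket the distributed system between the MDSP and the SSP from Section~\ref{sec:Diffusion-Limit}. Since $U_n = U^\star_n$, we can decompose
\[
\Gamma^\dagger_n(t) = \Gamma^\star_n(t) + \Delta_n(t), \qquad \Delta_n(t) := (W^\star_n(t) + U^\star_n(t)) - (W^\dagger_n(t) + U^\dagger_n(t)).
\]
The first term is controlled by Theorem~\ref{theorem:workload-difference}, so the task reduces to bounding the SSP-to-MDSP gap $\Delta_n$. Writing $\tilde{\Delta}_n(t) = \Delta_n(nt)/\sqrt{n}$, the goal is to show $\sup_{0 \leq t \leq T}\tilde{\Delta}_n(t) = O(\chi_n)$ almost surely.

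The SSP and MDSP are work-conserving single-server systems with rate $\mu$, driven by the same appearance epochs and the same per-customer service requirements $w_{\xi_n(j)}(I_n(j))$; they differ only in when each customer's work arrives at the station---at $a(j) + \sqrt{n}d_{m(j),\xi_n(j)}(j)$ in the SSP versus $a(j) + \sqrt{n}\underline{d}_{m(j)}$ in the MDSP. Let $V^\star_n$ and $V^\dagger_n$ denote the corresponding cumulative at-station work processes; since $d_{m,k} \geq \underline{d}_m$, $V^\dagger_n \geq V^\star_n$ pointwise. Using the reflection formula $\mu I^\bullet_n(s) = \sup_{0 \leq u \leq s}[\mu u - V^\bullet_n(u)]^+$ for $\bullet \in \{\star,\dagger\}$, together with the elementary inequality $\sup f - \sup g \leq \sup(f - g)$,
\[
\Delta_n(t) = \mu(I^\star_n(t) - I^\dagger_n(t)) \leq \sup_{0 \leq u \leq t}[V^\dagger_n(u) - V^\star_n(u)],
\]
and this supremum equals the maximum total service requirement of customers who have reached the MDSP station but not yet the SSP one---that is, customers sent to a non-nearest station, which we call \emph{misrouted}.

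Under the GBC condition \eqref{eq:nearest-station}, $r_{m,k} = 0$ whenever $d_{m,k} > \underline{d}_m$; by \eqref{eq:conditional-distribution-m} and condition~(\ref{item:cond-1-m}), a customer from origin~$m$ is misrouted only when the shortest queue sits at a non-nearest station, with conditional probability $\varepsilon^m_1(j) \leq \chi_n/p_m$ via \eqref{eq:pm-epsilon-chi}. The appearance times of misrouted customers relevant at a fixed $u$ lie in a sliding window of length at most $\sqrt{n} D$, where $D := \max_{m,k}(d_{m,k} - \underline{d}_m)$, so the expected total work of misrouted customers in any such window is $O(\sqrt{n}\chi_n)$. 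Decomposing the misrouted-work process into its predictable compensator plus a square-integrable martingale, Lemma~\ref{lemma:martingale} together with the fourth-moment assumption \eqref{eq:fourth-moments} yields the uniform almost-sure bound $\sup_{0 \leq u \leq nT}[V^\dagger_n(u) - V^\star_n(u)] = O(\sqrt{n}\chi_n)$, and hence $\sup_{0 \leq t \leq T}\tilde{\Delta}_n(t) = O(\chi_n)$. Adding Theorem~\ref{theorem:workload-difference} and absorbing this extra $O(\chi_n)$ into the first branch of the maximum in \eqref{eq:upper-bound} gives the claimed bound with new constants $C^\dagger, C^\ddagger$; the weak-convergence statement follows from $\chi_n \to 0$ and the corresponding part of Theorem~\ref{theorem:workload-difference}, and the optimal order $\chi_n = C'' n^{-1/4}\sqrt{\log n}$ is inherited from Corollary~\ref{corollary:optimal-gap}. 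The main technical obstacle is the martingale step: the misrouting indicators depend on the queue-length rankings $\pi_{n,k}(j)$ and thus are not independent of the system's history, so the compensator must be built with respect to the natural filtration of the RJSQ decisions, and the almost-sure bound has to be synchronized across the continuum of sliding windows in $[0, nT]$ at a rate finer than $\sqrt{n}\chi_n$.
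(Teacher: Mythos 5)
Your argument is correct in substance, but it is structured differently from the paper's proof, so a comparison is worthwhile. The paper never passes through the SSP: it works with $\tilde{\Gamma}^{\dagger}_{n}$ directly, using Proposition~\ref{prop:JNQ-workload-comparision} to restrict attention to the set $\mathcal{S}^{\dagger}_{n}$ of times at which $\partial_{-}\tilde{\Gamma}^{\dagger}_{n}>0$ (where some station must be empty, so the stationed workload is controlled by Theorems~\ref{theorem:load-balancing} and~\ref{theorem:load-balancing-gap}), and bounds the en route difference $\tilde{U}_{n}-\tilde{U}^{\dagger}_{n}$ through the arrival-count comparison of Lemma~\ref{lemma:comparison-MDSP-arrivals}, converting counts to work via the strong approximation of $V_{k}$ (Lemma~\ref{lemma:strong-approximations}) and the Brownian-increment bound (Lemma~\ref{lemma:Wiener-intervals}). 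You instead telescope $\Gamma^{\dagger}_{n}=\Gamma^{\star}_{n}+\Delta_{n}$, reuse Theorem~\ref{theorem:workload-difference} wholesale for the first term, and compare the SSP and MDSP pathwise through the Skorokhod reflection of their idle times, which reduces everything to the sliding-window work of misrouted customers. This is a legitimate alternative: the identity $\Delta_{n}=\mu(I^{\star}_{n}-I^{\dagger}_{n})$ is exact because the two single-server systems share the same appearance epochs and service requirements, $V^{\dagger}_{n}\geq V^{\star}_{n}$ holds since $d_{m,k}\geq\underline{d}_{m}$, and under \eqref{eq:nearest-station} the window content is exactly the quantity Lemma~\ref{lemma:comparison-MDSP-arrivals} controls (with $G^{m}_{k}\equiv 0$ when $r_{m,k}=0$, only the $\mathcal{E}^{m}_{n,k}$ compensator and the $M^{m}_{n,k}$ increments of \eqref{eq:Mmnk-increments} contribute). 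What your route buys is conceptual economy---the $\mathcal{S}^{\dagger}_{n}$ argument is replaced by a one-line reflection-map inequality and Theorem~\ref{theorem:workload-difference} is reused instead of reproved---while the paper's route delivers the explicit constants and keeps the two lower-bound systems logically independent. Two points you should tighten: first, Lemma~\ref{lemma:martingale} alone does not bound the misrouted \emph{work}, since the increments involve the unbounded $w_{k}(i)$; you need the count bound from the martingale step plus the strong approximation of $V_{k}$ (this is exactly where \eqref{eq:fourth-moments} enters for the almost-sure statement, and where C-tightness of $\tilde{V}_{n,k}$ suffices for the weak statement). Second, your claim $\sup_{t\leq T}\tilde{\Delta}_{n}(t)=O(\chi_{n})$ overstates: the window fluctuation contributes an extra term of order $n^{-1/4}\sqrt{\chi_{n}\log n\vee 1}$, which need not be $O(\chi_{n})$ under \eqref{eq:log-log-n-bound} alone; it is, however, dominated by the stated maximum (since $\chi_{n}\log n\vee 1=o(\log(n\chi_{n}^{2}))$, the same absorption the paper performs), so the final bound and the optimal choice $\chi_{n}=C''n^{-1/4}\sqrt{\log n}$ are unaffected.
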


\begin{remark}\label{remark:asymptotic-optimality}
	Since a customer's service requirement is station-dependent in the GBC system, the workload in the MDSP depends on the routing policy. Because the lower bound may change with $\chi_{n}$, it is unclear whether the RJSQ policy can be optimized with $ \chi_{n} = C'' n^{-1/4}\sqrt{\log n} $. However, if service requirements do not depend on stations, the routing policy will have no influence on the MDSP, where the system workload will be a \emph{universal} lower bound. In this case, the RJSQ policy will be optimized with $ \chi_{n} = C'' n^{-1/4}\sqrt{\log n} $.
\end{remark}

\begin{corollary} \label{corollary:workload-optimality}
	Assume the conditions of Theorem~\ref{theorem:load-balancing} hold for a sequence of GBC systems where the customers' origins are known. If $\{ w_{k}(i) : i\in\mathbb{N} ; k = 1, \ldots, s \}$ is a sequence of i.i.d.\ random variables, the workload in the GBC system can be asymptotically minimized by the RJSQ policy, in the sense that for $T > 0$, $ \sup_{0 \leq t \leq T} \tilde{\Gamma}^{\dagger}_{n}(t) \Rightarrow 0 $ as $ n \to \infty $. If conditions~\eqref{eq:fourth-moments} and \eqref{eq:log-log-n-bound} also hold, then by taking $ \chi_{n} = C'' n^{-1/4}\sqrt{\log n} $ for $ C'' > 0 $, the RJSQ policy attains $ \sup_{0 \leq t \leq T} \tilde{\Gamma}^{\dagger}_{n}(t) = O\big(n^{-1/4}\sqrt{\log n}\big) $.
\end{corollary}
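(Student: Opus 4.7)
The plan is to argue in two steps: first that Theorem~\ref{theorem:workload-optimality} already delivers the asserted rates for the gap $\tilde{\Gamma}^{\dagger}_{n}$ between the GBC distributed system under the RJSQ policy and the MDSP; and second that, under the extra hypothesis of station-independent i.i.d.\ service requirements, the MDSP serves as a \emph{universal} workload lower bound, valid under \emph{any} routing policy. Putting the two together upgrades Theorem~\ref{theorem:workload-optimality} from ``close to a policy-dependent reference'' (as flagged in Remark~\ref{remark:asymptotic-optimality}) to genuine asymptotic optimality of RJSQ for reducing workload.

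For the universal lower bound, I would recouple the MDSP so that its primitives depend only on the customer appearance process, not on the dispatcher's decisions. In the original construction, the service time of the $j$-th MDSP customer is $w_{\xi_{n}(j)}(i_{j})/\mu$, where $i_{j}$ is the rank of the $j$-th customer among those of class $\xi_{n}(j)$; through $\xi_{n}(\cdot)$ this depends on the routing policy. However, when $\{w_{k}(i): i \in \mathbb{N};\, k = 1,\ldots,s\}$ forms a single i.i.d.\ family with a common distribution, I would relabel these variables as one i.i.d.\ sequence $\{w(j): j \in \mathbb{N}\}$ ordered by appearance. The MDSP then becomes a single-server FCFS queue in which the $j$-th customer has appearance time $\sum_{i=1}^{j} z(i)/\lambda_{n}$, traveling delay $\sqrt{n}\underline{d}_{m}$ conditional on $\boldsymbol{\gamma}(j) = \boldsymbol{d}_{m}$, and service time $w(j)/\mu$ --- all of which are primitives independent of the dispatcher.

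Once this coupling is in place, Proposition~\ref{prop:JNQ-workload-comparision} yields, sample-pathwise, $W^{\dagger}_{n}(t) + U^{\dagger}_{n}(t) \leq W_{n}(t) + U_{n}(t)$ for the distributed system under \emph{every} routing policy, making the scaled MDSP system workload a universal lower bound for the scaled distributed system workload. Theorem~\ref{theorem:workload-optimality} then gives $\sup_{0 \leq t \leq T} \tilde{\Gamma}^{\dagger}_{n}(t) \Rightarrow 0$ under the RJSQ policy, which together with the universal lower bound establishes the first claim (asymptotic workload minimization). Under conditions~\eqref{eq:fourth-moments} and \eqref{eq:log-log-n-bound} and the tuning $\chi_{n} = C'' n^{-1/4}\sqrt{\log n}$, the same theorem supplies the almost sure $O(n^{-1/4}\sqrt{\log n})$ rate on $\sup_{0 \leq t \leq T} \tilde{\Gamma}^{\dagger}_{n}(t)$, giving the corollary's second assertion.

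The main subtlety --- not a heavy calculation but the conceptual crux --- is the relabeling argument that decouples the MDSP from $\xi_{n}(\cdot)$. One must verify that re-indexing the doubly-indexed i.i.d.\ family $\{w_{k}(i)\}$ by appearance order preserves the joint law of the MDSP's system workload while eliminating its dependence on the routing policy. As flagged in Remark~\ref{remark:asymptotic-optimality}, this step breaks when service requirements are genuinely station-dependent, which is precisely why the corollary imposes the additional i.i.d.\ assumption on $\{w_{k}(i)\}$.
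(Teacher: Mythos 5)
Your proposal is correct and follows the paper's (implicit) approach exactly: the corollary's two conclusions are verbatim those of Theorem~\ref{theorem:workload-optimality}, and the ``asymptotic optimality'' interpretation rests on precisely the observation in Remark~\ref{remark:asymptotic-optimality} that the MDSP workload is a policy-independent universal lower bound once $\{w_k(i)\}$ is a single i.i.d.\ family, which you supply via the recoupling/relabeling of service requirements by appearance order. The paper gives no separate proof of this corollary, so your plan --- invoke Theorem~\ref{theorem:workload-optimality} for the rates, and invoke Proposition~\ref{prop:JNQ-workload-comparision} together with policy-independence of the recoupled MDSP for optimality --- correctly identifies everything the reader is expected to fill in.
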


\begin{remark}\label{remark:geographic}
	Theorem~\ref{theorem:workload-optimality} and Corollary~\ref{corollary:workload-optimality} imply the following approach to capacity planning and load balancing. First, we divide the origins into $s$ disjoint sets $\mathcal{N}_{1}, \ldots, \mathcal{N}_{s}$ so that station~$k$ is the nearest to the origins in $\mathcal{N}_{k}$. Given the total capacity $\mu$, the capacities are set to $ \mu_{k} = \sum_{m \in \mathcal{N}_{k}} p_{m} \mu $ for $k = 1, \ldots, s$, where $\mu_{k} = 0$ if $\mathcal{N}_{k}$ is empty; a station with zero capacity is removed because of its inconvenient location. Without loss of generality, let us assume $\mu_{k} > 0$ for $k = 1, \ldots, s$. The resultant routing plan is given by $r_{m,k} = 1$ for $m \in \mathcal{N}_{k}$ and $r_{m,k} = 0$ otherwise, suggesting customers from each origin are sent to the nearest station; the system is thus GBC. Next, for $k = 1, \ldots, s$, we identify a set of origins outside $\mathcal{N}_{k}$ but still close to station~$k$---for instance, we may specify $\bar{\tau}_{k} > 0$, which is the \emph{tolerance for delays} to station~$k$, and define $ \mathcal{N}'_{k} = \{ m \not\in \mathcal{N}_{k} : \underline{d}_{m} \leq d_{m,k} \leq \underline{d}_{m} + \bar{\tau}_{k} \} $, where $\bar{\tau}_{k}$ should be large enough to make $\mathcal{N}'_{k}$ nonempty. For $m \in \mathcal{N}'_{k}$, the traveling delay from origin~$m$ to station~$k$ can exceed that to the nearest station by at most $\bar{\tau}_{k}$. Let $p'_{k} = \sum_{m\in\mathcal{N}'_{k}} p_{m}$, which is the probability of a customer coming from $\mathcal{N}'_{k}$, and assume the balancing fraction satisfies $0 < \chi \leq \min\{p'_{1}, \ldots, p'_{s} \}$. Finding the shortest queue length in station~$\ell$, the dispatcher may determine a customer's destination as follows: if coming from $\mathcal{N}'_{\ell}$, the customer will be sent to station~$\ell$ with probability $\chi/p'_{\ell}$ and to the nearest station with probability $1 - \chi/p'_{\ell}$; otherwise, the customer will be sent to the nearest station. In this way, station~$\ell$ will be the destination either because the customer is from $\mathcal{N}_{\ell}$, or because the customer is from $\mathcal{N}'_{\ell}$ and is selected with probability $\chi/p'_{\ell}$. The probability of sending the customer to station~$\ell$, which has the shortest queue, is $\mu_{\ell}/\mu + \chi$. Following this approach, the dispatcher will send all but a small fraction of customers to the nearest stations; the additional fraction of customers sent to the shortest queue will be from origins close to the destinations. Thus, the RJSQ policy can achieve asymptotic optimality by adding minimally to the traveling delays of a small fraction of customers. We will illustrate this approach by a numerical example in Section~\ref{sec:Numerical}.
\end{remark}

\section{Numerical Experiments and Heuristics}
\label{sec:Numerical}

Numerical examples are studied in this section. Relying on both theoretical results and numerical evidence, we propose simple yet near-optimal heuristic formulas for the balancing fraction.
\begin{figure}
	\caption{Mean Total Customer Count under RJSQ}
	\centering
	\begin{subfigure}[b]{0.495\textwidth}
		\includegraphics[width=3.2in]{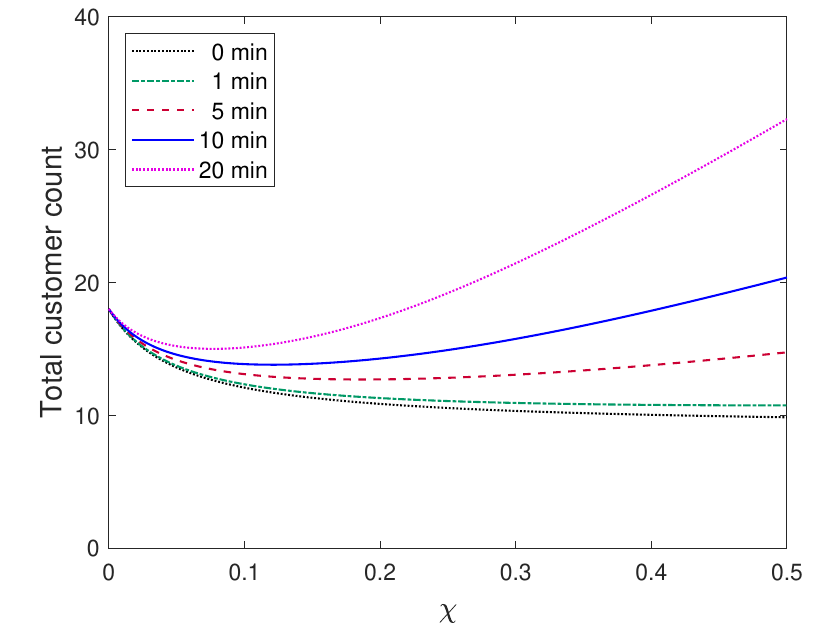}
		\caption{{\footnotesize $\rho = 0.90$}}
		\label{fig:rho09}
	\end{subfigure}
	\begin{subfigure}[b]{0.495\textwidth}
		\includegraphics[width=3.2in]{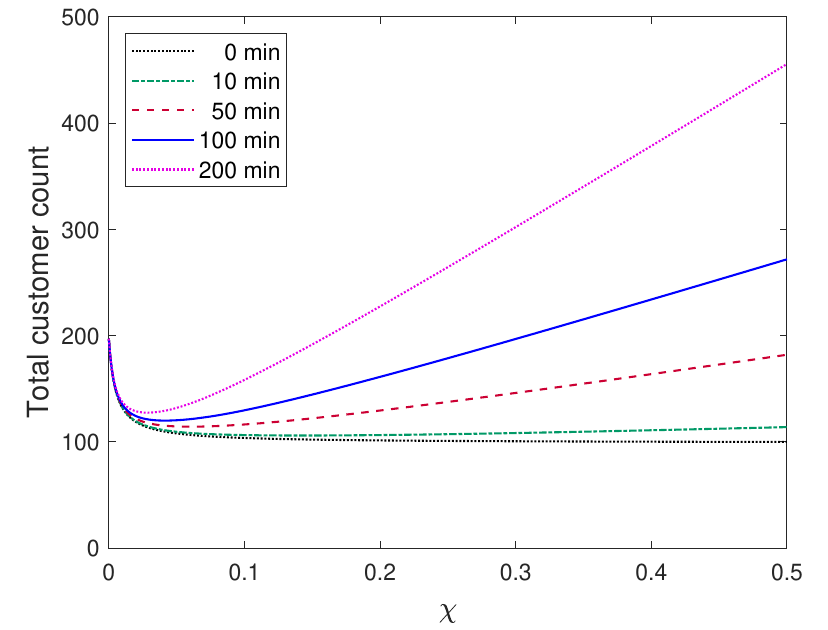}
		\caption{{\footnotesize $\rho = 0.99$}}
		\label{fig:rho099}
	\end{subfigure}\label{fig:QL-vs-rho}
\end{figure}

Let us first revisit the example in Section~\ref{sec:Introduction}: There are two stations with identical capacities. Customers join the system according to a Poisson process with equal traveling delays to both stations. The service times are exponentially distributed with mean 1 minute. The \emph{mean total customer count} (MTCC) is used as the performance measure. With the balancing fraction $\chi$ ranging from 0 to 0.5, simulation results by averaging $2\times 10^{4}$ runs are plotted in Figures~\ref{fig:QL-vs-rho} and~\ref{fig:QL-vs-rho-long}. In each run, the system continuously operates $2 \times 10^{5}$ minutes, including a burn-in period of $6 \times 10^{4}$ minutes. The traffic intensity is set to $\rho = 0.90$ and $0.99$, corresponding to $n = 100$ and $10^{4}$ in \eqref{eq:heavy-traffic} with $\beta = 1$ (i.e., $n = 1/(1-\rho)^{2}$). When $\chi = 0$, the stations are $\mbox{M}/\mbox{M}/1$ queues where the MTCC is $2\rho/(1-\rho)$. The traveling delay is $\sqrt{n}d$ minutes with $d$ taking values 0, 0.1, 0.5, 1, 2, and 10, respectively---that is, 0, 1, 5, 10, 20, 100 minutes for $\rho = 0.90$, and 0, 10, 50, 100, 200, 1000 minutes for $\rho = 0.99$.

We first test the traveling delays that are either on a lower order of magnitude than or on the same order as $\sqrt{n} = 1/(1-\rho)$. In Figure~\ref{fig:QL-vs-rho}, all the curves descend rapidly as $\chi$ begins to grow from zero; this implies sending a small fraction of customers to the shorter queue suffices to balance the workload, regardless of the traveling delay. When the traveling delay is zero, the MTCC continues to decrease as $\chi$ increases further, but at a much slower rate, until all the customers are sent to the shorter queue ($\chi = 0.5$). When the delay is on a lower order than $1/(1-\rho)$ (i.e., 1 minute for $\rho = 0.90$ and 10 minutes for $\rho = 0.99$), the curves exhibit a similar pattern where the JSQ policy is either optimal or near-optimal. When the traveling delay is on the same order as $1/(1-\rho)$ (i.e., 5, 10, or 20 minutes for $\rho = 0.90$ and 50, 100, or 200 minutes for $\rho = 0.99$), the curves each form a relatively ``flat'' bottom, which corresponds to the range specified by \eqref{eq:epsilon-1} and \eqref{eq:epsilon-2}, before ascending linearly to a large value. The JSQ policy is plagued when the traveling delay is considerable.

We also test traveling delays that are on a higher order than $1/(1-\rho)$---that is, 100 minutes for $\rho = 0.90$ and 1000 minutes for $\rho = 0.99$. Both curves in Figure~\ref{fig:QL-vs-rho-long} slightly descend as $\chi$ grows from zero, then ascending to an enormous value. The JSQ policy ($\chi = 0.5$) is far worse than \emph{routing at random} ($\chi = 0$). Within a very long traveling delay, load imbalance caused by random variations is often too large for the RJSQ policy to compensate in time. Even with the optimal balancing fraction, the RJSQ policy cannot offer much improvement over routing at random.
\begin{figure}
	\caption{Mean Total Customer Count with Long Traveling Delays}
	\centering
	\begin{subfigure}[b]{0.495\textwidth}
		\includegraphics[width=3.2in]{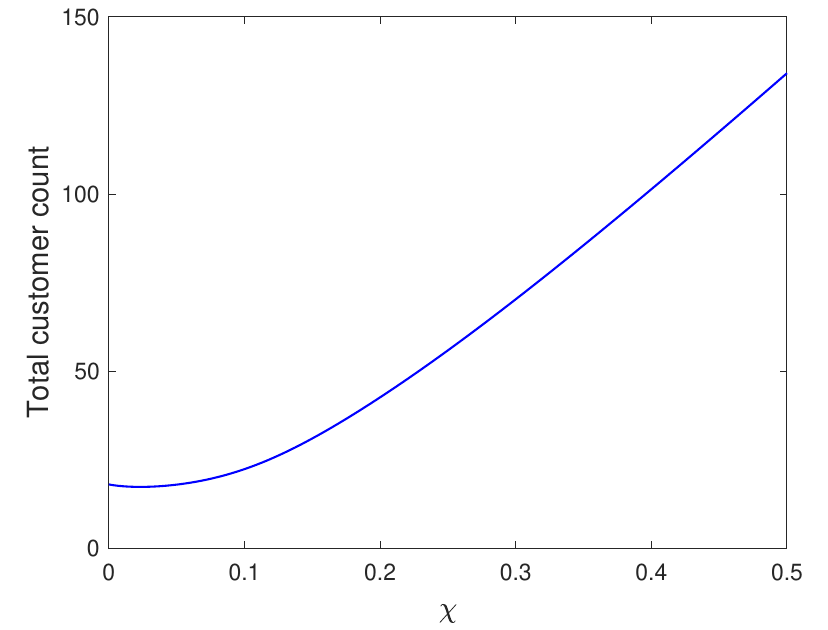}
		\caption{{\footnotesize $\rho = 0.90$, delay 100 minutes}}\label{fig:rho09-long}
	\end{subfigure}
	\begin{subfigure}[b]{0.495\textwidth}
		\includegraphics[width=3.2in]{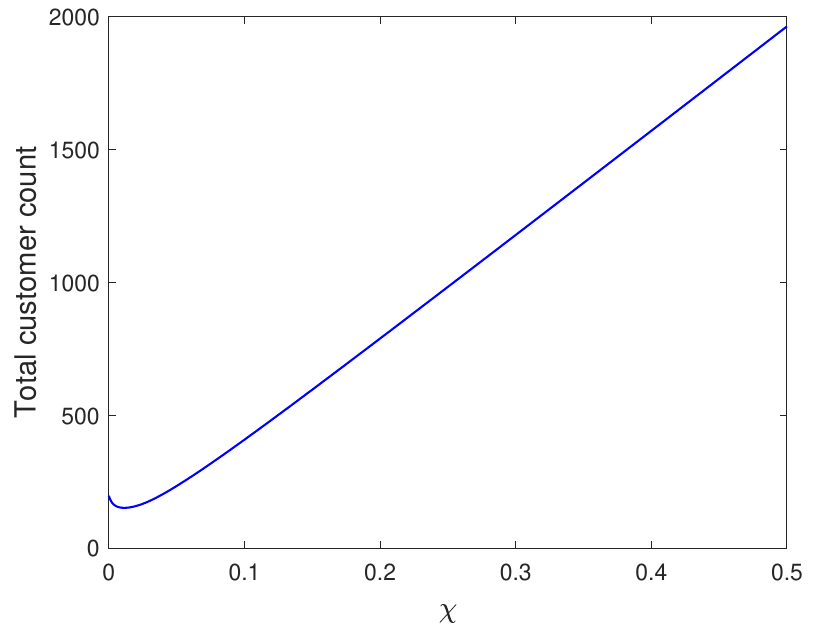}
		\caption{{\footnotesize $\rho = 0.99$, delay 1000 minutes}}\label{fig:rho099-long}
	\end{subfigure}\label{fig:QL-vs-rho-long}
\end{figure}

The optimal values of $\chi$ from the simulation results, denoted by $\chi_{\operatorname{opt}}$, are reported in Table~\ref{table:two-stations}, where the MTCC with balancing fraction $\chi$ is denoted by $\operatorname{CC}(\chi)$. As discussed in Remark~\ref{remark:trade-off}, load imbalance between the stations is attributed to both queue length oscillations brought by the RJSQ policy, and random variations caused by the arrival and service completion processes. By Theorem~\ref{theorem:load-balancing-gap}, the magnitude of oscillations is $O(\sqrt{n}\chi_{n})$ and that of random variations is $O((\log \log n + \log (n \chi_{n}^{2}))/\chi_{n})$. The latter involves logarithmic terms, because the law of the iterated logarithm is used to obtain a uniform bound for the load imbalance over the time interval $[0, nT]$. At an arbitrary moment, the magnitude of oscillations is still $O(\sqrt{n}\chi_{n})$, but that of random variations would be $O(1/\chi_{n})$. Hence, the mean load imbalance, as opposed to the supremum over a time interval in \eqref{eq:load-imbalance}, could be minimized if the balancing fraction is set to
\begin{equation}\label{eq:root-excess}
	\chi^{*} = C^{*} n^{-1/4} = C^{*}\sqrt{1-\rho},
\end{equation}
where $C^{*}$ is a positive constant. The MTCC should also be minimized if the mean load imbalance is minimized. Then, $\chi^{*}$ should be close to the optimal balancing fraction. To check this heuristic, the values of $\chi_{\operatorname{opt}}/\sqrt{1-\rho}$, as estimates of $C^{*}$, are plotted for different primitive traveling delays in Figure~\ref{fig:root-of-excess}: except for $d = 0.1$ (i.e., 1 minute for $\rho = 0.90$ and 10 minutes for $\rho = 0.99$), the optimal balancing fractions, scaled by $1/\sqrt{1-\rho}$, agree well when they correspond to the same value of $d$.
\begin{table}[b]
	\centering
	\caption{Mean Total Customer Counts in Two-Station System}
	\label{table:two-stations}
	\footnotesize
	\begin{tabular}{cccccccc}
		\toprule
		$\rho$ & Delay (min) & $\chi_{\operatorname{opt}}$ & $\operatorname{CC}(\chi_{\operatorname{opt}})$ & $\operatorname{CC}(\chi^{*})$ & $\operatorname{CC}(\chi^{**})$ & $\operatorname{CC}(0)$ & $\operatorname{CC}(0.5)$ \\ \midrule
		\multirow{6}{*}{0.90} & \phantom{000}0 & 0.500 & \phantom{00}9.86 & \phantom{0}11.63 & \phantom{00}9.86 & \phantom{0}18.01 & \phantom{000}9.86\\
		& \phantom{000}1 & 0.500 & \phantom{0}10.75 & \phantom{0}11.93 & \phantom{0}10.79 & \phantom{0}18.01 & \phantom{00}10.75\\
		& \phantom{000}5 & 0.184 & \phantom{0}12.70 & \phantom{0}12.87 & \phantom{0}12.71 & \phantom{0}18.01 & \phantom{00}14.75\\
		& \phantom{00}10 & 0.121 & \phantom{0}13.82 & \phantom{0}13.82 & \phantom{0}13.82 & \phantom{0}18.01 & \phantom{00}20.38 \\
		& \phantom{00}20 & 0.078 & \phantom{0}15.01 & \phantom{0}15.47 & \phantom{0}15.04 & \phantom{0}18.01 & \phantom{00}32.31\\
		& \phantom{0}100 & 0.022 & \phantom{0}17.32 & \phantom{0}26.52 & \phantom{0}17.57 & \phantom{0}18.01 & \phantom{0}134.12\\
		\midrule
		\multirow{6}{*}{0.99} & \phantom{000}0 & 0.500 & 100.02 & 110.05 & 100.02 & 197.12 & \phantom{0}100.02\\
		& \phantom{00}10 & 0.138 & 106.07 & 111.31 & 106.12 & 197.12 & \phantom{0}113.99\\
		& \phantom{00}50 & 0.059 & 114.38 & 115.47 & 114.40 & 197.12 & \phantom{0}182.10\\
		& \phantom{0}100 & 0.042 & 120.11 & 120.13 & 120.13 & 197.12 & \phantom{0}271.89\\
		& \phantom{0}200 & 0.029 & 127.58 & 129.07 & 127.59 & 197.12 & \phantom{0}455.74\\
		& 1000 & 0.012 & 152.06 & 204.06 & 152.34 & 197.31 & 1964.64\\
		\bottomrule
	\end{tabular}
\end{table}

In Figure~\ref{fig:root-of-excess}, $\chi_{\operatorname{opt}}/\sqrt{1-\rho}$ decreases as $d$ gets larger. All the traveling delays are $\sqrt{n}d$ minutes in this example. If we use the squared delay $nd^{2}$, instead of $n$, as the scaling factor to define the scaled queue length processes, the main results in Sections~\ref{sec:Load-Balancing}--\ref{sec:Routing-Capacity-Planning} still hold, except the constant coefficients (e.g., $C$ and $C'$ in Theorem~\ref{theorem:load-balancing-gap} and $C''$ in Corollary~\ref{corollary:optimal-gap}) need to be modified. By \eqref{eq:root-excess}, the MTCC could be minimized if the balancing fraction is set to
\begin{equation}\label{eq:root-of-excess-d}
	\chi^{**} = C^{**} (nd^{2})^{-1/4} = C^{**}\sqrt{\frac{1-\rho}{d}},
\end{equation}
where $C^{**}$ is a positive constant not depending on $d$. The values of $ \sqrt{d/(1-\rho)}\chi_{\operatorname{opt}} $, as estimates of $C^{**}$, are compared in Figure~\ref{fig:root-of-excess-d}: for $\rho = 0.99$, the values are all around 0.4; for $\rho = 0.90$, the value decreases with $d$, but staying around 0.4 for $d = 0.5$, 1, and 2---that is, for the traveling delays on the same order as $1/(1-\rho)$. Taking $C^{**} = 0.4$ in \eqref{eq:root-of-excess-d} and setting $\chi^{**} = 0.5$ for $d = 0$ (i.e., the JSQ policy), we list the MTCCs, denoted by $\operatorname{CC}(\chi^{**})$, in Table~\ref{table:two-stations}; in all the instances, these MTCCs are almost identical to the minimum values denoted by $\operatorname{CC}(\chi_{\operatorname{opt}})$.
\begin{figure}
	\caption{Optimal Values of Balancing Fraction}
	\centering
	\begin{subfigure}[b]{0.495\textwidth}
		\includegraphics[width=3.2in]{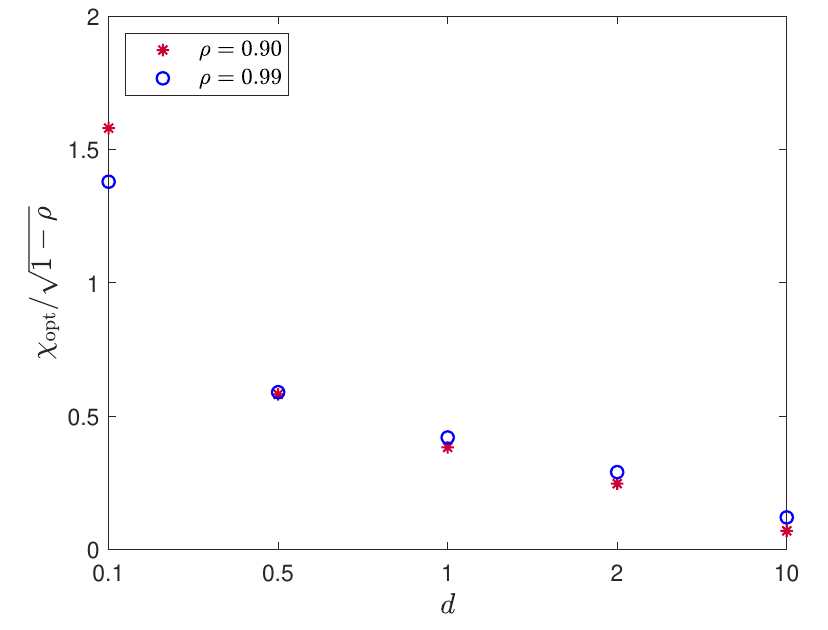}
		\caption{{\footnotesize Comparison of $ \chi_{\operatorname{opt}}/\sqrt{1-\rho} $}}\label{fig:root-of-excess}
	\end{subfigure}
	\begin{subfigure}[b]{0.495\textwidth}
		\includegraphics[width=3.2in]{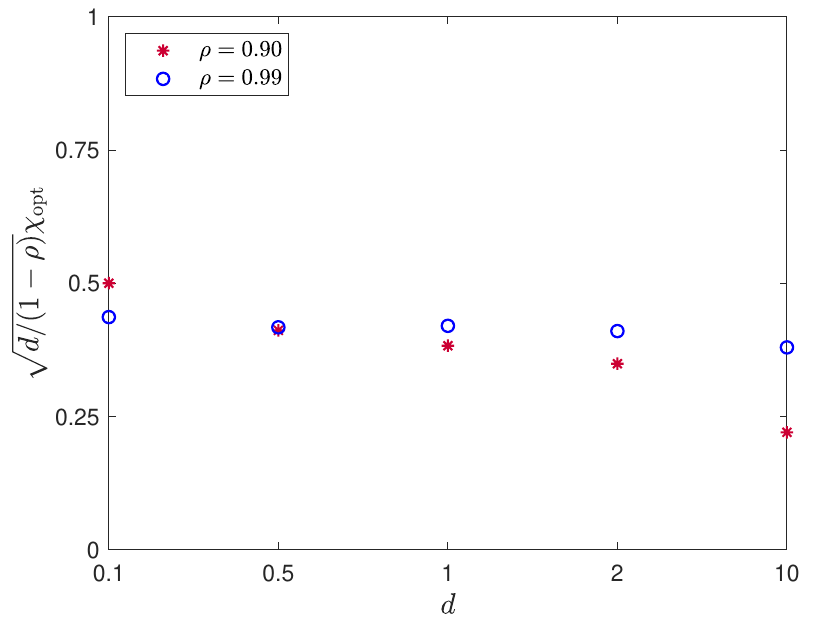}
		\caption{{\footnotesize Comparison of $ \sqrt{d/(1-\rho)}\chi_{\operatorname{opt}} $}}\label{fig:root-of-excess-d}
	\end{subfigure}
\end{figure}

We may use these heuristics to determine the balancing fraction under more general delay assumptions. Consider the primitive delay distribution given by \eqref{eq:delay-distribution}. Because the weighted queue lengths are approximately equal, the probability of an arbitrary station having the shortest queue should be around $1/s$. If customers can be sent to the shortest queue regardless of the origins (e.g., when the dispatcher is unaware of their origins), the mean primitive delay of additional customers sent to the shortest queue can be estimated by 
\[  
\bar{\gamma} = \frac{1}{s}\sum_{m = 1}^{b} \sum_{k = 1}^{s} p_{m} d_{m,k},
\]
which could replace $d$ in \eqref{eq:root-of-excess-d} to determine the balancing fraction. If the dispatcher is aware of the customers' origins and sends additional customers to the shortest queue only from specific origins (as discussed in Remark~\ref{remark:geographic}), the above mean primitive delay can be estimated by
\[  
\bar{\gamma} = \frac{1}{s}\sum_{k = 1}^{s}  \frac{\sum_{m \in \mathcal{N}'_{k}}p_{m} d_{m,k}}{\sum_{m \in \mathcal{N}'_{k}}p_{m}},
\]
where $\mathcal{N}'_{k}$ is the set of origins from which additional customers can be sent to the shortest queue in station~$k$. With the estimate $\bar{\gamma}$, we may modify \eqref{eq:root-of-excess-d} into
\begin{equation}\label{eq:reciprocal-root}
	\chi^{**} = C^{**}\hat{\gamma}^{-1/2},
\end{equation}
where $\hat{\gamma} = \bar{\gamma}/(1-\rho)$ is the estimate of the mean traveling delay of additional customers sent to the shortest queue. We refer to the heuristic formula \eqref{eq:reciprocal-root} as the \emph{reciprocal-root-of-delay rule}.

It would be difficult to obtain $\hat{\gamma}$ if the traveling delay distribution is unknown. In this case, we could use \eqref{eq:root-excess} instead, as long as traveling delays are on the same order as $1/(1 - \rho)$. Since $1 - \rho$ is the fraction of excess service capacity, we refer to the heuristic formula \eqref{eq:root-excess} as the \emph{root-of-excess rule}. By Theorem~\ref{theorem:load-balancing}, the magnitude of load imbalance is $o(\sqrt{n})$ when the balancing fraction is within the range specified by \eqref{eq:epsilon-1} and \eqref{eq:epsilon-2}; this range corresponds to the curves' flat bottoms in Figure~\ref{fig:QL-vs-rho}, at which the MTCC is insensitive to small changes in $\chi$. Taking $C^{*} = 0.4$ in \eqref{eq:root-excess}, we list the MTCCs in Table~\ref{table:two-stations}. Denoted by $\operatorname{CC}(\chi^{*})$, these values are close to the minimum when the traveling delays are on the same order as $1/(1-\rho)$ (i.e., 5, 10, 20 minutes for $\rho = 0.90$ and 50, 100, 200 minutes for $\rho = 0.99$). The table also includes the MTCCs under routing at random ($\chi = 0$) and the JSQ policy ($\chi = 0.5$): with traveling delays on a lower or higher order than $1/(1-\rho)$, the JSQ policy or routing at random could be an acceptable option, respectively.

Are these rules of thumb applicable to more than two stations? We test the system with three to five stations and keep other settings unchanged. The MTCC curves for $\rho = 0.99$ and traveling delays of 100 minutes are plotted in Figure~\ref{fig:QL-vs-ST}: all the curves have a steep drop followed by a flat bottom, before ascending linearly to a large value; the optimal balancing fractions are relatively stable, taking values at $0.042$, $0.046$, $0.046$, and $0.043$ for two to five stations, respectively. Table~\ref{table:five-stations} reports MTCCs with specific balancing fractions for the five-station system, where $\chi^{*}$ and $\chi^{**}$ are given by \eqref{eq:root-excess} and \eqref{eq:reciprocal-root} with $C^{*} = C^{**} = 0.4$ and the JSQ policy corresponds to $\chi = 0.8$. Our heuristics are confirmed again: the values of $\chi_{\operatorname{opt}}$ in Table~\ref{table:five-stations} are generally close to those in Table~\ref{table:two-stations}; the reciprocal-root-of-delay rule produces near-optimal performance; the root-of-excess rule produces satisfactory performance when the traveling delays are on the same order as $1/(1-\rho)$.
\begin{figure}[t]
		\caption{Mean Total Customer Count with More Stations}
		\centering
		\includegraphics[width=3.2in]{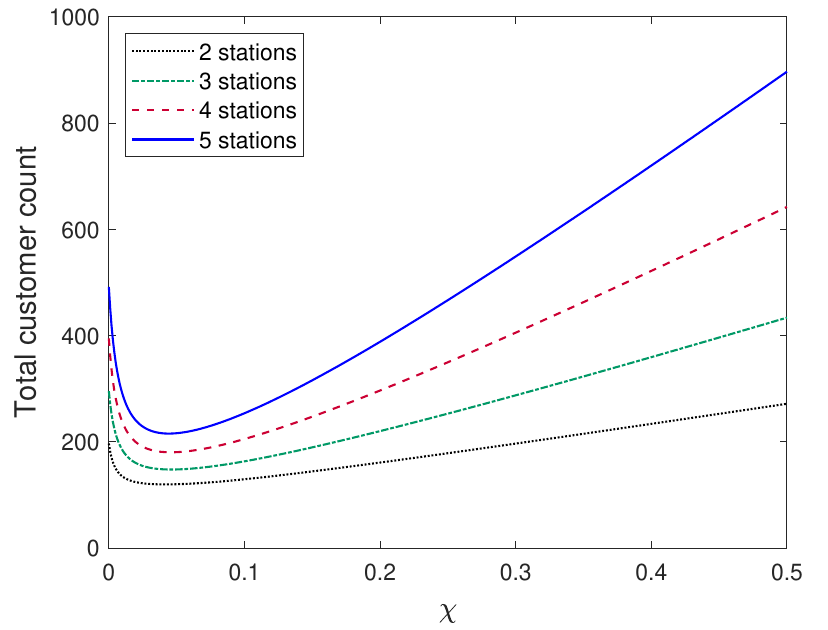}
		\label{fig:QL-vs-ST}
\end{figure}

\begin{table}[b]
	\centering
	\caption{Mean Total Customer Counts in Five-Station System}
	\label{table:five-stations}
	\footnotesize
	\begin{tabular}{cccccccc}
		\toprule
		$\rho$ & Delay (min) & $\chi_{\operatorname{opt}}$ & $\operatorname{CC}(\chi_{\operatorname{opt}})$ & $\operatorname{CC}(\chi^{*})$ & $\operatorname{CC}(\chi^{**})$ & $\operatorname{CC}(0)$ & $\operatorname{CC}(0.8)$ \\ \midrule
		\multirow{6}{*}{0.90} & \phantom{000}0 & 0.800 & \phantom{0}12.59 & \phantom{0}23.42 & \phantom{0}12.59 & \phantom{0}45.01 & \phantom{000}12.59\\
		& \phantom{000}1 & 0.462 & \phantom{0}19.73 & \phantom{0}25.04 & \phantom{0}19.80 & \phantom{0}45.01 & \phantom{000}21.18\\
		& \phantom{000}5 & 0.184 & \phantom{0}28.42 & \phantom{0}29.12 & \phantom{0}28.42 & \phantom{0}45.01 & \phantom{000}54.60\\
		& \phantom{00}10 & 0.122 & \phantom{0}32.77 & \phantom{0}32.78 & \phantom{0}32.78 & \phantom{0}45.01 & \phantom{000}98.10 \\
		& \phantom{00}20 & 0.078 & \phantom{0}36.93 & \phantom{0}38.45 & \phantom{0}36.99 & \phantom{0}45.01 & \phantom{00}186.52\\
		& \phantom{0}100 & 0.024 & \phantom{0}43.46 & \phantom{0}65.69 & \phantom{0}44.03 & \phantom{0}45.01 & \phantom{00}902.46\\
		\midrule
		\multirow{6}{*}{0.99} & \phantom{000}0 & 0.800 & 103.16 & 165.27 & 103.16 & 492.05 & \phantom{00}103.16\\
		& \phantom{00}10 & 0.149 & 140.50 & 172.28 & 141.13 & 492.05 & \phantom{00}222.40\\
		& \phantom{00}50 & 0.064 & 185.97 & 193.55 & 186.49 & 492.05 & \phantom{00}758.95\\
		& \phantom{0}100 & 0.043 & 215.80 & 216.22 & 216.22 & 492.04 & \phantom{0}1445.10\\
		& \phantom{0}200 & 0.030 & 252.46 & 257.29 & 252.87 & 492.03 & \phantom{0}2821.06\\
		& 1000 & 0.012 & 357.40 & 548.94 & 358.15 & 491.95 & 13467.16\\
		\bottomrule
	\end{tabular}
\end{table}

In the next example, we consider joint capacity planning and load balancing for three stations in a square district, each side of which is 20 km. As shown in Figure~\ref{fig:region}, the coordinate of the southwest corner is $(0,0)$ and the stations are at $(5, 5)$, $(5, 15)$, and $(15, 5)$. Following a Poisson process with rate $\lambda = 3.96$ per minute, customers appear at locations uniformly distributed in the district. Once the destination is determined, a customer will walk in a straight line to the station, at a speed of 0.1 km per minute; the traveling delay is thus a \emph{continuous} random variable. By the discussion in Remark~\ref{remark:general-distribution}, our heuristics should also work with a continuous delay distribution. The total service capacity is $\mu = 4$, which yields $\rho = 0.99$.
\begin{figure}[t]
	\caption{Geographically Separated Stations}
	\centering
	\includegraphics[width=2.3in]{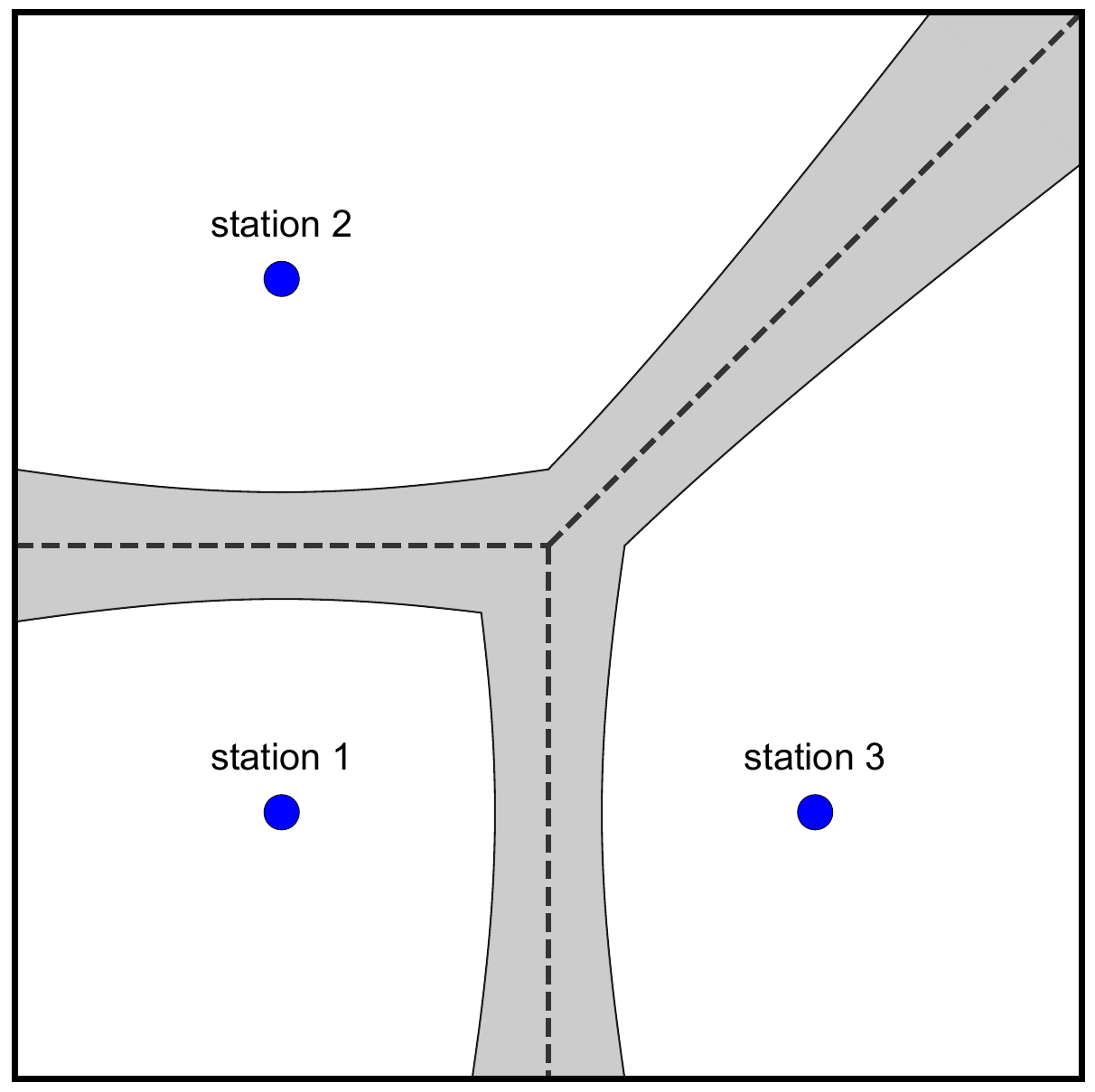}
	\label{fig:region}
\end{figure}

Let us first determine the capacity of each station. By the discussion in Remark~\ref{remark:geographic}, we may divide the district into three zones such that for $k = 1, 2, 3$, station~$k$ is the nearest to customers appearing in zone~$k$; these zones are separated by the dashed lines in Figure~\ref{fig:region}. Since the customers' origins are uniformly distributed, the service capacity of station~$k$ should be proportional to the area of zone~$k$. By setting $\mu_{1} = 1$ and $\mu_{2} = \mu_{3} = 1.5$, we make the system GBC; the resultant routing plan suggests sending customers from zone~$k$ to station~$k$, which is the nearest station. 

We next specify $\bar{\tau}$, the tolerance for delays, as discussed in Remark~\ref{remark:geographic}. We assume $\bar{\tau}$ is common to all the stations. Upon appearance from zone~$k$, a customer is sent to station~$\ell$ in an adjacent zone, if and only if station~$\ell$ has the shortest queue and the traveling delay to station~$\ell$ exceeds that to station~$k$ by at most $\bar{\tau}$; otherwise, the customer is sent to station~$k$. With a small $\bar{\tau}$, a customer can be sent to an adjacent zone only if the origin is close to the border---that is, within the shaded \emph{border region} in Figure~\ref{fig:region} (which is plotted for $\bar{\tau} = 20$ minutes).

How to choose $\bar{\tau}$? It can also be determined by the reciprocal-root-of-delay rule: Because the additional customers sent to the shortest queue are from the border region, we may estimate their mean distance to the destinations using the mean distance from the border lines by
\[
\begin{aligned}
	\frac{1}{20+10\sqrt{2}}\bigg(2 \int_{0}^{10} \sqrt{(x-5)^{2}+ 25}\,\mathrm{d}x + \sqrt{2} \int_{10}^{20} \sqrt{(x-5)^{2} + (x-15)^{2}}\,\mathrm{d}x \bigg) = 7.694 \ \mbox{km.}
\end{aligned}
\]
Their mean traveling delay to the shortest queue is around $\hat{\gamma} = 76.94 $ minutes. Taking $C^{**} = 0.4$ in \eqref{eq:reciprocal-root}, we obtain $\chi^{**} = 0.0456$, which is the additional fraction of customers sent to the shortest queue. A customer from the border region is sent to the second nearest station if it has the shortest queue; this probability should be around 1/3, since the weighted queue lengths are approximately equal. Therefore, the probability of a customer appearing from the border region should be around $3 \chi^{**}$, the area of the border region should be around $400 \times 3\chi^{**} = 54.72 $ $\mbox{km}^2$, and the mean ``width'' of the border region should be around $54.72/(20+10\sqrt{2}) = 1.603$ km. Hence, we may take $\bar{\tau} = 16 $ minutes. Although the above calculation is quite crude, the estimate may still produce near-optimal performance, since the RJSQ policy is insensitive to $\chi$ around the optimal value.

We assume the service requirements follow either an exponential distribution with mean 1 or a lognormal distribution with mean 1 and variance 3. Table~\ref{table:geographic} shows the system's performance with different values of $\bar{\tau}$. The results are produced by averaging $5 \times 10^{3}$ runs; in each run, the system continuously operates $2 \times 10^{5}$ minutes, including a burn-in period of $6 \times 10^{4}$ minutes. Since each $\bar{\tau}$ specifies a particular border region, with the same $\bar{\tau}$, both the balancing fractions (i.e., the fractions of customers sent to the second nearest stations) and the mean traveling delays should be identical under the two service distributions. All the customers are sent to their nearest stations when $\bar{\tau} = 0$, and to the shortest queue when $\bar{\tau} = \infty$. Although the former policy has the least mean traveling delay, it suffers from long waiting times in the absence of load balancing; the JSQ policy is even worse because of queue length oscillations. A relatively small $\bar{\tau}$ (as compared with the mean traveling delay and the mean waiting time) suffices to induce a satisfactory balancing fraction; this fraction of customers experience extra traveling delays of at most $\bar{\tau}$. When $\bar{\tau}$ is increased from 0 to 20 minutes, the mean traveling delay grows by less than 1.5\%, whereas the mean waiting time is reduced by 52.5\% and 56.9\% under the exponential and lognormal service distributions, respectively. Since the mean time to service is greatly shortened, the customers sent to the second nearest stations also benefit from the overall improvement, as everyone else does.
\begin{table}[t]
	\centering
	\caption{Mean Traveling Delay, Waiting, and Time to Service in Geographically Separated Stations (in Minutes)}
	\label{table:geographic}
	\footnotesize
	\begin{tabular}{c|cccc|cccc}
		\toprule
		& \multicolumn{4}{c|}{Exponential Service} & \multicolumn{4}{c}{Lognormal Service} \\ 
		$\bar{\tau}$ & $\chi$ & Delay & Waiting & Time to Service & $\chi$ & Delay & Waiting & Time to Service \\ \midrule
		\phantom{0}0 & 0\phantom{.0000} & 50.73 & \phantom{0}73.97 & 124.70 & 0\phantom{.0000} & 50.73 & 145.85 & 196.58 \\
		\phantom{0}5 & 0.0182 & 50.77 & \phantom{0}39.56 & \phantom{0}90.33 & 0.0182 & 50.77 & \phantom{0}76.23 & 127.00 \\
		10 & 0.0361 & 50.91 & \phantom{0}35.49 & \phantom{0}86.40 & 0.0362 & 50.91 & \phantom{0}66.81 & 117.72 \\
		15 & 0.0541 & 51.13 & \phantom{0}34.72 & \phantom{0}85.85 & 0.0542 & 51.13 & \phantom{0}63.76 & 114.89 \\
		\emph{16} & \emph{0.0577} & \emph{51.19} & \emph{\phantom{0}34.72} & \emph{\phantom{0}85.90} & \emph{0.0578} & \emph{51.19} & \emph{\phantom{0}63.49} & \emph{114.68} \\
		20 & 0.0722 & 51.45 & \phantom{0}35.11 & \phantom{0}86.56 & 0.0724 & 51.45 & \phantom{0}62.92 & 114.37 \\
		25 & 0.0906 & 51.86 & \phantom{0}36.03 & \phantom{0}87.89 & 0.0908 & 51.86 & \phantom{0}63.08 & 114.94 \\
		30 & 0.1094 & 52.38 & \phantom{0}37.39 & \phantom{0}89.77 & 0.1095 & 52.38 & \phantom{0}63.85 & 116.22 \\
		35 & 0.1287 & 53.00 & \phantom{0}39.13 & \phantom{0}92.13 & 0.1289 & 53.00 & \phantom{0}65.10 & 118.10 \\
		40 & 0.1489 & 53.75 & \phantom{0}41.21 & \phantom{0}94.96 & 0.1490 & 53.75 & \phantom{0}66.80 & 120.56 \\
		45 & 0.1702 & 54.65 & \phantom{0}43.78 & \phantom{0}98.43 & 0.1703 & 54.66 & \phantom{0}69.01 & 123.67 \\
		50 & 0.1934 & 55.75 & \phantom{0}47.00 & 102.75 & 0.1934 & 55.75 & \phantom{0}71.84 & 127.58 \\
		$\infty$ & 0.6557 & 97.78 & 181.11 & 278.89 & 0.6557 & 97.78 & 200.38 & 298.16 \\
		\bottomrule
	\end{tabular}
\end{table}

Owing to a long computing time, it is difficult to carry out an exhaustive search for the optimal $\bar{\tau}$ in this example. However, we can still tell from Table~\ref{table:geographic} that both the value that minimizes the mean waiting time and the value that minimizes the mean time to service are between 10 and 20 minutes under the exponential distribution, and between 15 and 25 minutes under the lognormal distribution. Thus, $\bar{\tau} = 16$ minutes should produce near-optimal performance, as seen in Table~\ref{table:geographic} (in \emph{italics}). The above observation also implies the optimal $\bar{\tau}$ (either minimizing the mean waiting time or minimizing the mean time to service) under the lognormal distribution is greater than that under the exponential distribution. This is because with a larger variance, the lognormal distribution causes more substantial random variations in the service completion processes, thus resulting in greater load imbalance; accordingly, more customers should be sent to the shortest queue to offset this effect. When either the inter-appearance time distribution or the service distribution has a large variance, one may slightly increases $C^{*}$ and $C^{**}$ in \eqref{eq:root-excess} and \eqref{eq:reciprocal-root} in order for $\chi^{*}$ and $\chi^{**}$ to be closer to the optimal value. This would be helpful but may not be necessary, since the RJSQ policy is insensitive to $\chi$ around the optimal value. As long as the variance is not too large, we may still use $C^{*} = C^{**} = 0.4$ in the heuristic formulas and expect satisfactory performance.

\section{Proof Sketches}
\label{sec:Sketches}

We sketch out the proofs of Theorems~\ref{theorem:load-balancing}--\ref{theorem:workload-optimality}. The complete proofs are collected in the appendices.

\subsection{Convergence Lemmas for Delayed Arrival Processes}
\label{sec:Lemmas}

With traveling delays, the customer arrival process at a station is in general not a renewal process. In the $n$th distributed system, the number of customers that have arrived at station~$ k $ by time $t$ is
\begin{equation}\label{eq:station-arrivals} 
	A_{n,k}(t) = \sum_{j = 1}^{E_{n}(t)} \mathbb{1}_{ \{ \xi_{n}(j) = k, a_{n}(j) + \gamma_{n,k}(j) \leq t \} } \quad\mbox{for $ k = 1, \ldots, s $,}
\end{equation}
where $ a_{n}(j) $ is the time the $ j $th customer joins the system. To analyze this arrival process, consider
\begin{equation}\label{eq:Rmnk}
	R^{m}_{n,k}(j) = \sum_{i = 1}^{j} \mathbb{1}_{\{ \xi_{n}(i) = k, \boldsymbol{\gamma}(i) = \boldsymbol{d}_{m} \}} \quad\mbox{for $ m = 1, \ldots, b $ and  $ j \in \mathbb{N} $,}
\end{equation}
which is the number of customers sent from origin~$ m $ to station~$ k $ among the first $ j $ customers. The sequence $ \{R^{m}_{n,k}(j) : j \in \mathbb{N} \}$ is a basic component of the arrival process at station~$ k $, whose stochastic variations are caused by both the random origins and the randomized routing policy. If the customers' origins are known and the destinations are given by \eqref{eq:destination-n-m}, it can be decomposed into
\begin{equation}\label{eq:Rmnk-decomposition}
	R^{m}_{n,k}(j) = j p_{m} r_{m,k} + G^{m}_{k}(j) + \mathcal{E}^{m}_{n,k}(j) + M^{m}_{n,k}(j),
\end{equation}
where
\begin{align}
	G_{k}^{m}(j) & = \sum_{i = 1}^{j} \big(\mathbb{1}_{ \{ \kappa^{m}_{k-1} \leq u(i) < \kappa^{m}_{k}, \boldsymbol{\gamma}(i) = \boldsymbol{d}_{m}  \}} - p_{m}r_{m,k}\big), \label{eq:Gmk}\\
	\mathcal{E}^{m}_{n,k}(j) & =  \sum_{i = 1}^{j} p_{m} \varepsilon^{m}_{n,\pi_{n,k}(i)}(i), \label{eq:cali-Emnk} \\
	M^{m}_{n,k}(j) & = \sum_{i = 1}^{j} \big( \mathbb{1}_{\{ \xi_{n}(i) = k, \boldsymbol{\gamma}(i) = \boldsymbol{d}_{m} \}} - \mathbb{1}_{ \{ \kappa^{m}_{k-1} \leq u(i) < \kappa^{m}_{k}, \boldsymbol{\gamma}(i) = \boldsymbol{d}_{m}  \}} - p_{m}\varepsilon^{m}_{n,\pi_{n,k}(i)}(i)\big), \label{eq:Mmnk}
\end{align}
with $ \kappa^{m}_{0} = 0 $ and $ \kappa^{m}_{k} = \sum_{\ell = 1}^{k} r_{m,\ell} $. If the origins are unknown and the destinations are given by \eqref{eq:destination-n}, we take $ r_{m,k} = \mu_{k}/\mu $ and $ \varepsilon^{m}_{n,\pi_{n,k}(i)}(i) = \varepsilon_{n,\pi_{n,k}(i)} $ in \eqref{eq:Rmnk-decomposition}--\eqref{eq:Mmnk}. The first two terms on the right side of \eqref{eq:Rmnk-decomposition} are dictated by the routing plan: they are the expected number of customers sent from origin~$ m $ to station~$ k $ and the variation caused by randomized routing. The other two terms come from perturbation, representing the cumulative adjustment and the induced additional random variation. This decomposition enables us to analyze random variations from different sources separately.

The sequence $ \{G^{m}_{k}(j) : j \in \mathbb{N} \}$ is a random walk. The lemma below follows from the law of the iterated logarithm (Lemma~\ref{lemma:LIL}) and the strong approximation of random walks (Lemma~\ref{lemma:strong-approximations}).
\begin{lemma}\label{lemma:random-walk}
	Assume condition~\eqref{eq:rmk-inequality} holds. Then with probability 1,
	\begin{equation}\label{eq:Gmk-LIL}
		\limsup_{n \to \infty} \frac{\sup_{1\leq j \leq n^{\alpha}T}G^{m}_{k}(j)}{\sqrt{n^{\alpha}\log\log n}} = - \liminf_{n \to \infty} \frac{\inf_{1\leq j \leq n^{\alpha}T}G^{m}_{k}(j)}{\sqrt{n^{\alpha}\log\log n}} = \sqrt{2 T p_{m}r_{m,k}(1 - p_{m}r_{m,k})}
	\end{equation}
	for $ \alpha > 0 $, $ T > 0 $, $ m = 1, \ldots, b $, and $ k = 1, \ldots, s $. Moreover, there exists a positive number $ C^{m}_{k,T} $ and a standard Brownian motion $ \check{B}^{m}_{k} $ such that with probability 1,
	\begin{equation}\label{eq:Gmk-strong-approximation}
		\limsup_{n \to \infty} \frac{1}{\log n} \sup_{1 \leq j \leq nT} | G^{m}_{k}(j) - \sqrt{p_{m}r_{m,k}(1 - p_{m}r_{m,k})}\check{B}^{m}_{k}(j)| \leq C^{m}_{k,T}.
	\end{equation}
\end{lemma}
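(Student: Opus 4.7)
The plan is to recognize $\{G^{m}_{k}(j) : j \in \mathbb{N}\}$ as the partial-sum process of a sequence of bounded, centered, i.i.d.\ random variables, and then invoke two classical limit theorems: the Hartman--Wintner law of the iterated logarithm (Lemma~\ref{lemma:LIL}) for \eqref{eq:Gmk-LIL}, and a Komlós--Major--Tusnády-type strong approximation (Lemma~\ref{lemma:strong-approximations}) for \eqref{eq:Gmk-strong-approximation}. First I would write $G^{m}_{k}(j) = \sum_{i=1}^{j} X^{m}_{k}(i)$ with
\[
X^{m}_{k}(i) = \mathbb{1}_{\{\kappa^{m}_{k-1} \leq u(i) < \kappa^{m}_{k},\, \boldsymbol{\gamma}(i) = \boldsymbol{d}_{m}\}} - p_{m} r_{m,k} .
\]
Since the pairs $\{(u(i), \boldsymbol{\gamma}(i))\}_{i \in \mathbb{N}}$ are i.i.d.\ and $u(i)$ is independent of $\boldsymbol{\gamma}(i)$, the summands $X^{m}_{k}(i)$ form an i.i.d.\ sequence with $\mathbb{E}[X^{m}_{k}(i)] = 0$ and variance $\sigma^{2} = p_{m} r_{m,k}(1 - p_{m} r_{m,k})$. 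Boundedness $|X^{m}_{k}(i)| \leq 1$ delivers finite exponential moments of every order, which is exactly what Lemma~\ref{lemma:strong-approximations} requires.

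For \eqref{eq:Gmk-LIL}, I would apply Lemma~\ref{lemma:LIL} to get $\limsup_{j \to \infty} G^{m}_{k}(j)/\sqrt{2 j \sigma^{2} \log \log j} = 1$ and the symmetric liminf equal to $-1$ almost surely. Writing $N_{n} = \lfloor n^{\alpha} T\rfloor$, the pathwise upper bound from the LIL gives $\sup_{1 \leq j \leq N_{n}} G^{m}_{k}(j) \leq (1+\varepsilon)\sqrt{2 N_{n} \sigma^{2} \log \log N_{n}}$ eventually, while the trivial inequality $\sup_{1 \leq j \leq N_{n}} G^{m}_{k}(j) \geq G^{m}_{k}(N_{n})$, combined with the fact that $\{N_{n}\}$ is an increasing integer subsequence with $N_{n+1}/N_{n} \to 1$ (so that any $j$ producing the LIL lower bound can be sandwiched by some $N_{n}$ with $N_{n}/j \to 1$), supplies a matching lower bound along this subsequence. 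Dividing by $\sqrt{n^{\alpha} \log \log n}$ and using $N_{n}/n^{\alpha} \to T$ together with $\log \log N_{n}/\log \log n \to 1$ converts these bounds into the stated identity; the liminf part of \eqref{eq:Gmk-LIL} follows by running the same argument for $-G^{m}_{k}$.

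For \eqref{eq:Gmk-strong-approximation}, I would invoke Lemma~\ref{lemma:strong-approximations} directly: boundedness of $X^{m}_{k}(i)$ fulfills the Cramér-type moment hypothesis, so on an enlarged probability space one obtains a standard Brownian motion $\check{B}^{m}_{k}$ and a constant $C$ (depending only on the distribution of $X^{m}_{k}(1)$) such that
\[
\limsup_{N \to \infty} \frac{\sup_{1 \leq j \leq N} |G^{m}_{k}(j) - \sigma \check{B}^{m}_{k}(j)|}{\log N} \leq C \quad \mbox{almost surely.}
\]
Substituting $N = \lfloor nT \rfloor$ and absorbing the $O(1)$ gap between $\log N$ and $\log n$ into a $T$-dependent constant $C^{m}_{k,T}$ yields \eqref{eq:Gmk-strong-approximation}. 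The only mildly non-routine bookkeeping is to place the Brownian motions $\check{B}^{m}_{k}$ for the different pairs $(m,k)$ on a common probability space together with the original primitives $\{u(i), \boldsymbol{\gamma}(i)\}$; this is handled by the standard enlargement-and-coupling construction used throughout the strong-approximation literature. Beyond that, the proof is a direct application of two off-the-shelf results, so no serious obstacle is anticipated.
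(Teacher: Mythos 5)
Your proposal is correct and takes essentially the same route as the paper, which disposes of this lemma by noting that $G^{m}_{k}$ is a random walk with i.i.d.\ bounded Bernoulli-type increments of mean zero and variance $p_{m}r_{m,k}(1-p_{m}r_{m,k})$ and then citing Lemma~\ref{lemma:LIL} for \eqref{eq:Gmk-LIL} and Lemma~\ref{lemma:strong-approximations} for \eqref{eq:Gmk-strong-approximation}. The only cosmetic difference is that those two auxiliary lemmas are stated for nonnegative mean-one summands, so a literal application rescales the indicators by $1/(p_{m}r_{m,k})$ rather than invoking the Hartman--Wintner/KMT results for centered increments directly; your subsequence and $\log\log$/$\log$ bookkeeping is the same and the conclusion is identical.
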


The sequence $ \{M^{m}_{n,k}(j) : j \in \mathbb{N} \}$ is a square-integrable martingale adapted to a certain filtration. Using Freedman's inequality (Lemma~\ref{lemma:Freedman}) and the martingale functional central limit theorem (Theorem~2.1 in \citealp{Whitt.2007}), we can estimate the magnitude of its variations and increments, as stated in the following lemma. Please refer to Appendix~\ref{sec:Proof-Mnkm} for the proof.
\begin{lemma}\label{lemma:martingale}
	Assume the conditions of Theorem~\ref{theorem:load-balancing} hold. Then with probability 1,
	\begin{equation}\label{eq:Mmnk-fluctuations}
		\limsup_{n \to \infty} \frac{\max_{1\leq j \leq n^{\alpha}T}|M^{m}_{n,k}(j)|}{\sqrt{n^{\alpha}(\chi_{n}\log n \vee 1)}} \leq \sqrt{10 T}
	\end{equation}
	for $ \alpha > 0 $, $ T > 0 $, $ m = 1, \ldots, b $, and $ k = 1, \ldots, s $. Let $ \{ q_{n} : n\in \mathbb{N} \} $ be a sequence of positive integers such that $ \lim_{n \to \infty} q_{n}/(\log n)^{2} = \infty $. Then  with probability 1,
	\begin{equation}\label{eq:Mmnk-increments}
		\limsup_{n \to \infty} \max_{1 \leq j \leq nT}\frac{\max_{1 \leq i \leq q_{n}}|M^{m}_{n,k}(j+i) - M^{m}_{n,k}(j)|}{\sqrt{q_{n}(\chi_{n}\log n \vee 1)}} \leq \sqrt{20}.
	\end{equation}
	Write $ \hat{M}^{m}_{n,k}(t) = M^{m}_{n,k}(\lfloor nt \rfloor)/\sqrt{n} $ for $ n \in \mathbb{N} $ and $ t \geq 0 $. Then, $ \sup_{0 \leq t \leq T} \hat{M}^{m}_{n,k}(t) \Rightarrow 0 $ as $ n \to \infty $.
\end{lemma}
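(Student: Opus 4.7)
Plan: I will identify a filtration under which $\{M^m_{n,k}(j)\}$ is a square-integrable martingale, obtain a deterministic pathwise bound of order $\chi_n$ on its conditional variances, and then apply Freedman's inequality (Lemma~\ref{lemma:Freedman}) to extract the almost-sure bounds \eqref{eq:Mmnk-fluctuations} and \eqref{eq:Mmnk-increments}. The weak-convergence claim $\sup_{0\le t\le T}\hat M^m_{n,k}(t)\Rightarrow 0$ will then be essentially free, via a Doob $L^2$ maximal inequality, because the total predictable quadratic variation up to time $nT$ is of order $\chi_n\to 0$ under diffusion scaling.

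First I will let $\mathcal{F}_j$ be the $\sigma$-algebra generated by all primitives used to route the first $j$ customers---the inter-appearance times $z(1),\dots,z(j)$, the service times used by time $a_n(j)$, the origins $\boldsymbol{\gamma}(1),\dots,\boldsymbol{\gamma}(j)$, and the uniform randomizers $u(1),\dots,u(j)$. The rankings $\pi_{n,\ell}(j)$ and perturbations $\varepsilon^m_{n,\pi_{n,\ell}(j)}(j)$ are $\mathcal{F}_{j-1}$-measurable, while $(\boldsymbol{\gamma}(j),u(j))$ is independent of $\mathcal{F}_{j-1}$. A direct calculation from \eqref{eq:destination-n-m} gives $\mathbb{E}[\Delta M^m_{n,k}(j)\mid\mathcal{F}_{j-1}]=0$. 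For the conditional second moment, I rewrite the increment as $[\mathbb{1}_{I_n(j)}(u(j))-\mathbb{1}_{I}(u(j))]\mathbb{1}_{\{\boldsymbol{\gamma}(j)=\boldsymbol{d}_m\}}-p_m\varepsilon^m_{n,\pi_{n,k}(j)}(j)$, where $I_n(j)=[\kappa^m_{n,k-1}(j),\kappa^m_{n,k}(j))$ and $I=[\kappa^m_{k-1},\kappa^m_k)$. Since $|\kappa^m_{n,\ell}(j)-\kappa^m_\ell|\le\sum_{i=1}^\ell|\varepsilon^m_{n,\pi_{n,i}(j)}(j)|$ and $|p_m\varepsilon^m_{n,i}(j)|\le\chi_n$ by \eqref{eq:pm-epsilon-chi}, the length of the symmetric difference satisfies $p_m|I_n(j)\triangle I|\le(2s-1)\chi_n$ deterministically (regardless of the realized ranking). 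Combined with the bound $|p_m\varepsilon^m_{n,\ell}(j)|\le\chi_n$ on the centering constant, this yields $\mathbb{E}[(\Delta M^m_{n,k}(j))^2\mid\mathcal{F}_{j-1}]\le C_1\chi_n$ pathwise, for some constant $C_1$ depending only on $s$, and $|\Delta M^m_{n,k}(j)|\le 2$.

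With these two ingredients, Freedman's inequality applies with variance envelope $\sigma^2=C_1 N\chi_n$ and increment bound $B=2$. Taking $N=\lfloor n^\alpha T\rfloor$ and $x_n=\sqrt{10T n^\alpha(\chi_n\log n\vee 1)}$ and analyzing the two regimes $\chi_n\log n\ge 1$ (where $N\chi_n$ dominates $Bx_n/3$ in the denominator of Freedman's exponent) and $\chi_n\log n<1$ (where $\sigma^2$ is negligible compared to $Bx_n/3$), I obtain $\mathbb{P}[\max_{j\le N}|M^m_{n,k}(j)|\ge x_n]\le 2n^{-c}$ with $c>1$ after, if needed, enlarging the constant $10$ slightly; Borel--Cantelli then delivers \eqref{eq:Mmnk-fluctuations}. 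For the increment bound \eqref{eq:Mmnk-increments}, the same Freedman estimate is applied to each window $\{M^m_{n,k}(j+i)-M^m_{n,k}(j):0\le i\le q_n\}$, followed by a union bound over the $O(n)$ starting indices $j\le\lceil nT\rceil$. The hypothesis $q_n/(\log n)^2\to\infty$ is precisely what is needed to keep the quadratic-variation term $C_1 q_n\chi_n$ comfortably above the $Bx_n/3$ correction, preserving a logarithmic exponent $c>2$ and making $n\cdot n^{-c}$ summable.

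Finally, for the functional statement I will invoke Doob's $L^2$ maximal inequality: $\mathbb{E}[\sup_{0\le t\le T}\hat M^m_{n,k}(t)^2]\le 4\mathbb{E}[\langle M^m_{n,k}\rangle_{\lfloor nT\rfloor}]/n\le 4C_1 T\chi_n\to 0$ by \eqref{eq:epsilon-1}, so $\sup_{0\le t\le T}|\hat M^m_{n,k}(t)|\to 0$ in $L^2$ and hence in distribution. I expect the main obstacle to be the second step---producing a sample-pathwise rather than merely in-expectation $O(\chi_n)$ bound on the conditional variance---because any looseness there forces truncation in Freedman and inflates the universal constant $\sqrt{10T}$ in \eqref{eq:Mmnk-fluctuations}. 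The resolution is the ranking-uniform estimate $p_m|I_n(j)\triangle I|\le(2s-1)\chi_n$ derived from \eqref{eq:pm-epsilon-chi}, which is deterministic given $\mathcal{F}_{j-1}$ and does not depend on how ties among queue lengths are broken.
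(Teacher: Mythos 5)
Your route is the same as the paper's: exhibit the martingale structure of $\{M^m_{n,k}(j)\}$ under a filtration that makes $(u(j),\boldsymbol{\gamma}(j))$ independent of the conditioning, establish a pathwise $O(\chi_n)$ bound on the conditional variance, apply Freedman's inequality with a two-regime analysis of the exponent and Borel--Cantelli (with a union bound over the $O(n)$ windows for \eqref{eq:Mmnk-increments}), and handle the functional claim softly; your Doob $L^2$ maximal-inequality argument there is a perfectly adequate, slightly more elementary substitute for the paper's appeal to the martingale FCLT. However, two steps need repair. First, with the filtration you define, the assertion that $\pi_{n,\ell}(j)$ is $\mathcal{F}_{j-1}$-measurable is false: the ranking for customer $j$ is taken at time $a_n(j)-$, which depends on $z(j)$ (through $a_n(j)$) and on the service completions during $(a_n(j-1),a_n(j)]$, neither of which lies in your $\mathcal{F}_{j-1}$. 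Since both the conditional-mean and conditional-variance computations rest on this measurability, the martingale property is not justified as written. The fix is easy: either include the rankings themselves in the filtration, as the paper does with $\mathcal{F}_n(j)=\sigma\{\pi_{n,k}(1),\pi_{n,k}(i+1),u(i),\boldsymbol{\gamma}(i): i\le j;\ k=1,\ldots,s\}$, or place the entire interarrival and service sequences into $\mathcal{F}_0$ (they are independent of the $u$'s and $\boldsymbol{\gamma}$'s).

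Second, the lemma asserts the explicit constants $\sqrt{10T}$ and $\sqrt{20}$, and these are tied to a per-step conditional-variance bound of $5\chi_n$: in the regime where the variance term dominates Freedman's denominator, the exponent is roughly $x_n^2/(2N\cdot v\chi_n)$, so a threshold with constant $\sqrt{10T}$ forces $v\le 5$. Your bound $p_m|I_n(j)\triangle I|\le(2s-1)\chi_n$ gives an $s$-dependent $C_1$ that exceeds $5$ once $s\ge 4$, and your own hedge about ``enlarging the constant $10$ slightly'' concedes that the resulting exponent can fall below $(1+\delta)\log n$; what you then prove is \eqref{eq:Mmnk-fluctuations} and \eqref{eq:Mmnk-increments} with larger constants, not the statement as given. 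The sharp bound follows from structure you already cite: by \eqref{eq:sum-epsilon-m} and condition~(\ref{item:cond-1-m}), the perturbations for a fixed $(m,j)$ have one nonnegative term and the rest nonpositive, summing to zero, so every partial sum is bounded in absolute value by $\varepsilon^m_{n,1}(j)\le\chi_n/p_m$ by \eqref{eq:pm-epsilon-chi}; hence $p_m|I_n(j)\triangle I|\le 2\chi_n$ uniformly in $s$, and $\mathbb{E}[\Delta M^m_{n,k}(j)^2\mid\mathcal{F}_{n}(j-1)]\le 5\chi_n$ with room to spare. With that replacement (and the corrected filtration), your Freedman--Borel--Cantelli scheme delivers the stated constants, exactly as in the paper.
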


By \eqref{eq:station-arrivals} and \eqref{eq:Rmnk}, the arrival process at station~$ k $ can be written as 
\begin{equation}\label{eq:station-arrivals-analysis} 
	A_{n,k}(t) = \sum_{m = 1}^{b} R^{m}_{n,k}\big(E_{n}((t-\sqrt{n} d_{m,k})^{+})\big),
\end{equation} 
where $E_{n}$ also contributes to random variations. We define the fluid-scaled renewal processes by $ \bar{E}_{n}(t) = E_{n}(nt)/n $ and $ \bar{S}_{n,k}(t) = S_{k}(nt)/n $, and define the diffusion-scaled processes by $ \tilde{E}_{n}(t) = \sqrt{n} (\bar{E}_{n}(t) - \lambda_{n} t) $ and $ \tilde{S}_{n,k}(t) = \sqrt{n} (\bar{S}_{n,k}(t) - \mu_{k} t ) $. By \eqref{eq:heavy-traffic} and the functional strong law of large numbers for renewal processes (Theorem~5.10 in \citealp{Chen.Yao.2001}), with probability 1,
\begin{equation}\label{eq:FSLLN}
	(\bar{E}_{n}, \bar{S}_{n,1}, \ldots, \bar{S}_{n,s}) \rightarrow (\mu\mathcal{I}, \mu_{1}\mathcal{I}, \ldots, \mu_{s}\mathcal{I} )  \quad \mbox{as $ n \to \infty $,}
\end{equation}
where $\mathcal{I}(t) = t$ for $t\geq 0$. By the functional central limit theorem for renewal processes (Theorem~5.11 in \citealp{Chen.Yao.2001}),
\begin{equation}\label{eq:FCLT}
	(\tilde{E}_{n}, \tilde{S}_{n,1}, \ldots, \tilde{S}_{n,s}) \Rightarrow (\tilde{E}, \tilde{S}_{1}, \ldots, \tilde{S}_{s}) \quad \mbox{as $ n \to \infty $,}
\end{equation}
where $ \tilde{E} $ and $ \tilde{S}_{1}, \ldots, \tilde{S}_{s} $ are mutually independent driftless Brownian motions, starting from $ 0 $ and having variances $ \mu c^{2}_{0} $ and $ \mu_{1} c^{2}_{1}, \ldots, \mu_{s} c^{2}_{s}$, respectively. If the normalized inter-appearance time distribution and the normalized service time distributions have finite fourth moments, we can obtain the following strong approximations (see Lemma~\ref{lemma:strong-approximations}).
\begin{lemma}\label{lemma:renewal-strong-approximations}
	If $ \mathbb{E}[z(1)^{4}] < \infty $, there exists a standard Brownian motion $ \check{B}_{0} $ such that with probability~1,
	\begin{equation}\label{eq:E-strong-approximation}
		\lim_{n \to \infty} \frac{1}{n^{1/4}} \sup_{0 \leq t \leq nT} | E_{n}(t) - \lambda_{n} t - c_{0} \check{B}_{0}(\lambda_{n} t ) | = 0 \quad \mbox{for $ T > 0 $.}
	\end{equation}
	If $ \mathbb{E}[w_{k}(1)^{4}] < \infty $ for $ k = 1, \ldots, s $, there exists a standard Brownian motion $ \check{B}_{k} $ such that with probability 1,
	\begin{equation}\label{eq:Sk-strong-approximation}
		\lim_{n \to \infty} \frac{1}{n^{1/4}} \sup_{0 \leq t \leq nT} | S_{k}(t) - \mu_{k} t - c_{k} \check{B}_{k}(\mu_{k} t ) | = 0 \quad \mbox{for $ T > 0 $.}
	\end{equation}
\end{lemma}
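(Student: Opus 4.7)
The plan is to obtain both strong approximations as direct applications of the general strong approximation result cited as Lemma~\ref{lemma:strong-approximations}, which provides Komlós--Major--Tusnády type couplings whose rate is calibrated to the available moments. For \eqref{eq:E-strong-approximation}, the inter-appearance times $\{z(j)\}$ are i.i.d.\ with mean~1, variance $c_{0}^{2}$, and finite fourth moment (from condition \eqref{eq:fourth-moments}), so the cited lemma applies to the renewal counting process $E_{n}$ and produces a standard Brownian motion $\check{B}_{0}$ realising the claimed $o(n^{1/4})$ error uniformly on $[0, nT]$. The appearance of $\lambda_{n}t$ inside $\check{B}_{0}$ reflects that the renewal rate is $\lambda_{n}$ and the variance parameter $c_{0}^{2}$ is inherited from the inter-appearance time distribution. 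Statement \eqref{eq:Sk-strong-approximation} follows by exactly the same argument, applied to the renewal process $S_{k}$ generated by the normalised service times $\{w_{k}(i)\}$ (with rate $\mu_{k}$ and coefficient of variation $c_{k}$).

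Operationally, one may derive Lemma~\ref{lemma:strong-approximations} for a renewal process from the corresponding coupling for partial sums along the lines of Csörg\H{o}--Révész: given an approximation
\[
\sup_{0 \leq i \leq m}\Bigl|\sum_{j=1}^{i}(z(j) - 1) - c_{0}\check{B}_{0}(i)\Bigr| = o(m^{1/4}) \quad \text{a.s.,}
\]
the inverse relation $E_{n}(t) = \sup\{i : \sum_{j=1}^{i} z(j)/\lambda_{n} \leq t\}$ and the first-passage identity translate the coupling of the sum into one for $E_{n}$. One then replaces $\check{B}_{0}(E_{n}(t))$ by $\check{B}_{0}(\lambda_{n} t)$ using the uniform modulus of continuity of Brownian paths together with the LIL-type bound $|E_{n}(t) - \lambda_{n}t| = O((n \log\log n)^{1/2})$ on $[0, nT]$. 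The same inversion/time-change argument transfers the partial-sum approximation for the service times to $S_{k}$.

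The main obstacle --- if one were to rebuild Lemma~\ref{lemma:strong-approximations} from the partial-sum version rather than cite it --- is calibrating the error at the critical rate $n^{1/4}$. A naive use of Brownian modulus of continuity gives $|\check{B}_{0}(E_{n}(t)) - \check{B}_{0}(\lambda_{n}t)|$ of order $|E_{n}(t) - \lambda_{n}t|^{1/2}\sqrt{\log n}$, which is $O(n^{1/4}(\log\log n)^{1/4}\sqrt{\log n})$ and thus not manifestly $o(n^{1/4})$. The standard remedy is to appeal directly to Horváth's (1984) strong approximation for renewal processes, which couples $E_{n}$ to a Brownian motion at the desired $o(n^{1/4})$ rate under just the fourth-moment hypothesis, thereby bypassing the delicate inversion step; this is precisely what is packaged in Lemma~\ref{lemma:strong-approximations} and is the tool I would invoke once stated. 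With the lemma in hand, the proof of Lemma~\ref{lemma:renewal-strong-approximations} reduces to identifying the correct rate and variance constants, which is the mechanical matching described above.
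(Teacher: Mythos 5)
Your proposal is correct and takes the same route as the paper: the paper offers no explicit proof of this lemma beyond the parenthetical reference to Lemma~\ref{lemma:strong-approximations}, whose renewal-process half (with $\alpha = 4$, so rate $T^{1/4}$) is applied after the observations $E_{n}(t) = N(\lambda_{n}t)$ and $S_{k}(t) = N_{k}(\mu_{k}t)$, with $\lambda_{n}$ bounded. Your detour through the partial-sum version and the $(\log\log n)^{1/4}\sqrt{\log n}$ obstacle is a fair account of why one should invoke the renewal-process coupling directly rather than invert, but since the cited lemma already contains that coupling, the detour is not needed for the argument actually made.
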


Lemma~\ref{lemma:Wiener-intervals} is part of Theorem~1 in \citet{Csorgo.Revesz.1979}, concerned with the increments of a standard Brownian motion over subintervals of $ [0, T] $. It is used with Lemma~\ref{lemma:renewal-strong-approximations} to estimate the increments of renewal processes in our proofs.
\begin{lemma}\label{lemma:Wiener-intervals}
	Let $ \{\check{B}(t) : t \geq 0 \} $ be a standard Brownian motion and $ f: \mathbb{R}_{+} \rightarrow \mathbb{R}_{+} $ a nondecreasing function such that $ 0 < f(t) \leq t $ and $ f(t)/t $ is nonincreasing for $ t > 0 $. Then with probability 1,
	\[
	\limsup_{T \to \infty} \sup_{0 \leq t \leq T - f(T)} \sup_{0 < u \leq f(T)} \frac{|\check{B}(t+u) - \check{B}(t)|}{\sqrt{2f(T)(\log T - \log f(T) + \log\log T)}} = 1.
	\]
\end{lemma}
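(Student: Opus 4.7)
\subsection*{Proof proposal for Lemma~\ref{lemma:Wiener-intervals}}

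The plan is to prove the two matching halves of the limsup separately, using the standard Gaussian tail bounds together with Borel--Cantelli along a geometric subsequence $T_{n}=\theta^{n}$, $\theta>1$. Throughout, write $a_{T} = \sqrt{2 f(T) (\log T - \log f(T) + \log\log T)}$ and $g(T) = \log T - \log f(T) + \log\log T$. The monotonicity assumptions on $f$ ensure that $f$ and $g$ vary regularly enough that consecutive values $a_{T_{n}}$ and $a_{T_{n+1}}$ are comparable up to a factor that tends to $1$ as $\theta\downarrow 1$, which is what eventually lets us pass from the subsequence to the full limit.

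For the upper bound, I would fix $\varepsilon>0$, let $\theta>1$ be close to $1$, and on each block $[T_{n},T_{n+1}]$ overlay a grid $\{j h_{n}\}$ with spacing $h_{n} = f(T_{n})/k_{n}$ for an integer $k_{n}\to\infty$ chosen slowly (e.g.\ $k_{n}=\lfloor g(T_{n})\rfloor$). First, for any two grid points within distance $f(T_{n+1})$, the increment is a centered Gaussian with variance at most $f(T_{n+1})$, so by the standard Gaussian tail bound $\mathbb{P}(|N(0,\sigma^{2})|>x)\leq \exp(-x^{2}/(2\sigma^{2}))$ and a union bound over $O(T_{n}/h_{n})\times O(k_{n})$ pairs, the probability that any grid-restricted increment exceeds $(1+\varepsilon)a_{T_{n}}$ decays like $T_{n}^{-\varepsilon'}$ for some $\varepsilon'>0$, which is summable in $n$. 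Second, I would use Lévy's modulus of continuity on $[0,T_{n+1}]$ to control, with probability one, the oscillation of $\check B$ on intervals of length $\leq h_{n}$ by something that is $o(a_{T_{n}})$ provided $h_{n}$ shrinks faster than $f(T_{n})/g(T_{n})$, which our choice of $k_{n}$ guarantees. Combining the grid estimate with the modulus-of-continuity slack and applying Borel--Cantelli yields $\limsup_{T\to\infty} \sup_{t,u}|\check B(t+u)-\check B(t)|/a_{T}\leq 1+O(\varepsilon)$, and $\varepsilon\downarrow 0$ finishes the upper bound.

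For the lower bound, I would exploit independence: within $[0, T]$ one can fit $N_{T} = \lfloor T/f(T) \rfloor$ disjoint intervals of length $f(T)$, and the increments $X_{i} = \check B(i f(T)) - \check B((i-1) f(T))$, $i=1,\dots,N_{T}$, are i.i.d.\ $\mathcal{N}(0, f(T))$. Using the lower Gaussian tail $\mathbb{P}(N(0,1) > x) \geq c x^{-1} e^{-x^{2}/2}$ for large $x$, the probability that each $X_{i}$ stays below $(1-\varepsilon) a_{T}$ is, after plugging in the definition of $a_{T}$, at most $(1 - \phi(T))^{N_{T}}$ where $\phi(T)\cdot N_{T}\to\infty$ (this is where the precise form $\log T - \log f(T) + \log\log T$ matters; it makes the exponent beat $\log N_{T}$ by a $\log\log T$ margin). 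Hence the complementary event is summable along $T_{n}=\theta^{n}$, Borel--Cantelli delivers $\max_{i} X_{i}/a_{T_{n}} \geq 1-\varepsilon$ eventually almost surely, and since every such $X_{i}$ is an admissible increment in the supremum, we obtain $\limsup\geq 1-\varepsilon$, hence $\geq 1$.

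The main obstacle is the bookkeeping in handling the subsequence $T_{n}=\theta^{n}$: one needs the ratios $a_{T}/a_{T_{n}}$ and $f(T)/f(T_{n})$ for $T\in[T_{n},T_{n+1}]$ to be uniformly close to $1$ as $\theta\downarrow 1$, and for this the monotonicity of $f(t)/t$ together with the monotonicity of $f$ are essential. Once that regularity is in hand, the Gaussian estimates above are routine and deliver both halves of the stated limsup.
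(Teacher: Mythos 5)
The paper does not prove this lemma at all: it is quoted as part of Theorem~1 of \citet{Csorgo.Revesz.1979}, so your self-contained attempt is being measured against that classical result rather than an in-paper argument. Your upper-bound half is essentially the standard Cs\"{o}rg\H{o}--R\'{e}v\'{e}sz discretization proof and is workable: the grid union bound with spacing $f(T_{n})/g(T_{n})$ is summable along $T_{n}=\theta^{n}$ (though the per-scale bound decays like a power of $\log T_{n}$, i.e.\ of $n$, not like $T_{n}^{-\varepsilon'}$ when $f(T)\asymp T$), and the off-grid oscillation should be controlled by a reflection-principle union bound at scale $h_{n}$ rather than by ``L\'{e}vy's modulus of continuity,'' which is a fixed-interval, $h\to 0$ statement and does not apply when $h_{n}\to\infty$ (e.g.\ $f(T)=T$). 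These are repairable.

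The genuine gap is in the lower bound. Your key claim is that $\phi(T)N_{T}\to\infty$ with $N_{T}=\lfloor T/f(T)\rfloor$ and $\phi(T)\approx\exp(-(1-\varepsilon)^{2}g(T))$, $g(T)=\log(T/f(T))+\log\log T$. Then $\phi(T)N_{T}\approx\exp\big(\delta\log(T/f(T))-(1-\varepsilon)^{2}\log\log T\big)$ with $\delta=1-(1-\varepsilon)^{2}<1$, and this tends to $0$, not $\infty$, whenever $\log(T/f(T))=O(\log\log T)$ --- for instance $f(T)=T$ or $f(T)=cT$, which are admissible and for which the lemma reduces exactly to the law of the iterated logarithm. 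In that regime you have only $O(1)$ disjoint windows of length $f(T)$ inside $[0,T]$, each exceeding $(1-\varepsilon)a_{T}$ with probability of order $(\log T)^{-(1-\varepsilon)^{2}}$, so $(1-\phi(T_{n}))^{N_{T_{n}}}$ is nowhere near summable and the first Borel--Cantelli argument collapses; the $\log\log T$ term in $g(T)$ works against you rather than providing the margin you describe. The missing idea is the classical LIL mechanism: take $T_{n}=\theta^{n}$ with $\theta$ \emph{large}, apply the second Borel--Cantelli lemma to the independent cross-scale increments $\check{B}(T_{n})-\check{B}(T_{n-1})$ (or to one window per scale over disjoint time intervals), and use the already-proved upper bound to control the leftover endpoint term $\check{B}(T_{n-1})$. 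Your single-scale disjoint-blocks argument covers only the regime $\log(T/f(T))/\log\log T\to\infty$; without the multi-scale argument the stated equality is not established for the remaining admissible $f$.
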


\subsection{Theorems~\ref{theorem:load-balancing} and \ref{theorem:load-balancing-gap}}

We prove Theorems~\ref{theorem:load-balancing} and~\ref{theorem:load-balancing-gap} following similar procedures. Consider the event
\[
\Theta_{n} = \Big\{ \sup_{0 \leq t \leq T} \max_{k,\ell = 1, \ldots, s} | \tilde{L}_{n,k}(t) - \tilde{L}_{n,\ell}(t) | \geq \phi_{n} \Big\}.
\]
With certain choices of $ \phi_{n} $, we prove $ \lim_{n \to \infty} \mathbb{P}[\Theta_{n}] = 0 $ for Theorem~\ref{theorem:load-balancing} and $ \mathbb{P} [ \limsup_{n \to \infty} \Theta_{n} ] = 0 $ for Theorem~\ref{theorem:load-balancing-gap} under the respective conditions.

Rewrite the above event as $ \Theta_{n} = \{ \tau_{1,n} \leq T \} $, where
\[
\tau_{1,n} = \inf \Big\{ t \geq 0 : \max_{k,\ell = 1, \ldots, s} | \tilde{L}_{n,k}(t) - \tilde{L}_{n,\ell}(t) | \geq \phi_{n} \Big\}. 
\] 
Consider an outcome in $ \Theta_{n} $. Let stations~$ \bar{\ell}(n) $ and~$ \underline{\ell}(n) $ be the stations that have the longest and shortest queues at time $ \tau_{1,n} $, respectively. Then, $ \tilde{L}_{n,\bar{\ell}(n)}(\tau_{1,n}) - \tilde{L}_{n,\underline{\ell}(n)}(\tau_{1,n}) \geq \phi_{n} $. Writing $ \tilde{H}_{n}(t) = \mu \tilde{L}_{n,\bar{\ell}(n)}(t) - \sum_{\ell = 1}^{s} \tilde{Q}_{n,\ell}(t) $, we can prove $\tilde{H}_{n}(\tau_{1,n}) \geq \mu_{1}\phi_{n}$. Let
\[
\tau_{2,n} = \sup\Big\{ t \in [0, \tau_{1,n}] :  \tilde{H}_{n}(t-) < \frac{\mu_{1}\phi_{n}}{2}  \Big\} \quad \mbox{and} \quad \tau_{3,n} = \inf\big\{ t \in [\tau_{2,n}, \tau_{1,n}] :  \tilde{H}_{n}(t) \geq \mu_{1}\phi_{n}  \big\}.
\] 
Since the system is initially empty, $ 0 \leq \tau_{2,n} \leq \tau_{3,n} \leq \tau_{1,n} $ and $\tilde{H}_{n}(\tau_{3,n}) - \tilde{H}_{n}(\tau_{2,n}-) \geq \mu_{1}\phi_{n}/2$.

Let $\psi_{n}$ be a positive number and $ d_{0} = \max \{ d_{m,k} : m = 1,\ldots, b; k = 1, \ldots, s \} $. We decompose $ \Theta_{n} $ into three disjoint events: $ \Theta_{1,n} = \{ \tau_{1,n} \leq T, \tau_{2,n} \leq n^{-1/2} d_{0} \} $, $ \Theta_{2,n} = \{ \tau_{1,n} \leq T, \tau_{2,n} > n^{-1/2} d_{0}, \tau_{3,n} - \tau_{2,n} < \psi_{n} \} $, and $ \Theta_{3,n} = \{ \tau_{1,n} \leq T, \tau_{2,n} > n^{-1/2} d_{0}, \tau_{3,n} - \tau_{2,n} \geq \psi_{n} \} $. We prove these events are asymptotically negligible by selecting some appropriate $ \psi_{n}$.

It is simple to show
\begin{align*} 
	\mathbb{P}\Big[ \limsup_{n \to \infty} \Theta_{1,n} \Big] & \leq \mathbb{P}\Big[\limsup_{n \to \infty} \{\tau_{2,n} \leq n^{-1/2} d_{0}\}\Big] \\
	& \leq \mathbb{P} \Big[ \limsup_{n \to \infty} \Big\{\sup_{0 \leq t \leq d_{0}} \max_{k = 1, \ldots, s} \tilde{L}_{n,k} (n^{-1/2}t) \geq \frac{\mu_{1}\phi_{n}}{2(\mu - \mu_{1})} \Big\}\Big].
\end{align*}
Since the system is initially empty, the queue lengths under the RJSQ policy, which sends customers to each station with a probability approximately proportional to the service capacity, cannot be too long at the beginning. We can thus prove $ \mathbb{P}[\limsup_{n \to \infty} \Theta_{1,n}] = 0 $ when $\phi_{n}$ is sufficiently large. 

Consider $\Theta_{2,n}$ and suppose $\psi_{n}$ is given. Since $\tau_{3,n} - \tau_{2,n} < \psi_{n}$, we render the event $\{ \tilde{H}_{n}(\tau_{3,n}) - \tilde{H}_{n}(\tau_{2,n}-) \geq \mu_{1}\phi_{n}/2 \}$ asymptotically negligible by making $\phi_{n}$ (which depends on $\psi_{n}$) sufficiently large. Then, $\Theta_{2,n}$ will be asymptotically negligible. We determine $ \psi_{n} $ by analyzing $ \Theta_{3,n} $ as follows.

Assume $\psi_{n} \geq n^{-1/2} (2d_{0} \vee 1) $ and write $ \tau_{4,n} = \tau_{3,n} -  \psi_{n}/2 - n^{-1/2} d_{0} $. Then, $ \tau_{2,n} \leq \tau_{4,n} < \tau_{3,n} $ and $ \tilde{H}_{n} (\tau_{3,n}) - \tilde{H}_{n} (\tau_{4,n}-) > 0 $ on $ \Theta_{3,n} $. Since $ \tilde{H}_{n}(t) \geq \mu_{1}\phi_{n}/2 $ for $ t \in [\tau_{2,n}, \tau_{3,n}] $, station~$\bar{\ell}(n)$ cannot have the shortest queue within this period. If the $ j $th customer joins the system during $[\tau_{2,n}, \tau_{3,n}] $, then $ \varepsilon_{n,\pi_{n,\bar{\ell}(n)}(j)} \leq - \delta_{0} \chi_{n}$ by  condition~\eqref{item:cond-1-n}. Because the consecutive perturbation coefficients for routing customers to station~$\bar{\ell}(n)$ are all negative, $\mu \tilde{L}_{n,\bar{\ell}(n)}(t)$ tends to decrease faster than $ \sum_{\ell = 1}^{s} \tilde{Q}_{n,\ell}(t) $ for $t \in [\tau_{4,n}+n^{-1/2}d_{0}, \tau_{3,n}]$. Then, we can prove $ \tilde{H}_{n} (\tau_{3,n}) - \tilde{H}_{n} (\tau_{4,n}-) \leq 0 $ when $\psi_{n}$ is sufficiently large. Because $ \tilde{H}_{n} (\tau_{3,n}) - \tilde{H}_{n} (\tau_{4,n}-) > 0 $ on $ \Theta_{3,n} $, the event must be asymptotically negligible.

According to the above discussion, we need to determine $ \psi_{n} $ and $ \phi_{n} $ in order for $ \Theta_{n} $ to be asymptotically negligible. Under the respective conditions of Theorems~\ref{theorem:load-balancing} and~\ref{theorem:load-balancing-gap}, Lemmas~\ref{lemma:random-walk}--\ref{lemma:Wiener-intervals} are used to first specify the order of magnitude for $ \psi_{n} $ and then for $ \phi_{n} $. Please refer to Appendix~\ref{sec:Proofs-Load-Balancing}.

\subsection{Theorems~\ref{theorem:workload-difference} and~\ref{theorem:CRP}}

Let $ B_{n,k}^{\star}(t) $ be the amount of time that the server in the SSP has spent serving class-$ k $ customers by time $ t $. The server's cumulative busy time is $ B_{n}^{\star}(t) = \sum_{k = 1}^{s} B_{n,k}^{\star}(t) $. The fluid-scaled versions are defined by $ \bar{B}_{n,k}^{\star}(t) = B_{n,k}^{\star}(nt)/n $ and $ \bar{B}_{n}^{\star}(t) = B_{n}^{\star}(nt)/n $. 

Since the corresponding customers have identical service requirements, the cumulative stationed workloads must be equal in the two systems. Then by \eqref{eq:workload-difference}, $ \tilde{\Gamma}^{\star}_{n}(t) = \sqrt{n} (\mu \bar{B}_{n}^{\star}(t) - \sum_{k = 1}^{s} \mu_{k} \bar{B}_{n,k}(t)) $. Because the sample paths of $\tilde{\Gamma}^{\star}_{n}$ are continuous and piecewise linear,
\[
\sup_{0 \leq t \leq T} \tilde{\Gamma}^{\star}_{n}(t) = \sup_{t \in \mathcal{S}^{\star}_{n} } \tilde{\Gamma}^{\star}_{n}(t) = \sup_{t \in \mathcal{S}^{\star}_{n} } \tilde{\Gamma}^{\star}_{n}(t-),
\]
where $ \mathcal{S}^{\star}_{n} = \{ t \in (0,T] : \mbox{$ \partial_{-} \tilde{\Gamma}^{\star}_{n}(t) > 0 $} \} \cup \{0\} $, with $\partial_{-}$ denoting the left derivative. At any time, the cumulative busy time of a server either increases at rate $ 1 $ or stays unchanged. Then for $t \in \mathcal{S}^{\star}_{n}\setminus\{0\}$, $ \partial_{-} \bar{B}_{n}^{\star}(t) = 1 $ and at least one of $\partial_{-}\bar{B}_{n,1}(t), \ldots, \partial_{-}\bar{B}_{n,s}(t) $ is zero. Hence, at least one of $\tilde{Q}_{n,1}(t-), \ldots, \tilde{Q}_{n,s}(t-)$ is zero. By Theorems~\ref{theorem:load-balancing} and~\ref{theorem:load-balancing-gap} and under the respective conditions,
\[
\max_{k = 1, \ldots, s} \sup_{t \in \mathcal{S}^{\star}_{n}} \tilde{Q}_{n,k}(t-) \Rightarrow 0 \quad \mbox{as $ n \to \infty $,} 
\]
or with probability 1,
\[
\sup_{t \in \mathcal{S}^{\star}_{n}} \tilde{Q}_{n,k}(t-) < \max \Big\{ C \mu_{k} \chi_{n}, C' \mu_{k} \frac{\log \log n + \log (n \chi_{n}^{2})}{\sqrt{n} \chi_{n}} \Big\} \quad \mbox{for $n$ sufficiently large.}
\]

Write $\tilde{W}_{n,k}(t) = W_{n,k}(nt)/\sqrt{n}$. Then, $\tilde{W}_{n,k}(t) \approx \tilde{Q}_{n,k}(t)$ because $\{w_{k}(i) : i\in\mathbb{N} \} $ is a sequence of i.i.d.\ random variables with mean 1. We complete the proof of Theorem~\ref{theorem:workload-difference} using
\[
\sup_{t \in \mathcal{S}^{\star}_{n} } \tilde{\Gamma}^{\star}_{n}(t-) \leq \sum_{k = 1}^{s} \sup_{t \in \mathcal{S}^{\star}_{n} } \tilde{W}_{n,k}(t-) \approx \sum_{k = 1}^{s} \sup_{t \in \mathcal{S}^{\star}_{n} } \tilde{Q}_{n,k}(t-).
\] 

Let $Q^{\star}_{n}(t)$ be the customer count in the station of the SSP at time $t$, with the scaled version $\tilde{Q}_{n}^{\star}(t) = Q_{n}^{\star}(nt)/\sqrt{n}$. Since the workloads in the two systems are close, their customer counts should also be close.

\begin{proposition}\label{prop:asymptotic-equivalence}
	Assume the conditions of Theorem~\ref{theorem:load-balancing} hold and all the SSPs are initially empty. Then for $ T > 0 $,
	\[  
	\sup_{0 \leq t \leq T} \Big\vert \sum_{k = 1}^{s} \tilde{Q}_{n,k}(t) - \tilde{Q}^{\star}_{n}(t) \Big\vert \Rightarrow 0 \quad \mbox{as $ n \to \infty $.}
	\]
\end{proposition}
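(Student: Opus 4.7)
The plan is to reduce the equivalence of customer counts to the workload closeness already given by Theorem~\ref{theorem:workload-difference}, exploiting the fact that in heavy traffic the customer count in each queue is asymptotically equal to the service-requirement workload because $\mathbb{E}[w_{k}(i)] = 1$. Writing $\tilde{W}_{n,k}(t) = W_{n,k}(nt)/\sqrt{n}$ and $\tilde{W}^{\star}_{n}(t) = W^{\star}_{n}(nt)/\sqrt{n}$, and using $\tilde{\Gamma}^{\star}_{n} = \sum_{k} \tilde{W}_{n,k} - \tilde{W}^{\star}_{n}$ by the definition of $W_{n}$ and $\Gamma^{\star}_{n}$, I would start from the identity
\[
\sum_{k = 1}^{s} \tilde{Q}_{n,k}(t) - \tilde{Q}^{\star}_{n}(t) = \tilde{\Gamma}^{\star}_{n}(t) + \sum_{k = 1}^{s} \bigl[ \tilde{Q}_{n,k}(t) - \tilde{W}_{n,k}(t) \bigr] - \bigl[ \tilde{Q}^{\star}_{n}(t) - \tilde{W}^{\star}_{n}(t) \bigr].
\]
Theorem~\ref{theorem:workload-difference} disposes of the first term, so the task reduces to showing that the two ``customer-count minus workload'' discrepancies vanish uniformly on $[0, T]$ in probability.

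For the distributed system, the work-balance identity $W_{n,k}(t) = V^{\mathrm{cl}}_{n,k}(t) - \mu_{k} B_{n,k}(t)$ with $V^{\mathrm{cl}}_{n,k}(t) = \sum_{i = 1}^{A_{n,k}(t)} w_{k}(i)$, combined with \eqref{eq:queue-length} and the defining inequality for $S_{k}$, yields
\[
W_{n,k}(t) - Q_{n,k}(t) = M_{k}(A_{n,k}(t)) - M_{k}(S_{k}(B_{n,k}(t))) - \eta_{k}(t),
\]
where $M_{k}(j) := \sum_{i = 1}^{j}(w_{k}(i) - 1)$ is the centered random walk of class-$k$ service requirements and $\eta_{k}(t) \in [0, w_{k}(S_{k}(B_{n,k}(t)) + 1))$ is bounded. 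The crucial observation is that the gap between the two indices equals $Q_{n,k}(t)$ exactly. Setting $\hat{M}_{n,k}(t) := M_{k}(\lfloor nt \rfloor)/\sqrt{n}$, the functional central limit theorem gives $\hat{M}_{n,k} \Rightarrow c_{k} B$ for a Brownian motion $B$, and its modulus of continuity $w_{\hat{M}_{n,k}}(\delta)$ tends to zero in probability as $\delta \to 0$. Taking $\delta_{n} = n^{-1} \sup_{0 \leq t \leq T} Q_{n,k}(nt)$, I would conclude $\sup_{0 \leq t \leq T} |\tilde{W}_{n,k}(t) - \tilde{Q}_{n,k}(t)| \Rightarrow 0$ as soon as $\delta_{n} \Rightarrow 0$.

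The SSP analysis follows the same pattern with one added wrinkle: the single server processes customers of different classes in their joint arrival order, so the service-requirement sums must be tracked class by class. Writing $N^{\star}_{k}(u)$ for the number of class-$k$ customers served during the first $u$ units of the SSP server's busy time, the analogous manipulation produces
\[
W^{\star}_{n}(t) - Q^{\star}_{n}(t) = \sum_{k = 1}^{s} \bigl[ M_{k}(A_{n,k}(t)) - M_{k}(N^{\star}_{k}(B^{\star}_{n}(t))) \bigr] - \eta^{\star}(B^{\star}_{n}(t)),
\]
where the $k$th index gap $A_{n,k}(t) - N^{\star}_{k}(B^{\star}_{n}(t))$ is the number of class-$k$ customers present in the SSP at time $t$ and is dominated by $Q^{\star}_{n}(t)$. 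The modulus-of-continuity argument applies term by term.

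The hard part will be the a priori tightness of $\sup_{0 \leq t \leq T} \tilde{Q}_{n,k}(t)$ and $\sup_{0 \leq t \leq T} \tilde{Q}^{\star}_{n}(t)$, which forces $\delta_{n} \Rightarrow 0$ but cannot circularly invoke the present proposition. I would obtain this by first establishing heavy-traffic tightness of the single-server SSP workload $\tilde{W}^{\star}_{n}$ via a standard single-server argument applied to its delayed renewal arrivals, transferring tightness to $\tilde{W}_{n}$ via Theorem~\ref{theorem:workload-difference} (noting that Proposition~\ref{prop:stationed-workload-comparision} gives only a one-sided comparison, so both directions must be handled), and finally deducing tightness of the $\tilde{Q}$-quantities from a coarse form of the workload-to-count correspondence proved just above.
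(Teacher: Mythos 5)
Your proposal is correct in substance but takes a genuinely different route from the paper. The paper's proof starts from \eqref{eq:queue-length}, \eqref{eq:Q-star}, and \eqref{eq:tilde-Gamma} to write $\sum_{k=1}^{s}\tilde{Q}_{n,k}(t)-\tilde{Q}^{\star}_{n}(t)=\tilde{\Gamma}^{\star}_{n}(t)+\sum_{k=1}^{s}\bigl(\tilde{S}_{n,k}(\mu\bar{B}^{\star}_{n,k}(t)/\mu_{k})-\tilde{S}_{n,k}(\bar{B}_{n,k}(t))\bigr)$, kills the second term by C-tightness of $\tilde{S}_{n,k}$ together with the busy-time fluid limits $\bar{B}_{n,k}\Rightarrow\mathcal{I}$ and $\bar{B}^{\star}_{n,k}\Rightarrow\mu_{k}\mathcal{I}/\mu$ (Lemmas~\ref{lemma:busy-time} and~\ref{lemma:B-star}), and kills the first by Theorem~\ref{theorem:workload-difference}. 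You instead convert counts to workloads in each system via the centered requirement walks $M_{k}$, so your vanishing-window argument is applied to $\hat{M}_{n,k}$ between the arrival and departure indices, the window being the fluid-scaled queue length; structurally both proofs are ``Theorem~\ref{theorem:workload-difference} plus oscillation of a C-tight process over an asymptotically vanishing window,'' but yours trades the busy-time lemmas for an a priori bound forcing $\sup_{0\leq t\leq T}Q_{n,k}(nt)/n\Rightarrow 0$ and $\sup_{0\leq t\leq T}Q^{\star}_{n}(nt)/n\Rightarrow 0$. Your plan for that bound is sound and non-circular: the SSP workload is the one-sided reflection of $\sum_{k}V_{k}(A_{n,k}(\cdot))-\mu\mathcal{I}$, and although the SSP input is policy-dependent and non-renewal, arrivals over any interval are sandwiched by $E_{n}$ shifted by the maximal primitive delay, so the functional CLT for $E_{n}$ and the $V_{k}$ gives $\sup_{t\leq T}\tilde{W}^{\star}_{n}(t)=O_{p}(1)$; Theorem~\ref{theorem:workload-difference} together with Proposition~\ref{prop:stationed-workload-comparision} transfers this to $\tilde{W}_{n}$, and your count--workload identities then bound the $\tilde{Q}$-quantities (the paper obtains the analogous estimate from Lemma~\ref{lemma:stochastic-boundedness}, which rests on Theorem~\ref{theorem:load-balancing}, so the two arguments need comparable a priori input; in fact only fluid-scale smallness of the queues is needed for your window, so your a priori step could be weakened to a crude fluid argument). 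Two details to spell out when writing it up: the fractional-job remainders $\eta_{k}$ and $\eta^{\star}$ must be shown uniformly $o_{p}(\sqrt{n})$, which follows from $\max_{i\leq O(n)}w_{k}(i)=o_{p}(\sqrt{n})$ under finite second moments, and the modulus-of-continuity step needs the index range to stay in a compact set after scaling by $n$, which is supplied by \eqref{eq:A-bar-convergence} (or simply $A_{n,k}(nT)\leq E_{n}(nT)$).
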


In the SSP, the scaled customer count process converges to a reflected Brownian motion.
\begin{proposition}\label{prop:star-limit}
	Assume the conditions of Theorem~\ref{theorem:load-balancing} hold and all the SSPs are initially empty. Then, $ \tilde{Q}_{n}^{\star} \Rightarrow \tilde{Q} $ as $ n \to \infty $, where $ \tilde{Q} $ is a one-dimensional reflected Brownian motion starting from $ 0 $ with drift $ -\beta \mu $ and variance $ \mu c^{2}_{0} + \sum_{k = 1}^{s} \mu_{k} c^{2}_{k} $.
\end{proposition}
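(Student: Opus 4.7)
The plan is to treat the SSP as a single-server G/G/1 queue in heavy traffic and apply the one-dimensional Skorohod reflection framework. Writing $\Sigma_n^\star(u)$ for the number of services completed in a busy time of length $u$, work conservation gives
\[
Q_n^\star(t) = A_n^\star(t) - \Sigma_n^\star(B_n^\star(t)) = X_n^\star(t) + Y_n^\star(t),
\]
where $X_n^\star(t) = A_n^\star(t) - \Sigma_n^\star(t)$ and $Y_n^\star(t) = \Sigma_n^\star(t) - \Sigma_n^\star(B_n^\star(t))$ is nondecreasing and only grows when $Q_n^\star = 0$. Hence $(Q_n^\star, Y_n^\star)$ is the Skorohod reflection of $X_n^\star$. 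Once the scaled netput $\tilde{X}_n^\star(t) = X_n^\star(nt)/\sqrt{n}$ is shown to converge to a Brownian motion with drift $-\beta\mu$ and variance $\mu c_0^2 + \sum_k \mu_k c_k^2$, the continuous mapping theorem delivers $\tilde{Q}_n^\star \Rightarrow \tilde{Q}$.

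For the arrival side, I would use $A_n^\star(t) = \sum_{k=1}^{s} A_{n,k}(t)$ with each $A_{n,k}$ given by \eqref{eq:station-arrivals-analysis}, and apply the decomposition \eqref{eq:Rmnk-decomposition}. Summed over $k$ and $m$, the leading deterministic term reproduces $E_n(t)$ up to time shifts of at most $\sqrt{n}d_0$, which contribute only an asymptotically constant displacement that the reflection absorbs. The random-walk increments $G_k^m$ contribute, via Lemma~\ref{lemma:random-walk} and \eqref{eq:FCLT}, a Brownian component capturing the class-assignment fluctuations; the perturbation terms $\mathcal{E}_{n,k}^m$ are $O(\sqrt{n}\chi_n) = o(\sqrt{n})$ by \eqref{eq:epsilon-1}; and the martingale terms $M_{n,k}^m$ vanish at the diffusion scale by the final assertion of Lemma~\ref{lemma:martingale}. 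On the service side, the individual renewal processes $S_1, \ldots, S_s$ jointly satisfy the FCLT \eqref{eq:FCLT}, and the aggregated service process $\Sigma_n^\star$ combines them with asymptotic class proportions $\mu_k/\mu$ inherited from the routing plan. A standard compound-renewal argument combining the arrival and service FCLTs, together with the drift $\sqrt{n}(\lambda_n - \mu) \to -\beta\mu$ from \eqref{eq:heavy-traffic}, yields the stated limit for $\tilde{X}_n^\star$.

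The main obstacle is that the service-time sequence seen at the SSP station is neither i.i.d.\ nor adapted to a simple filtration: each customer's service time depends on its class, which the RJSQ policy assigns based on current queue lengths, and the arrival order at the SSP is further scrambled by heterogeneous travel delays. The resolution is to show, via the decomposition \eqref{eq:Rmnk-decomposition}, that the number of class-$k$ customers among the first $j$ joiners equals $j \sum_m p_m r_{m,k} = j\mu_k/\mu$ up to $o(\sqrt{n})$ fluctuations, so the aggregate service process has the same diffusion limit as one generated by i.i.d.\ multinomial class assignment with probabilities $\mu_k/\mu$. The travel delays themselves are benign at the diffusion scale: each is at most $\sqrt{n}d_0$, which translates to a vanishing time shift on the $n$-time scale and produces only an $O(1)$ persistent discrepancy between $A_n^\star$ and $E_n$ that the reflection absorbs.
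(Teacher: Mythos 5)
Your overall skeleton is correct and matches the paper's: express $Q_n^\star$ via the one-sided Skorohod reflection of a netput process, identify the Brownian limit of the netput through the decomposition~\eqref{eq:Rmnk-decomposition}, the FCLTs~\eqref{eq:FCLT}, the fluid limits of the class-specific busy times (Lemma~\ref{lemma:B-star}), and the vanishing of the perturbation and martingale pieces, then invoke the reflection-map continuity. The handling of the aggregate service process via asymptotic class proportions $\mu_k/\mu$ is also the right idea.

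The gap is in your last sentence: ``produces only an $O(1)$ persistent discrepancy between $A_n^\star$ and $E_n$ that the reflection absorbs.'' This is precisely the step that cannot be dispatched with the continuous mapping theorem, and the paper devotes most of its proof to it. The scaled netput $\tilde{X}_n^\star$ starts at $0$, but for $t > d_0/\sqrt{n}$ it sits roughly $\mu\sum_{k,m} p_m r_{m,k} d_{m,k}$ below the corresponding undelayed netput. The candidate limit would therefore equal $0$ at $t=0$ and jump down by a positive constant immediately afterwards; such a path is not right-continuous at $0$, so it does not live in $\mathbb{D}$, and $\tilde{X}_n^\star$ does \emph{not} converge in the Skorokhod topology. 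Consequently you cannot apply continuous mapping to $\Phi(\tilde{X}_n^\star)$, and the claim that ``once $\tilde{X}_n^\star$ converges to Brownian motion, the continuous mapping theorem delivers the result'' has a false premise. The paper resolves this by adding back the delay-displacement function $\iota_n$ to obtain a corrected netput $\tilde{Z}_n^\diamond = \tilde{Z}_n^\star + \iota_n$ with $\tilde{Z}_n^\diamond(0)=0$ and genuine convergence in $\mathbb{D}$, then proving directly, via the explicit form of the reflection map and the monotonicity of $\iota_n$, that $\sup_{0\le t\le T}|\Phi(\tilde{Z}_n^\diamond)(t) - \Phi(\tilde{Z}_n^\star)(t)| \Rightarrow 0$ (using that the initial displacement is felt over a window $[0, n^{-1/2}d_0]$ on which $\tilde{Z}_n^\diamond$ is small by C-tightness). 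Your intuition that ``the reflection absorbs it'' is exactly what this argument establishes, but it is a nontrivial two-sided estimate (see the four cases \eqref{eq:dagger-star-1}--\eqref{eq:dagger-star-4} in the paper), not a consequence of standard weak-convergence machinery, and your proposal as written does not supply it.
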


Using Propositions~\ref{prop:asymptotic-equivalence} and~\ref{prop:star-limit}, we follow a continuous mapping approach and prove Theorem~\ref{theorem:CRP}. Please refer to Appendix~\ref{sec:Proofs-CRP}.

\subsection{Theorem~\ref{theorem:workload-optimality}}

Let $A^{\dagger}_{n,k}(t)$ be the number of class-$k$ customers that have arrived at the station of the MDSP by time~$t$, with the fluid-scaled version $\bar{A}^{\dagger}_{n,k}(t) = A^{\dagger}_{n,k}(nt)/n$. Clearly, $A^{\dagger}_{n,k}(t) \geq A_{n,k}(t)$. Since the cumulative workloads are equal in the two systems,
\[
\sum_{k = 1}^{s} \sum_{i = 1}^{A_{n,k}(t)}w_{k}(i) + U_{n}(t) = \sum_{k = 1}^{s} \sum_{i = 1}^{A^{\dagger}_{n,k}(t)} w_{k}(i) + U^{\dagger}_{n}(t).
\] 
Write $\tilde{U}_{n}(t) = U_{n}(nt)/\sqrt{n}$ and $\tilde{U}^{\dagger}_{n}(t) = U^{\dagger}_{n}(nt)/\sqrt{n}$. Because $\{ w_{k}(i) : i\in\mathbb{N} \}$ is a sequence of i.i.d.\ random variables with mean 1, $ \tilde{U}_{n}(t) - \tilde{U}^{\dagger}_{n}(t) \approx \sum_{k = 1}^{s} \sqrt{n} ( \bar{A}^{\dagger}_{n,k}(t) - \bar{A}_{n,k}(t) ) $. Assume the conditions of Theorem~\ref{theorem:load-balancing} hold. Using \eqref{eq:Mmnk-increments} and \eqref{eq:FSLLN}, we can prove 
\[ 
\sup_{0 \leq t \leq T} \sqrt{n}\big( \bar{A}^{\dagger}_{n,k}(t) - \bar{A}_{n,k}(t) \big) < \max\big\{ C^{\dagger}_{k} \chi_{n}, C^{\ddagger}_{k} n^{-1/4}\sqrt{\chi_{n}\log n \vee 1} \big\} \quad \mbox{for $n$ sufficiently large,}
\]
where $C^{\dagger}_{k}$ and $C^{\ddagger}_{k}$ are two positive numbers. Then, an upper bound for the difference between the en route workloads can be specified.

Following the approach in the proof of Theorem~\ref{theorem:workload-difference}, we estimate the difference between the total stationed workloads. It follows from Proposition~\ref{prop:JNQ-workload-comparision} that $ \sup_{0 \leq t \leq T} \tilde{\Gamma}^{\dagger}_{n}(t) = \sup_{t \in \mathcal{S}^{\dagger}_{n} } \tilde{\Gamma}^{\dagger}_{n}(t-) $, where $ \mathcal{S}^{\dagger}_{n} = \{ t \in (0,T] : \mbox{$ \partial_{-} \tilde{\Gamma}^{\dagger}_{n}(t) > 0 $} \} \cup \{0\} $. By Theorems~\ref{theorem:load-balancing} and~\ref{theorem:load-balancing-gap} and under the respective conditions,
\[
\max_{k = 1, \ldots, s} \sup_{t \in \mathcal{S}^{\dagger}_{n} } \tilde{W}_{n,k}(t-) \Rightarrow 0 \quad \mbox{as $ n \to \infty $,} 
\]
or with probability 1,
\[
\sup_{t \in \mathcal{S}^{\dagger}_{n} } \tilde{W}_{n,k}(t-)  <  \max \Big\{ 2C \mu \chi_{n}, 2C' \mu \frac{\log \log n + \log (n \chi_{n}^{2})}{\sqrt{n} \chi_{n}} \Big\} \quad \mbox{for $n$ sufficiently large.}
\]
Then, we can conclude the proof using the fact that
\[
\sup_{t \in \mathcal{S}^{\dagger}_{n} } \tilde{\Gamma}^{\dagger}_{n}(t-) \leq \sum_{k = 1}^{s} \sup_{t \in \mathcal{S}^{\dagger}_{n} } \tilde{W}_{n,k}(t-) + \sup_{0 \leq t \leq T} \big(\tilde{U}_{n}(t) - \tilde{U}^{\dagger}_{n}(t)\big).
\] 
Please refer to Appendix~\ref{sec:Proof-Capacity}.

\section{Concluding Remarks}
\label{sec:Conclusion}

We devised and analyzed the RJSQ policy for a queueing system with parallel stations distant from customers. In contrast to the JSQ policy, which induces queue length oscillations in the presence of information delays, the RJSQ policy retains complete resource pooling when customers' traveling delays are on the same order of magnitude as their potential waiting times. A strong approximation approach was developed to analyze load imbalance between the stations, enabling us to fine-tune the balancing fraction and optimize the RJSQ policy.

This study is the first step toward our research on \emph{stochastic spatial networks}---an example of which is the service system with geographically separated stations in Section~\ref{sec:Routing-Capacity-Planning}. In such a network, if customers may visit other stations upon service completion in one station, can we still rely on the insights gleaned here to design a routing policy and retain complete resource pooling? A stochastic network model with traveling delays between the stations may have applications in transportation, logistics, sharing economy services, etc. For instance, the stations could represent multiple regions in a ride-hailing system, between which passengers are transported by cars; empty-car routing is often needed to balance supply and demand across the regions \citep{Braverman.et.al.2019}.

It is known that routing policies of the \emph{backpressure} type are throughput-optimal and may asymptotically minimize workload in queueing networks \citep{Tassiulas.Ephremides.1992,Stolyar.2004,Dai.Lin.2005,Dai.Lin.2008}. Motivated by ride-hailing applications, \citet{Kanoria.Qian.2024} proposed a policy of this type for closed queueing networks; assuming customers to be transferred from one station to another \emph{instantaneously}, they proved the policy to be near-optimal and obtained the optimality gap. The backpressure policies, however, also suffer from queue length oscillations when there are information delays. Our study suggests a simple randomization technique would suffice to mitigate the oscillation phenomenon. Under certain conditions, a randomized version of the backpressure policy may still attain throughput optimality and workload minimization. We believe randomized policies will play a crucial role in load balancing for stochastic spatial networks.

Numerous recent studies are concerned with asymptotic analysis of the JSQ policy in many-server regimes \citep{Mukherjee.et.al.2016,Eschenfeldt.Gamarnik.2018,Gupta.Walton.2019,Banerjee.Mukherjee.2019,Banerjee.Mukherjee.2020,Braverman.2020,Braverman.2023,Cao.et.al.2021,Budhiraja.et.al.2021,Hurtado-Lange.Maguluri.2022}. It is not straightforward to extend our approach to a many-server setting; we would also leave this topic for future research.

\appendix

\section*{Appendices}

The complete proofs of all theoretical results are given here. Fundamental lemmas are summarized in Appendix~\ref{sec:additional-lemmas}. Appendix~\ref{sec:Proof-Mnkm} gives the proof of Lemma~\ref{lemma:martingale}. Additional convergence lemmas are collected in Appendix~\ref{sec:Preliminary-Results}.  We prove Theorem~\ref{theorem:load-balancing}, Theorem~\ref{theorem:load-balancing-gap}, and Corollary~\ref{corollary:optimal-gap} in  Appendix~\ref{sec:Proofs-Load-Balancing}, Theorems~\ref{theorem:workload-difference} and~\ref{theorem:CRP} in Appendix~\ref{sec:Proofs-CRP}, and Theorem~\ref{theorem:workload-optimality} in Appendix~\ref{sec:Proof-Capacity}. We prove all the propositions in Appendix~\ref{sec:Proofs-Propositions} and the lemmas given in the appendices in Appendix~\ref{sec:Proofs-Additional-Lemmas}.

\subsubsection*{Notation}
The symbols $\mathbb{N}$, $\mathbb{N}_{0}$, $\mathbb{R}$, and $\mathbb{R}_{+}$ are used for the sets of positive integers, nonnegative integers, real numbers, and nonnegative real numbers, respectively. For $s \in \mathbb{N}$, $\mathbb{R}_{+}^{s}$ is the set of $s$-dimensional nonnegative vectors. The space of functions from $\mathbb{R}_{+} $ to $ \mathbb{R}$ that are right-continuous on $\mathbb{R}_{+}$ and have left limits on $(0, \infty)$ is denoted by $\mathbb{D}$. For $f \in \mathbb{D}$, $t \geq 0$, and $0 \leq t_{1} \leq t_{2}$, we write $\Delta f(t) = f(t) - f(t-)$, where $f(t-) = \lim_{u \uparrow t} f(u)$ with the convention $f(0-) = 0$, and write $ f(t_{1}, t_{2}] = f(t_{2}) - f(t_{1})$ and $ f[t_{1}, t_{2}] = f(t_{2}) - f(t_{1}-) $. The identity function on $\mathbb{R}_{+}$ is denoted by $\mathcal{I}$, with $\mathcal{I}(t) = t$ for $t \geq 0$. For $g, g' : \mathbb{N} \rightarrow \mathbb{R}$, we write $\Delta g(n) = g(n) - g(n-1)$, with the convention $\Delta g(1) = g(1)$, write $g(n) = O(g'(n))$ if $\limsup_{n \to \infty} |g(n)/g'(n)| < \infty$, and write $g(n) = o(g'(n))$ if $\lim_{n \to \infty} g(n)/g'(n) = 0$. For $a, a' \in \mathbb{R}$, $a \vee a' = \max \{a, a'\}$ and $a \wedge a' = \min \{ a, a'\}$.

All random variables and stochastic processes are defined on a common probability space $( \Omega, \mathcal{F}, \mathbb{P})$. We reserve $\mathbb{E}[\cdot]$ for expectation and $\mathbb{1}_{\Theta}$ for the indicator random variable of $\Theta \in \mathcal{F}$. For two random variables $X$ and $Y$, we write $X \asl Y$, $X \asleq Y$, and $X \aseq Y$ to denote $\mathbb{P}[X < Y]=1$, $\mathbb{P}[X \leq Y]=1$, and $\mathbb{P}[X = Y]=1$, respectively. For a collection of random variables $\{ X_{n} : n \in \mathbb{I} \}$, where $\mathbb{I}$ is an index set, we use $\sigma\{ X_{n} : n \in \mathbb{I} \}$ to denote the $\sigma$-field generated by $\{ X_{n} : n \in \mathbb{I} \}$. All continuous-time stochastic processes are assumed to have sample paths in $\mathbb{D}$, which is endowed with the Skorokhod $J_{1}$-topology \citep{Billingsley.1999}. For a sequence of random variables or stochastic processes $\{ X_{n} : n \in \mathbb{N} \}$, we write $X_{n} \Rightarrow X$ for the convergence of $X_{n}$ to $X$ in distribution.

\section{Preliminaries}
\label{sec:additional-lemmas}

The proofs of the main theorems rely on Freedman's inequality, the law of the iterated logarithm, and strong approximations for random walks and renewal processes. These results are given in this section as a series of lemmas for reference.

The first lemma is identical to Theorem~1.6 in \citet{Freedman.1975}, known as Freedman's inequality. We use this inequality, along with the Borel--Cantelli lemma, to estimate the increments of martingales that have uniformly bounded differences. 
\begin{lemma}\label{lemma:Freedman}
	Let $ \{ M(j) : j \in \mathbb{N}_{0} \} $ be a locally square-integrable martingale adapted to a filtration $ \{ \mathcal{F}(j) : j\in\mathbb{N}_{0} \} $, with predictable quadratic variation $ \langle M \rangle (j) = \sum_{i = 1}^{j} \mathbb{E}[\Delta M(i)^2 \,|\, \mathcal{F}(i-1)] $. If $ M(0) = 0 $ and $ \sup_{j \in \mathbb{N}} | \Delta M(j) | \leq 1 $, then for $ a_{1},a_{2} > 0 $,
	\[
	\mathbb{P}[ \mbox{$ M(j) \geq a_{1} $ and $ \langle M \rangle(j) \leq a_{2} $ for some $ j \in \mathbb{N} $} ] \leq \exp\Big( -\frac{a_{1}^{2}}{2(a_{1}+a_{2})} \Big).
	\]
\end{lemma}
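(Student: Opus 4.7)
The stated inequality is the well-known concentration bound of Freedman (1975) for martingales with bounded increments, and my plan follows the classical exponential martingale (Laplace transform) argument. The approach has three moving pieces: construct a nonnegative exponential supermartingale driven by $\langle M \rangle$, apply optional stopping at a first-passage time, and optimize over the exponential parameter.

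First, for $\lambda \geq 0$ set $\psi(\lambda) = e^\lambda - 1 - \lambda$ and define
$$Z_\lambda(j) = \exp\bigl(\lambda M(j) - \psi(\lambda) \langle M \rangle(j)\bigr), \qquad j \in \mathbb{N}_0,$$
with $Z_\lambda(0) = 1$. The key pointwise inequality is that for $-1 \leq x \leq 1$ and $\lambda \geq 0$, $e^{\lambda x} \leq 1 + \lambda x + \psi(\lambda) x^2$, which follows by comparing the Taylor series $\sum_{k \geq 2}(\lambda x)^k/k!$ termwise with $x^2 \sum_{k \geq 2} \lambda^k/k!$, using $|x^k| \leq x^2$ for $k \geq 2$ whenever $|x| \leq 1$. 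Applying this to $\Delta M(j)$, taking conditional expectations, and using $1+y \leq e^y$ yields $\mathbb{E}[e^{\lambda \Delta M(j)} \mid \mathcal{F}(j-1)] \leq \exp(\psi(\lambda)\mathbb{E}[\Delta M(j)^2 \mid \mathcal{F}(j-1)])$, so $\{Z_\lambda(j)\}$ is a nonnegative supermartingale.

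Second, I would convert this into a tail bound by stopping. Because $\langle M \rangle$ is predictable, $\sigma = \inf\{j : \langle M \rangle(j+1) > a_2\}$ is a stopping time, and $\langle M \rangle(j) \leq a_2$ for all $j \leq \sigma$. Let $\tau = \inf\{j : M(j) \geq a_1\}$. On the event $E = \{M(j) \geq a_1 \text{ and } \langle M \rangle(j) \leq a_2 \text{ for some } j\}$ we have $\tau \leq \sigma < \infty$, $M(\tau) \geq a_1$, and $\langle M \rangle(\tau) \leq a_2$, hence $Z_\lambda(\tau \wedge \sigma) \geq \exp(\lambda a_1 - \psi(\lambda) a_2)$. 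Applying optional stopping to the nonnegative supermartingale at the bounded time $\tau \wedge \sigma \wedge N$, letting $N \to \infty$, and combining with Markov's inequality gives
$$\mathbb{P}[E] \leq \exp\bigl(-\lambda a_1 + \psi(\lambda) a_2\bigr) \qquad \text{for every } \lambda \geq 0.$$

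Third, I would optimize. Setting $\lambda = \log(1 + a_1/a_2)$ yields $\mathbb{P}[E] \leq \exp(-a_2 h(a_1/a_2))$ with $h(u) = (1+u)\log(1+u) - u$, and the elementary inequality $h(u) \geq u^2/(2+2u)$ for $u \geq 0$ (verified from $h(0)=0$ and direct comparison of derivatives) delivers the claimed bound $\exp(-a_1^2/(2(a_1+a_2)))$. The main obstacle is purely a bookkeeping one: one must introduce the predictable cap $\sigma$ so that $\langle M \rangle(\tau) \leq a_2$ holds at the first crossing time of $M$, which is where the hypothesis that $\langle M \rangle$ is the \emph{predictable} quadratic variation (rather than the optional quadratic variation) is essential. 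Everything else is routine once the supermartingale is in hand.
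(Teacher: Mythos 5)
Your proof is correct. Note, however, that the paper does not prove this lemma at all: it is quoted verbatim as Theorem~1.6 of Freedman (1975) and used as a black box, so there is no internal proof to compare against. What you have written is the classical exponential-supermartingale derivation (the same route as Freedman's original argument and the standard textbook treatment): the pointwise bound $e^{\lambda x}\leq 1+\lambda x+\psi(\lambda)x^{2}$ for $|x|\leq 1$, the supermartingale $Z_{\lambda}$, optional stopping, and optimization via $h(u)=(1+u)\log(1+u)-u\geq u^{2}/(2+2u)$ — all steps check out. Two minor remarks. First, the stopping time $\sigma$ is not actually needed: on the event in question, if $M(j)\geq a_{1}$ and $\langle M\rangle(j)\leq a_{2}$, then at the first crossing time $\tau\leq j$ one already has $\langle M\rangle(\tau)\leq\langle M\rangle(j)\leq a_{2}$ by monotonicity, so stopping at $\tau\wedge N$ suffices. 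Second, the place where predictability of $\langle M\rangle$ is genuinely essential is not the measurability of $\sigma$ but the supermartingale step itself, where the increment $\mathbb{E}[\Delta M(j)^{2}\,|\,\mathcal{F}(j-1)]$ must be $\mathcal{F}(j-1)$-measurable in order to be pulled out of the conditional expectation. Neither point affects the validity of your argument.
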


We next present the law of the iterated logarithm for random walks and renewal processes (see, e.g., Theorems~1.9.1 and~3.11.1 in \citealp{Gut.2009}).
\begin{lemma}\label{lemma:LIL}
	Let $ \{ x(j) : j \in \mathbb{N} \} $ be a sequence of i.i.d.\ nonnegative random variables with mean~$ 1 $ and coefficient of variation $ c_{x} < \infty $. Then,
	\[  
	\limsup_{n \to \infty} \frac{\sup_{1\leq j \leq n} \big( \sum_{i = 1}^{j}x(i) - j \big)}{\sqrt{2 n \log\log n}} \aseq - \liminf_{n \to \infty} \frac{\inf_{1\leq j \leq n} \big( \sum_{i = 1}^{j}x(i) - j \big)}{\sqrt{2 n \log\log n}} \aseq c_{x}.
	\]
	Let $ \{ N(t) : t \geq 0 \} $ be the associated renewal process, given by $ N(t) = \max\{ j \in \mathbb{N}_{0} : \sum_{i = 1}^{j} x(j) \leq t \} $. Then,
	\[  
	\limsup_{T \to \infty} \frac{\sup_{0 \leq t \leq T} (N(t) - t)}{\sqrt{2 T \log\log T}} \aseq - \liminf_{T \to \infty} \frac{\inf_{0 \leq t \leq T} (N(t) - t)}{\sqrt{2 T \log\log T}} \aseq c_{x}.
	\]
\end{lemma}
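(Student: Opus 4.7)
The plan is to reduce both displays to the classical Hartman--Wintner law of the iterated logarithm. Let $S_n = \sum_{i=1}^{n}x(i)$; since $x(1)$ has mean $1$ and variance $c_x^2$, Hartman--Wintner yields
$$\limsup_{n\to\infty}\frac{S_n-n}{\sqrt{2n\log\log n}} \aseq c_x, \qquad \liminf_{n\to\infty}\frac{S_n-n}{\sqrt{2n\log\log n}} \aseq -c_x.$$
I would first upgrade this pointwise statement to running suprema and infima, and then transfer it to the renewal counting process via the inversion identity $\{N(t)\geq n\}=\{S_n \leq t\}$.

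For the running-sup upgrade, the lower bound is immediate from $\sup_{1\leq j\leq n}(S_j-j)\geq S_n-n$. For the matching upper bound, fix $\epsilon>0$; Hartman--Wintner supplies a (random) $N_0$ with $S_j - j \leq (c_x+\epsilon)\sqrt{2j\log\log j}$ for all $j\geq N_0$, and since $j\mapsto \sqrt{2j\log\log j}$ is eventually nondecreasing,
$$\sup_{1\leq j\leq n}(S_j-j) \;\leq\; \max_{1\leq j\leq N_0}|S_j-j| + (c_x+\epsilon)\sqrt{2n\log\log n}$$
for $n$ large. Dividing through, taking $\limsup_n$, and letting $\epsilon\downarrow 0$ along a rational sequence delivers the matching upper bound $c_x$. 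The analogous statement for the infimum follows symmetrically from the Hartman--Wintner $\liminf$ side.

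For the renewal process, I would exploit the monotonicity of $N$: since $N$ is right-continuous, integer-valued and nondecreasing, on the level set $\{t:N(t)=k\}=[S_k,S_{k+1})$ the map $t\mapsto N(t)-t$ is linearly decreasing and is maximised at $t=S_k$ with value $k-S_k$. Hence
$$\sup_{0\leq t\leq T}(N(t)-t) \;=\; \max_{0\leq k\leq N(T)}(k-S_k) \;=\; -\min_{0\leq k\leq N(T)}(S_k-k),$$
and similarly the infimum of $N(t)-t$ over $[0,T]$ is $-\max_{0\leq k\leq N(T)+1}(S_k-k)$, up to a one-index boundary correction. The strong law of large numbers gives $N(T)/T\to 1$ almost surely, so the time change only introduces the asymptotically trivial factor $\sqrt{2N(T)\log\log N(T)}/\sqrt{2T\log\log T}\to 1$. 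Composing this time change with the running-sup display for $S_k-k$ established above transfers both the $\limsup$ and $\liminf$ asymptotics, with constant $c_x$, from the random walk to $N$.

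The only substantive obstacle is Hartman--Wintner itself, which under a finite second moment alone requires a delicate truncation-plus-subsequence argument using Kolmogorov-type maximal inequalities; however, it is a textbook result (Theorem~1.9.1 in Gut, 2009) and is invoked wholesale. The remaining steps---the running-sup upgrade and the renewal inversion---are routine, relying only on the monotonicity of the normaliser $\sqrt{2n\log\log n}$, the monotonicity of $N$, and the strong law of large numbers for $N(T)/T$.
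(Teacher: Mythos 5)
Your proposal is correct. The paper does not actually prove Lemma~\ref{lemma:LIL}: it is stated as a preliminary and attributed to Theorems~1.9.1 and~3.11.1 in Gut (2009), i.e., both the random-walk and the renewal-process laws of the iterated logarithm are imported wholesale. You import only the Hartman--Wintner LIL for the partial sums and derive the rest yourself: the upgrade from the pointwise statement to the running supremum/infimum (using the eventual monotonicity of $\sqrt{2n\log\log n}$ and absorbing the first $N_0$ terms into a fixed random constant, with $\epsilon\downarrow 0$ along rationals), and the renewal half via the inversion identity $\sup_{0\le t\le T}(N(t)-t)=\max_{0\le k\le N(T)}(k-S_k)$ together with $N(T)/T\to 1$. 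This buys a self-contained argument and makes explicit a bridging step the paper leaves implicit, since the cited theorems are stated for $S_n-n$ and $N(T)-T$ at a single index or time rather than for running extrema. Two small points to tighten if you write it out in full: in the lower-bound direction for the renewal $\limsup$ you should not argue through $\limsup_T g(N(T))$ with $g(n)=-\min_{0\le k\le n}(S_k-k)/\sqrt{2n\log\log n}$ as though $N(T)$ visited every large integer (it can skip values when interarrival times are zero); instead evaluate at $T_m=S_{k_m}$ along the subsequence $k_m$ furnished by the walk's $\liminf$, using $\sup_{0\le t\le T_m}(N(t)-t)\ge k_m-S_{k_m}$ and $S_{k_m}/k_m\to 1$ almost surely. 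Likewise, the one-index and boundary corrections in your infimum identity (the $-1$ shift and the terminal term $N(T)-T$) are $O(1)$ and harmless after normalization, as you indicate, but they deserve a sentence.
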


If the above sequence of random variables fulfills an additional moment condition, we can obtain refined sample-path approximations. The third lemma corresponds to Theorem~5.14 in \citet{Chen.Yao.2001} and Corollary~3.1 in \citet{Csorgo.et.al.1987a}.

\begin{lemma}\label{lemma:strong-approximations}
	Assume the conditions of Lemma~\ref{lemma:LIL} hold. If $ \mathbb{E}[x(1)^{\alpha}] < \infty $ for some $ \alpha > 2 $, then there exist two standard Brownian motions $ \check{B}' $ and $ \check{B}'' $ such that
	\[
	\lim_{T \to \infty} \frac{1}{T^{1/\alpha}} \sup_{0 \leq t \leq T} \Big\vert \sum_{i = 1}^{\lfloor t \rfloor} x(i) - t - c_{x} \check{B}'(t)\Big\vert \aseq \lim_{T \to \infty} \frac{1}{T^{1/\alpha}} \sup_{0 \leq t \leq T} \vert N(t) - t - c_{x} \check{B}''(t) \vert \aseq 0.
	\]
	If $ \mathbb{E}[\exp(t x(1))] < \infty $ in a neighborhood of $ t = 0 $, then there exists a positive number $ C_{x} $ such that
	\begin{align*}
		\limsup_{T \to \infty} \frac{1}{\log T} \sup_{0 \leq t \leq T} \Big\vert \sum_{i = 1}^{\lfloor t \rfloor} x(i) - t - c_{x} \check{B}'(t)\Big\vert & \asl C_{x},\\
		\limsup_{T \to \infty} \frac{1}{\log T} \sup_{0 \leq t \leq T} \vert N(t) - t - c_{x} \check{B}''(t) \vert & \asl C_{x}.
	\end{align*}
\end{lemma}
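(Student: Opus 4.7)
The plan is to reduce the lemma to the classical Komlós--Major--Tusnády (KMT) strong approximation for partial sums and then transfer it to the renewal counting process by inversion. Let $S_{n} = \sum_{i=1}^{n} x(i)$, so that $\mathbb{E}[S_{n}] = n$ and $\operatorname{Var}(S_{n}) = n c_{x}^{2}$; both assertions of the lemma concern approximations of centered, partial-sum-type processes by Brownian motion.

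First, I would invoke KMT and its moment-based extensions to control $S_{\lfloor t \rfloor}-t$. Under the exponential moment condition, the original theorem of Komlós, Major, and Tusnády constructs, on a suitably enriched probability space, a standard Brownian motion $\check{B}'$ such that $\sup_{n \leq T}|S_{n}-n-c_{x}\check{B}'(n)|$ is $O(\log T)$ almost surely, with a constant $C_{x}$ depending only on the moment generating function of $x(1)$. Under the weaker condition $\mathbb{E}[x(1)^{\alpha}]<\infty$ with $\alpha > 2$, the Major--Einmahl refinement replaces this rate by $o(T^{1/\alpha})$. To pass from integer $n$ to continuous $t \geq 0$ inside the supremum, I would bound $\sup_{0 \leq t \leq T}|x(\lfloor t\rfloor+1)|$ via Borel--Cantelli using the $\alpha$-th moment (respectively the exponential moment) and control $\check{B}'$ over unit subintervals using Lévy's modulus of continuity; both contributions are negligible compared with the target rate.

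Second, I would deduce the renewal-process approximation from the partial-sum one via the elementary identity $S_{N(t)} \leq t < S_{N(t)+1}$. Substituting the Step~1 approximation for $S_{N(t)}$ and noting that a single summand $x(N(t)+1)$ contributes negligibly, I would obtain
\begin{equation*}
	N(t) - t \;=\; -c_{x}\check{B}'(N(t)) + o(T^{1/\alpha})
\end{equation*}
uniformly in $t \in [0,T]$ almost surely, and analogously $O(\log T)$ in the exponential case. Since the strong law of large numbers yields $N(t)/t \to 1$ and Lemma~\ref{lemma:LIL} yields $|N(t)-t| = O(\sqrt{t\log\log t})$ almost surely, Brownian modulus-of-continuity estimates on intervals of length $O(\sqrt{t\log\log t})$ control $|\check{B}'(N(t))-\check{B}'(t)|$, suggesting $\check{B}''(t) := -\check{B}'(t)$ as a natural candidate for the second Brownian motion.

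The main obstacle is that this crude inversion loses the sharp rate: the Brownian increment $|\check{B}'(N(t))-\check{B}'(t)|$ is at best of order $(t\log\log t)^{1/4}(\log t)^{1/2}$, which dominates $t^{1/\alpha}$ for $\alpha \geq 4$ and also dominates $\log t$ in the exponential-moment case. To close this gap I would either bootstrap the argument, reinserting the first-iteration bound on $N(t)-t$ into the modulus-of-continuity estimate and iterating finitely many times to recover the prescribed rate for each fixed $\alpha$; or, more cleanly, bypass the inversion altogether and appeal to the direct strong approximation for renewal counting processes established in \citet{Csorgo.et.al.1987a}, which constructs a Brownian motion $\check{B}''$ (generally distinct from $\check{B}'$) achieving the optimal rate. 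The second route is the one cited in the statement and yields the lemma directly.
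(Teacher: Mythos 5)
Your concluding route is the paper's own: the paper offers no proof of Lemma~\ref{lemma:strong-approximations} at all, but states it as a known result, citing Theorem~5.14 of \citet{Chen.Yao.2001} for the partial-sum part and Corollary~3.1 of \citet{Csorgo.et.al.1987a} for the renewal part---exactly your ``cleaner'' alternative. Your treatment of the partial-sum approximation (KMT under exponential moments, the Major--Einmahl moment refinement for $\alpha>2$, plus the routine interpolation from integer to continuous time via Borel--Cantelli and the Brownian modulus over unit intervals) is sound and is the content of the first cited result.

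The genuine flaw is the first branch of your either/or: bootstrapping the inversion cannot recover the stated rates. The obstruction is not that your bound on $N(t)-t$ is crude on the first pass; it is that $|N(t)-t|$ is genuinely of order $\sqrt{t}$ (by the CLT, and of order $\sqrt{t\log\log t}$ along subsequences by Lemma~\ref{lemma:LIL}), so no iteration can make it smaller, and consequently $|\check{B}'(N(t))-\check{B}'(t)|$ is irreducibly of order $t^{1/4}$ up to logarithmic factors. It is classical that if one insists on approximating $N$ by the \emph{same} Wiener process used for the partial sums, the best attainable uniform rate is of order $T^{1/4}(\log\log T)^{1/4}(\log T)^{1/2}$, which exceeds $\log T$ in the exponential case and $T^{1/\alpha}$ whenever $\alpha\geq 4$. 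This is precisely why the lemma is stated with two possibly different Brownian motions $\check{B}'$ and $\check{B}''$, and why the renewal half must come from the direct construction of \citet{Csorgo.et.al.1987a} (your second branch) rather than from the partial-sum coupling composed with inversion.
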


\section{Proof of Lemma~\ref{lemma:martingale}}
\label{sec:Proof-Mnkm}

For $ n \in \mathbb{N} $, we define a filtration $ \mathbb{F}_{n} = \{ \mathcal{F}_{n}(j) : j \in \mathbb{N}_{0} \} $ by $ \mathcal{F}_{n}(0) = \sigma\{ \pi_{n,k}(1) : k = 1, \ldots, s \} $ and $ \mathcal{F}_{n}(j) = \sigma\{ \pi_{n,k}(1), \pi_{n,k}(i+1), u(i), \boldsymbol{\gamma}(i) : i = 1, \ldots, j; k = 1, \ldots, s \} $ for $ j \in \mathbb{N} $. By \eqref{eq:kappa}--\eqref{eq:kappa-m}, $ \{ \xi_{n}(j) : j \in \mathbb{N} \} $ is adapted to $ \mathbb{F}_{n} $. We use $ \mathcal{F}_{n}(j) $ to keep track of the ranking history of queue lengths upon the $ (j+1) $st customer's appearance, along with the previous customers' origins and destinations. Since $ u(j) $ and $ \boldsymbol{\gamma}(j) $ are independent of $ \mathcal{F}_{n}(j-1) $, $ \{ M_{n,k}^{m}(j) : j \in \mathbb{N} \} $ is a square-integrable martingale adapted to $ \mathbb{F}_{n} $, with $ |\Delta M_{n,k}^{m}(j)| \leq 1 + \chi_{n} $. By \eqref{eq:kappa-m} and condition \eqref{item:cond-1-m}, $ \vert r_{m,k} - ((\kappa^{m}_{n,k}(j) \wedge \kappa^{m}_{k}) - (\kappa^{m}_{n,k-1}(j)\vee \kappa^{m}_{k-1}))\vert \leq 2 \varepsilon^{m}_{n,1}(j) $. Then by \eqref{eq:pm-epsilon-chi},
\begin{align*}
	\mathbb{E}[\Delta M_{n,k}^{m}(j)^{2}\,|\, \mathcal{F}_{n}(j-1)]
	& = 2p_{m}r_{m,k} - 2p_{m}\mathbb{E}\big[ \mathbb{1}_{ \{ \kappa^{m}_{n,k-1}(j)\vee \kappa^{m}_{k-1} \leq u(j) < \kappa^{m}_{n,k}(j) \wedge \kappa^{m}_{k}\}} \,|\, \mathcal{F}_{n}(j-1)\big] \\ & \quad + p_{m}\varepsilon^{m}_{n,\pi_{n,k}(j)}(j) - p^{2}_{m} \varepsilon^{m}_{n,\pi_{n,k}(j)}(j)^{2}\\
	& \leq 5 \chi_{n}.
\end{align*}
Hence, $ \langle M_{n,k}^{m} \rangle (j) \leq 5 j \chi_{n}$ for $ j \in \mathbb{N} $.

We first prove $ \sup_{0 \leq t \leq T} \hat{M}^{m}_{n,k}(t) \Rightarrow 0 $ as $ n \to \infty $. Put $ \mathcal{G}_{n}(t) = \mathcal{F}_{n}(\lfloor n t \rfloor) $. Then, $ \hat{M}_{n,k}^{m} $ is a martingale adapted to $ \{ \mathcal{G}_{n}(t) : t \geq 0 \} $ with $ \langle \hat{M}_{n,k}^{m} \rangle (t) \leq 5 \chi_{n} t $. It follows from \eqref{eq:epsilon-1} that $ \lim_{n \to \infty} \langle \hat{M}_{n,k}^{m} \rangle (t) = 0 $. Because $ | \Delta \hat{M}^{m}_{n,k} (t) | \leq (1 + \chi_{n})/\sqrt{n} $ and $ |\Delta \langle \hat{M}^{m}_{n,k} \rangle (t) | \leq 5\chi_{n}/n $, 
\[  
\lim_{n \to \infty} \mathbb{E}\Big[ \sup_{0 < u \leq t} | \Delta \hat{M}^{m}_{n,k} (u)|^{2} \Big] = \lim_{n \to \infty} \mathbb{E}\Big[ \sup_{0 < u \leq t} | \Delta \langle \hat{M}^{m}_{n,k} \rangle (u)| \Big] = 0.
\]
Then, the assertion follows from the martingale functional central limit theorem (Theorem~2.1 in \citealp{Whitt.2007}).

To prove \eqref{eq:Mmnk-fluctuations}, write $ \eta_{n}(\delta) = (1+\delta)\sqrt{10  n^{\alpha} T (\chi_{n}\log n \vee 1) } $ for $ \delta > 0 $. If $ \chi_{n} \log n \leq 1 $, then for $ n $ sufficiently large, $ (1+\chi_{n})\eta_{n}(\delta)\log n = (1+\chi_{n})(1+\delta)\sqrt{10  n^{\alpha} T } \log n \leq 5 \delta n^{\alpha} T $, in which case
\begin{align*}
	\frac{\eta_{n}(\delta)^{2} }{2((1+\chi_{n})\eta_{n}(\delta)+5 n^{\alpha} T \chi_{n})} & = \frac{\eta_{n}(\delta)^{2} \log n }{2((1 + \chi_{n}) \eta_{n}(\delta)\log n + 5n^{\alpha} T \chi_{n}\log n)}\\
	& \geq \frac{10(1+\delta)^{2} n^{\alpha} T \log n }{10 \delta n^{\alpha} T  + 10 n^{\alpha} T  } \\
	& = (1 + \delta) \log n.
\end{align*}
If $ \chi_{n} \log n > 1 $, $(1+\chi_{n})\eta_{n}(\delta)  < (1+\chi_{n})(1+\delta)\sqrt{10  n^{\alpha} T } \chi_{n} \log n \leq 5 \delta n^{\alpha} T \chi_{n}$ for $ n $ sufficiently large, in which case
\[
\frac{\eta_{n}(\delta)^{2} }{2((1+\chi_{n})\eta_{n}(\delta)+5 n^{\alpha} T \chi_{n})} \geq \frac{10 (1+\delta)^{2} n^{\alpha} T \chi_{n}\log n }{10 \delta n^{\alpha} T \chi_{n} + 10 n^{\alpha} T \chi_{n} } = (1 + \delta) \log n.
\]
Hence, by Lemma~\ref{lemma:Freedman},
\begin{align*}
	\mathbb{P}\Big[ \max_{1\leq j \leq n^{\alpha}T} M_{n,k}^{m}(j) \geq \eta_{n}(\delta) \Big]
	& \leq \mathbb{P}\big[ \mbox{$ M_{n,k}^{m}(j) \geq \eta_{n}(\delta) $ and $ \langle M_{n,k}^{m} \rangle(j) \leq 5 n^{\alpha} T \chi_{n} $ for some $ j \in \mathbb{N} $} \big]\\
	& \leq \exp\Big( -\frac{\eta_{n}(\delta)^{2} }{2((1+\chi_{n})\eta_{n}(\delta)+5 n^{\alpha} T \chi_{n})} \Big) \\
	& \leq \exp( -(1+\delta) \log n )\\
	& = n^{-(1+\delta)}.
\end{align*}
Since $ \sum_{n = 1}^{\infty} n^{-(1+\delta)} < \infty $, by the Borel--Cantelli lemma, 
\[
\limsup_{n \to \infty} \frac{\max_{1\leq j \leq n^{\alpha}T} M_{n,k}^{m}(j)}{\eta_{n}(\delta)} \asl 1 \quad \mbox{for all $ \delta > 0 $.}
\]
Taking $ \delta \rightarrow 0 $, we obtain
\[
\limsup_{n \to \infty} \frac{\max_{1\leq j \leq n^{\alpha}T} M_{n,k}^{m}(j)}{\sqrt{n^{\alpha}(\chi_{n}\log n \vee 1) }} \asleq \sqrt{10T},
\]
and by symmetry, 
\[
\limsup_{n \to \infty} \frac{\max_{1\leq j \leq n^{\alpha}T} \big(- M_{n,k}^{m}(j)\big)}{\sqrt{n^{\alpha}(\chi_{n}\log n \vee 1)}} \asleq \sqrt{10T}.
\]
Combining these two inequalities, we complete the proof of \eqref{eq:Mmnk-fluctuations}.

The proof of \eqref{eq:Mmnk-increments} is similar. Put $ \mathbb{F}^{j}_{n} = \{ \mathcal{F}_{n}(j+i) : i \in \mathbb{N}_{0} \} $ and $ M^{m,j}_{n,k}(i) = M^{m}_{n,k}(j+i) - M^{m}_{n,k}(j) $ for $ j \in \mathbb{N} $. Then, $ \{ M^{m,j}_{n,k}(i) : i \in \mathbb{N}_{0} \} $ is a square-integrable martingale adapted to $ \mathbb{F}^{j}_{n} $, with $ |\Delta M^{m,j}_{n,k}(i)| \leq 1 + \chi_{n} $ and $ \langle M_{n,k}^{m,j} \rangle(i) = \langle M_{n,k}^{m} \rangle(j+i) - \langle M_{n,k}^{m} \rangle(j) \leq 5i\chi_{n} $ for $ i \in \mathbb{N} $. 

Write $\eta'_{n}(\delta) =  \sqrt{ 10 (1 + \delta)(2 + \delta)q_{n}(\chi_{n}\log n \vee 1)}$ for $ \delta > 0 $. If $ \chi_{n} \log n \leq 1 $, then for $n$ sufficiently large, $(1+\chi_{n})\eta'_{n}(\delta)\log n = (1+\chi_{n})\sqrt{ 10 (1 + \delta)(2 + \delta) q_{n}} \log n \leq 5 \delta q_{n}$, since $ \lim_{n \to \infty} q_{n}/(\log n)^{2} = \infty $. In this case,
\begin{align*}
	\frac{\eta'_{n}(\delta)^{2} }{2((1+\chi_{n})\eta'_{n}(\delta)+5 q_{n} \chi_{n})} & = \frac{\eta'_{n}(\delta)^{2} \log n }{2((1 + \chi_{n}) \eta'_{n}(\delta)\log n + 5 q_{n} \chi_{n}\log n)}\\
	& \geq \frac{10(1+\delta)(2+\delta) q_{n} \log n }{10 \delta q_{n}  + 10 q_{n}  } \\
	& = (2 + \delta) \log n.
\end{align*}
If $ \chi_{n} \log n > 1 $, $(1+\chi_{n})\eta'_{n}(\delta)  < (1+\chi_{n})\sqrt{ 10 (1 + \delta)(2 + \delta)  q_{n}} \chi_{n}\log n \leq 5 \delta q_{n} \chi_{n}$ for $ n $ sufficiently large, in which case
\[
\frac{\eta'_{n}(\delta)^{2} }{2((1+\chi_{n})\eta'_{n}(\delta)+5 q_{n} \chi_{n})} \geq \frac{10 (1+\delta) (2+\delta) q_{n} \chi_{n}\log n }{10 \delta q_{n} \chi_{n} + 10 q_{n} \chi_{n} } = (2 + \delta) \log n.
\]
Hence, by Lemma~\ref{lemma:Freedman},
\begin{align*}
	\mathbb{P}\Big[ \max_{1\leq i \leq q_{n}} M_{n,k}^{m,j}(i) \geq \eta'_{n}(\delta) \Big]
	& \leq \mathbb{P}\big[ \mbox{$ M_{n,k}^{m,j}(i) \geq \eta'_{n}(\delta) $ and $ \langle M_{n,k}^{m,j} \rangle(i) \leq 5 q_{n} \chi_{n} $ for some $ i \in \mathbb{N} $} \big]\\
	& \leq \exp\Big( -\frac{\eta'_{n}(\delta)^{2} }{2((1+\chi_{n})\eta'_{n}(\delta)+5 q_{n} \chi_{n})} \Big) \\
	& \leq \exp( -(2+\delta) \log n )\\
	& = n^{-(2+\delta)}.
\end{align*}
It follows that
\[
\mathbb{P}\Big[ \max_{1\leq j \leq nT} \max_{1\leq i \leq q_{n}} M_{n,k}^{m,j}(i) \geq \eta'_{n}(\delta) \Big] \leq \sum_{j = 1}^{\lfloor nT \rfloor} \mathbb{P}\Big[ \max_{1\leq i \leq q_{n}} M_{n,k}^{m,j}(i) \geq \eta'_{n}(\delta) \Big] \leq n^{-(1+\delta)}T.
\]
Since $ \sum_{n = 1}^{\infty} n^{-(1+\delta)}T < \infty $, by the Borel--Cantelli lemma, 
\[
\limsup_{n \to \infty} \max_{1\leq j \leq nT} \frac{\max_{1\leq i \leq q_{n}} M_{n,k}^{m,j}(i)}{\eta'_{n}(\delta)} \asl 1 \quad \mbox{for all $ \delta > 0 $.}
\]
Taking $ \delta \rightarrow 0 $, we obtain
\[
\limsup_{n \to \infty} \max_{1\leq j \leq nT} \frac{\max_{1\leq i \leq q_{n}} M_{n,k}^{m,j}(i)}{\sqrt{q_{n}(\chi_{n}\log n \vee 1)}} \asleq \sqrt{20},
\]
and by symmetry, 
\[
\limsup_{n \to \infty} \max_{1\leq j \leq nT} \frac{\max_{1\leq i \leq q_{n}} \big(- M_{n,k}^{m,j}(i)\big)}{\sqrt{q_{n}(\chi_{n}\log n \vee 1)}} \asleq \sqrt{20}.
\]
Combining these two inequalities, we complete the proof of \eqref{eq:Mmnk-increments}.

\section{More Convergence Results}
\label{sec:Preliminary-Results}

In this section, we present additional convergence results to be used in subsequent proofs. Let
\begin{equation}\label{eq:V-nk}
	V_{k}(j) =  \sum_{i = 1}^{j} w_{k}(i) \quad \mbox{for $ j \in \mathbb{N} $,} 
\end{equation}
which is the workload of the first $ j $ customers arriving at station~$ k $. Then,
\begin{equation}\label{eq:B-V}
	B_{n,k}(t) \leq \frac{1}{\mu_{k}}V_{k}(A_{n,k}(t)).
\end{equation}
We define the fluid- and diffusion-scaled workload processes by $ \bar{V}_{n,k}(t) = V_{n,k}(\lfloor nt \rfloor)/n $ and $ \tilde{V}_{n,k}(t) = \sqrt{n} ( \bar{V}_{n,k}(t) -t ) $. By the functional strong law of large numbers, with probability 1,
\begin{equation}\label{eq:FSLLN-V}
	\bar{V}_{n,k} \rightarrow \mathcal{I} \quad \mbox{as $ n \to \infty $,}
\end{equation}
and by Donsker's theorem, 
\begin{equation}\label{eq:FCLT-V}
	\tilde{V}_{n,k} \Rightarrow \tilde{V}_{k} \quad \mbox{ as $ n \to \infty $,}
\end{equation}
where $ \tilde{V}_{k} $ is a driftless Brownian motion starting from $ 0 $ and having variance $ c_{k}^{2} $. 

We rewrite the decomposition \eqref{eq:Rmnk-decomposition} under the fluid scaling as follows:
\begin{equation}\label{eq:Rmnk-fluid}
	\bar{R}^{m}_{n,k}(t) = p_{m}r_{m,k} \bar{E}_{n}(t) + \frac{1}{\sqrt{n}} \big(\tilde{G}^{m}_{n,k}(t) + \tilde{\mathcal{E}}^{m}_{n,k}(t) + \tilde{M}^{m}_{n,k}(t)\big),
\end{equation}
where 
\begin{align*}
	\bar{R}^{m}_{n,k}(t) & = \frac{1}{n} R^{m}_{n,k}(E_{n}(nt)), \quad \tilde{G}^{m}_{n,k}(t) = \frac{1}{\sqrt{n}} G^{m}_{k}(E_{n}(nt)),\\
	\tilde{\mathcal{E}}^{m}_{n,k}(t) & = \frac{1}{\sqrt{n}} \mathcal{E}^{m}_{n,k}(E_{n}(nt)), \quad \tilde{M}^{m}_{n,k}(t) = \frac{1}{\sqrt{n}} M^{m}_{n,k}(E_{n}(nt)).
\end{align*}
The fluid-scaled arrival process at station~$ k $ is defined by $ \bar{A}_{n,k}(t) = A_{n,k}(nt)/n $, which, by \eqref{eq:station-arrivals-analysis}, can be written into
\begin{equation}\label{eq:bar-Ank}
	\bar{A}_{n,k}(t) = \sum_{m = 1}^{b} \bar{R}^{m}_{n,k}\big((t - n^{-1/2}d_{m,k})^{+}\big).
\end{equation}
By \eqref{eq:pm-epsilon-chi}, \eqref{eq:epsilon-1}, \eqref{eq:Gmk-LIL}, and \eqref{eq:Mmnk-fluctuations},
\[
\lim_{n \to \infty}\frac{1}{n} \max_{0 \leq j \leq nT} |G^{m}_{k}(j) + \mathcal{E}^{m}_{n,k}(j) + M^{m}_{n,k}(j)| \aseq 0 \quad \mbox{for $ T > 0 $.}
\]
Then by \eqref{eq:rmk-heavy-traffic}, \eqref{eq:FSLLN}, \eqref{eq:Rmnk-fluid}, and \eqref{eq:bar-Ank}, with probability 1,
\begin{equation}\label{eq:A-bar-convergence}
	(\bar{A}_{n,1}, \ldots, \bar{A}_{n,s}) \rightarrow (\mu_{1}\mathcal{I}, \ldots, \mu_{s}\mathcal{I})  \quad \mbox{as $n \to \infty$.}
\end{equation}
The diffusion-scaled arrival process at station~$k$ is defined by
\begin{equation}\label{eq:tilde-A}
	\tilde{A}_{n,k}(t) = \sqrt{n} \Big(\bar{A}_{n,k}(t) - \rho_{n} \mu \sum_{m = 1}^{b} p_{m}r_{m,k} (t - n^{-1/2}d_{m,k})^{+} \Big).
\end{equation}
It can be written into
\begin{equation}\label{eq:tilde-Ank-decomposition}
	\tilde{A}_{n,k}(t) = \sum_{m = 1}^{b} \Big( p_{m}r_{m,k} \tilde{E}^{m,\Delta}_{n,k}(t) + \tilde{G}^{m,\Delta}_{n,k} (t) + \tilde{\mathcal{E}}^{m,\Delta}_{n,k}(t) + \tilde{M}^{m,\Delta}_{n,k} (t) \Big),
\end{equation}
where
\begin{align*}
	\tilde{E}^{m,\Delta}_{n,k}(t) & = \tilde{E}_{n}\big( (t - n^{-1/2}d_{m,k})^{+}  \big), \quad \tilde{G}^{m,\Delta}_{n,k}(t) = \tilde{G}^{m}_{n,k} \big( (t - n^{-1/2}d_{m,k})^{+} \big),\\
	\tilde{\mathcal{E}}^{m,\Delta}_{n,k}(t) & = \tilde{\mathcal{E}}^{m}_{n,k}\big( (t - n^{-1/2}d_{m,k})^{+} \big), \quad \tilde{M}^{m,\Delta}_{n,k}(t) = \tilde{M}^{m}_{n,k} \big( (t - n^{-1/2}d_{m,k})^{+} \big).
\end{align*}

We define the fluid-scaled busy time process by $ \bar{B}_{n,k}(t) = B_{n,k}(nt)/n $. Clearly,
\begin{equation}\label{eq:busy-inequality}
	\bar{B}_{n,k}(t) \leq t.
\end{equation}
By \eqref{eq:queue-length} and \eqref{eq:diffusion-scaled}, the scaled customer count process satisfies
\begin{equation}\label{eq:diffusion-scaled-queue}
	\tilde{Q}_{n,k}(t) = \sqrt{n}\big(\bar{A}_{n,k}(t) - \bar{S}_{n,k}(\bar{B}_{n,k}(t))\big).
\end{equation}

The following lemma is used for proving Theorems~\ref{theorem:load-balancing} and~\ref{theorem:load-balancing-gap}. It implies the queue lengths under the RJSQ policy cannot be long at the beginning.
\begin{lemma}\label{lemma:after-initial}
	Assume the conditions of Theorem~\ref{theorem:load-balancing} hold. Then for $T_{0} > 0$ and $ k = 1, \ldots, s $, there exist two positive numbers $ C_{k} $ and $ C'_{k} $, both depending on $ T_{0} $, such that with probability~1,
	\[  
	\sup_{0 \leq t \leq T_{0}} \tilde{Q}_{n,k}(n^{-1/2} t) < C_{k}\chi_{n} + C'_{k} n^{-1/4}\sqrt{\log\log n} \quad \mbox{for $ n $ sufficiently large.}
	\]
\end{lemma}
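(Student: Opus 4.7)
The plan is to translate the scaled claim into the unscaled statement
\[
\sup_{0 \leq \tau \leq \sqrt{n}T_0} Q_{n,k}(\tau) = O\bigl(\sqrt{n}\chi_n\bigr) + O\bigl(n^{1/4}\sqrt{\log\log n}\bigr) \quad \mbox{almost surely,}
\]
and to prove the latter via a busy-period argument that reduces $Q_{n,k}(\tau)$ to forward increments of the arrival process around its fluid drift $\mu_k\mathcal{I}$. The crucial observation is that, although $A_{n,k}(v) - \mu_k v$ can itself be as negative as $-O(\sqrt{n})$ because of the traveling delays, the forward increment $A_{n,k}(u,\tau] - \mu_k(\tau-u)$ has a non-positive deterministic mean for every $u \leq \tau$, so only the random fluctuations drive the bound.

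If $Q_{n,k}(\tau) > 0$, let $u^{*} \in [0,\tau]$ denote the start of the current busy period; then $B_{n,k}(\tau) - B_{n,k}(u^{*}) = \tau - u^{*}$, and
\[
Q_{n,k}(\tau) = A_{n,k}(u^{*},\tau] - \bigl[S_k(B_{n,k}(u^{*}) + \tau - u^{*}) - S_k(B_{n,k}(u^{*}))\bigr].
\]
Lemma~\ref{lemma:renewal-strong-approximations} together with Lemma~\ref{lemma:Wiener-intervals} applied to $S_k$ on $[0, 2\sqrt{n}T_0]$ gives $|S_k(B_{n,k}(u^{*}) + s) - S_k(B_{n,k}(u^{*})) - \mu_k s| = O(n^{1/4}\sqrt{\log\log n})$ almost surely, uniformly in $u^{*}$ and $s \leq \sqrt{n}T_0$; hence
\[
Q_{n,k}(\tau) \leq \sup_{0 \leq u \leq \tau} \bigl[A_{n,k}(u,\tau] - \mu_k(\tau - u)\bigr] + O(n^{1/4}\sqrt{\log\log n}).
\]

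For the arrival increment I plug in \eqref{eq:station-arrivals-analysis} and the decomposition \eqref{eq:Rmnk-decomposition}. Writing $\tau_m = (\tau - \sqrt{n} d_{m,k})^+$ and $u_m = (u - \sqrt{n} d_{m,k})^+$, the deterministic main part $\lambda_n \sum_m p_m r_{m,k}(\tau_m - u_m)$ is at most $\rho_n \mu_k(\tau - u) \leq \mu_k(\tau - u)$ by \eqref{eq:rmk-heavy-traffic}, so only the four fluctuation terms survive. The renewal error $E_n(\tau_m) - E_n(u_m) - \lambda_n(\tau_m - u_m)$ and the random-walk increment $G_k^m(E_n(\tau_m)) - G_k^m(E_n(u_m))$ are each $O(n^{1/4}\sqrt{\log\log n})$ almost surely, by Lemmas~\ref{lemma:renewal-strong-approximations}, \ref{lemma:random-walk}, and~\ref{lemma:Wiener-intervals} applied to the relevant Brownian motions on ranges of length $O(\sqrt{n})$. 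The perturbation term satisfies $|\mathcal{E}^m_{n,k}(E_n(\tau_m)) - \mathcal{E}^m_{n,k}(E_n(u_m))| \leq \chi_n E_n(\sqrt{n}T_0) = O(\sqrt{n}\chi_n)$ by \eqref{eq:pm-epsilon-chi}. Finally, Lemma~\ref{lemma:martingale} with $\alpha = 1/2$ yields $\max_{j \leq O(\sqrt{n})} |M_{n,k}^{m}(j)| = O\bigl(n^{1/4}\sqrt{\chi_n \log n \vee 1}\bigr)$, and this collapses into $O(\sqrt{n}\chi_n + n^{1/4}\sqrt{\log\log n})$ after a case split: when $\chi_n \log n \geq 1$ one has $\chi_n \geq 1/\log n$, so $n^{1/4}\sqrt{\chi_n \log n} \leq \sqrt{n}\chi_n$ for $n$ large; when $\chi_n \log n < 1$ the bound reduces to $n^{1/4} \leq n^{1/4}\sqrt{\log\log n}$.

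Combining the four contributions yields $\sup_u [A_{n,k}(u,\tau] - \mu_k(\tau - u)] = O(\sqrt{n}\chi_n + n^{1/4}\sqrt{\log\log n})$ almost surely and uniformly over $\tau \leq \sqrt{n}T_0$, so dividing by $\sqrt{n}$ completes the lemma. The main obstacle I expect is verifying that the martingale regime split is exhaustive, i.e.\ that no admissible $\chi_n$ lies in the intermediate range $\log\log n/\log n \ll \chi_n \ll \log n/\sqrt{n}$; this holds because $\log n/\sqrt{n} = o(\log\log n/\log n)$ for $n$ large, making the alleged gap empty. A secondary subtlety is making the service-increment bound uniform over the random shift $B_{n,k}(u^{*}) \in [0,\sqrt{n}T_0]$, but this is immediate from the global strong approximation of $S_k$ on $[0, 2\sqrt{n}T_0]$ combined with the Brownian increment bound of Lemma~\ref{lemma:Wiener-intervals}.
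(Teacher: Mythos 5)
Your reduction is essentially the paper's: writing $Q_{n,k}$ through the work-conserving server as a supremum of netput increments over $[0,\sqrt{n}T_0]$ is the same mechanism as the paper's reflection-map bound $\tilde{Q}_{n,k}\leq 2\sup_{u\leq t}|\tilde{Z}_{n,k}(u)|$ after discarding the nonpositive, nonincreasing delayed-drift term $\grave{Z}_{n,k}$ (your observation that the deterministic part of the forward arrival increment is at most $\mu_k(\tau-u)$, up to an $O(1)$ correction when $\rho_n>1$, is exactly that step). The componentwise bounds for $G^m_k$, $\mathcal{E}^m_{n,k}$, and $M^m_{n,k}$ (the latter via Lemma~\ref{lemma:martingale} with $\alpha=1/2$ and the case split on $\chi_n\log n\gtrless 1$) also match the paper's treatment.

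The genuine problem is a hypothesis mismatch in how you control $E_n$ and $S_k$. You invoke Lemma~\ref{lemma:renewal-strong-approximations} (together with Lemma~\ref{lemma:Wiener-intervals}) for both the service increments over the random shift $B_{n,k}(u^{*})$ and the renewal arrival increments, but that lemma requires the fourth-moment condition \eqref{eq:fourth-moments}, which is \emph{not} among the conditions of Theorem~\ref{theorem:load-balancing} and hence not available here. This matters because Lemma~\ref{lemma:after-initial} is used inside the proof of Theorem~\ref{theorem:load-balancing} itself (to verify \eqref{eq:phi-n-condition-1}), where only finite coefficients of variation are assumed; as written, your argument proves the lemma only under the stronger moment assumption. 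The fix is easy and is what the paper does: since every time argument you need lies in $[0,2\sqrt{n}T_0]$, you do not need an increment lemma at all --- bound each increment by twice the global supremum, $|S_k(b+s)-S_k(b)-\mu_k s|\leq 2\sup_{v\leq 2\sqrt{n}T_0}|S_k(v)-\mu_k v|$ and similarly for $E_n$, and apply the law of the iterated logarithm (Lemma~\ref{lemma:LIL}), which needs only finite second moments and already gives the $O(n^{1/4}\sqrt{\log\log n})$ order over a horizon of length $O(\sqrt{n})$. (Your closing discussion of an ``intermediate range'' of $\chi_n$ is a red herring: the split on whether $\chi_n\log n\geq 1$ is trivially exhaustive, and your verification that each branch is absorbed into $\sqrt{n}\chi_n$ or $n^{1/4}\sqrt{\log\log n}$ is fine.)
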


The next three lemmas are used for proving Theorem~\ref{theorem:CRP}. Let $ \tilde{A}_{n}(t) = \sum_{k = 1}^{s} \tilde{A}_{n,k}(t) $. Subject to traveling delays, $\tilde{A}_{n}$ is a randomly perturbed version of $\tilde{E}_{n}$. Because these traveling delays are ``short,'' the two processes are asymptotically close.
\begin{lemma}\label{lemma:A-star}
	Assume the conditions of Theorem~\ref{theorem:load-balancing} hold. Then, $ \sup_{0 \leq t \leq T} \vert \tilde{E}_{n}(t) - \tilde{A}_{n}(t) \vert \Rightarrow 0 $ as $ n \to \infty $ for $T>0$.
\end{lemma}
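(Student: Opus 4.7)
The plan is to exploit the decomposition \eqref{eq:tilde-Ank-decomposition} of $\tilde{A}_{n,k}$ given in the paper and, after summing over $k$ and $m$, verify that the four resulting pieces together reproduce $\tilde{E}_n$ up to a vanishing error. Writing
\[
\tilde{A}_n(t) - \tilde{E}_n(t) = \Xi^E_n(t) + \Xi^G_n(t) + \Xi^{\mathcal{E}}_n(t) + \Xi^M_n(t),
\]
with $\Xi^E_n(t) = \sum_{m,k} p_m r_{m,k}\, \tilde{E}^{m,\Delta}_{n,k}(t) - \tilde{E}_n(t)$ and $\Xi^X_n(t) = \sum_{m,k} \tilde{X}^{m,\Delta}_{n,k}(t)$ for $X \in \{G, \mathcal{E}, M\}$, I would prove uniform vanishing in probability on $[0,T]$ for each piece.

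The two easy pieces are $\Xi^E_n$ and $\Xi^M_n$. Using $\sum_m p_m = 1$ and \eqref{eq:rmk-sum}, the first collapses to
\[
\Xi^E_n(t) = \sum_{m,k} p_m r_{m,k}\bigl[\tilde{E}_n((t - n^{-1/2}d_{m,k})^+) - \tilde{E}_n(t)\bigr],
\]
which is uniformly negligible because $\tilde{E}_n \Rightarrow \tilde{E}$ (a continuous Brownian motion) by \eqref{eq:FCLT} and the shifts $n^{-1/2}d_{m,k}$ vanish; a modulus-of-continuity (or continuous-mapping) argument suffices. For $\Xi^M_n$, Lemma~\ref{lemma:martingale} directly gives $\sup_{0 \leq t \leq T}|\tilde{M}^{m,\Delta}_{n,k}(t)| \Rightarrow 0$ for every $(m,k)$, and summing over the finite index set preserves this.

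The pieces $\Xi^{\mathcal{E}}_n$ and $\Xi^G_n$ need algebraic cancellations, since individual summands do not vanish. For $\mathcal{E}$, the key identity is $\sum_k \varepsilon^m_{n,\pi_{n,k}(i)}(i) = 0$ (because $(\pi_{n,k}(i))_k$ is a permutation and \eqref{eq:sum-epsilon-m} holds), whence $\sum_k \mathcal{E}^m_{n,k}(j) \equiv 0$. Fixing any reference index $k_0$,
\[
\sum_k \tilde{\mathcal{E}}^{m,\Delta}_{n,k}(t) = \frac{1}{\sqrt{n}}\sum_k \bigl[\mathcal{E}^m_{n,k}(E_n(n T^m_{n,k}(t))) - \mathcal{E}^m_{n,k}(E_n(n T^m_{n,k_0}(t)))\bigr],
\]
with $T^m_{n,k}(t) = (t - n^{-1/2}d_{m,k})^+$. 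Since $|\Delta \mathcal{E}^m_{n,k}(i)| \leq \chi_n$ by \eqref{eq:pm-epsilon-chi} and $|E_n(nT^m_{n,k}(t)) - E_n(nT^m_{n,k_0}(t))| = O(\sqrt{n})$ by \eqref{eq:FSLLN}, the right side is $O(\chi_n) = o(1)$ by \eqref{eq:epsilon-1}. For $G$, the parallel identity is $\sum_m \sum_k G^m_k(j) = \sum_{i \leq j} \sum_m (\mathbb{1}_{\{\boldsymbol{\gamma}(i) = \boldsymbol{d}_m\}} - p_m) \equiv 0$, which reduces $\Xi^G_n(t)$ to $\sum_{m,k}\bigl[\tilde{G}^m_{n,k}((t-n^{-1/2}d_{m,k})^+) - \tilde{G}^m_{n,k}(t)\bigr]$; this vanishes uniformly by the same shift argument used for $\Xi^E_n$, with tightness and continuous limit of $\tilde{G}^m_{n,k}$ provided by Lemma~\ref{lemma:random-walk}.

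The main obstacle will be bookkeeping the asymptotically small time shifts while locating the right cancellation identity for each piece: none of the four vanishes summand-wise, and each requires a different combination of an identity from the routing structure with a continuity or smallness argument tied to the $O(n^{-1/2})$ scale of the traveling delays in diffusion-scaled time.
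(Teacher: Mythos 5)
Your proposal is correct and is essentially the paper's argument: the paper obtains the same four window-increment terms in one stroke from the identity $E_n(t)=\sum_{k=1}^{s}\sum_{m=1}^{b}R^m_{n,k}(E_n(t))$, which encodes exactly the per-piece cancellations you derive, and then kills each increment over $((t-n^{-1/2}d_{m,k})^{+},t]$ using C-tightness of $\tilde{E}_n$ and $\tilde{G}^m_{n,k}$, Lemma~\ref{lemma:martingale} for the martingale part, and the bound $|\tilde{\mathcal{E}}^m_{n,k}((t-n^{-1/2}d_{m,k})^{+},t]|\le\chi_n\big(\rho_n\mu d_{m,k}+|\tilde{E}_n((t-n^{-1/2}d_{m,k})^{+},t]|\big)$ together with \eqref{eq:epsilon-1}. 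One citation in your $\Xi^{\mathcal{E}}_n$ step should be fixed: \eqref{eq:FSLLN} only yields $E_n(nT^m_{n,k}(t))-E_n(nT^m_{n,k_0}(t))=o(n)$ uniformly, and since $\sqrt{n}\chi_n\to\infty$ by \eqref{eq:epsilon-2} the product $\chi_n n^{-1/2}\cdot o(n)$ is not automatically negligible; bound the arrival count in the window by $\sqrt{n}\big(\lambda_n d_0+|\tilde{E}_n((t-n^{-1/2}d_0)^{+},t]|\big)$ and use the C-tightness of $\tilde{E}_n$ from \eqref{eq:FCLT} (which you already invoke for $\Xi^E_n$), exactly as the paper does.
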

Using Theorem~\ref{theorem:load-balancing}, we can prove the scaled total customer count process is stochastically bounded.
\begin{lemma}\label{lemma:stochastic-boundedness}
	Assume the conditions of Theorem~\ref{theorem:load-balancing} hold. Then,
	\[  
	\lim_{c \to \infty} \limsup_{n \to \infty} \mathbb{P}\Big[ \sup_{0 \leq t \leq T} \sum_{k = 1}^{s}\tilde{Q}_{n,k}(t) > c \Big] = 0 \quad \mbox{for $ T > 0 $.}
	\]
\end{lemma}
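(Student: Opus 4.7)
The plan is to represent the scaled total queue length as the sum of a stochastically bounded free process and a nonnegative, nondecreasing regulator, and then invoke an approximate Skorokhod reflection argument enabled by Theorem~\ref{theorem:load-balancing}.

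For $t \geq n^{-1/2} d_{0}$ with $d_{0} = \max_{m,k} d_{m,k}$, I would combine \eqref{eq:diffusion-scaled-queue}, \eqref{eq:tilde-A}, and \eqref{eq:heavy-traffic}, together with \eqref{eq:rmk-heavy-traffic}, to derive the identity
\[
\sum_{k = 1}^{s} \tilde{Q}_{n,k}(t) = \tilde{X}_{n}(t) + \tilde{I}_{n}(t),
\]
where $\tilde{I}_{n}(t) = \sum_{k = 1}^{s} \mu_{k} \sqrt{n}(t - \bar{B}_{n,k}(t))$ is the scaled weighted total idle time (nonnegative and nondecreasing in $t$), and
\[
\tilde{X}_{n}(t) = \tilde{A}_{n}(t) - \sum_{k = 1}^{s} \tilde{S}_{n,k}(\bar{B}_{n,k}(t)) - \mu \beta t - \rho_{n} \sum_{k = 1}^{s} D_{k} + o(1),
\]
with $D_{k} = \mu \sum_{m = 1}^{b} p_{m} r_{m,k} d_{m,k}$. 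Lemma~\ref{lemma:A-star}, together with the FCLT~\eqref{eq:FCLT} for $\tilde{E}_{n}$ and $\tilde{S}_{n,k}$ and the uniform bound $\bar{B}_{n,k} \leq \mathcal{I}$, ensures that $\sup_{0 \leq t \leq T} |\tilde{X}_{n}(t)|$ is stochastically bounded.

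Next, $\tilde{I}_{n}$ increases at time $t$ only when $\bar{B}_{n,k}$ is flat at $t$ for some $k$, i.e., when $\tilde{Q}_{n,k}(t) = 0$. Letting $\epsilon_{n} = \sup_{0 \leq t \leq T} \max_{k,\ell} |\tilde{L}_{n,k}(t) - \tilde{L}_{n,\ell}(t)|$, Theorem~\ref{theorem:load-balancing} gives $\epsilon_{n} \Rightarrow 0$. On the event $\{\epsilon_{n} \leq \epsilon\}$, at any such increase time $t$ we have $\max_{\ell} \tilde{L}_{n,\ell}(t) \leq \epsilon$, so $\sum_{\ell} \tilde{Q}_{n,\ell}(t) \leq \mu \epsilon$, and consequently $\tilde{I}_{n}(t) = \sum_{\ell} \tilde{Q}_{n,\ell}(t) - \tilde{X}_{n}(t) \leq \mu \epsilon + \sup_{0 \leq s \leq T} |\tilde{X}_{n}(s)|$. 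Since $\tilde{I}_{n}$ is nondecreasing and stays flat outside increase times, the same bound holds for every $t \in [0,T]$, yielding
\[
\sup_{0 \leq t \leq T} \sum_{k = 1}^{s} \tilde{Q}_{n,k}(t) \leq 2 \sup_{0 \leq t \leq T} |\tilde{X}_{n}(t)| + \mu \epsilon \quad \mbox{on $\{\epsilon_{n} \leq \epsilon\}$.}
\]
Sending $n \to \infty$ first and $\epsilon \to 0$ afterward yields the lemma, with the initial segment $t \leq n^{-1/2} d_{0}$ absorbed using Lemma~\ref{lemma:after-initial} applied with $T_{0} = d_{0}$.

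The main technical point is the approximate Skorokhod step: $\tilde{I}_{n}$ lives in accelerated time and could a priori make rapid increases, so one must ensure that the pointwise bound derived at increase times propagates to a uniform bound. The key observation is that the Lebesgue--Stieltjes measure $d\tilde{I}_{n}$ is supported on the (closed) set of times at which some server is idle, which under Theorem~\ref{theorem:load-balancing} lies within $\{\sum_{k} \tilde{Q}_{n,k} \leq \mu \epsilon_{n}\}$; hence $\tilde{I}_{n}(t)$ cannot exceed its value at the latest preceding increase time, validating the supremum bound.
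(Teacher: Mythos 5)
Your argument is correct, but it takes a genuinely different route from the paper's proof. The paper argues by contradiction: if the stochastic bound failed along some subsequence $n(i)$ with threshold $2i$, it fixes the first hitting time $\tau_{i,1}$ of level $2i$ and the last time $\tau_{i,2} < \tau_{i,1}$ at which the total scaled queue drops below $i$; on the event where the queue-length spread is less than $1$ (which by Theorem~\ref{theorem:load-balancing} has probability tending to $1$), all stations are nonempty on $[\tau_{i,2},\tau_{i,1}]$, so all servers run at full rate, and the increment of $\sum_k \tilde{Q}_{n(i),k}$ over that interval is bounded pathwise by renewal-process terms (via~\eqref{eq:FCLT}) plus an $O(1)$ delay correction, yielding a stochastically bounded quantity that must nevertheless exceed $i$---contradiction. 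You instead set up the explicit free-process-plus-regulator decomposition $\sum_k \tilde{Q}_{n,k} = \tilde{X}_n + \tilde{I}_n$ and run an approximate Skorokhod reflection argument: the measure $d\tilde{I}_n$ charges only times at which some server is idle, at which point Theorem~\ref{theorem:load-balancing} caps the entire queue vector, so $\tilde{I}_n$ inherits the stochastic bound on $\tilde{X}_n$ and propagates it by monotonicity. Your route is more conceptual and more in line with the standard reflection-map machinery (and it makes explicit what the complementarity condition is being approximated); the paper's route avoids having to set up the regulator identity and relies on Lemma~\ref{lemma:A-star} not at all, bounding increments directly. Two small points worth tightening in your write-up: the idle set $\{s : Q_{n,k}(ns)=0\}$ is a union of half-open intervals rather than a closed set, so the support of $d\tilde{I}_n$ includes right endpoints where the queue has just jumped to $1$; the bound $\sum_\ell \tilde{Q}_{n,\ell}(s)\le \mu\epsilon_n + O(1/\sqrt{n})$ still holds there by right-continuity, so this costs nothing asymptotically but deserves a sentence. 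Also, the closed-form expression for $\tilde{X}_n$ uses~\eqref{eq:rmk-heavy-traffic} and is exact only for $t\ge n^{-1/2}d_0$; you correctly invoke Lemma~\ref{lemma:after-initial} for the initial segment, but it is cleaner to note that $\tilde{I}_n(t)\le \mu d_0$ for $t\le n^{-1/2}d_0$ by definition, so the propagation argument starts from a bounded value regardless.
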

By Lemma~\ref{lemma:stochastic-boundedness}, we can further establish the limit of the fluid-scaled busy time processes, which implies the servers are almost always busy.
\begin{lemma}\label{lemma:busy-time}
	Assume the conditions of Theorem~\ref{theorem:load-balancing} hold. Then, $ (\bar{B}_{n,1}, \ldots, \bar{B}_{n,s}) \Rightarrow (\mathcal{I}, \ldots, \mathcal{I}) $ as $ n \to \infty $.
\end{lemma}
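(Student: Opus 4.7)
The plan is to deduce the convergence of the fluid-scaled busy time processes from the dynamical identity \eqref{eq:diffusion-scaled-queue}, the stochastic boundedness provided by Lemma~\ref{lemma:stochastic-boundedness}, and the fluid limits \eqref{eq:FSLLN} and \eqref{eq:A-bar-convergence} for the primitive processes. Heuristically, if $\tilde{Q}_{n,k}$ remains of constant order, then each server's idle time over $[0, nT]$ must be $o(n)$, so under the fluid scaling the busy time is essentially the identity.

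First I would rewrite \eqref{eq:diffusion-scaled-queue} as
\[
\bar{A}_{n,k}(t) - \bar{S}_{n,k}(\bar{B}_{n,k}(t)) = \frac{\tilde{Q}_{n,k}(t)}{\sqrt{n}}.
\]
Since $\tilde{Q}_{n,k}$ is nonnegative, Lemma~\ref{lemma:stochastic-boundedness} implies $\sup_{0 \leq t \leq T} \tilde{Q}_{n,k}(t)/\sqrt{n} \Rightarrow 0$, and hence
\[
\sup_{0 \leq t \leq T} \bigl| \bar{A}_{n,k}(t) - \bar{S}_{n,k}(\bar{B}_{n,k}(t)) \bigr| \Rightarrow 0.
\]

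Next, the work-conservation constraint \eqref{eq:busy-inequality} together with the monotonicity of $B_{n,k}$ shows that $\bar{B}_{n,k}$ is nondecreasing and $1$-Lipschitz on $[0,T]$, with $\bar{B}_{n,k}(0)=0$. By Arzel\`a--Ascoli, the joint family $\{(\bar{B}_{n,1}, \ldots, \bar{B}_{n,s}): n \in \mathbb{N}\}$ is C-tight. Along any weakly convergent subsequence, I would invoke Skorokhod's representation theorem to assume without loss of generality that $\bar{B}_{n,k} \to B_k$ uniformly on $[0,T]$ almost surely, where $B_k$ is continuous, nondecreasing, $1$-Lipschitz, and $B_k(0) = 0$.

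Finally, the strong law \eqref{eq:FSLLN} provides $\bar{S}_{n,k} \to \mu_k \mathcal{I}$ uniformly on compacts almost surely. Because the limit $\mu_k \mathcal{I}$ is continuous and $\bar{B}_{n,k} \to B_k$ uniformly on $[0,T]$, a routine composition argument yields $\bar{S}_{n,k}(\bar{B}_{n,k}(\cdot)) \to \mu_k B_k(\cdot)$ uniformly on $[0,T]$ almost surely. Combining this with the a.s.\ convergence $\bar{A}_{n,k} \to \mu_k \mathcal{I}$ from \eqref{eq:A-bar-convergence} and the vanishing of $\bar{A}_{n,k}-\bar{S}_{n,k}(\bar{B}_{n,k})$ established above, I obtain $\mu_k B_k(t) = \mu_k t$, so $B_k(t) = t$ on $[0,T]$. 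Since every weakly convergent subsequence has the same deterministic limit $(\mathcal{I}, \ldots, \mathcal{I})$, the full sequence converges in distribution (indeed in probability, uniformly on $[0,T]$) to $(\mathcal{I}, \ldots, \mathcal{I})$. The only mildly technical step is the composition, but the deterministic Lipschitz bound on $\bar{B}_{n,k}$ together with the continuity of $\mu_k \mathcal{I}$ makes this entirely standard, so no serious obstacle is anticipated.
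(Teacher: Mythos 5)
Your argument is correct, and it hinges on the same two ingredients as the paper's proof: the identity $\bar{A}_{n,k}(t) - \bar{S}_{n,k}(\bar{B}_{n,k}(t)) = \tilde{Q}_{n,k}(t)/\sqrt{n}$ together with the stochastic boundedness of $\tilde{Q}_{n,k}$ from Lemma~\ref{lemma:stochastic-boundedness}, plus the fluid limits. The route you take from there is genuinely different, though. You run a compactness/subsequence argument: the busy-time processes are uniformly $1$-Lipschitz, so they are C-tight; you pass to a weakly convergent subsequence, upgrade to a.s.\ uniform convergence via Skorokhod representation, and identify the deterministic limit by composing $\bar{S}_{n,k}\to\mu_k\mathcal{I}$ with $\bar{B}_{n,k}\to B_k$. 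The paper instead avoids all of this machinery through a small monotonicity trick: since $\bar{B}_{n,k}(t)\leq t$ by \eqref{eq:busy-inequality} and $\bar{S}_{n,k}$ is nondecreasing, one has $\bar{S}_{n,k}(\bar{B}_{n,k}(t))\leq\bar{S}_{n,k}(t)$ pointwise, which combined with the crude lower bound $\sum_k\bar{A}_{n,k}(t)\geq\bar{E}_n((t-n^{-1/2}d_0)^+)$ gives a direct one-sided estimate $\sup_t\sum_k\mu_k(t-\bar{B}_{n,k}(t))\Rightarrow 0$ without any subsequence extraction; the composition issue is handled by the deterministic bound $\sup_t|\bar{S}_{n,k}(\bar{B}_{n,k}(t)) - \mu_k\bar{B}_{n,k}(t)|\leq\sup_{0\leq x\leq T}|\bar{S}_{n,k}(x)-\mu_kx|$. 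Your version also invokes the station-level arrival law \eqref{eq:A-bar-convergence}, which depends on the full arrival decomposition machinery (Lemmas~\ref{lemma:random-walk} and~\ref{lemma:martingale}), whereas the paper only needs the elementary inequality on total arrivals. Both approaches are valid; yours is a bit heavier but more uniformly templated (tightness, Skorokhod, identify-the-limit), while the paper's is leaner because of the monotonicity trick.
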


\section{Proofs of Theorem~\ref{theorem:load-balancing}, Theorem~\ref{theorem:load-balancing-gap}, and Corollary~\ref{corollary:optimal-gap}}
\label{sec:Proofs-Load-Balancing}

We prove Theorems~\ref{theorem:load-balancing} and~\ref{theorem:load-balancing-gap} following similar procedures. To avoid unnecessary repetition, we present the common part of the proofs before the specific parts are given. 

Consider the event 
\[
\Theta_{n} = \Big\{ \sup_{0 \leq t \leq T} \max_{k,\ell = 1, \ldots, s} | \tilde{L}_{n,k}(t) - \tilde{L}_{n,\ell}(t) | \geq \phi_{n} \Big\},
\]
where $ \{\phi_{n} : n \in \mathbb{N} \} $ is a sequence of positive numbers. We will prove $ \lim_{n \to \infty} \mathbb{P}[\Theta_{n}] = 0 $ for Theorem~\ref{theorem:load-balancing} and $ \mathbb{P} [ \limsup_{n \to \infty} \Theta_{n} ] = 0 $ for Theorem~\ref{theorem:load-balancing-gap}, with certain choices of $ \{\phi_{n} : n \in \mathbb{N} \} $.

The event $ \Theta_{n} $ can be written as $ \Theta_{n} = \{ \tau_{1,n} \leq T \} $, where
\[
\tau_{1,n} = \inf \Big\{ t \geq 0 : \max_{k,\ell = 1, \ldots, s} | \tilde{L}_{n,k}(t) - \tilde{L}_{n,\ell}(t) | \geq \phi_{n} \Big\}.
\] 
As a convention, we take $ \tau_{1,n} = \infty $ if $ \max_{k,\ell = 1, \ldots, s} | \tilde{L}_{n,k}(t) - \tilde{L}_{n,\ell}(t) | < \phi_{n} $ for all $ t \geq 0 $. Consider an arbitrary outcome in $ \Theta_{n} $. Let stations~$ \bar{\ell}(n) $ and~$ \underline{\ell}(n) $ be the stations having the longest and shortest queue lengths at time $ \tau_{1,n} $, respectively. Then, $ \tilde{L}_{n,\underline{\ell}(n)}(\tau_{1,n}) \leq \tilde{L}_{n,\ell}(\tau_{1,n}) \leq \tilde{L}_{n,\bar{\ell}(n)}(\tau_{1,n}) $ for $ \ell = 1, \ldots, s $, and $ \tilde{L}_{n,\bar{\ell}(n)}(\tau_{1,n}) - \tilde{L}_{n,\underline{\ell}(n)}(\tau_{1,n}) \geq \phi_{n} $. By \eqref{eq:capacity}, \eqref{eq:diffusion-scaled}, and the assumption $\mu_{1} \leq \ldots \leq \mu_{s}$,
\begin{equation}\label{eq:tilde-L-Q}
	\mu \tilde{L}_{n,\bar{\ell}(n)}(\tau_{1,n}) - \sum_{\ell = 1}^{s} \tilde{Q}_{n,\ell}(\tau_{1,n}) \geq \mu_{1} \phi_{n}.
\end{equation}
Write $ \tilde{H}_{n}(t) = \mu \tilde{L}_{n,\bar{\ell}(n)}(t) - \sum_{\ell = 1}^{s} \tilde{Q}_{n,\ell}(t) $ with $\tilde{H}_{n}(0-) = 0$. Consider
\[
\tau_{2,n} = \sup\Big\{ t \in [0, \tau_{1,n}] :  \tilde{H}_{n}(t-) < \frac{\mu_{1}\phi_{n}}{2}  \Big\} \quad \mbox{and} \quad \tau_{3,n} = \inf\big\{ t \in [\tau_{2,n}, \tau_{1,n}] :  \tilde{H}_{n}(t) \geq \mu_{1}\phi_{n}  \big\}.
\] 
Since the system is initially empty, $ 0 \leq \tau_{2,n} \leq \tau_{3,n} \leq \tau_{1,n} $.

Let $ \{ \psi_{n} : n \in \mathbb{N} \} $ be a sequence of positive numbers such that $ \psi_{n} < T $ for all $ n \in \mathbb{N} $. The event $ \Theta_{n} $ can be decomposed into three disjoint events:
\begin{align*}
	\Theta_{1,n} & = \{ \tau_{1,n} \leq T, \tau_{2,n} \leq n^{-1/2} d_{0} \},\\
	\Theta_{2,n} & = \{ \tau_{1,n} \leq T, \tau_{2,n} > n^{-1/2} d_{0}, \tau_{3,n} - \tau_{2,n} < \psi_{n} \},\\
	\Theta_{3,n} & = \{ \tau_{1,n} \leq T, \tau_{2,n} > n^{-1/2} d_{0}, \tau_{3,n} - \tau_{2,n} \geq \psi_{n} \},
\end{align*}
where $ d_{0} = \max \{ d_{m,k} : m = 1,\ldots, b; k = 1, \ldots, s \} $ is the largest value of a primitive delay. We will prove these events are asymptotically negligible by finding an appropriate sequence $ \{ \psi_{n} : n \in \mathbb{N} \} $ for each theorem. To this end, the increments of each $ \tilde{Q}_{n,\ell} $ need to be analyzed in detail. 

By \eqref{eq:tilde-A}, \eqref{eq:tilde-Ank-decomposition}, and \eqref{eq:diffusion-scaled-queue}, $ \tilde{Q}_{n,\ell}(t) = \sum_{i = 1}^{4}\tilde{Q}^{(i)}_{n,\ell} (t) $ where
\begin{align*}
	\tilde{Q}^{(1)}_{n,\ell} (t) & = \sum_{m = 1}^{b} \tilde{\mathcal{E}}^{m,\Delta}_{n,\ell}(t), \quad \tilde{Q}^{(2)}_{n,\ell} (t) = - \sqrt{n} \mu_{\ell}\bar{B}_{n,\ell}(t), \\
	\tilde{Q}^{(3)}_{n,\ell} (t) & = \sum_{m = 1}^{b} \big( p_{m} r_{m,\ell} \tilde{E}^{m,\Delta}_{n,\ell}(t) + \tilde{G}^{m,\Delta}_{n,\ell} (t) + \tilde{M}^{m,\Delta}_{n,\ell} (t) \big) - \tilde{S}_{n,\ell}\big( \bar{B}_{n,\ell}(t)\big),\\
	\tilde{Q}^{(4)}_{n,\ell} (t) & = \sqrt{n} \rho_{n} \mu \sum_{m = 1}^{b} p_{m}r_{m,\ell} (t - n^{-1/2}d_{m,\ell})^{+} .
\end{align*}
It follows that $ \tilde{H}_{n}(t) = \sum_{i = 1}^{4} \tilde{H}^{(i)}_{n}(t) $ where $ \tilde{H}^{(i)}_{n}(t) = \mu\tilde{Q}^{(i)}_{n,\bar{\ell}(n)}(t)/\mu_{\bar{\ell}(n)} - \sum_{\ell = 1}^{s} \tilde{Q}^{(i)}_{n,\ell}(t)$.

The sequence of events $ \{ \Theta_{1,n} : n \in \mathbb{N} \} $ satisfies
\begin{align*} 
	\mathbb{P}\Big[ \limsup_{n \to \infty} \Theta_{1,n} \Big] & \leq \mathbb{P}\Big[\limsup_{n \to \infty} \{\tau_{2,n} \leq n^{-1/2} d_{0}\}\Big] \\
	& = \mathbb{P} \Big[ \limsup_{n \to \infty} \Big\{\sup_{0 \leq t \leq d_{0}} \tilde{H}_{n} (n^{-1/2}t) \geq \frac{\mu_{1}\phi_{n}}{2} \Big\} \Big] \\
	& \leq \mathbb{P} \Big[ \limsup_{n \to \infty} \Big\{\sup_{0 \leq t \leq d_{0}} \big(\mu \tilde{L}_{n,\bar{\ell}(n)} (n^{-1/2}t) - \tilde{Q}_{n,\bar{\ell}(n)} (n^{-1/2}t)\big) \geq \frac{\mu_{1}\phi_{n}}{2} \Big\} \Big] \\
	& \leq \mathbb{P} \Big[ \limsup_{n \to \infty} \Big\{\sup_{0 \leq t \leq d_{0}} \max_{k = 1, \ldots, s} (\mu - \mu_{\bar{\ell}(n)})\tilde{L}_{n,k} (n^{-1/2}t) \geq \frac{\mu_{1}\phi_{n}}{2} \Big\}\Big] \\
	& \leq \mathbb{P} \Big[ \limsup_{n \to \infty} \Big\{\sup_{0 \leq t \leq d_{0}} \max_{k = 1, \ldots, s} \tilde{L}_{n,k} (n^{-1/2}t) \geq \frac{\mu_{1}\phi_{n}}{2(\mu - \mu_{1})} \Big\}\Big],
\end{align*}
where we obtain the second inequality by \eqref{eq:tilde-L-Q}, the third inequality by \eqref{eq:diffusion-scaled}, and the fourth inequality by the assumption $\mu_{1} \leq \mu_{\bar{\ell}(n)}$. By Lemma~\ref{lemma:after-initial}, $ \mathbb{P}[\limsup_{n \to \infty} \Theta_{1,n}] = 0 $ if 
\begin{equation}\label{eq:phi-n-condition-1}
	\phi_{n} \geq \max_{k = 1, \ldots, s}  \frac{2(\mu - \mu_{1})}{\mu_{1}\mu_{k}} \big(C_{k}\chi_{n} + C'_{k} n^{-1/4}\sqrt{\log\log n}\big) \quad \mbox{for $ n $ sufficiently large,}
\end{equation}
where $ C_{k} $ and $ C'_{k} $ are two positive numbers depending on $ d_{0} $.

Consider $ \Theta_{2,n} $. Since $ n^{-1/2} d_{0} < \tau_{2,n} \leq \tau_{3,n} $, $ \tilde{Q}^{(4)}_{n,\ell} [\tau_{2,n}, \tau_{3,n}] = \sqrt{n} \rho_{n} \mu_{\ell} (\tau_{3,n} - \tau_{2,n}) $ by \eqref{eq:rmk-heavy-traffic}, and thus $ \tilde{H}^{(4)}_{n}[\tau_{2,n},\tau_{3,n}] = 0 $. Because $ \tilde{Q}_{n,\bar{\ell}(n)}(t) > 0 $ for $ t \in [\tau_{2,n}, \tau_{3,n}] $, the server in station~$ \bar{\ell}(n) $ must be busy and $ \bar{B}_{n,\bar{\ell}(n)}[\tau_{2,n},\tau_{3,n}] = \tau_{3,n} - \tau_{2,n} $. Since $\bar{B}_{n,\ell} [\tau_{2,n},\tau_{3,n}] \leq \tau_{3,n} - \tau_{2,n} $ for $\ell \neq \bar{\ell}(n)$, $ \tilde{H}^{(2)}_{n}[\tau_{2,n},\tau_{3,n}] \leq 0$. By \eqref{eq:pm-epsilon-chi}, $ |\tilde{Q}^{(1)}_{n,\ell} [\tau_{2,n}, \tau_{3,n}]| \leq b \mu \sqrt{n}\rho_{n}\chi_{n}(\tau_{3,n} - \tau_{2,n}) + \sum_{m = 1}^{b} \chi_{n} \tilde{E}^{m,\Delta}_{n,\ell}[\tau_{2,n}, \tau_{3,n}] $. Then by \eqref{eq:tilde-L-Q},
\begin{equation}\label{eq:bound-Theta-2}
	\tilde{H}_{n}[\tau_{2,n}, \tau_{3,n}] \leq b \mu\Big(\frac{\mu}{\mu_{\bar{\ell}(n)}} + s - 2\Big) \sqrt{n}\rho_{n}\chi_{n}\psi_{n} + \tilde{H}^{(3)}_{n}[\tau_{2,n}, \tau_{3,n}] + \tilde{H}^{(5)}_{n}[\tau_{2,n}, \tau_{3,n}],
\end{equation}
where 
\[
\tilde{H}^{(5)}_{n} [\tau_{2,n}, \tau_{3,n}] = \Big(\frac{\mu}{\mu_{\bar{\ell}(n)}} - 1\Big) \sum_{m = 1}^{b} \chi_{n}\tilde{E}^{m,\Delta}_{n,\bar{\ell}(n)}[\tau_{2,n}, \tau_{3,n}] + \sum_{\ell \neq \bar{\ell}(n)} \sum_{m = 1}^{b}\chi_{n} \tilde{E}^{m,\Delta}_{n,\ell}[\tau_{2,n}, \tau_{3,n}].
\]
If $ \psi_{n} $ is given, we can choose $ \phi_{n} $ to make the right side of \eqref{eq:bound-Theta-2} less than $ \mu_{1}\phi_{n}/2 $ when $ n $ is large. Then, $ \Theta_{2,n} $ will be asymptotically negligible, since $ \tilde{H}_{n} [\tau_{2,n}, \tau_{3,n}] \geq \mu_{1}\phi_{n}/2 $ on  $ \Theta_{2,n} $. We will determine $ \psi_{n} $ by analyzing $ \Theta_{3,n} $ as follows.

Let us assume
\begin{equation}\label{eq:psi-condition-1}
	\psi_{n} \geq n^{-1/2} (2d_{0} \vee 1) \quad \mbox{for $ n $ sufficiently large.}
\end{equation}
Write $ \tau_{4,n} = \tau_{3,n} -  \psi_{n}/2 - n^{-1/2} d_{0} $. Then, $ \tau_{2,n} \leq \tau_{4,n} < \tau_{3,n} $ and $ \tilde{H}_{n} [\tau_{4,n}, \tau_{3,n}] > 0 $ on $ \Theta_{3,n} $. By the previous arguments, $ \tilde{H}^{(4)}_{n} [\tau_{4,n}, \tau_{3,n}] = 0 $ and $ \tilde{H}^{(2)}_{n} [\tau_{4,n}, \tau_{3,n}] \leq 0 $. Put $\tilde{\mathcal{E}}_{n,\ell}(t) = \sum_{j = 1}^{E_{n}(nt)} \varepsilon_{n,\pi_{n,\ell}(j)}/\sqrt{n} $. Then, $\sum_{\ell = 1}^{s} \tilde{\mathcal{E}}_{n,\ell}(t) = 0$ because $\sum_{\ell = 1}^{s} \varepsilon_{n,\ell} = 0$. By \eqref{eq:perturbation-parameters} and \eqref{eq:pm-epsilon-chi}, 
\begin{align*}
	& | \tilde{Q}^{(1)}_{n,\ell}[\tau_{4,n}, \tau_{3,n}] -  \tilde{\mathcal{E}}_{n,\ell}[ \tau_{4,n}, \tau_{3,n}  - n^{-1/2}d_{0}] | \\
	& \quad \leq b \sqrt{n}\chi_{n} \big(\bar{E}_{n}[\tau_{4,n} - n^{-1/2}d_{0}, \tau_{4,n}] + \bar{E}_{n}[\tau_{3,n} - n^{-1/2}d_{0}, \tau_{3,n}] \big).
\end{align*}
Then, $ \tilde{H}^{(1)}_{n} [\tau_{4,n}, \tau_{3,n}] \leq \tilde{H}^{(6)}_{n} [\tau_{4,n}, \tau_{3,n}] + \tilde{H}^{(7)}_{n} [\tau_{4,n}, \tau_{3,n}] + \tilde{H}^{(8)}_{n} [\tau_{4,n}, \tau_{3,n}] $, where 
\begin{align*}  
	\tilde{H}^{(6)}_{n} [\tau_{4,n}, \tau_{3,n}] & = 2 b \mu d_{0} \Big( \frac{\mu}{\mu_{\bar{\ell}(n)}} + s - 2\Big) \rho_{n} \chi_{n}  ,\\
	\tilde{H}^{(7)}_{n} [\tau_{4,n}, \tau_{3,n}] & =  b \Big( \frac{\mu}{\mu_{\bar{\ell}(n)}} + s - 2 \Big)\chi_{n} \big(\tilde{E}_{n}[\tau_{4,n} - n^{-1/2}d_{0}, \tau_{4,n}] + \tilde{E}_{n}[\tau_{3,n} - n^{-1/2}d_{0}, \tau_{3,n}] \big),\\
	\tilde{H}^{(8)}_{n} [\tau_{4,n}, \tau_{3,n}] & = \frac{\mu}{\mu_{\bar{\ell}(n)}} \tilde{\mathcal{E}}_{n,\bar{\ell}(n)}[\tau_{4,n}, \tau_{3,n}  - n^{-1/2}d_{0}].
\end{align*}
Since $ \tilde{H}_{n}(t) \geq \mu_{1}\phi_{n}/2 $ for $ t \in [\tau_{2,n}, \tau_{3,n}] $, station~$\bar{\ell}(n)$ cannot have the shortest queue. If the $ j $th customer joins the system during this time interval, $ \varepsilon_{n,\pi_{n,\bar{\ell}(n)}(j)} \leq - \delta_{0} \chi_{n}$ by  condition~\eqref{item:cond-1-n}. Hence, 
\[
\tilde{H}^{(8)}_{n} [\tau_{4,n}, \tau_{3,n}] \leq - \frac{\mu}{\mu_{\bar{\ell}(n)}}\delta_{0} \sqrt{n} \chi_{n} \bar{E}_{n}[\tau_{4,n}, \tau_{3,n}  - n^{-1/2}d_{0}] =  \tilde{H}^{(9)}_{n} [\tau_{4,n}, \tau_{3,n}] - \frac{\delta_{0} \mu^{2}}{2\mu_{\bar{\ell}(n)}} \sqrt{n}\rho_{n}\chi_{n}\psi_{n},
\]
where $ \tilde{H}^{(9)}_{n} [\tau_{4,n}, \tau_{3,n}] = - \mu\delta_{0}\chi_{n} \tilde{E}_{n}[\tau_{4,n}, \tau_{3,n}- n^{-1/2}d_{0}] /\mu_{\bar{\ell}(n)}$. Combing the above results, we obtain
\begin{equation}\label{eq:bound-Theta-3}
	\tilde{H}_{n} [\tau_{4,n}, \tau_{3,n}] \leq - \frac{\delta_{0} \mu^{2}}{2\mu_{\bar{\ell}(n)}} \sqrt{n}\rho_{n}\chi_{n}\psi_{n} + \sum_{i = 3,6,7,9}\tilde{H}^{(i)}_{n} [\tau_{4,n}, \tau_{3,n}].
\end{equation}
We choose $ \psi_{n} $ so that the first term will dominate the right side of \eqref{eq:bound-Theta-3} as $ n $ gets large; then, $ \tilde{H}_{n} [\tau_{4,n}, \tau_{3,n}] \leq 0 $. Because $ \tilde{H}_{n} [\tau_{4,n}, \tau_{3,n}] > 0 $ on $ \Theta_{3,n} $, this event must be asymptotically negligible. 

\begin{proof}[Proof of Theorem~\ref{theorem:load-balancing}.]
	Take $ \phi_{n} = \delta > 0 $ for all $ n \in \mathbb{N} $. By \eqref{eq:epsilon-1}, condition~\eqref{eq:phi-n-condition-1} holds and thus
	\[
	\lim_{n \to \infty} \mathbb{P} [ \Theta_{1,n} ] \leq \lim_{n \to \infty} \mathbb{P} \Big[ \bigcup_{i = n}^{\infty} \Theta_{1,i} \Big] = \mathbb{P}\Big[ \limsup_{n \to \infty} \Theta_{1,n} \Big] = 0,
	\]
	where the first equality follows from the continuity of probability measures. Take
	\[
	\psi_{n} = \frac{\delta \mu_{1}^{2}}{4 b \mu (\mu + (s - 2)\mu_{1}) \sqrt{n}\rho_{n}\chi_{n} }.
	\]
	Then, $ \lim_{n \to \infty} \psi_{n} = 0 $ by \eqref{eq:heavy-traffic} and \eqref{eq:epsilon-2}. Since $\mu_{1} \leq \mu_{\bar{\ell}(n)}$, $ \tilde{H}_{n} [\tau_{2,n}, \tau_{3,n}]  < \delta\mu_{1}/4 + \tilde{H}^{(3)}_{n} [\tau_{2,n}, \tau_{3,n}] + \tilde{H}^{(5)}_{n} [\tau_{2,n}, \tau_{3,n}] $ by \eqref{eq:bound-Theta-2}. If we can prove 
	\begin{equation}\label{eq:H3H5}
		\lim_{n \to \infty} \mathbb{P}\Big[ \tilde{H}^{(3)}_{n} [\tau_{2,n}, \tau_{3,n}] + \tilde{H}^{(5)}_{n} [\tau_{2,n}, \tau_{3,n}]  \geq \frac{\delta\mu_{1}}{4}, \tau_{1,n} \leq T,  \tau_{3,n} - \tau_{2,n} < \psi_{n} \Big] = 0,
	\end{equation}
	it follows that $ \lim_{n \to \infty} \mathbb{P}[ \tilde{H}_{n} [\tau_{2,n}, \tau_{3,n}] \geq \delta\mu_{1}/2, \tau_{1,n} \leq T, \tau_{3,n} - \tau_{2,n} < \psi_{n} ] = 0 $, by which we will obtain $ \lim_{n \to \infty} \mathbb{P} [ \Theta_{2,n} ] = 0 $. 
	
	By \eqref{eq:Gmk-strong-approximation}, \eqref{eq:FSLLN}, and the random time-change theorem (Theorem~5.3 in \citealp{Chen.Yao.2001}), $ \tilde{G}^{m}_{n,k} \Rightarrow \tilde{G}^{m}_{k} $ as $ n \to \infty $, where $ \tilde{G}^{m}_{k} $ is a driftless Brownian motion starting from $ 0 $ with variance $ p_{m}r_{m,k}(1 - p_{m}r_{m,k}) \mu $. It follows from Lemma~3.2 in \citet{Iglehart.Whitt.1970a} that $ \{ \tilde{G}^{m}_{n,k} : n \in \mathbb{N} \} $ is C-tight. Because $ \lim_{n \to \infty} \psi_{n} = 0 $, $ \sup_{0 \leq t_{2} \leq t_{1} \leq T, t_{1} - t_{2} \leq \psi_{n}}  |\tilde{G}^{m}_{n,k} [t_{2}, t_{1}]| \Rightarrow 0 $ as $ n \to \infty $, by which we obtain $ \tilde{G}^{m,\Delta}_{n,k} [\tau_{2,n}, \tau_{3,n}] \mathbb{1}_{\{ \tau_{1,n} \leq T, \tau_{3,n} - \tau_{2,n} < \psi_{n} \}} \Rightarrow 0 $. Following this C-tightness argument, we may use \eqref{eq:FCLT} to obtain $ \tilde{E}^{m,\Delta}_{n,k} [\tau_{2,n}, \tau_{3,n}] \mathbb{1}_{\{ \tau_{1,n} \leq T, \tau_{3,n} - \tau_{2,n} < \psi_{n} \}} \Rightarrow 0 $, use the last assertion of Lemma~\ref{lemma:martingale} to obtain $ \tilde{M}^{m,\Delta}_{n,k} [\tau_{2,n}, \tau_{3,n}] \mathbb{1}_{\{ \tau_{1,n} \leq T, \tau_{3,n} - \tau_{2,n} < \psi_{n} \}} \Rightarrow 0 $, and use \eqref{eq:FCLT} and \eqref{eq:busy-inequality} to obtain $ \tilde{S}_{n,k} [ \bar{B}_{n,k}(\tau_{2,n}), \bar{B}_{n,k}(\tau_{3,n}) ] \mathbb{1}_{\{ \tau_{1,n} \leq T, \tau_{3,n} - \tau_{2,n} < \psi_{n} \}} \Rightarrow 0 $. These results lead to $(\tilde{H}^{(3)}_{n} [\tau_{2,n}, \tau_{3,n}] + \tilde{H}^{(5)}_{n} [\tau_{2,n}, \tau_{3,n}]) \mathbb{1}_{\{ \tau_{1,n} \leq T, \tau_{3,n} - \tau_{2,n} < \psi_{n} \}} \Rightarrow 0 $, which proves \eqref{eq:H3H5}.
	
	Now consider $ \Theta_{3,n} $. By \eqref{eq:epsilon-1}, condition \eqref{eq:psi-condition-1} holds for $ n $ sufficiently large, and thus $ \tau_{3,n} - \tau_{4,n} \leq \psi_{n} $. Since $\mu_{\bar{\ell}(n)} \leq \mu_{s}$, it follows from \eqref{eq:bound-Theta-3} that
	\[
	\tilde{H}_{n} [\tau_{4,n}, \tau_{3,n}] \leq -  \frac{\delta \delta_{0} \mu \mu_{1}^{2}}{8 b \mu_{s}(\mu + (s-2) \mu_{1}) } + \sum_{i = 3,6,7,9}\tilde{H}^{(i)}_{n} [\tau_{4,n}, \tau_{3,n}].
	\]
	Using the C-tightness argument, we obtain $ \tilde{H}^{(3)}_{n} [\tau_{4,n}, \tau_{3,n}] \mathbb{1}_{\{ \tau_{1,n} \leq T, \tau_{3,n} - \tau_{4,n} \leq \psi_{n} \}} \Rightarrow 0 $, and by \eqref{eq:heavy-traffic} and \eqref{eq:epsilon-1}, $ (\tilde{H}^{(6)}_{n} [\tau_{4,n}, \tau_{3,n}] + \tilde{H}^{(7)}_{n} [\tau_{4,n}, \tau_{3,n}] + \tilde{H}^{(9)}_{n} [\tau_{4,n}, \tau_{3,n}] ) \mathbb{1}_{\{ \tau_{1,n} \leq T, \tau_{3,n} - \tau_{4,n} \leq \psi_{n} \}} \Rightarrow 0 $. Therefore, 
	\[
	\lim_{n \to \infty} \mathbb{P}\Big[ \sum_{i = 3,6,7,9}\tilde{H}^{(i)}_{n} [\tau_{4,n}, \tau_{3,n}] \geq \frac{\delta \delta_{0} \mu \mu_{1}^{2}}{16 b \mu_{s} (\mu + (s-2) \mu_{1}) }, \tau_{1,n} \leq T, \tau_{3,n} - \tau_{4,n} \leq \psi_{n} \Big] = 0,
	\]
	by which we obtain $ \lim_{n \to \infty} \mathbb{P}\big[ \tilde{H}_{n} [\tau_{4,n}, \tau_{3,n}] > 0, \tau_{1,n} \leq T, \tau_{3,n} - \tau_{4,n} \leq \psi_{n} \big] = 0$. Then, we conclude the proof with $ \lim_{n \to \infty} \mathbb{P} [ \Theta_{3,n} ] = 0 $. 
\end{proof}

\begin{proof}[Proof of Theorem~\ref{theorem:load-balancing-gap}.]
	Assume \eqref{eq:phi-n-condition-1} holds. Then, $ \mathbb{P}[\limsup_{n \to \infty} \Theta_{1,n}] = 0 $.
	
	Without loss of generality, we assume the sequence $ \{ \psi_{n} : n \in \mathbb{N} \} $ is nonincreasing. By \eqref{eq:psi-condition-1} and the assumption $ \psi_{n} < T $, $ \limsup_{n \to \infty} \sqrt{n^{1/2} \psi_{n} (\log \log n - \log \psi_{n})} = \infty $. By Lemmas~\ref{lemma:renewal-strong-approximations} and \ref{lemma:Wiener-intervals},
	\begin{align}
		\limsup_{n \to \infty} \sup_{0 \leq t \leq nT } \sup_{0 \leq u \leq n \psi_{n}} \frac{|E_{n}(t, t+u] - \lambda_{n} u |}{\sqrt{n \psi_{n} (\log \log n - \log \psi_{n})}} & \aseq c_{0}\sqrt{2\mu}, \label{eq:E-increments}\\
		\limsup_{n \to \infty} \sup_{0 \leq t \leq nT } \sup_{0 \leq u \leq n \psi_{n}} \frac{|S_{k}(t, t+u] - \mu_{k} u |}{\sqrt{n \psi_{n} (\log \log n - \log \psi_{n})}} & \aseq c_{k}\sqrt{2\mu_{k}} \label{eq:S-increments}
	\end{align}
	for $ k = 1, \ldots, s $. By \eqref{eq:Gmk-strong-approximation}, \eqref{eq:FSLLN}, and Lemma~\ref{lemma:Wiener-intervals},
	\[
	\limsup_{n \to \infty} \sup_{0 \leq t \leq nT } \sup_{0 \leq u \leq n \psi_{n}} \frac{|G^{m}_{k}(E_{n}(t+u)) - G^{m}_{k}(E_{n}(t)) |}{\sqrt{n \psi_{n} (\log \log n - \log \psi_{n})}} \aseq \sqrt{2 \mu p_{m}r_{m,k}(1 - p_{m}r_{m,k})} \leq \sqrt{0.5\mu},
	\]
	and by \eqref{eq:Mmnk-increments} and \eqref{eq:FSLLN},
	\[
	\limsup_{n \to \infty} \sup_{0 \leq t \leq nT } \sup_{0 \leq u \leq n \psi_{n}} \frac{|M^{m}_{n,k}(E_{n}(t+u)) - M^{m}_{n,k}(E_{n}(t)) |}{\sqrt{n \psi_{n} (\chi_{n} \log n \vee 1)}} \asleq \sqrt{20},
	\]
	for $ m = 1, \ldots, b $ and $ k = 1, \ldots, s $. These results will be used to analyze $ \Theta_{2,n} $ and $ \Theta_{3,n} $. 
	
	Take 
	\[
	\bar{C} > \max_{k = 1, \ldots, s} \Big(\frac{\mu}{\mu_{1}} + s - 2\Big)\Big( c_{0}\mu_{k} \sqrt{2\mu^{-1}} +  c_{k}\sqrt{2\mu_{k}} + b\sqrt{0.5\mu} \Big) \quad\mbox{and}\quad \bar{C}' > \sqrt{20}b\Big( \frac{\mu}{\mu_{1}} + s - 2\Big).
	\] 
	Then by \eqref{eq:rmk-heavy-traffic}, with probability 1, $ |\tilde{H}^{(3)}_{n} [\tau_{2,n}, \tau_{3,n}] \mathbb{1}_{\{ \tau_{1,n} \leq T, \tau_{3,n} - \tau_{2,n} < \psi_{n} \}} | < \bar{C} \sqrt{\psi_{n} (\log \log n - \log \psi_{n})} + \bar{C}' \sqrt{\psi_{n} (\chi_{n} \log n \vee 1)} $ for $ n $ sufficiently large. Moreover,
	\[  
	\limsup_{n \to \infty} \frac{|\tilde{H}^{(5)}_{n} [\tau_{2,n}, \tau_{3,n}]\mathbb{1}_{\{ \tau_{1,n} \leq T, \tau_{3,n} - \tau_{2,n} < \psi_{n} \}}|}{\chi_{n}\sqrt{\psi_{n} (\log \log n - \log \psi_{n})}} \asleq c_{0} b \sqrt{2 \mu }  \Big(\frac{\mu}{\mu_{1}} + s - 2\Big).
	\]
	By \eqref{eq:epsilon-1} and \eqref{eq:bound-Theta-2}, with probability 1,
	\begin{equation}\label{eq:psi-Theta-2}
		\begin{aligned}
			\tilde{H}_{n} [\tau_{2,n}, \tau_{3,n}] \mathbb{1}_{\{ \tau_{1,n} \leq T, \tau_{3,n} - \tau_{2,n} < \psi_{n} \}} & < b \mu\Big(\frac{\mu}{\mu_{1}}+s-2\Big) \sqrt{n}\rho_{n}\chi_{n}\psi_{n} \\
			& \quad + \bar{C} \sqrt{\psi_{n} (\log \log n - \log \psi_{n})} + \bar{C}' \sqrt{\psi_{n} (\chi_{n} \log n \vee 1)}
		\end{aligned}
	\end{equation}
	for $ n $ sufficiently large.
	
	Similarly, with probability 1, $ |\tilde{H}^{(3)}_{n} [\tau_{4,n}, \tau_{3,n}] \mathbb{1}_{\{ \tau_{1,n} \leq T, \tau_{3,n} - \tau_{4,n} \leq \psi_{n} \}} | < \bar{C} \sqrt{\psi_{n} (\log \log n - \log \psi_{n})} + \bar{C}' \sqrt{\psi_{n} (\chi_{n} \log n \vee 1)} $ for $ n $ sufficiently large. By Lemmas~\ref{lemma:renewal-strong-approximations} and~\ref{lemma:Wiener-intervals},
	\begin{align*}
		\limsup_{n \to \infty} \frac{|\tilde{H}^{(7)}_{n} [\tau_{4,n}, \tau_{3,n}]\mathbb{1}_{\{ \tau_{1,n} \leq T, \tau_{3,n} - \tau_{4,n} \leq \psi_{n} \}}|}{n^{-1/4}\chi_{n}\sqrt{\log n}} & \asleq 4 c_{0} b \sqrt{\mu d_{0}} \Big(\frac{\mu}{\mu_{1}}+s-2\Big), \\
		\limsup_{n \to \infty} \frac{|\tilde{H}^{(9)}_{n} [\tau_{4,n}, \tau_{3,n}]\mathbb{1}_{\{ \tau_{1,n} \leq T, \tau_{3,n} - \tau_{4,n} \leq \psi_{n} \}}|}{\chi_{n}\sqrt{\psi_{n}(\log \log n - \log \psi_{n}})} & \asleq  \frac{2 \delta_{0} c_{0}\mu^{3/2}}{\mu_{1}}.
	\end{align*}
	Note that 
	\[
	\lim_{n \to \infty} \frac{n^{-1/4}\chi_{n}\sqrt{\log n}}{\sqrt{\psi_{n} (\chi_{n} \log n \vee 1)}} =  \lim_{n \to \infty} \frac{\sqrt{\chi_{n}}}{\sqrt{n^{1/2}\psi_{n}}} \cdot \frac{\sqrt{\chi_{n}\log n}}{\sqrt{ \chi_{n} \log n \vee 1}} \leq \lim_{n \to \infty}  \frac{\sqrt{\chi_{n}}}{\sqrt{n^{1/2}\psi_{n}}} = 0,
	\]
	where the last equality follows from \eqref{eq:epsilon-1} and \eqref{eq:psi-condition-1}. By \eqref{eq:epsilon-1} and \eqref{eq:bound-Theta-3}, with probability 1,
	\begin{equation}\label{eq:Theta-3-choice}
		\begin{aligned}
			\tilde{H}_{n} [\tau_{4,n}, \tau_{3,n}] \mathbb{1}_{\{ \tau_{1,n} \leq T, \tau_{3,n} - \tau_{4,n} \leq \psi_{n} \}} & < - \frac{\delta_{0} \mu^{2}}{2\mu_{s}} \sqrt{n}\rho_{n}\chi_{n}\psi_{n} + 2 b \mu d_{0}\Big(\frac{\mu}{\mu_{1}} + s - 2\Big) \rho_{n} \chi_{n}\\
			& \quad + \bar{C} \sqrt{\psi_{n} (\log \log n - \log \psi_{n})} + \bar{C}' \sqrt{\psi_{n} (\chi_{n} \log n \vee 1)}
		\end{aligned}
	\end{equation}
	for $ n $ sufficiently large.
	
	We choose $ \psi_{n} $ in order for the first term to dominate the right side of \eqref{eq:Theta-3-choice}. To this end, we set
	\begin{equation}\label{eq:psi-Theta-3}
		\begin{aligned}
			\frac{\delta_{0}\mu^{2}}{6\mu_{s}}  \sqrt{n}\rho_{n}\chi_{n}\psi_{n} \geq \max & \Big\{ 2 b \mu d_{0}\Big(\frac{\mu}{\mu_{1}} + s - 2\Big) \rho_{n} \chi_{n},\\
			& \quad \bar{C} \sqrt{\psi_{n} (\log \log n - \log \psi_{n})}, \bar{C}'\sqrt{\psi_{n} (\chi_{n} \log n \vee 1)} \Big\}.
		\end{aligned}
	\end{equation}
	Solving the inequality with the first term on the right side, we obtain
	\begin{equation}\label{eq:psi-bound-1}
		\psi_{n} \geq \Big(\frac{\mu}{\mu_{1}} + s - 2\Big) \frac{12 b d_{0}\mu_{s}}{\delta_{0} \mu\sqrt{n}}.
	\end{equation}
	The inequality involving the second term can be reduced to
	\begin{equation}\label{eq:psi-bound-2}
		\psi_{n} \geq h_{n} (\log \log n - \log \psi_{n}),
	\end{equation}
	where $ h_{n} = 36 \bar{C}^{2}\mu_{s}^{2}/(\delta_{0}^{2} \mu^{4} n \rho_{n}^{2} \chi_{n}^{2}) $. Take
	\begin{equation}\label{eq:psi-bound-2-1}
		\psi_{n} \geq h_{n} (\log \log n - \log h_{n}).
	\end{equation}
	Since $ \lim_{n \to \infty} h_{n} = 0 $ by \eqref{eq:heavy-traffic} and \eqref{eq:log-log-n-bound}, $ h_{n} (\log \log n - \log \psi_{n}) \leq h_{n} (\log \log n - \log h_{n} - \log (\log \log n - \log h_{n})) < h_{n} (\log \log n - \log h_{n} ) $ for $ n $ sufficiently large, and thus inequality \eqref{eq:psi-bound-2} holds. Solving the inequality with the third term on the right side of \eqref{eq:psi-Theta-3}, we obtain
	\begin{equation}\label{eq:psi-bound-3}
		\psi_{n} \geq \frac{\bar{C}'^{2}}{\bar{C}^{2}}h_{n}(\chi_{n} \log n \vee 1).
	\end{equation}
	Note that when $ \chi_{n} \log n \geq 1 $,
	\[
	\frac{\chi_{n}\log n}{\log (n \chi_{n}^{2})} \leq \frac{\chi_{n}\log n}{\log (n (\log n)^{-2})} = \frac{\chi_{n}\log n}{\log n - 2 \log \log n},
	\]
	and then by \eqref{eq:epsilon-1} and \eqref{eq:log-log-n-bound}, $ \chi_{n}\log n \vee 1 = o(\log (n \chi_{n}^{2}))$. Hence, $ \chi_{n}\log n \vee 1 = o(- \log h_{n})$, and it follows that for $ n $ sufficiently large, inequality \eqref{eq:psi-bound-3} must hold if \eqref{eq:psi-bound-2-1} holds. If we take
	\begin{equation}\label{eq:psi-bound}
		\begin{aligned}
			\psi_{n} \geq \max & \Big\{ \frac{2d_{0}\vee 1}{\sqrt{n}},\Big(\frac{\mu}{\mu_{1}} + s - 2\Big) \frac{12 b d_{0}\mu_{s}}{\delta_{0}\mu \sqrt{n}}, \\
			& \quad \frac{36 \bar{C}^{2}\mu_{s}^{2}}{\delta_{0}^{2} \mu^{4} } \cdot \frac{\log \log n + \log (n \rho_{n}^{2} \chi_{n}^{2}) + \log (\delta_{0}^{2} \mu^{4}/(36 \bar{C}^{2} \mu_{s}^{2})) }{n \rho_{n}^{2} \chi_{n}^{2}}  \Big\}
		\end{aligned}
	\end{equation} 
	by \eqref{eq:psi-condition-1}, \eqref{eq:psi-bound-1}, and \eqref{eq:psi-bound-2-1}, then with probability 1, $ \tilde{H}_{n} [\tau_{4,n}, \tau_{3,n}] \mathbb{1}_{\{ \tau_{1,n} \leq T, \tau_{3,n} - \tau_{4,n} \leq \psi_{n} \}} < 0 $ for $ n $ sufficiently large. Therefore, $ \mathbb{P}[\limsup_{n \to \infty} \Theta_{3,n}] = 0 $ under this condition.
	
	Now we choose $ \phi_{n} $ to make $ \mathbb{P}[\limsup_{n \to \infty} \Theta_{2,n}] = 0 $. Assume $ \psi_{n} $ satisfies \eqref{eq:psi-bound}. By \eqref{eq:psi-Theta-2} and \eqref{eq:psi-Theta-3}, with probability 1,
	\[
	\tilde{H}_{n} [\tau_{2,n}, \tau_{3,n}] \mathbb{1}_{\{ \tau_{1,n} \leq T, \tau_{3,n} - \tau_{2,n} < \psi_{n} \}} < b \mu\Big(\frac{\mu}{\mu_{1}} +s-2+ \frac{\delta_{0}\mu}{3b\mu_{s}}\Big) \sqrt{n}\rho_{n}\chi_{n}\psi_{n} \quad \mbox{for $ n $ sufficiently large.}
	\]
	Since $ \tilde{H}_{n} [\tau_{2,n}, \tau_{3,n}] \geq \mu_{1}\phi_{n}/2 $ on  $ \Theta_{2,n} $, $ \mathbb{P}[\limsup_{n \to \infty} \Theta_{2,n}] = 0 $ holds if 
	\[
	\phi_{n} \geq \frac{2b \mu}{\mu_{1}}\Big(\frac{\mu}{\mu_{1}} + s - 2 + \frac{\delta_{0}\mu}{3b\mu_{s}}\Big) \sqrt{n}\rho_{n}\chi_{n}\psi_{n}.
	\]
	By \eqref{eq:psi-bound}, we can take
	\begin{equation}\label{eq:phi-Theta-2}
		\phi_{n} = \max \Big\{ C \chi_{n}, C' \frac{\log \log n + \log (n \chi_{n}^{2})}{\sqrt{n} \chi_{n}} \Big\},
	\end{equation}
	where 
	\begin{equation}\label{eq:C-C-prime-1}
		C \geq \frac{2b\mu}{\mu_{1}}\Big(\frac{\mu}{\mu_{1}}+s-2+ \frac{\delta_{0}\mu}{3b\mu_{s}}\Big) \bigg(2d_{0} \vee 1 \vee \Big(\frac{\mu}{\mu_{1}} + s - 2\Big) \frac{12 b d_{0}\mu_{s}}{\delta_{0}\mu}\bigg).
	\end{equation}
	and 
	\begin{equation}\label{eq:C-C-prime-2}
		C' \geq \Big(\frac{\mu}{\mu_{1}} +s-2+ \frac{\delta_{0}\mu}{3b\mu_{s}}\Big)\frac{72 b \bar{C}^{2}\mu\mu_{s}^{2}}{\delta_{0}^{2} \mu^{4} \mu_{1} } .
	\end{equation}
	
	With $ \phi_{n} $ given by \eqref{eq:phi-Theta-2}, we need to ensure that it also satisfies condition \eqref{eq:phi-n-condition-1}. By \eqref{eq:phi-Theta-2}, $ \phi_{n}/2 \geq \max_{k = 1, \ldots, s} 2(\mu - \mu_{1}) C_{k} \chi_{n} / (\mu_{1}\mu_{k})$ if we take
	\begin{equation}\label{eq:C}
		C \geq \max_{k = 1, \ldots, s} \frac{4(\mu - \mu_{1})}{ \mu_{1}\mu_{k}}C_{k} .
	\end{equation}
	By the inequality of arithmetic and geometric means, 
	\[
	\frac{\phi_{n}}{2} \geq \frac{1}{4}\Big( C \chi_{n} + C' \frac{\log \log n + \log (n \chi_{n}^{2})}{\sqrt{n} \chi_{n}} \Big) \geq  \frac{n^{-1/4}}{2}\sqrt{CC'(\log \log n + \log (n \chi_{n}^{2}))}.
	\]
	If we take
	\begin{equation}\label{eq:C-C-prime}
		C' \geq \max_{k = 1, \ldots, s} \frac{16(\mu - \mu_{1})^{2}}{C \mu_{1}^{2}\mu_{k}^{2}} C'^{2}_{k},
	\end{equation}
	then for $ n $ sufficiently large,
	\[
	\frac{\phi_{n}}{2} > \frac{\sqrt{CC'}}{2}n^{-1/4}\sqrt{\log \log n } \geq \max_{k = 1, \ldots, s}  \frac{2(\mu - \mu_{1})}{\mu_{1}\mu_{k}} C'_{k} n^{-1/4}\sqrt{\log\log n}.
	\]
	Therefore, condition \eqref{eq:phi-n-condition-1} holds if $ C $ and $ C' $ satisfy \eqref{eq:C} and \eqref{eq:C-C-prime}.
	
	Taking $ C $ and $ C' $ such that \eqref{eq:C-C-prime-1}--\eqref{eq:C-C-prime} hold, we obtain $ \mathbb{P}[\limsup_{n \to \infty} \Theta_{n}] = 0 $ and complete the proof of \eqref{eq:upper-bound}. The almost sure convergence in \eqref{eq:almost-sure-convergence} follows from \eqref{eq:epsilon-1} and \eqref{eq:upper-bound}. 
\end{proof}

\begin{remark}
	Both conditions \eqref{eq:fourth-moments} and \eqref{eq:log-log-n-bound} are essential for the proof of Theorem~\ref{theorem:load-balancing-gap}: Condition \eqref{eq:fourth-moments} ensures that the discrepancies between the renewal processes and their Brownian approximations are negligible, so we can use the Brownian approximations to estimate the increments of the renewal processes---as in \eqref{eq:E-increments} and \eqref{eq:S-increments}. Condition \eqref{eq:log-log-n-bound} ensures that we can find $ \psi_{n} < T $ for all $ T > 0 $, such that the first term will dominate the right side of \eqref{eq:Theta-3-choice}---see the third term on the right side of \eqref{eq:psi-bound}.
\end{remark}

\begin{proof}[Proof of Corollary~\ref{corollary:optimal-gap}.]
	Put $ g(x) = C x $ and $ g_{n}(x) = C' (\log \log n + \log (n x^{2}))/(\sqrt{n} x) $ for $ x > 0 $. Then, $ g $ is strictly increasing, $ g_{n} $ is strictly decreasing for $ x > \mathrm{e}/\sqrt{n} $, and both functions are continuous. When $n$ is large, we have $ \chi_{n} > \mathrm{e}/\sqrt{n} $ by \eqref{eq:epsilon-2}, and $g(\mathrm{e}/\sqrt{n}) < g_{n}(\mathrm{e}/\sqrt{n})$. It follows that $ g(\chi_{n}) \vee g_{n}(\chi_{n}) $ is minimized if and only if $ g(\chi_{n}) = g_{n}(\chi_{n}) $. This yields $ n \chi_{n}^{2} = C' \sqrt{n} (\log \log n + \log (n \chi_{n}^{2})) / C $, by which we obtain $ \lim_{n \to \infty} \log (n \chi_{n}^{2})/\log n = 1/2 $, and thus,
	\[
	\lim_{n \to \infty} \frac{g(\chi_{n}) \vee g_{n}(\chi_{n})}{n^{-1/4}\sqrt{\log n}} = \lim_{n \to \infty} \frac{C \chi_{n}}{n^{-1/4}\sqrt{\log n}} = \lim_{n \to \infty} \frac{\sqrt{CC'(\log \log n + \log (n \chi_{n}^{2}))}}{\sqrt{\log n}} = \sqrt{\frac{CC'}{2}}.
	\]
	The minimized upper bound is asymptotically equal to $ n^{-1/4	}\sqrt{CC'\log n/2} $.
	
	Plugging in $ \chi_{n} = C'' n^{-1/4}\sqrt{\log n} $, we obtain
	\[
	\frac{\log \log n + \log (n \chi_{n}^{2})}{\sqrt{n} \chi_{n}} = \frac{\log n + 4 \log \log n + 4 \log C''}{2 C'' n^{1/4}\sqrt{\log n}},
	\] 
	which, together with \eqref{eq:upper-bound}, yields \eqref{eq:optimal-bound}.
\end{proof}

\section{Proofs of Theorems~\ref{theorem:workload-difference} and~\ref{theorem:CRP}}
\label{sec:Proofs-CRP}

Since the customers join the distributed system and the SSP according to the same renewal process with identical traveling delays, $ A_{n,k} $ is also the arrival process of class-$k$ customers at the station of the SSP. Let $ B_{n,k}^{\star}(t) $ be the amount of time that the server in the SSP has spent serving class-$ k $ customers by time $ t $. Then, the server's cumulative busy time is $ B_{n}^{\star}(t) = \sum_{k = 1}^{s} B_{n,k}^{\star}(t) $, the number of class-$ k $ customers that have completed service is $ S_{k}(\mu B_{n,k}^{\star}(t)/\mu_{k}) $, and the customer count is
\begin{equation}\label{eq:Q-star}
	Q_{n}^{\star}(t) = \sum_{k = 1}^{s} \bigg(A_{n,k}(t) - S_{k}\Big(\frac{\mu}{\mu_{k}} B_{n,k}^{\star}(t)\Big)\bigg).
\end{equation}
We define the fluid-scaled cumulative busy time process in the SSP and that for class-$ k $ customers by $ \bar{B}_{n}^{\star}(t) = B_{n}^{\star}(nt)/n $ and $ \bar{B}_{n,k}^{\star}(t) = B_{n,k}^{\star}(nt)/n $, respectively. 

Because the corresponding customers have identical service requirements, the cumulative stationed workloads must be equal in both systems, and thus
\begin{equation}\label{eq:original-auxiliary}
	\sum_{k = 1}^{s} \mu_{k}B_{n,k}(t) + W_{n}(t) = \mu B_{n}^{\star}(t) + W_{n}^{\star}(t).
\end{equation}
We use this identity to establish asymptotic equivalence between the two systems. By \eqref{eq:workload-difference} and \eqref{eq:original-auxiliary}, the difference between the workloads can also be written as
\begin{equation}\label{eq:Gamma}
	\Gamma^{\star}_{n}(t) = \mu B_{n}^{\star}(t) - \sum_{k = 1}^{s} \mu_{k} B_{n,k}(t),
\end{equation}
or in the scaled version,
\begin{equation}\label{eq:tilde-Gamma}
	\tilde{\Gamma}^{\star}_{n}(t) = \sqrt{n} \Big(\mu \bar{B}_{n}^{\star}(t) - \sum_{k = 1}^{s} \mu_{k} \bar{B}_{n,k}(t)\Big).
\end{equation}
By \eqref{eq:V-nk}, the workload in station~$ k $ is bounded by
\begin{equation}\label{eq:workload-inequality}
	W_{n,k}(t) \leq V_{n,k}(A_{n,k}(t)) - V_{n,k}(A_{n,k}(t) - Q_{n,k}(t)).
\end{equation}
We use this inequality to prove Theorem~\ref{theorem:workload-difference} by considering the total stationed workload. 
\begin{proof}[Proof of Theorem~\ref{theorem:workload-difference}.]
	We first prove $ \sup_{0 < t \leq T} \tilde{\Gamma}^{\star}_{n}(t) \Rightarrow 0 $ as $ n \to \infty $ under the conditions of Theorem~\ref{theorem:load-balancing}. By Proposition~\ref{prop:stationed-workload-comparision}, the sample paths of $\tilde{\Gamma}^{\star}_{n}$ are continuous and piecewise linear. Then,
	\[
	\sup_{0 \leq t \leq T} \tilde{\Gamma}^{\star}_{n}(t) = \sup_{t \in \mathcal{S}^{\star}_{n} } \tilde{\Gamma}^{\star}_{n}(t) = \sup_{t \in \mathcal{S}^{\star}_{n} } \tilde{\Gamma}^{\star}_{n}(t-),
	\]
	where $ \mathcal{S}^{\star}_{n} = \{ t \in (0,T] : \mbox{$ \partial_{-} \tilde{\Gamma}^{\star}_{n}(t) > 0 $} \} \cup \{0\} $, with $\partial_{-}$ denoting the left derivative of a function from $\mathbb{R}_{+}$ to $\mathbb{R}$. Since $ \tilde{\Gamma}^{\star}_{n}(t) \geq 0 $ for $ t \geq 0 $, it suffices to prove 
	\begin{equation}\label{eq:tilde-Gamma-convergence}
		\sup_{t \in \mathcal{S}^{\star}_{n} } \tilde{\Gamma}^{\star}_{n}(t-) \Rightarrow 0 \quad \mbox{as $ n \to \infty $.}
	\end{equation}
	Note that at any moment the cumulative busy time of a server either increases at rate $ 1 $ or stays unchanged. For $t \in \mathcal{S}^{\star}_{n}\setminus\{0\}$, we deduce from \eqref{eq:capacity} and \eqref{eq:tilde-Gamma} that $ \partial_{-} \bar{B}_{n}^{\star}(t) = 1 $ and that at least one of $\partial_{-}\bar{B}_{n,1}(t), \ldots, \partial_{-}\bar{B}_{n,s}(t) $ is zero. Since the servers are work-conserving, at least one of $\tilde{Q}_{n,1}(t-), \ldots, \tilde{Q}_{n,s}(t-)$ is zero. By \eqref{eq:diffusion-scaled} and Theorem~\ref{theorem:load-balancing},
	\begin{equation}\label{eq:Q-t-minus}
		\max_{k = 1, \ldots, s} \sup_{t \in \mathcal{S}^{\star}_{n}} \tilde{Q}_{n,k}(t-) \Rightarrow 0 \quad \mbox{as $ n \to \infty $.} 
	\end{equation}
	Write $\tilde{W}_{n,k}(t) = W_{n,k}(nt)/\sqrt{n}$ and $ \bar{Q}_{n,k}(t) = Q_{n,k}(nt)/n $. By \eqref{eq:workload-inequality}, 
	\[
	\tilde{W}_{n,k}(t) \leq \sqrt{n} \bar{V}_{n,k}(\bar{A}_{n,k}(t) - \bar{Q}_{n,k}(t), \bar{A}_{n,k}(t)] = \tilde{Q}_{n,k}(t) + \tilde{V}_{n,k}(\bar{A}_{n,k}(t) - \bar{Q}_{n,k}(t), \bar{A}_{n,k}(t)] .
	\]
	We deduce from \eqref{eq:FCLT-V} that $ \{\tilde{V}_{n,k} : n\in\mathbb{N} \} $ is C-tight (Lemma~3.2 in \citealt{Iglehart.Whitt.1970a}), which, together with \eqref{eq:Q-t-minus}, implies
	\begin{equation}\label{eq:V-t-minus}
		\sup_{t \in \mathcal{S}^{\star}_{n}} \vert \tilde{V}_{n,k}(\bar{A}_{n,k}(t-) - \bar{Q}_{n,k}(t-), \bar{A}_{n,k}(t-)] \vert \Rightarrow 0 \quad \mbox{as $ n \to \infty $.}
	\end{equation}
	Because $\tilde{\Gamma}^{\star}_{n}(t) \leq \sum_{k = 1}^{s} \tilde{W}_{n,k}(t)$, we obtain \eqref{eq:tilde-Gamma-convergence} from \eqref{eq:Q-t-minus} and \eqref{eq:V-t-minus}.
	
	Assume the conditions of Theorem~\ref{theorem:load-balancing-gap} hold. Write
	\[
	\tilde{\phi}_{n,k} =  \max \Big\{ C \mu_{k} \frac{\chi_{n}}{\sqrt{n}}, C' \mu_{k} \frac{\log \log n + \log (n \chi_{n}^{2})}{n \chi_{n}} \Big\}.
	\]
	Then with probability 1, 
	\begin{equation}\label{eq:Q-bound}
		\sup_{t \in \mathcal{S}^{\star}_{n}} \tilde{Q}_{n,k}(t-) < \sqrt{n}\tilde{\phi}_{n,k} \quad \mbox{for $ n $ sufficiently large.}
	\end{equation} 
	By the inequality of arithmetic and geometric means, 
	\[
	n\tilde{\phi}_{n,k} \geq \frac{1}{2}\Big( C \mu_{k} \sqrt{n}\chi_{n} +  \frac{C' \mu_{k}}{\chi_{n}} ( \log \log n + \log (n \chi_{n}^{2}) ) \Big) \geq  \mu_{k} n^{1/4}\sqrt{CC'(\log \log n + \log (n \chi_{n}^{2}))}.
	\]
	By Lemma~\ref{lemma:strong-approximations} and \eqref{eq:A-bar-convergence}, there exists a standard Brownian motion $\check{B}'_{k}$ such that
	\[
	\lim_{n \to \infty} \frac{1}{n^{1/4}} \sup_{0 \leq t \leq nT} | V_{k}(A_{n,k}(t)) - A_{n,k}(t) - c_{k} \check{B}'_{k}( A_{n,k}(t) ) | \aseq 0.
	\]
	Then, it follows from \eqref{eq:epsilon-2} that
	\begin{equation}\label{eq:Vk-limit}
		\lim_{n \to \infty} \frac{1}{n\tilde{\phi}_{n,k}} \sup_{0 \leq t \leq nT} | V_{k}(A_{n,k}(t)) - A_{n,k}(t) - c_{k} \check{B}'_{k}( A_{n,k}(t) ) | \aseq 0.
	\end{equation}
	By Lemma~\ref{lemma:Wiener-intervals} and \eqref{eq:A-bar-convergence},
	\[
	\limsup_{n \to \infty} \sup_{0 \leq t \leq A_{n,k}(nT)} \sup_{0 < u \leq n\tilde{\phi}_{n,k}} \frac{|\check{B}'_{k}(t, t+u] |}{\sqrt{n\tilde{\phi}_{n,k}(\log \log n - \log  \tilde{\phi}_{n,k} )}} \aseq \sqrt{2}.
	\]
	Note that for $n$ sufficiently large,
	\begin{align*}
		\frac{|\log \log n - \log  \tilde{\phi}_{n,k}|}{n \tilde{\phi}_{n,k}} & = \frac{|\log \log n +\log n - \log  (n\tilde{\phi}_{n,k})|}{n \tilde{\phi}_{n,k}}\\
		& \leq \frac{\log \log n + \log n }{\mu_{k} n^{1/4} \sqrt{CC'(\log \log n + \log (n \chi_{n}^{2}))}} + \frac{\log (n \tilde{\phi}_{n,k})}{n \tilde{\phi}_{n,k}}.
	\end{align*}
	Then by \eqref{eq:epsilon-2}, $ |\log \log n - \log  \tilde{\phi}_{n,k}| = o(n \tilde{\phi}_{n,k}) $, which implies
	\begin{equation}\label{eq:B'-bound}
		\limsup_{n \to \infty} \sup_{0 \leq t \leq A_{n,k}(nT)} \sup_{0 < u \leq n\tilde{\phi}_{n,k}} \frac{|\check{B}'_{k}(t, t+u]|}{n\tilde{\phi}_{n,k}}  \aseq 0.
	\end{equation}
	By \eqref{eq:Q-bound}--\eqref{eq:B'-bound}, we obtain
	\[
	\lim_{n \to \infty}  \frac{1}{\sqrt{n}\tilde{\phi}_{n,k}} \sup_{0 \leq t \leq T} |\tilde{V}_{n,k}(\bar{A}_{n,k}(t) - \bar{Q}_{n,k}(t), \bar{A}_{n,k}(t)]| \aseq 0.
	\]
	We obtain the upper bound, because with probability 1,
	\[
	\sup_{t \in \mathcal{S}^{\star}_{n} } \tilde{\Gamma}^{\star}_{n}(t-) \leq  \sum_{k = 1}^{s} \sup_{t \in \mathcal{S}^{\star}_{n} } \tilde{Q}_{n,k}(t -) + \sum_{k = 1}^{s} \sup_{0 \leq t \leq T} |\tilde{V}_{n,k}(\bar{A}_{n,k}(t) - \bar{Q}_{n,k}(t), \bar{A}_{n,k}(t)]| < \sum_{k = 1}^{s} 2\sqrt{n}\tilde{\phi}_{n,k}
	\]
	for $n$ sufficiently large. The rest of the proof follows that of Corollary~\ref{corollary:optimal-gap}. 
\end{proof}

Using \eqref{eq:workload-inequality} and Proposition~\ref{prop:stationed-workload-comparision}, we can prove the following lemma that concerns the cumulative busy time process for customers in different classes in the SSP.
\begin{lemma}\label{lemma:B-star}
	Assume the conditions of Theorem~\ref{theorem:load-balancing} hold and all the SSPs are initially empty. Then, $(\bar{B}_{n,1}^{\star}, \ldots, \bar{B}_{n,s}^{\star} ) \Rightarrow (\mu_{1}\mathcal{I}/\mu, \ldots, \mu_{s}\mathcal{I}/\mu)$ as $ n \to \infty $.
\end{lemma}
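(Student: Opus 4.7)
The plan is to leverage a class-level workload balance in the SSP that isolates $B^\star_{n,k}$, then to take fluid limits termwise. In the SSP the server works at rate $\mu$, so a class-$k$ customer requires service work $w_k(i)$ (in units compatible with \eqref{eq:original-auxiliary}), and the cumulative work done on class-$k$ by time $t$ is $\mu B^\star_{n,k}(t)$. Balancing arriving work against work served and work still awaiting service gives the identity
\[
V_k(A_{n,k}(t)) \;=\; \mu B^\star_{n,k}(t) + W^\star_{n,k}(t).
\]
Dividing by $n$ and time-changing $t \mapsto nt$ produces
\[
\bar{V}_{n,k}(\bar{A}_{n,k}(t)) \;=\; \mu \bar{B}^\star_{n,k}(t) + \frac{W^\star_{n,k}(nt)}{n},
\]
so the problem reduces to showing the left-hand side converges to $\mu_k t$ and the remainder on the right vanishes.

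For the left-hand side, I would invoke a standard random time change: \eqref{eq:FSLLN-V} gives $\bar{V}_{n,k} \to \mathcal{I}$ almost surely uniformly on compact sets, and \eqref{eq:A-bar-convergence} gives $\bar{A}_{n,k} \to \mu_k \mathcal{I}$ almost surely uniformly on $[0,T]$. Since $\bar{A}_{n,k}([0,T])$ is almost surely eventually contained in a fixed compact set, composition yields $\bar{V}_{n,k}(\bar{A}_{n,k}(t)) \to \mu_k t$ almost surely uniformly on $[0,T]$.

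For the remainder I would dominate $W^\star_{n,k}$ by the total stationed workload in the distributed system and then bound the latter by the scaled queue lengths. By Proposition~\ref{prop:stationed-workload-comparision},
\[
W^\star_{n,k}(t) \;\le\; W^\star_n(t) \;\le\; W_n(t) \;=\; \sum_{\ell = 1}^{s} W_{n,\ell}(t),
\]
and applying \eqref{eq:workload-inequality} gives the per-station bound
\[
\frac{W_{n,\ell}(nt)}{n} \;\le\; \bar{V}_{n,\ell}(\bar{A}_{n,\ell}(t)) - \bar{V}_{n,\ell}\!\bigl(\bar{A}_{n,\ell}(t) - \bar{Q}_{n,\ell}(t)\bigr),
\]
where $\bar{Q}_{n,\ell}(t) = \tilde{Q}_{n,\ell}(t)/\sqrt{n}$. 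By Lemma~\ref{lemma:stochastic-boundedness} the sequence $\tilde{Q}_{n,\ell}$ is stochastically bounded on $[0,T]$, so $\bar{Q}_{n,\ell}(t) \Rightarrow 0$ uniformly; combined with the uniform convergence of $\bar{V}_{n,\ell}$ to the identity on compacts, this forces $W_{n,\ell}(nt)/n \Rightarrow 0$ and hence $W^\star_{n,k}(nt)/n \Rightarrow 0$ uniformly on $[0,T]$.

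Combining the two steps, $\bar{B}^\star_{n,k} \Rightarrow \mu_k \mathcal{I}/\mu$ uniformly on $[0,T]$ in probability for each $k$; because the limits are deterministic and there are only $s$ components, the joint convergence in distribution claimed in the lemma follows immediately. The only real delicacy is to make sure the bound on total stationed workload transfers to the individual class in the SSP, which is the reason for passing through Proposition~\ref{prop:stationed-workload-comparision} and the trivial inequality $W^\star_{n,k} \le W^\star_n$; everything else is a fluid-level calculation that does not require the finer load-balancing estimates of Theorem~\ref{theorem:load-balancing-gap}.
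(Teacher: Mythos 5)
Your proof is correct, and it rests on the same core ingredients as the paper's: the per-class work-conservation identity, the domination $W^{\star}_{n,k} \leq W^{\star}_{n} \leq W_{n}$ via Proposition~\ref{prop:stationed-workload-comparision}, the bound \eqref{eq:workload-inequality}, Lemma~\ref{lemma:stochastic-boundedness}, and the fluid limits \eqref{eq:FSLLN-V} and \eqref{eq:A-bar-convergence}. The only genuine difference is in how the identity is exploited. The paper writes $\mu_{k}B_{n,k}(t) + W_{n,k}(t) = \mu B^{\star}_{n,k}(t) + W^{\star}_{n,k}(t)$ (both sides being $V_{k}(A_{n,k}(t))$), bounds $\vert \mu B^{\star}_{n,k}(t) - \mu_{k}B_{n,k}(t)\vert$ by the total stationed workload, and then imports the already-proved fluid limit $\bar{B}_{n,k} \Rightarrow \mathcal{I}$ (Lemma~\ref{lemma:busy-time}) together with the convergence-together theorem. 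You instead keep only the SSP side, $V_{k}(A_{n,k}(t)) = \mu B^{\star}_{n,k}(t) + W^{\star}_{n,k}(t)$, evaluate the arrived-work term directly through the composition $\bar{V}_{n,k}\circ\bar{A}_{n,k} \to \mu_{k}\mathcal{I}$, and show the stationed-workload remainder is negligible at fluid scale. This buys you independence from Lemma~\ref{lemma:busy-time} and the convergence-together step, at the cost of redoing a fluid computation the paper gets for free by reusing that lemma; your final remark that joint convergence is automatic because the limits are deterministic is also fine. One small point of care, which you implicitly handle correctly: the identity requires measuring workload in service-requirement units (so that work is depleted at rate $\mu$ in the SSP and at rate $\mu_{k}$ in station~$k$), which is the convention the paper itself uses in \eqref{eq:original-auxiliary}.
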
 
Lemma~\ref{lemma:B-star} is used in the proofs of Propositions~\ref{prop:asymptotic-equivalence} and~\ref{prop:star-limit}. With these results, we are ready to present the proof of Theorem~\ref{theorem:CRP}.

\begin{proof}[Proof of Theorem~\ref{theorem:CRP}.]
	Let $ \mathbf{A} $ be the coefficient matrix of the following system of linear equations:
	\begin{align*}
		\sum_{k = 1}^{s} \mu_{k} x_{k} = y_{1} \quad \mbox{and} \quad x_{\ell - 1} - x_{\ell} = y_{\ell} \quad \mbox{for $ \ell = 2, \ldots, s $.}
	\end{align*}
	The determinant of $ \mathbf{A} $ is $ \det (\mathbf{A}) = \sum_{k = 1}^{s} \mu_{k}(-1)^{s-k}(-1)^{k-1} = (-1)^{s-1} \mu $. Hence, $ \mathbf{A} $ is invertible. 
	
	We define three $ s $-dimensional column vector processes by $ \tilde{\boldsymbol{L}}_{n}(t) = (\tilde{L}_{n,1}(t), \ldots, \tilde{L}_{n,s}(t))^{\mathrm{T}}$, $\tilde{\boldsymbol{Q}}(t) = (\tilde{Q}(t), \ldots, \tilde{Q}(t))^{\mathrm{T}}$, and $ \tilde{\boldsymbol{Q}}_{0}(t) = (\tilde{Q}(t), 0, \ldots, 0)^{\mathrm{T}}$, where the superscript $ \mathrm{T} $ stands for transpose. By Propositions~\ref{prop:asymptotic-equivalence} and~\ref{prop:star-limit} and the convergence-together theorem (Theorem~5.4 in \citealp{Chen.Yao.2001}), $ \sum_{k = 1}^{s} \tilde{Q}_{n,k} \Rightarrow \tilde{Q} $ as $ n \to \infty $. Then by \eqref{eq:diffusion-scaled}, Theorem~\ref{theorem:load-balancing} in this paper, and Theorem~3.9 in \citet{Billingsley.1999}, $ \mathbf{A} \tilde{\boldsymbol{L}}_{n} \Rightarrow \tilde{\boldsymbol{Q}}_{0} $ as $ n \to \infty $. Since $ \mathbf{A} \tilde{\boldsymbol{Q}} = \mu \tilde{\boldsymbol{Q}}_{0} $, we deduce from the continuous mapping theorem that $ \tilde{\boldsymbol{L}}_{n} \Rightarrow \mathbf{A}^{-1} \tilde{\boldsymbol{Q}}_{0} = \tilde{\boldsymbol{Q}}/\mu $ as $ n \to \infty $, from which the assertion follows.
\end{proof}

\section{Proof of Theorem~\ref{theorem:workload-optimality}}
\label{sec:Proof-Capacity}

Let $A^{\dagger}_{n,k}(t)$ be the number of class-$k$ customers that have arrived at the station of the $n$th MDSP by time~$t$, with the fluid-scaled version $\bar{A}^{\dagger}_{n,k}(t) = A^{\dagger}_{n,k}(nt)/n$. Clearly, $A^{\dagger}_{n,k}(t) \geq A_{n,k}(t)$. The following lemma specifies an upper bound for the difference between the two arrival processes.

\begin{lemma}\label{lemma:comparison-MDSP-arrivals}
	Assume the conditions of Theorem~\ref{theorem:load-balancing} hold for a sequence of GBC systems and all the MDSPs are initially empty. Then, there exist two positive numbers $C^{\dagger}_{k}$ and $C^{\ddagger}_{k}$ such that with probability 1, $ \sup_{0 \leq t \leq T} \sqrt{n}( \bar{A}^{\dagger}_{n,k}(t) - \bar{A}_{n,k}(t) ) < \max\{ C^{\dagger}_{k} \chi_{n}, C^{\ddagger}_{k} n^{-1/4}\sqrt{\chi_{n}\log n \vee 1} \} $ for $T > 0$, $k = 1, \ldots, s$, and $n$ sufficiently large.
\end{lemma}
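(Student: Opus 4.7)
The plan is to exploit the GBC condition to reduce $R^m_{n,k}$ to only its perturbation components and then bound the resulting increments using the martingale estimates developed earlier. Since the MDSP shares the appearance stream with the distributed system and assigns each customer to the class determined by its destination $\xi_n(j)$, only the traveling delays differ and
\[
\sqrt{n}\bigl(\bar{A}^{\dagger}_{n,k}(t) - \bar{A}_{n,k}(t)\bigr) = \frac{1}{\sqrt{n}}\sum_{m = 1}^{b}\Bigl[R^m_{n,k}\bigl(E_n((nt - \sqrt{n}\underline{d}_m)^+)\bigr) - R^m_{n,k}\bigl(E_n((nt - \sqrt{n}d_{m,k})^+)\bigr)\Bigr].
\]
For $m \in \mathcal{N}_k$ one has $d_{m,k} = \underline{d}_m$, so the summand vanishes identically. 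For $m \notin \mathcal{N}_k$, the GBC property \eqref{eq:nearest-station} forces $r_{m,k} = 0$, hence $\kappa^m_{k-1} = \kappa^m_k$ in \eqref{eq:Gmk}; this makes the indicator in $G^m_k$ identically zero, and decomposition \eqref{eq:Rmnk-decomposition} collapses to $R^m_{n,k}(j) = \mathcal{E}^m_{n,k}(j) + M^m_{n,k}(j)$.

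It therefore suffices, for each $m \notin \mathcal{N}_k$, to bound the increments of $\mathcal{E}^m_{n,k}$ and $M^m_{n,k}$ between the two random indices. By \eqref{eq:pm-epsilon-chi}, $|\mathcal{E}^m_{n,k}(j_1) - \mathcal{E}^m_{n,k}(j_2)| \leq \chi_n (j_1 - j_2)$. The functional strong law \eqref{eq:FSLLN} yields a positive constant $\tilde{C}$ such that, almost surely for all $n$ sufficiently large and uniformly for $0 \leq t \leq T$,
\[
E_n\bigl((nt - \sqrt{n}\underline{d}_m)^+\bigr) - E_n\bigl((nt - \sqrt{n}d_{m,k})^+\bigr) \leq \tilde{C}\sqrt{n}, \qquad E_n(nT) \leq 2\mu n T,
\]
since the underlying time window has length at most $\sqrt{n}(d_{m,k} - \underline{d}_m)$. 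Division by $\sqrt{n}$ gives a contribution of order $\chi_n$ to the supremum, providing the first term in the asserted max-bound. For the martingale piece, I apply \eqref{eq:Mmnk-increments} with $q_n = \lceil \tilde{C}\sqrt{n}\rceil$, which satisfies $q_n/(\log n)^2 \to \infty$; the envelope $E_n(nT) \leq 2\mu nT$ lets us replace $nT$ there by $2\mu nT$, delivering a constant $\tilde{C}'$ with $|M^m_{n,k}(j_1) - M^m_{n,k}(j_2)| \leq \tilde{C}' n^{1/4}\sqrt{\chi_n\log n \vee 1}$ uniformly in $t$, almost surely for $n$ large. Dividing by $\sqrt{n}$ yields the $n^{-1/4}\sqrt{\chi_n\log n \vee 1}$ term; summing over $m \notin \mathcal{N}_k$ and using $a + b \leq 2\max\{a, b\}$ produces the desired $\max$-bound after absorbing the numerical factors into $C^{\dagger}_k$ and $C^{\ddagger}_k$.

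The main technical point is coordinating the uniformity in $t$ with the fact that the indices $j_1(t), j_2(t)$ are themselves random, driven by the renewal process $E_n$. This is precisely what Lemma~\ref{lemma:martingale} was designed for: \eqref{eq:Mmnk-increments} supplies a uniform-in-$j$ almost-sure bound on all martingale increments of length up to $q_n$, so once the random time window is controlled by the almost-sure linear growth of $E_n$, both the cumulative-adjustment and martingale pieces can be estimated sample-path-wise without invoking any weak-convergence limits. The GBC-induced vanishing of $G^m_k$ is essential here: without it, a $O(\sqrt{n}\log\log n)$ random-walk term from \eqref{eq:Gmk-LIL} would enter and dominate the second summand, destroying the bound.
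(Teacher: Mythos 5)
Your proposal is correct and follows essentially the same route as the paper's proof: the same origin-wise decomposition of $\bar{A}^{\dagger}_{n,k}-\bar{A}_{n,k}$, the same GBC-induced collapse of \eqref{eq:Rmnk-decomposition} to $\mathcal{E}^{m}_{n,k}+M^{m}_{n,k}$ (with $G^{m}_{k}\equiv 0$), the Lipschitz bound via \eqref{eq:pm-epsilon-chi} together with the renewal envelope from \eqref{eq:FSLLN} for the adjustment term, and \eqref{eq:Mmnk-increments} applied over windows of order $\sqrt{n}$ for the martingale term, yielding the two scales $\chi_{n}$ and $n^{-1/4}\sqrt{\chi_{n}\log n \vee 1}$. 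One cosmetic remark: the case split should be on $d_{m,k}>\underline{d}_{m}$ rather than $m\notin\mathcal{N}_{k}$, since under ties GBC does not force $r_{m,k}=0$; in that tie case, however, the two time arguments coincide and the summand vanishes anyway, so nothing in your argument is affected.
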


To prove Theorem~\ref{theorem:workload-optimality}, we employ the approach in the proof of Theorem~\ref{theorem:workload-difference} to estimate the difference between the total stationed workloads in the two systems, and rely on Lemma~\ref{lemma:comparison-MDSP-arrivals} to estimate the difference between the en route workloads.
\begin{proof}[Proof of Theorem~\ref{theorem:workload-optimality}.]
	Let us first prove $ \sup_{0 < t \leq T} \tilde{\Gamma}^{\dagger}_{n}(t) \Rightarrow 0 $ as $ n \to \infty $ under the conditions of Theorem~\ref{theorem:load-balancing}. By Proposition~\ref{prop:JNQ-workload-comparision}, 
	\begin{equation}\label{eq:sup-Gamma-dagger}
		\sup_{0 \leq t \leq T} \tilde{\Gamma}^{\dagger}_{n}(t) = \sup_{t \in \mathcal{S}^{\dagger}_{n} } \tilde{\Gamma}^{\dagger}_{n}(t-),
	\end{equation}
	where $ \mathcal{S}^{\dagger}_{n} = \{ t \in (0,T] : \mbox{$ \partial_{-} \tilde{\Gamma}^{\dagger}_{n}(t) > 0 $} \} \cup \{0\} $. Following the first part of the proof of Theorem~\ref{theorem:workload-difference}, we obtain
	\[
	\max_{k = 1, \ldots, s} \sup_{t \in \mathcal{S}^{\dagger}_{n}} \tilde{Q}_{n,k}(t-) \Rightarrow 0 \quad \mbox{as $ n \to \infty $,} 
	\]
	and then obtain
	\begin{equation}\label{eq:tilde-Wnk-limit}
		\max_{k = 1, \ldots, s} \sup_{t \in \mathcal{S}^{\dagger}_{n} } \tilde{W}_{n,k}(t-) \Rightarrow 0 \quad \mbox{as $ n \to \infty $,} 
	\end{equation}
	where $\tilde{W}_{n,k}(t) = W_{n,k}(nt)/\sqrt{n}$. Since the cumulative workloads are equal in the two systems, 
	\[
	\sum_{k = 1}^{s} V_{k}(A_{n,k}(t)) + U_{n}(t) = \sum_{k = 1}^{s} V_{k}(A^{\dagger}_{n,k}(t)) + U^{\dagger}_{n}(t).
	\] 
	Write $\tilde{U}_{n}(t) = U_{n}(nt)/\sqrt{n}$ and $\tilde{U}^{\dagger}_{n}(t) = U^{\dagger}_{n}(nt)/\sqrt{n}$. Then,
	\begin{equation}\label{eq:enroute-difference}
		\tilde{U}_{n}(t) - \tilde{U}^{\dagger}_{n}(t) = \sum_{k = 1}^{s} \big(\sqrt{n} \big( \bar{A}^{\dagger}_{n,k}(t) - \bar{A}_{n,k}(t) \big) + \tilde{V}_{n,k}(\bar{A}_{n,k}(t), \bar{A}^{\dagger}_{n,k}(t)]\big).
	\end{equation}
	By Lemma~\ref{lemma:comparison-MDSP-arrivals} and the C-tightness of $\{ \tilde{V}_{n,k} : n\in\mathbb{N} \}$, $\sup_{0 < t \leq T} (\tilde{U}_{n}(t) - \tilde{U}^{\dagger}_{n}(t)) \Rightarrow 0$ as $n \to \infty$. Because 
	\begin{equation}\label{eq:tilde-dagger-bound}
		\tilde{\Gamma}^{\dagger}_{n}(t) \leq \sum_{k = 1}^{s} \tilde{W}_{n,k}(t) + \tilde{U}_{n}(t) - \tilde{U}^{\dagger}_{n}(t),
	\end{equation}
	we deduce from \eqref{eq:sup-Gamma-dagger} and \eqref{eq:tilde-Wnk-limit} that $ \sup_{0 < t \leq T} \tilde{\Gamma}^{\dagger}_{n}(t) \Rightarrow 0 $ as $ n \to \infty $.
	
	Assume the conditions of Theorem~\ref{theorem:load-balancing-gap} hold. Following the second part of the proof of Theorem~\ref{theorem:workload-difference}, we obtain
	\begin{equation}\label{eq:bound-1}
		\sum_{k = 1}^{s} \sup_{t \in \mathcal{S}^{\dagger}_{n} } \tilde{W}_{n,k}(t-)  <  \max \Big\{ 2C \mu \chi_{n}, 2C' \mu \frac{\log \log n + \log (n \chi_{n}^{2})}{\sqrt{n} \chi_{n}} \Big\},
	\end{equation}
	where $C$ and $C'$ are the positive numbers specified in Theorem~\ref{theorem:load-balancing-gap}. Write
	\[
	\hat{\phi}_{n,k} =  \max \Big\{ C^{\dagger}_{k}\frac{\chi_{n}}{\sqrt{n}}, C^{\ddagger}_{k} \frac{\sqrt{\chi_{n}\log n \vee 1}}{n^{3/4}} \Big\},
	\]
	where $C^{\dagger}_{k}$ and $C^{\ddagger}_{k}$ are the positive numbers specified in Lemma~\ref{lemma:comparison-MDSP-arrivals}. Since $n \hat{\phi}_{n,k} \geq C^{\ddagger}_{k} n^{1/4} $, following the proof of Theorem~\ref{theorem:workload-difference}, we obtain
	\[
	\lim_{n \to \infty}  \frac{1}{\sqrt{n}\hat{\phi}_{n,k}} \sup_{0 \leq t \leq T} |\tilde{V}_{n,k}(\bar{A}_{n,k}(t), \bar{A}^{\dagger}_{n,k}(t)]| \aseq 0.
	\]
	It follows from Lemma~\ref{lemma:comparison-MDSP-arrivals} and \eqref{eq:enroute-difference} that with probability 1,
	\[
	\sup_{0 \leq t \leq T} \big( \tilde{U}_{n}(t) - \tilde{U}^{\dagger}_{n}(t) \big) < \sum_{k = 1}^{s} 2\sqrt{n}\hat{\phi}_{n,k} < \sum_{k = 1}^{s} \Big( 2C^{\dagger}_{k}\chi_{n} + 2C^{\ddagger}_{k} \frac{\sqrt{\chi_{n}\log n \vee 1}}{n^{1/4}} \Big) \quad \mbox{for $n$ sufficiently large.}
	\]
	Since $n^{-1/4}\sqrt{\chi_{n}\log n \vee 1} = o(n^{-1/4} \sqrt{\log n})$ by  \eqref{eq:epsilon-1}, it follows from Corollary~\ref{corollary:optimal-gap} that when $n$ is large, $n^{-1/4}\sqrt{\chi_{n}\log n \vee 1}$ is negligible in comparison with the right side of \eqref{eq:bound-1}. Then by \eqref{eq:sup-Gamma-dagger} and \eqref{eq:tilde-dagger-bound},
	\[
	\sup_{0 \leq t \leq T} \tilde{\Gamma}^{\dagger}_{n}(t) < \max \Big\{ 3C \mu \chi_{n}, 3C' \mu \frac{\log \log n + \log (n \chi_{n}^{2})}{\sqrt{n} \chi_{n}} \Big\} + \sum_{k = 1}^{s} 2C^{\dagger}_{k}\chi_{n} \quad \mbox{for $n$ sufficiently large,}
	\]
	which allows us to get the upper bound by taking
	\[
	C^{\dagger} = 3C \mu + \sum_{k = 1}^{s} 2C^{\dagger}_{k} \quad \mbox{and} \quad C^{\ddagger} = 3C' \mu + \frac{C'}{C}\sum_{k = 1}^{s} 2C^{\dagger}_{k}.
	\]
	Lastly, we complete the proof following that of Corollary~\ref{corollary:optimal-gap}.
\end{proof}

\section{Proofs of Propositions}
\label{sec:Proofs-Propositions}

All the propositions are proved in this section.

\subsection{Proposition~\ref{prop:lower-bound}}
\label{sec:two-station-example}

Let $\check{z} = \sum_{j=1}^{\nu} z(j)$, where $\nu$ is a geometric random variable that is independent of $\{ z(j) : j \in\mathbb{N} \}$ and has mean $\mu/\mu_{1}$. Then, $\mathbb{E}[\check{z}] = \mu/\mu_{1}$ and the coefficient of variation is $\check{c}_{0} = \sqrt{1 + \mu_{1}(c^{2}_{0}-1)/\mu}$ (see, e.g., Section~3.4.1 in \citealt{Ross.2010}). Since $\check{z}^{4} \leq \nu^{4} (\max_{1\leq j \leq \nu} z(j))^{4}  \leq \nu^{4} \sum_{j = 1}^{\nu} z(j)^{4} $, we obtain $\mathbb{E}[\check{z}^{4}] \leq \mathbb{E}[\nu^{5}]\mathbb{E}[z(1)^{4}] < \infty$.

For notational convenience, write $ E'_{n}(t) = \sum_{j = 1}^{E_{n}(t)} \mathbb{1}_{\{ 0 \leq u(j) < \mu_{1}/\mu  \}}$. Then, $E'_{n}$ is a renewal process and $\check{z}/\lambda_{n}$ is a generic inter-renewal time of $E'_{n}$. By \eqref{eq:E-strong-approximation}, there exists a standard Brownian motion $ \check{B}'_{0} $ such that
\begin{equation}\label{eq:E'-strong-approximation}
	\lim_{n \to \infty} \frac{1}{n^{1/4}} \sup_{0 \leq t \leq nT} | E'_{n}(t) - \mu_{1}\rho_{n} t - \check{c}_{0} \check{B}'_{0}(\mu_{1}\rho_{n} t ) | \aseq 0 \quad \mbox{for $ T > 0 $.}
\end{equation}
Let $ \tilde{E}'_{n}(t) = (E'_{n}(nt) - n\mu_{1}\rho_{n}t)/\sqrt{n} $. By \eqref{eq:Gmk}, $ \tilde{E}'_{n}(t - n^{-1/2}d_{1,1}) = \mu_{1} \tilde{E}^{1,\Delta}_{n,1}(t)/\mu + \tilde{G}^{1,\Delta}_{n,1}(t) $ for $ t \geq n^{-1/2}d_{1,1} $. Since $ \bar{B}_{n,1}(n^{-1/2}d_{1,1}) = 0 $, we deduce from \eqref{eq:diffusion-scaled-queue} that $ \tilde{Q}_{n,1}(t) \geq \sqrt{n}(\bar{A}_{n,1}(t) - \bar{S}_{n,1}(t - n^{-1/2} d_{1,1}))$ for $ t \geq n^{-1/2}d_{1,1} $, which, along with \eqref{eq:tilde-A} and \eqref{eq:tilde-Ank-decomposition}, yields
\[
\tilde{Q}_{n,1}(t) \geq \tilde{E}'_{n}(t - n^{-1/2}d_{1,1}) + \tilde{\mathcal{E}}^{1,\Delta}_{n,1}(t) + \tilde{M}^{1,\Delta}_{n,1}(t) - \tilde{S}_{n,1}(t - n^{-1/2} d_{1,1}) - \sqrt{n}(1 - \rho_{n})\mu_{1}(t - n^{-1/2} d_{1,1}).
\]

Since $E'_{n}$ and $S_{1}$ are independent, $\check{B}'_{0}$ and $\check{B}_{1}$ can be taken as independent standard Brownian motions. By the law of the iterated logarithm for Brownian motion (Lemma~1 in \citealp{Huggins.1985}),
\[
\limsup_{n \to \infty} \frac{\check{c}_{0} \check{B}'_{0}(\sqrt{n}\mu_{1}\rho_{n} (d'-d_{1,1}) ) - c_{1} \check{B}_{1}(\sqrt{n}\mu_{1}(d'-d_{1,1}) )}{n^{1/4}\sqrt{\log\log n}} \aseq \sqrt{2(\check{c}_{0}^{2} + c_{1}^{2})\mu_{1} (d' - d_{1,1})}.
\]
It follows from  \eqref{eq:Sk-strong-approximation} and \eqref{eq:E'-strong-approximation} that
\[
\limsup_{n \to \infty} \frac{\tilde{E}'_{n}(n^{-1/2}(d' - d_{1,1})) - \tilde{S}_{n,1}(n^{-1/2}(d' - d_{1,1}))}{n^{-1/4}\sqrt{\log\log n}} \aseq \sqrt{2(\check{c}_{0}^{2} + c_{1}^{2})\mu_{1} (d' - d_{1,1})}.
\]
By the strong law of large numbers for renewal processes, 
\[
\lim_{n \to \infty} \frac{\tilde{\mathcal{E}}^{1,\Delta}_{n,1}(n^{-1/2}d')}{\chi_{n}} = \lim_{n \to \infty} \frac{1}{\sqrt{n}} E_{n} (\sqrt{n}(d' - d_{1,1})) \aseq \mu (d' - d_{1,1}).
\]
As in the proof of Lemma~\ref{lemma:after-initial}, we can obtain
\[
\limsup_{n \to \infty}  \frac{\sup_{d_{1,1} \leq t \leq d'} |\tilde{M}^{1,\Delta}_{n,1}(n^{-1/2}t)|}{\chi_{n} \vee n^{-1/4}\sqrt{\log \log n}} \aseq 0.
\]
Taking $0 < \check{C} < \mu (d' - d_{1,1})/\mu_{1}$ and $0 < \check{C}' < \sqrt{2(\check{c}_{0}^{2} + c_{1}^{2})(d' - d_{1,1})/\mu_{1}}$, we complete the proof by the above results and the heavy-traffic condition \eqref{eq:heavy-traffic}.

\subsection{Proposition~\ref{prop:JSQ}}
\label{sec:proof-JSQ}

The proof of Proposition~\ref{prop:JSQ} is similar to that of Theorem~\ref{theorem:load-balancing-gap} but simpler. Consider the event
\[
\Theta_{n} = \Big\{ \sup_{0 \leq t \leq T} \max_{k,\ell = 1, \ldots, s} | \tilde{L}_{n,k}(t) - \tilde{L}_{n,\ell}(t) | \geq C_{\operatorname{JSQ}}\frac{\log n}{\sqrt{n}} \Big\},
\]
where $ C_{\operatorname{JSQ}} $ is a positive number to be determined. It can be written as $ \Theta_{n} = \{ \tau_{1,n} \leq T \} $, where 
\[
\tau_{1,n} = \inf \Big\{ t \geq 0: \max_{k,\ell = 1, \ldots, s} | \tilde{L}_{n,k}(t) - \tilde{L}_{n,\ell}(t) | \geq C_{\operatorname{JSQ}}\frac{\log n}{\sqrt{n}} \Big\}.
\] 
As a convention, $ \tau_{1,n} = \infty $ if $ \max_{k,\ell = 1, \ldots, s} | \tilde{L}_{n,k}(t) - \tilde{L}_{n,\ell}(t) | < C_{\operatorname{JSQ}} \log n/\sqrt{n} $ for all $ t \geq 0 $.

Consider an outcome in $ \Theta_{n} $. Let stations~$ \bar{\ell}(n) $ and~$ \underline{\ell}(n) $ be the stations having the longest and shortest queue lengths at time $ \tau_{1,n} $. Then, $ \tilde{L}_{n,\underline{\ell}(n)}(\tau_{1,n}) \leq \tilde{L}_{n,\ell}(\tau_{1,n}) \leq \tilde{L}_{n,\bar{\ell}(n)}(\tau_{1,n}) $ for $ \ell = 1, \ldots, s $, and $ \tilde{L}_{n,\bar{\ell}(n)}(\tau_{1,n}) - \tilde{L}_{n,\underline{\ell}(n)}(\tau_{1,n}) \geq C_{\operatorname{JSQ}} \log n/\sqrt{n} $. By \eqref{eq:capacity}, \eqref{eq:diffusion-scaled}, and the assumption $ \mu_{1} \leq \cdots \leq \mu_{s} $,
\begin{equation}\label{eq:Q-tau-1}
	\sum_{\ell = 1}^{s} \tilde{Q}_{n,\ell}(\tau_{1,n}) \leq \mu \tilde{L}_{n,\bar{\ell}(n)}(\tau_{1,n}) - \mu_{1} C_{\operatorname{JSQ}} \frac{\log n}{\sqrt{n}}.
\end{equation}
Put $ \tau_{2,n} = \sup\{ t \in [0, \tau_{1,n}] : \tilde{L}_{n,\bar{\ell}(n)}(t-) \leq \tilde{L}_{n,\ell}(t-)\mbox{ for all $ \ell \neq \bar{\ell}(n) $}  \} $ where we take $\tilde{L}_{n,k}(0-) = 0$ for $k = 1, \ldots, s$. Since the system is initially empty, $ 0 \leq \tau_{2,n} \leq \tau_{1,n} $, and by \eqref{eq:capacity} and \eqref{eq:diffusion-scaled}, 
\begin{equation}\label{eq:Q-tau-2}
	\mu \tilde{L}_{n,\bar{\ell}(n)}(\tau_{2,n}-) \leq \sum_{\ell = 1}^{s} \tilde{Q}_{n,\ell}(\tau_{2,n}-).
\end{equation}
Under the JSQ policy, there are no customers joining station $ \bar{\ell}(n) $ and the server is kept busy during $ (\tau_{2,n}, \tau_{1,n}] $. By \eqref{eq:diffusion-scaled-queue}, $ \tilde{Q}_{n,\bar{\ell}(n)}[\tau_{2,n}, \tau_{1,n}] \leq \Delta\tilde{E}_{n}(\tau_{2,n}) - \sqrt{n} \bar{S}_{n,\bar{\ell}(n)}[\bar{B}_{n,\bar{\ell}(n)}(\tau_{2,n}), \bar{B}_{n,\bar{\ell}(n)}(\tau_{1,n})]$, and 
\[
\sum_{\ell = 1}^{s} \tilde{Q}_{n,\ell}[\tau_{2,n}, \tau_{1,n}] = \sqrt{n}\bar{E}_{n}[\tau_{2,n}, \tau_{1,n}] - \sum_{\ell = 1}^{s} \sqrt{n} \bar{S}_{n,\ell}[\bar{B}_{n,\ell}(\tau_{2,n}), \bar{B}_{n,\ell}(\tau_{1,n})].
\]
By \eqref{eq:Q-tau-1}, \eqref{eq:Q-tau-2}, and the assumption $ \mu_{1} \leq \cdots \leq \mu_{s} $,
\begin{align*}
	\mu_{1} C_{\operatorname{JSQ}} \frac{\log n}{\sqrt{n}} & \leq \mu \tilde{L}_{n,\bar{\ell}(n)}[\tau_{2,n}, \tau_{1,n}] - \sum_{\ell = 1}^{s} \tilde{Q}_{n,\ell}[\tau_{2,n}, \tau_{1,n}]\\ 
	& \leq \frac{\mu}{\mu_{1}} \big(\Delta\tilde{E}_{n}(\tau_{2,n}) - \sqrt{n} \bar{S}_{n,\bar{\ell}(n)}[\bar{B}_{n,\bar{\ell}(n)}(\tau_{2,n}), \bar{B}_{n,\bar{\ell}(n)}(\tau_{1,n})] \big)\\
	& \quad -\sqrt{n}\bar{E}_{n}[\tau_{2,n}, \tau_{1,n}] + \sum_{\ell = 1}^{s} \sqrt{n} \bar{S}_{n,\ell}[\bar{B}_{n,\ell}(\tau_{2,n}), \bar{B}_{n,\ell}(\tau_{1,n})].
\end{align*}
Because $ \bar{B}_{n,\bar{\ell}(n)}(\tau_{1,n}) - \bar{B}_{n,\bar{\ell}(n)}(\tau_{2,n}) = \tau_{1,n} - \tau_{2,n} $ and $ \bar{B}_{n,\ell}(\tau_{1,n}) - \bar{B}_{n,\ell}(\tau_{2,n}) \leq \tau_{1,n} - \tau_{2,n} $ for $ \ell \neq \bar{\ell}(n) $,
\begin{align*}
	\mu_{1} C_{\operatorname{JSQ}} \frac{\log n}{\sqrt{n}} & \leq \frac{\mu}{\mu_{1}} \big(\Delta\tilde{E}_{n}(\tau_{2,n}) - \tilde{S}_{n,\bar{\ell}(n)}[\bar{B}_{n,\bar{\ell}(n)}(\tau_{2,n}), \bar{B}_{n,\bar{\ell}(n)}(\tau_{1,n})]\big)\\
	& \quad  - \tilde{E}_{n}[\tau_{2,n}, \tau_{1,n}] + \sum_{\ell = 1}^{s} \tilde{S}_{n,\ell}[\bar{B}_{n,\ell}(\tau_{2,n}), \bar{B}_{n,\ell}(\tau_{1,n})] - \sqrt{n}\lambda_{n}(\tau_{1,n} - \tau_{2,n}).
\end{align*}
Then, we obtain 
\begin{equation}\label{eq:JSQ-bound}
	\mu_{1} C_{\operatorname{JSQ}} \leq \mathcal{D}_{n}(\tau_{2,n}, \tau_{1,n}),
\end{equation}
where for $ 0 \leq t_{2} \leq t_{1} $,
\begin{align*}
	\mathcal{D}_{n}(t_{2}, t_{1}) & = \frac{\mu\sqrt{n}}{\mu_{1}\log n} \Delta\tilde{E}_{n}(t_{2}) + \frac{\mu\sqrt{n}}{\mu_{1}\log n} \max_{\ell = 1, \ldots, s} | \tilde{S}_{n,\ell}[\bar{B}_{n,\ell}(t_{2}), \bar{B}_{n,\ell}(t_{1})] |\\
	& \quad  + \frac{\sqrt{n}}{\log n}| \tilde{E}_{n}[t_{2}, t_{1}] | + \frac{\sqrt{n}}{\log n}\sum_{\ell = 1}^{s} |\tilde{S}_{n,\ell}[\bar{B}_{n,\ell}(t_{2}), \bar{B}_{n,\ell}(t_{1})]| - \frac{n}{\log n}\lambda_{n}(t_{1} - t_{2}).
\end{align*}

By Lemma~\ref{lemma:strong-approximations}, there exist mutually independent standard Brownian motions $ \check{B}_{0} , \ldots, \check{B}_{s} $ and positive numbers $ \hat{C}_{0}, \ldots \hat{C}_{s} $, such that
\begin{align}
	\limsup_{n \to \infty} \frac{1}{\log n} \sup_{0 \leq t \leq nT} \vert E_{n}(t) - \lambda_{n}t - c_{0} \check{B}_{0}(\lambda_{n} t) \vert & \asl \hat{C}_{0}, \label{eq:strong-E-JSQ}\\
	\limsup_{n \to \infty} \frac{1}{\log n} \sup_{0 \leq t \leq nT} \vert S_{k}(t) - \mu_{k}t - c_{k} \check{B}_{k}(\mu_{k}t) \vert & \asl \hat{C}_{k} \label{eq:strong-S-JSQ}
\end{align}
for $ k = 1, \ldots, s $. Then by Lemma~\ref{lemma:Wiener-intervals},
\begin{align}
	\limsup_{n \to \infty} \sup_{0 \leq t \leq nT} \sup_{0 \leq u \leq a \log n} \frac{1}{\log n} |E_{n}(t+u) - E_{n}(t) - \lambda_{n} u | & \asl 2\hat{C}_{0} + c_{0}\sqrt{2 a \mu}, \label{eq:E-increments-JSQ}\\
	\limsup_{n \to \infty} \sup_{0 \leq t \leq n T } \sup_{0 \leq u \leq a \log n} \frac{1}{\log n} |S_{k}(t+u) - S_{k}(t) - \mu_{k} u | & \asl 2\hat{C}_{k} + c_{k}\sqrt{2 a \mu_{k}} \label{eq:S-increments-JSQ}
\end{align}
for $ k = 1, \ldots, s $ and all $ a > 0 $. By \eqref{eq:strong-E-JSQ}, we also obtain
\begin{equation}\label{eq:jump-bound}
	\limsup_{n \to \infty} \sup_{0 \leq t \leq T} \frac{\sqrt{n}}{\log n}\Delta \tilde{E}_{n}(t) \asl 2\hat{C}_{0}.
\end{equation}

Let $ a' $ be a sufficiently large positive number such that
\[
\frac{2\mu}{\mu_{1}} \hat{C}_{0} +  \frac{\mu}{\mu_{1}} \max_{k = 1, \ldots, s} \big(2\hat{C}_{k} + c_{k}\sqrt{2 a' \mu_{k}}\big) + \big( 2\hat{C}_{0} + c_{0}\sqrt{2 a' \mu} \big) + \sum_{k = 1}^{s} \big(2\hat{C}_{k} + c_{k}\sqrt{2 a' \mu_{k}}\big) - \mu a' < 0.
\]
Then by \eqref{eq:heavy-traffic} and \eqref{eq:E-increments-JSQ}--\eqref{eq:jump-bound}, 
\begin{equation}\label{eq:negative-increment}
	\limsup_{n\rightarrow \infty} \sup_{0 \leq t \leq T} \mathcal{D}_{n}\Big(t, t + \frac{a'\log n}{n} \Big) \asl 0. 
\end{equation}
Put 
\[
\tau_{2,n}' = \tau_{2,n} + \frac{a'\log n}{n} \Big\lfloor \frac{n(\tau_{1,n} - \tau_{2,n})}{a'\log n} \Big\rfloor .
\] 
Then, $ 0 \leq \tau_{1,n} - \tau_{2,n}' < a' \log n/n $. Using \eqref{eq:E-increments-JSQ}--\eqref{eq:jump-bound} again, we obtain
\begin{align*}
	\limsup_{n \to \infty}\mathcal{D}_{n}(\tau_{2,n}', \tau_{1,n}) & \asl \frac{2\mu}{\mu_{1}} \hat{C}_{0} +  \frac{\mu}{\mu_{1}} \max_{k = 1, \ldots, s} \big(2\hat{C}_{k} + c_{k}\sqrt{2 a' \mu_{k}}\big) \\
	& \quad + \big( 2\hat{C}_{0} + c_{0}\sqrt{2 a' \mu} \big) + \sum_{k = 1}^{s} \big(2\hat{C}_{k} + c_{k}\sqrt{2 a' \mu_{k}}\big) \\
	& < \mu a'.
\end{align*}
Since $ \mathcal{D}_{n}(t_{2}, t_{1}) \leq \mathcal{D}_{n}(t_{2}, t'_{2}) + \mathcal{D}_{n}(t'_{2}, t_{1}) $ for $ 0 \leq t_{2} \leq t'_{2} \leq t_{1} $, it follows from \eqref{eq:negative-increment} that
\[
\limsup_{n \to \infty} \mathcal{D}_{n}(\tau_{2,n}, \tau_{1,n})  \asleq \limsup_{n \to \infty} \mathcal{D}_{n}(\tau_{2,n}', \tau_{1,n}) \asl \mu a'.
\]
Taking $ C_{\operatorname{JSQ}} \geq \mu a'/\mu_{1} $, we deduce from \eqref{eq:JSQ-bound} that $ \mathbb{P}[\limsup_{n \to \infty} \Theta_{n}] = 0 $. 

\subsection{Proposition~\ref{prop:stationed-workload-comparision}}
\label{sec:proof-stationed-workload-comparison}

At any time, the cumulative busy time of a server either increases at rate $ 1 $ or stays unchanged. Hence, $B_{n,1}, \ldots, B_{n,s}$, and $B_{n}^{\star}$ all have continuous, piecewise linear sample paths, with slopes either 1 or 0. By \eqref{eq:Gamma}, $ \Gamma^{\star}_{n} $ must have continuous, piecewise linear sample paths. 

Consider $ \tau_{n}^{\star} = \inf \{ t > 0 : \Gamma^{\star}_{n}(t) < 0 \}$ and suppose $ \tau_{n}^{\star} < \infty$. Since $ \Gamma^{\star}_{n} $ has continuous sample paths with $\Gamma^{\star}_{n}(0) = 0$, we have $ \Gamma^{\star}_{n}(\tau_{n}^{\star}) = 0 $. Let $\delta$ be a positive number. If $B_{n}^{\star}$ is strictly increasing on $[\tau_{n}^{\star}, \tau_{n}^{\star}+\delta]$, by \eqref{eq:capacity} and \eqref{eq:Gamma}, we should have $ \Gamma^{\star}_{n}(t) \geq 0$ for $ t \in [\tau_{n}^{\star}, \tau_{n}^{\star}+\delta]$, which is a contradiction of the definition of $\tau_{n}^{\star}$. Hence, we should be able to find $\delta > 0$ such that $ B_{n}^{\star}(\tau_{n}^{\star}) = B_{n}^{\star}(\tau_{n}^{\star}+\delta) $. This implies that $ W_{n}^{\star}(t) = 0 $ for $ t \in [\tau_{n}^{\star}, \tau_{n}^{\star}+\delta) $, since the server is work-conserving. However, we should also be able to find $ t' \in (\tau_{n}^{\star}, \tau_{n}^{\star}+\delta) $ such that $ \Gamma^{\star}_{n}(t') < 0 $. A contradiction of \eqref{eq:original-auxiliary} follows from
\[  
\mu B_{n}^{\star}(t') + W_{n}^{\star}(t') = \mu B_{n}^{\star}(t') < \sum_{k = 1}^{s} \mu_{k} B_{n,k}(t') \leq \sum_{k = 1}^{s} \mu_{k}B_{n,k}(t') + W_{n}(t').
\]

\subsection{Proposition~\ref{prop:JNQ-workload-comparision}}
\label{sec:proof-JNQ-workload-comparison}

The proof is mostly identical to that of Proposition~\ref{prop:stationed-workload-comparision}. Let $B^{\dagger}_{n}(t)$ be the cumulative busy time of the server in the MDSP. Since the cumulative workloads are equal in the two systems,
\begin{equation}\label{eq:cumulative-system-workloads}
	\sum_{k = 1}^{s} \mu_{k}B_{n,k}(t) + W_{n}(t) + U_{n}(t) = \mu B_{n}^{\dagger}(t) + W_{n}^{\dagger}(t) + U_{n}^{\dagger}(t),
\end{equation}
which, along with  \eqref{eq:workload-difference-JNQ}, yields $\Gamma^{\dagger}_{n}(t) = \mu B_{n}^{\dagger}(t) - \sum_{k = 1}^{s} \mu_{k} B_{n,k}(t) $. It follows that $\Gamma^{\dagger}$ has continuous, piecewise linear sample paths.

Consider $ \tau_{n}^{\dagger} = \inf \{ t > 0 : \Gamma^{\dagger}_{n}(t) < 0 \}$ and suppose $ \tau_{n}^{\dagger} < \infty$. As in the proof of Proposition~\ref{prop:stationed-workload-comparision}, we should have $ \Gamma^{\dagger}_{n}(\tau_{n}^{\dagger}) = 0 $ and should be able to find some $\delta > 0$ such that $ B_{n}^{\dagger}(\tau_{n}^{\dagger}) = B_{n}^{\dagger}(\tau_{n}^{\dagger}+\delta) $. Then, $ W_{n}^{\dagger}(t) = 0 $ for $ t \in [\tau_{n}^{\dagger}, \tau_{n}^{\dagger}+\delta) $. However, we should also be able to find $ t' \in (\tau_{n}^{\dagger}, \tau_{n}^{\dagger}+\delta) $ such that $ \Gamma^{\dagger}_{n}(t') < 0 $. Since the traveling delay of a customer in the distributed system cannot be shorter than that of the counterpart in the MDSP, we have $ U^{\dagger}_{n}(t') \leq U_{n}(t')$. Then, a contradiction of \eqref{eq:cumulative-system-workloads} follows from
\begin{align*}
	\mu B_{n}^{\dagger}(t') + W_{n}^{\dagger}(t') + U_{n}^{\dagger}(t') & = \mu B_{n}^{\dagger}(t') + U_{n}^{\dagger}(t') \\
	& < \sum_{k = 1}^{s} \mu_{k} B_{n,k}(t') + U_{n}(t') \\
	& \leq \sum_{k = 1}^{s} \mu_{k}B_{n,k}(t') + W_{n}(t') + U_{n}(t').
\end{align*}

\subsection{Proposition~\ref{prop:asymptotic-equivalence}}
\label{sec:Proof-Customer-Count-Equivalence}
By \eqref{eq:queue-length}, \eqref{eq:Q-star}, and \eqref{eq:tilde-Gamma}, 
\[  
\sum_{k = 1}^{s} \tilde{Q}_{n,k}(t) - \tilde{Q}^{\star}_{n}(t) = \tilde{\Gamma}^{\star}_{n} + \sum_{k = 1}^{s} \bigg(\tilde{S}_{n,k}\Big(\frac{\mu}{\mu_{k}} \bar{B}_{n,k}^{\star}(t)\Big) - \tilde{S}_{n,k}\big(\bar{B}_{n,k}(t)\big) \bigg).
\]
By \eqref{eq:FCLT} and Lemma~3.2 in \citet{Iglehart.Whitt.1970a}, $ \{\tilde{S}_{n,k} : n\in\mathbb{N} \} $ is C-tight for $k = 1, \ldots, s$. Then by Lemmas~\ref{lemma:busy-time} and~\ref{lemma:B-star}, 
\[  
\sup_{0 \leq t \leq T} \Big\vert \tilde{S}_{n,k}\Big(\frac{\mu}{\mu_{k}} \bar{B}_{n,k}^{\star}(t)\Big) - \tilde{S}_{n,k}\big(\bar{B}_{n,k}(t)\big) \Big\vert \Rightarrow 0 \quad \mbox{as $ n \to \infty $.}
\]
The assertion follows from Theorem~\ref{theorem:workload-difference}.

\subsection{Proposition~\ref{prop:star-limit}}
\label{sec:Proof-Star-Limit}

By \eqref{eq:tilde-A} and \eqref{eq:Q-star}, $ \tilde{Q}_{n}^{\star} (t) = \tilde{Z}_{n}^{\star}(t) + \mu \tilde{I}_{n}^{\star}(t) $, where $ \tilde{I}_{n}^{\star}(t) = \sqrt{n} \big( t - \bar{B}_{n}^{\star}(t) ) $ is the scaled cumulative idle time and
\[  
\tilde{Z}_{n}^{\star}(t) = \tilde{A}_{n}(t) - \sum_{k = 1}^{s} \tilde{S}_{k}\Big(\frac{\mu}{\mu_{k}} \bar{B}_{n,k}^{\star}(t)\Big) - \sqrt{n}(1 - \rho_{n}) \mu  t - \rho_{n} \mu \sum_{k = 1}^{s} \sum_{m = 1}^{b}  p_{m}r_{m,k} \big( \sqrt{n} t - ( \sqrt{n} t - d_{m,k})^{+} \big).
\]
For ease of notation, write $ \iota_{n}(t) = \rho_{n} \mu \sum_{k = 1}^{s} \sum_{m = 1}^{b}  p_{m}r_{m,k} ( \sqrt{n} t - ( \sqrt{n} t - d_{m,k})^{+} ) $.

Because the server is work-conserving, $ \tilde{Q}_{n}^{\star} = \Phi(\tilde{Z}_{n}^{\star}) $ and $ \tilde{I}_{n}^{\star} = \Psi(\tilde{Z}_{n}^{\star})/\mu $, where $ (\Phi, \Psi) $ is the reflection map---that is, $\Phi(f)(t) = f(t) + \sup_{0 \leq u \leq t} (-f(u))^{+}$ and $ \Psi(f)(t) = \sup_{0 \leq u \leq t} (-f(u))^{+} $ for $f \in \mathbb{D}$ (Section~6.2 in \citealp{Chen.Yao.2001}).

To prove the proposition, one may wish to follow a standard continuous mapping approach. However, $ \iota_{n}(0) = 0 $ for $ n \in \mathbb{N} $ but $ \lim_{t\downarrow 0} \lim_{n \to \infty} \iota_{n}(t) = \mu  \sum_{k = 1}^{s} \sum_{m = 1}^{b} p_{m} r_{m,k} d_{m,k} $. Since the limit process is not right-continuous at $0$, $\tilde{Z}^{\star}_{n}$ does not converge in $\mathbb{D}$. We cannot apply the continuous mapping theorem to $\{ \tilde{Z}^{\star}_{n} : n \in \mathbb{N} \}$ directly.

Let us consider $\tilde{Z}^{\diamond}_{n}(t) = \tilde{Z}^{\star}_{n}(t) + \iota_{n}(t) $. By \eqref{eq:FCLT}, Lemma~\ref{lemma:A-star}, and the convergence-together theorem (Theorem~5.4 in \citealp{Chen.Yao.2001}), $ (\tilde{A}_{n}, \tilde{S}_{n,1}, \ldots, \tilde{S}_{n,s}) \Rightarrow (\tilde{E}, \tilde{S}_{1}, \ldots, \tilde{S}_{s}) $ as $ n \to \infty $. By Lemma~\ref{lemma:B-star} and the random time-change theorem (Theorem~5.3 in \citealp{Chen.Yao.2001}), $ \tilde{Z}_{n}^{\diamond} \Rightarrow \tilde{Z}^{\diamond} $ as $ n \to \infty $, where $ \tilde{Z}^{\diamond}(t) = \tilde{E}(t) - \sum_{k = 1}^{s} \tilde{S}_{k}(t) - \beta \mu t$. Hence, $\{ \tilde{Z}_{n}^{\diamond} : n \in \mathbb{N} \}$ is C-tight (Lemma~3.2 in \citealp{Iglehart.Whitt.1970a}). Because $\tilde{Z}^{\diamond}_{n}(0) = 0$,
\begin{equation}\label{eq:dagger-star-0}
	\sup_{0 \leq u \leq n^{-1/2}d_{0}} \vert \tilde{Z}^{\diamond}_{n}(u) \vert \Rightarrow 0 \quad \mbox{as $n \to \infty$.}
\end{equation}

Put $\tilde{Q}^{\diamond}_{n} = \Phi(\tilde{Z}^{\diamond}_{n})$ and $\tilde{I}^{\diamond}_{n} = \Psi(\tilde{Z}^{\diamond}_{n}) / \mu$. By Theorem~6.1 in \citet{Chen.Yao.2001} and the continuous mapping theorem (Theorem~5.2 in \citealp{Chen.Yao.2001}), $ (\tilde{Q}_{n}^{\diamond}, \tilde{I}_{n}^{\diamond}) \Rightarrow (\tilde{Q}, \tilde{I}^{\diamond}) $ as $ n \to \infty $, where $\tilde{Q} = \Phi(\tilde{Z}^{\diamond})$ and $\tilde{I}^{\diamond} = \Psi(\tilde{Z}^{\diamond}) / \mu$. Then, $ \tilde{Q} $ is a one-dimensional reflected Brownian motion starting from $0$ with drift $ -\beta \mu $ and variance $ \mu c^{2}_{0} + \sum_{k = 1}^{s} \mu_{k} c^{2}_{k} $.

It remains to show $ \sup_{0 \leq t \leq T} \vert \tilde{Q}^{\diamond}_{n}(t) - \tilde{Q}^{\star}_{n}(t) \vert \Rightarrow 0 $ as $n \to \infty$, which, along with the convergence-together theorem, will complete the proof. By the definition of the reflection map,
\begin{align*}
	\tilde{Q}^{\diamond}_{n}(t) - \tilde{Q}^{\star}_{n}(t) & = \big(\tilde{Z}^{\diamond}_{n}(t) + \mu \tilde{I}^{\diamond}_{n}(t)\big) - \big(\tilde{Z}^{\star}_{n}(t) + \mu \tilde{I}^{\star}_{n}(t)\big) \\
	& = \big(\tilde{Z}^{\diamond}_{n}(t) - \tilde{Z}^{\star}_{n}(t) \big) + \mu \tilde{I}^{\diamond}_{n}(t)  - \mu \tilde{I}^{\star}_{n}(t) \\
	& = \iota_{n}(t) + \sup_{0 \leq u \leq t} \big(-\tilde{Z}^{\diamond}_{n}(u)\big)^{+}  - \sup_{0 \leq u \leq t} \big(-\tilde{Z}^{\diamond}_{n}(u) + \iota_{n}(u) \big)^{+} \\
	& = \iota_{n}(t) - \inf_{0 \leq u \leq t} \tilde{Z}^{\diamond}_{n}(u)  + \inf_{0 \leq u \leq t} \big(\tilde{Z}^{\diamond}_{n}(u) - \iota_{n}(u) \big),
\end{align*}
where the last equality holds because $\tilde{Z}^{\diamond}_{n}(0) = 0$ and $\iota_{n}(0) = 0$. We use this identity to establish lower and upper bounds for $\tilde{Q}^{\diamond}_{n}(t) - \tilde{Q}^{\star}_{n}(t)$. 

Since $\iota_{n}$ is a nonnegative and nondecreasing function, the above identity yields
\begin{equation}\label{eq:dagger-star-1}
	\tilde{Q}^{\diamond}_{n}(t) - \tilde{Q}^{\star}_{n}(t) \geq 0 \quad \mbox{for $t \geq 0$.}
\end{equation}
When $0 \leq t < n^{-1/2}d_{0}$,
\begin{equation}\label{eq:dagger-star-2}
	\tilde{Q}^{\diamond}_{n}(t) - \tilde{Q}^{\star}_{n}(t) \leq \iota_{n}(t) - \inf_{0 \leq u \leq t} \tilde{Z}^{\diamond}_{n}(u) + \big(\tilde{Z}^{\diamond}_{n}(t) - \iota_{n}(t) \big) \leq \sup_{0 \leq u \leq n^{-1/2}d_{0}} \vert 2\tilde{Z}^{\diamond}_{n}(u) \vert.
\end{equation}
When $t \geq n^{-1/2}{d_{0}}$, $ \iota_{n}(t) = \iota_{n}(n^{-1/2}d_{0}) $. In this case, if $\inf_{0 \leq u \leq t} \tilde{Z}^{\diamond}_{n}(u) = \inf_{0 \leq u < n^{-1/2}{d_{0}}} \tilde{Z}^{\diamond}_{n}(u) $ holds,
\begin{equation}\label{eq:dagger-star-3}
	\tilde{Q}^{\diamond}_{n}(t) - \tilde{Q}^{\star}_{n}(t) \leq \iota_{n}(t) -\inf_{0 \leq u \leq t} \tilde{Z}^{\diamond}_{n}(u) + \big(\tilde{Z}^{\diamond}_{n}(n^{-1/2}d_{0}) - \iota_{n}(n^{-1/2}d_{0})\big) \leq \sup_{0 \leq u \leq n^{-1/2}d_{0}} \vert 2\tilde{Z}^{\diamond}_{n}(u) \vert;
\end{equation}
if $\inf_{0 \leq u \leq t} \tilde{Z}^{\diamond}_{n}(u) = \inf_{n^{-1/2}{d_{0}} \leq u \leq t } \tilde{Z}^{\diamond}_{n}(u) $ holds,
\begin{equation}\label{eq:dagger-star-4}
	\tilde{Q}^{\diamond}_{n}(t) - \tilde{Q}^{\star}_{n}(t) \leq \iota_{n}(t) - \inf_{0 \leq u \leq t} \tilde{Z}^{\diamond}_{n}(u)  + \inf_{n^{-1/2}d_{0} \leq u \leq t} \big(\tilde{Z}^{\diamond}_{n}(u) - \iota_{n}(u) \big) = 0.
\end{equation} 
The asymptotic equivalence between $\tilde{Q}_{n}^{\diamond}$ and $\tilde{Q}_{n}^{\star}$ follows from \eqref{eq:dagger-star-0}--\eqref{eq:dagger-star-4}.

\section{Proofs of Additional Lemmas}
\label{sec:Proofs-Additional-Lemmas}

This section provides the proofs of the lemmas given in the appendices.

\subsection{Lemma~\ref{lemma:after-initial}}
\label{sec:proof-initial}

Let $\tilde{I}_{n,k}(t) = \sqrt{n} (t - \bar{B}_{n,k}(t))$, which is the scaled cumulative idle time of the server in station $k$. By \eqref{eq:tilde-Ank-decomposition} and \eqref{eq:diffusion-scaled-queue}, $\tilde{Q}_{n,k}(t) = \tilde{Z}_{n,k}(t) + \grave{Z}_{n,k}(t) + \mu_{k} \tilde{I}_{n,k}(t)$, where 
\begin{align*} 
	\tilde{Z}_{n,k}(t) & = \sum_{m = 1}^{b} \big(p_{m}r_{m,k} \tilde{E}^{m,\Delta}_{n,k}(t) + \tilde{G}^{m,\Delta}_{n,k}(t) + \tilde{\mathcal{E}}^{m,\Delta}_{n,k}(t) + \tilde{M}^{m,\Delta}_{n,k}(t) \big) - \tilde{S}_{n,k}(\bar{B}_{n,k}(t)) - \sqrt{n} (1 - \rho_{n}) \mu_{k} t, \\
	\grave{Z}_{n,k}(t) & = \sqrt{n}\rho_{n}\mu \sum_{m = 1}^{b}  p_{m}r_{m,k}(t - n^{-1/2} d_{m,k})^{+} - \sqrt{n} \rho_{n} \mu_{k} t.
\end{align*}
Since the server is work-conserving, $\mu_{k}\tilde{I}_{n,k}(t) = \sup_{0 \leq u \leq t} (-\tilde{Z}_{n,k}(u) - \grave{Z}_{n,k}(u) )^{+} $ (Section~6.2 in \citealp{Chen.Yao.2001}). By \eqref{eq:rmk-heavy-traffic}, $ \grave{Z}_{n,k} $ is nonincreasing and $ \grave{Z}_{n,k}(t) \leq 0 $ for $ t \geq 0 $. Then,
\begin{align*}
	\tilde{Q}_{n,k}(t) & \leq \tilde{Z}_{n,k}(t) + \grave{Z}_{n,k}(t) + \sup_{0 \leq u \leq t} \big(-\tilde{Z}_{n,k}(u) \big)^{+} + \sup_{0 \leq u \leq t} \big( - \grave{Z}_{n,k}(u) \big)^{+} \\
	& = \tilde{Z}_{n,k}(t) + \sup_{0 \leq u \leq t} \big(-\tilde{Z}_{n,k}(u) \big)^{+} \\
	& \leq \sup_{0 \leq u \leq t} 2| \tilde{Z}_{n,k}(u) |.
\end{align*}

By the strong law of large numbers for renewal processes, $\lim_{n \to \infty} E_{n}(\sqrt{n}(T_{0} - d_{m,k})^{+})/\sqrt{n} \aseq \mu (T_{0} - d_{m,k})^{+}$. By Lemma~\ref{lemma:LIL},
\begin{align*} 
	\limsup_{n \to \infty}  \frac{\sup_{0 \leq t \leq T_{0}} |\tilde{E}^{m,\Delta}_{n,k}(n^{-1/2}t)|}{n^{-1/4}\sqrt{\log\log n}} & = \limsup_{n \to \infty}  \frac{\sup_{0 \leq t \leq \sqrt{n}(T_{0}-d_{m,k})^{+}} |E_{n}(t)- \lambda_{n}t|}{\sqrt{n^{1/2}\log\log n}}  \\
	& \aseq \sqrt{2 \mu (T_{0} - d_{m,k})^{+}} c_{0}.
\end{align*}
By \eqref{eq:Gmk-LIL},
\begin{align*} 
	\limsup_{n \to \infty}  \frac{\sup_{0 \leq t \leq T_{0}} |\tilde{G}^{m,\Delta}_{n,k}(n^{-1/2}t)|}{n^{-1/4}\sqrt{\log\log n}} & = \limsup_{n \to \infty} \frac{\max_{1 \leq j \leq E_{n}(\sqrt{n}(T_{0}-d_{m,k})^{+}) }|G^{m}_{n,k}(j)|}{\sqrt{n^{1/2}\log \log n}}\\
	& \aseq \sqrt{2 \mu p_{m}r_{m,k}(1 - p_{m}r_{m,k})(T_{0} - d_{m,k})^{+}}.
\end{align*}
By \eqref{eq:pm-epsilon-chi}, $ \sup_{0 \leq t \leq T_{0}} |\tilde{\mathcal{E}}^{m,\Delta}_{n,k}(n^{-1/2}t)| \leq \chi_{n} E_{n}(\sqrt{n}(T_{0} - d_{m,k})^{+})/\sqrt{n}$, and thus
\[
\limsup_{n \to \infty} \sup_{0 \leq t \leq T_{0}} \frac{|\tilde{\mathcal{E}}^{m,\Delta}_{n,k}(n^{-1/2}t)|}{\chi_{n}} \asleq \mu (T_{0} - d_{m,k})^{+}.
\]
By \eqref{eq:Mmnk-fluctuations},
\begin{align*}
	\limsup_{n \to \infty}  \frac{\sup_{0 \leq t \leq T_{0}} |\tilde{M}^{m,\Delta}_{n,k}(n^{-1/2}t)|}{n^{-1/4}\sqrt{\chi_{n} \log n \vee 1}} & = \limsup_{n \to \infty} \frac{\max_{1 \leq j \leq E_{n}(\sqrt{n}(T_{0}-d_{m,k})^{+}) }|M^{m}_{n,k}(j)|}{\sqrt{n^{1/2}(\chi_{n} \log n \vee 1)}} \\
	& \asleq \sqrt{10\mu (T_{0} - d_{m,k})^{+}}.
\end{align*}
If $ \chi_{n} \log n \geq 1 $, $ n^{-1/4} \sqrt{\chi_{n} \log n \vee 1} /\chi_{n} \leq n^{-1/4} (\chi_{n} \log n \vee 1) /\chi_{n} = n^{-1/4} \log n $. This implies
\[
\lim_{n \to \infty} \frac{n^{-1/4} \sqrt{\chi_{n} \log n \vee 1}}{\chi_{n} \vee n^{-1/4}\sqrt{\log \log n}} = 0,
\]
by which we obtain
\[
\limsup_{n \to \infty}  \frac{\sup_{0 \leq t \leq T_{0}} |\tilde{M}^{m,\Delta}_{n,k}(n^{-1/2}t)|}{\chi_{n} \vee n^{-1/4}\sqrt{\log \log n}} \aseq 0.
\]
By \eqref{eq:Gmk-LIL} and \eqref{eq:Mmnk-fluctuations},
\[
\lim_{n \to \infty}\frac{1}{\sqrt{n}} \max_{1 \leq j \leq \sqrt{n}t} |G^{m}_{k}(j) + M^{m}_{n,k}(j)| \aseq 0 \quad \mbox{for $ t \geq 0 $.}
\]
By \eqref{eq:pm-epsilon-chi}, \eqref{eq:epsilon-1}, and \eqref{eq:Rmnk-decomposition}, $ \lim_{n \to \infty} R^{m}_{n,k}(E_{n}(\sqrt{n}(T_{0} - d_{m,k})^{+}))/\sqrt{n} \aseq \mu p_{m}r_{m,k} (T_{0} - d_{m,k})^{+} $. By \eqref{eq:station-arrivals-analysis} and \eqref{eq:V-nk}, $ \lim_{n \to \infty} A_{n,k}(\sqrt{n}T_{0})/\sqrt{n}\aseq \lim_{n \to \infty} V_{k}(A_{n,k}(\sqrt{n}T_{0}))/\sqrt{n} \aseq \mu \sum_{m = 1}^{b} p_{m}r_{m,k} (T_{0} - d_{m,k})^{+} $. Since $ B_{n,k}(\sqrt{n}T_{0}) \leq V_{k}(A_{n,k}(\sqrt{n}T_{0}))/\mu_{k} $ by \eqref{eq:B-V}, we deduce from Lemma~\ref{lemma:LIL} that
\begin{align*} 
	\limsup_{n \to \infty}  \frac{\sup_{0 \leq t \leq T_{0}} |\tilde{S}_{n,k}(\bar{B}_{n,k}(n^{-1/2}t))|}{n^{-1/4}\sqrt{\log\log n}} & = \limsup_{n \to \infty}  \frac{\sup_{0 \leq t \leq B_{n,k}(\sqrt{n}T_{0})} |S_{k}(t)- \mu_{k}t|}{\sqrt{n^{1/2}\log\log n}}  \\
	& \leq \limsup_{n \to \infty}  \frac{\sup_{0 \leq t \leq V_{k}(A_{n,k}(\sqrt{n}T_{0}))/\mu_{k}} |S_{k}(t)- \mu_{k}t|}{\sqrt{n^{1/2}\log\log n}} \\
	& \aseq \Big(2 \mu \sum_{m = 1}^{b} p_{m}r_{m,k} (T_{0} - d_{m,k})^{+}\Big)^{1/2} c_{k}.
\end{align*}
By the above results and \eqref{eq:heavy-traffic}, there exist two positive numbers $ C_{k} $ and $ C'_{k} $, both depending on $ T_{0} $, such that $ \sup_{0 \leq t \leq T_{0}} 2|\tilde{Z}_{n,k}(n^{-1/2} t)| \asl C_{k}\chi_{n} + C'_{k}n^{-1/4}\sqrt{\log\log n}$ for $n$ sufficiently large.

\subsection{Lemma~\ref{lemma:A-star}}
\label{sec:proof-A-star}

By \eqref{eq:Rmnk} and \eqref{eq:station-arrivals-analysis},
\[
E_{n}(t) - \sum_{k = 1}^{s} A_{n,k}(t) = \sum_{k = 1}^{s} \sum_{m = 1}^{b} \big(R^{m}_{n,k}(E_{n}(t)) - R^{m}_{n,k}\big(E_{n}((t-\sqrt{n} d_{m,k})^{+})\big)\big).
\]
Then by \eqref{eq:Rmnk-fluid},
\begin{align*}
	\begin{split}
		\tilde{E}_{n}(t) - \tilde{A}_{n}(t) = \sum_{k = 1}^{s} \sum_{m = 1}^{b} \big( & p_{m}r_{m,k}  \tilde{E}_{n}\big( (t - n^{-1/2}d_{m,k})^{+}, t \big] + \tilde{G}^{m}_{n,k}\big((t - n^{-1/2}d_{m,k})^{+}, t \big]  \\
		& + \tilde{M}^{m}_{n,k}\big((t - n^{-1/2}d_{m,k})^{+}, t \big] + \tilde{\mathcal{E}}^{m}_{n,k} \big( (t - n^{-1/2}d_{m,k})^{+}, t \big]  \big).
	\end{split}
\end{align*}

By \eqref{eq:FCLT} and Lemma~3.2 in \citet{Iglehart.Whitt.1970a}, $ \{\tilde{E}_{n} : n\in\mathbb{N} \} $ is C-tight. By \eqref{eq:Gmk-strong-approximation}, \eqref{eq:FSLLN}, and the random time-change theorem (Theorem~5.3 in \citealp{Chen.Yao.2001}), $ \tilde{G}^{m}_{n,k} \Rightarrow \tilde{G}^{m}_{k} $ as $ n \to \infty $, where $ \tilde{G}^{m}_{k} $ is a driftless Brownian motion starting from $ 0 $ with variance $ p_{m}r_{m,k}(1 - p_{m}r_{m,k}) \mu $. Then, $ \{ \tilde{G}^{m}_{n,k} : n \in \mathbb{N} \} $ is C-tight. Following a similar argument, we prove the C-tightness of $\{ \tilde{M}^{m}_{n,k} : n\in\mathbb{N} \}$ by the last assertion of Lemma~\ref{lemma:martingale}. Then for $m = 1, \ldots, b$ and $k = 1, \ldots, s$, 
\[
\sup_{0 \leq t \leq T} \big\vert \tilde{E}_{n}\big( (t - n^{-1/2}d_{m,k})^{+}, t \big] \big\vert \vee \big\vert \tilde{G}^{m}_{n,k}\big((t - n^{-1/2}d_{m,k})^{+}, t \big]\big\vert \vee \big\vert \tilde{M}^{m}_{n,k}\big((t - n^{-1/2}d_{m,k})^{+}, t \big] \big\vert \Rightarrow 0
\]
as $ n \to \infty $. Note that $ \vert \tilde{\mathcal{E}}^{m}_{n,k}( (t - n^{-1/2}d_{m,k})^{+}, t ] \vert \leq  \chi_{n} ( \rho_{n} \mu d_{m,k} + \vert \tilde{E}_{n}( (t - n^{-1/2}d_{m,k})^{+}, t ] \vert ) $. Then by \eqref{eq:heavy-traffic} and \eqref{eq:epsilon-1}, $ \sup_{0 \leq t \leq T} \vert \tilde{\mathcal{E}}^{m}_{n,k}( (t - n^{-1/2}d_{m,k})^{+}, t ] \vert \Rightarrow 0 $ as $ n \to \infty $, which completes the proof.

\subsection{Lemma~\ref{lemma:stochastic-boundedness}}
\label{sec:proof-boundedness}

If the assertion is not true, we may find an increasing sequence of positive integers $ \{ n(i) : i \in \mathbb{N} \} $, along with $ \delta > 0 $ and $ T > 0 $, such that $ \mathbb{P}[ \sup_{0 \leq t \leq T} \sum_{k = 1}^{s} \tilde{Q}_{n(i),k}(t) > 2i ] > \delta $ for $ i \in \mathbb{N} $.

Consider the event 
\[  
\Omega_{i} = \Big\{ \sup_{0 \leq t \leq T} \sum_{k = 1}^{s} \tilde{Q}_{n(i),k}(t) > 2i,\, \max_{k,\ell = 1, \ldots, s} \sup_{0 \leq t \leq T} | \tilde{L}_{n(i),k}(t) - \tilde{L}_{n(i),\ell}(t)| < 1 \Big\}.
\]
Write $ \tau_{i,1} = \inf\{ t\geq 0: \sum_{k = 1}^{s} \tilde{Q}_{n(i),k}(t) > 2i \} $ and $ \tau_{i,2} = \sup \{ t \in (0, \tau_{i,1}) : \sum_{k = 1}^{s} \tilde{Q}_{n(i),k}(t) < i \} $. Since $ \sum_{k = 1}^{s} \tilde{Q}_{n(i),k}(0) = 0 $, we must have $ 0 < \tau_{i,2} \leq \tau_{i,1} $ and $ \sum_{k = 1}^{s} \tilde{Q}_{n(i),k}[\tau_{i,2}, \tau_{i,1}] \geq i $ on $ \Omega_{i} $. Because the differences between the scaled queue lengths are all less than 1, we must have $ \tilde{L}_{n(i),k}(t) > 0 $ for all $ t \in [\tau_{i,2}, \tau_{i,1}] $, $k = 1, \ldots, s$, and $i$ sufficiently large. This implies that $ \bar{B}_{n(i),k}[\tau_{i,2}, \tau_{i,1}] = \tau_{i,1} - \tau_{i,2} $. By \eqref{eq:commute-delays} and \eqref{eq:station-arrivals}, $ \sum_{k = 1}^{s} \bar{A}_{n(i),k}[\tau_{i,2}, \tau_{i,1}] \leq \bar{E}_{n(i)}[(\tau_{i,2} - n(i)^{-1/2}d_{0})^{+}, \tau_{i,1}] $. Then by \eqref{eq:heavy-traffic} and \eqref{eq:diffusion-scaled-queue},
\begin{align*}
	\sum_{k = 1}^{s} \tilde{Q}_{n(i),k}[\tau_{i,2}, \tau_{i,1}] & \leq \tilde{E}_{n(i)}[(\tau_{i,2} - n(i)^{-1/2}d_{0})^{+}, \tau_{i,1}] - \sum_{k = 1}^{s} \tilde{S}_{n(i),k}[\bar{B}_{n(i),k}(\tau_{i,2}), \bar{B}_{n(i),k}(\tau_{i,1})] \\
	& \quad - \sqrt{n(i)} (1 - \rho_{n(i)}) \mu (\tau_{i,1} - \tau_{i,2}) + \lambda_{n(i)} d_{0}  \\
	& \leq 2 \sup_{0 \leq t \leq T} | \tilde{E}_{n(i)}(t) | + 2 \sum_{k = 1}^{s} \sup_{0 \leq t \leq T}|\tilde{S}_{n(i),k}(t)| + \sqrt{n(i)} |1 - \rho_{n(i)}| \mu T + \lambda_{n(i)} d_{0},
\end{align*}
the right side of which is stochastically bounded by \eqref{eq:heavy-traffic} and \eqref{eq:FCLT}. Since $ \sum_{k = 1}^{s} \tilde{Q}_{n(i),k}[\tau_{i,2}, \tau_{i,1}] \geq i $ on $ \Omega_{i} $, the above inequality implies that $ \lim_{i \to \infty} \mathbb{P}[\Omega_{i}] = 0 $. Then, we deduce from Theorem~\ref{theorem:load-balancing} that
\begin{align*}
	\lim_{i \to \infty}\mathbb{P}\Big[ \sup_{0 \leq t \leq T} \sum_{k = 1}^{s} \tilde{Q}_{n(i),k}(t) > 2i \Big] & \leq  \lim_{i \to \infty} \mathbb{P}[\Omega_{i}] + \lim_{i \to \infty} \mathbb{P} \Big[ \max_{k,\ell = 1, \ldots, s} \sup_{0 \leq t \leq T} | \tilde{L}_{n(i),k}(t) - \tilde{L}_{n(i),\ell}(t)| \geq 1 \Big] \\
	& = 0,
\end{align*}
which is a contradiction.

\subsection{Lemma~\ref{lemma:busy-time}}
\label{sec:proof-busy}

By \eqref{eq:commute-delays} and \eqref{eq:station-arrivals}, $ \sum_{k = 1}^{s} \bar{A}_{n,k}(t) \geq \bar{E}_{n}((t - n^{-1/2}d_{0})^{+})$. Write $ \bar{Q}_{n,k}(t) = Q_{n,k}(nt)/n $. By \eqref{eq:queue-length}, \eqref{eq:busy-inequality}, and \eqref{eq:diffusion-scaled-queue},
\begin{equation}\label{eq:bar-Q-inequality} 
	\sum_{k = 1}^{s} \bar{Q}_{n,k}(t) \geq \bar{E}_{n}\big((t - n^{-1/2}d_{0})^{+}\big) - \sum_{k = 1}^{s} \bar{S}_{n,k}\big(\bar{B}_{n,k}(t)\big) \geq \bar{E}_{n}\big((t - n^{-1/2}d_{0})^{+}\big) - \sum_{k = 1}^{s} \bar{S}_{n,k}(t).
\end{equation}
By \eqref{eq:FSLLN}, $ \sup_{0 \leq t \leq T} \vert \bar{E}_{n}((t - n^{-1/2}d_{0})^{+}) - \mu t \vert \Rightarrow 0 $ and $ \sup_{0 \leq t \leq T} \vert \bar{S}_{n,k}(t) - \mu_{k}t \vert \Rightarrow 0 $ as $ n \to \infty $. Hence,
\[  
\sup_{0 \leq t \leq T} \Big\vert \bar{E}_{n}\big((t - n^{-1/2}d_{0})^{+}\big) - \sum_{k = 1}^{s} \bar{S}_{n,k}(t) \Big\vert \Rightarrow 0 \quad \mbox{as $ n \to \infty $.}
\]
By Lemma~\ref{lemma:stochastic-boundedness}, $\sup_{0 \leq t \leq T} \sum_{k = 1}^{s} \bar{Q}_{n,k}(t) \Rightarrow 0 $ as $ n \to \infty $. Then, we deduce from \eqref{eq:bar-Q-inequality} that
\[ 
\sup_{0 \leq t \leq T} \sum_{k = 1}^{s} \big( \bar{S}_{n,k}(t) - \bar{S}_{n,k}\big(\bar{B}_{n,k}(t)\big) \big) \Rightarrow 0 \quad \mbox{as $ n \to \infty $.}
\]
By  \eqref{eq:FSLLN} and \eqref{eq:busy-inequality}, $ \sup_{0 \leq t \leq T} \vert \bar{S}_{n,k}(\bar{B}_{n,k}(t)) - \mu_{k} \bar{B}_{n,k}(t) \vert \Rightarrow 0 $ as $ n \to \infty $.
Note that
\begin{align*} 
	0 \leq \sup_{0 \leq t \leq T} \sum_{k = 1}^{s} \mu_{k} \big( t - \bar{B}_{n,k}(t) \big) & \leq \sum_{k = 1}^{s} \sup_{0 \leq t \leq T} \big( \bar{S}_{n,k}(t) - \bar{S}_{n,k}\big(\bar{B}_{n,k}(t)\big) \big) + \sum_{k = 1}^{s} \sup_{0 \leq t \leq T} \vert \mu_{k}t - \bar{S}_{n,k}(t) \vert\\
	& \quad + \sum_{k = 1}^{s}  \sup_{0 \leq t \leq T} \big\vert \bar{S}_{n,k}\big(\bar{B}_{n,k}(t)\big) - \mu_{k} \bar{B}_{n,k}(t) \big\vert.
\end{align*}
Then, using the above convergence results, we obtain $ \sup_{0 \leq t \leq T} \sum_{k = 1}^{s} \mu_{k} ( t - \bar{B}_{n,k}(t) ) \Rightarrow 0 $ as $ n \to \infty $, which implies $ \bar{B}_{n,k} \Rightarrow \mathcal{I} $ as $ n \to \infty $ for $ k = 1, \ldots, s $. Finally, the joint convergence follows from Theorem~3.9 in \citet{Billingsley.1999}.

\subsection{Lemma~\ref{lemma:B-star}}
\label{sec:proof-B-star}

Similar to \eqref{eq:original-auxiliary}, the cumulative workload arriving at station~$ k $ in the distributed system satisfies $ \mu_{k}B_{n,k}(t) + W_{n,k}(t) = \mu B_{n,k}^{\star}(t) + W_{n,k}^{\star}(t) $. Then, $ -W_{n}^{\star}(t) \leq \mu B_{n,k}^{\star}(t) - \mu_{k}B_{n,k}(t) \leq W_{n,k}(t)$. By \eqref{eq:workload-difference} and Proposition~\ref{prop:stationed-workload-comparision}, $ W_{n}(t) - W_{n}^{\star}(t) \geq 0 $. Then by \eqref{eq:workload-inequality},
\[  
\vert \mu B_{n,k}^{\star}(t) - \mu_{k}B_{n,k}(t) \vert \leq W_{n}(t) \leq \sum_{k = 1}^{s} \big( V_{n,k}(A_{n,k}(t)) - V_{n,k}\big(A_{n,k}(t) - Q_{n,k}(t)\big) \big).
\]
Lemma~\ref{lemma:stochastic-boundedness}, along with \eqref{eq:FSLLN-V} and \eqref{eq:A-bar-convergence}, implies $ \sup_{0 \leq t \leq T} \bar{V}_{n,k}\big(\bar{A}_{n,k}(t) - \bar{Q}_{n,k}(t), \bar{A}_{n,k}(t)\big] \Rightarrow 0 $ as $ n \to \infty $. Hence, $ \sup_{0 \leq t \leq T} \vert \mu \bar{B}_{n,k}^{\star}(t) - \mu_{k}\bar{B}_{n,k}(t) \vert \Rightarrow 0 $ as $ n \to \infty $. The assertion follows from Lemma~\ref{lemma:busy-time} and the convergence-together theorem (Theorem~5.4 in \citealp{Chen.Yao.2001}).

\subsection{Lemma~\ref{lemma:comparison-MDSP-arrivals}}
\label{sec:proof-comparison-MDSP-arrivals}																								
Since the traveling delays of customers from origin~$m$ are all equal to $\sqrt{n}\underline{d}_{m}$, by \eqref{eq:Rmnk} we obtain $ A^{\dagger}_{n,k}(t) = \sum_{m = 1}^{b} R^{m}_{n,k}(E_{n}((t-\sqrt{n} \underline{d}_{m})^{+}))$, which, along with \eqref{eq:bar-Ank}, yields
\[
\bar{A}^{\dagger}_{n,k}(t) - \bar{A}_{n,k}(t) = \sum_{m = 1}^{b} \big( \bar{R}^{m}_{n,k}\big((t-n^{-1/2}\underline{d}_{m})^{+}\big) -  \bar{R}^{m}_{n,k}\big((t-n^{-1/2} d_{m,k})^{+}\big)\big).
\]
If $\bar{A}^{\dagger}_{n,k}(t) > \bar{A}_{n,k}(t)$, there exists some $m$ such that $d_{m,k} > \underline{d}_{m}$. For such an $m$, $r_{m,k} = 0$ by \eqref{eq:nearest-station} and $\mathbb{1}_{ \{ \kappa^{m}_{k-1} \leq u(i) < \kappa^{m}_{k}, \, \boldsymbol{\gamma}(i) = \boldsymbol{d}_{m}  \}} = 0$, where $ \kappa^{m}_{0} = 0 $ and $ \kappa^{m}_{k} = \sum_{\ell = 1}^{k} r_{m,\ell} $. Then by \eqref{eq:Rmnk-decomposition}--\eqref{eq:Mmnk} and \eqref{eq:Rmnk-fluid},
\begin{align*}
	\bar{R}^{m}_{n,k}\big((t-n^{-1/2}\underline{d}_{m})^{+}\big) -  \bar{R}^{m}_{n,k}\big((t-n^{-1/2} d_{m,k})^{+}\big) & = \frac{1}{\sqrt{n}}\big(\tilde{\mathcal{E}}^{m}_{n,k}\big((t- n^{-1/2} \underline{d}_{m})^{+}\big) - \tilde{\mathcal{E}}^{m, \Delta}_{n,k}(t)\big)\\
	& \quad + \frac{1}{\sqrt{n}} \big(\tilde{M}^{m}_{n,k}\big((t- n^{-1/2} \underline{d}_{m})^{+}\big) - \tilde{M}^{m,\Delta}_{n,k}(t)\big).
\end{align*}
By \eqref{eq:pm-epsilon-chi}, \eqref{eq:cali-Emnk}, and \eqref{eq:FSLLN},
\[
\limsup_{n \to \infty} \sup_{0 \leq t \leq T} \frac{1}{\chi_{n}} \big(\tilde{\mathcal{E}}^{m}_{n,k}\big((t- n^{-1/2} \underline{d}_{m})^{+}\big) - \tilde{\mathcal{E}}^{m, \Delta}_{n,k}(t)\big) \asleq \mu (d_{m,k} - \underline{d}_{m}).
\]
By \eqref{eq:Mmnk-increments} and \eqref{eq:FSLLN},
\[
\limsup_{n \to \infty} \sup_{0 \leq t \leq T}  \frac{\big\vert\tilde{M}^{m}_{n,k}\big((t- n^{-1/2} \underline{d}_{m})^{+}\big) - \tilde{M}^{m, \Delta}_{n,k}(t)\big\vert}{n^{-1/4}\sqrt{\chi_{n}\log n \vee 1}} \asleq \sqrt{20 \mu (d_{m,k} - \underline{d}_{m})}.
\]
We complete the proof by taking $ C^{\dagger}_{k} > \sum_{m = 1}^{b} \mu (d_{m,k} - \underline{d}_{m}) $ and $ C^{\ddagger}_{k} > \sum_{m = 1}^{b} \sqrt{20 \mu (d_{m,k} - \underline{d}_{m})} $.


\begin{thebibliography}{89}
	\providecommand{\natexlab}[1]{#1}
	\providecommand{\url}[1]{\texttt{#1}}
	\providecommand{\urlprefix}{URL }
	
	\bibitem[{Aldous(2017)}]{Aldous.2017}
	Aldous D (2017) Waves in a spatial queue. \emph{Stochastic Systems}
	7(1):197--236.
	
	\bibitem[{Altman et~al.(1995)Altman, Kofman, \protect\BIBand{}
		Yechiali}]{Altman.et.al.1995}
	Altman E, Kofman D, Yechiali U (1995) Discrete time queues with delayed
	information. \emph{Queueing Systems} 19(4):361--376.
	
	\bibitem[{Altman \protect\BIBand{} Koole(1995)}]{Altman.Koole.1995}
	Altman E, Koole G (1995) Control of a random walk with noisy delayed
	information. \emph{Systems \& Control Letters} 24(3):207--213.
	
	\bibitem[{Altman \protect\BIBand{} Levy(1994)}]{Altman.Levy.1994}
	Altman E, Levy H (1994) Queueing in space. \emph{Advances in Applied
		Probability} 26(4):1095--1116.
	
	\bibitem[{Altman \protect\BIBand{} Nain(1992)}]{Altman.Nain.1992}
	Altman E, Nain P (1992) Closed-loop control with delayed information.
	\emph{SIGMETRICS Performance Evaluation Review} 20(1):193–204.
	
	\bibitem[{Alwan et~al.(2024)Alwan, Ata, \protect\BIBand{}
		Zhou}]{Alwan.et.al.2004}
	Alwan AA, Ata B, Zhou Y (2024) A queueing model of dynamic pricing and dispatch
	control for ride-hailing systems incorporating travel times. \emph{Queueing
		Systems} 106(1--2):1--66.
	
	\bibitem[{Anselmi \protect\BIBand{} Dufour(2020)}]{Anselmi.Dufour.2020}
	Anselmi J, Dufour F (2020) Power-of-$ d $-choices with memory: Fluid limit and
	optimality. \emph{Mathematics of Operations Research} 45(3):862--888.
	
	\bibitem[{Artiges(1995)}]{Artiges.1995}
	Artiges D (1995) Optimal routing into two heterogeneous service stations with
	delayed information. \emph{IEEE Transactions on Automatic Control}
	40(7):1234--1236.
	
	\bibitem[{Atar \protect\BIBand{} Lipshutz(2021)}]{Atar.Lipshutz.2021}
	Atar R, Lipshutz D (2021) Heavy traffic limits for
	join-the-shortest-estimated-queue policy using delayed information.
	\emph{Mathematics of Operations Research} 46(1):268--300.
	
	\bibitem[{Banerjee(1992)}]{Banerjee.1992}
	Banerjee AV (1992) A simple model of herd behavior. \emph{The Quarterly Journal
		of Economics} 107(3):797--817.
	
	\bibitem[{Banerjee et~al.(2024)Banerjee, Budhiraja, \protect\BIBand{}
		Estevezand}]{Banerjee.et.al.2024}
	Banerjee S, Budhiraja A, Estevezand B (2024) Load balancing in parallel queues
	and rank-based diffusions. \emph{Mathematics of Operations Research}
	Forthcoming.
	
	\bibitem[{Banerjee \protect\BIBand{} Mukherjee(2019)}]{Banerjee.Mukherjee.2019}
	Banerjee S, Mukherjee D (2019) Join-the-shortest queue diffusion limit in
	{H}alfin--{W}hitt regime: Tail asymptotics and scaling of extrema. \emph{The
		Annals of Applied Probability} 29(2):1262--1309.
	
	\bibitem[{Banerjee \protect\BIBand{} Mukherjee(2020)}]{Banerjee.Mukherjee.2020}
	Banerjee S, Mukherjee D (2020) Join-the-shortest queue diffusion limit in
	{H}alfin--{W}hitt regime: Sensitivity on the heavy-traffic parameter.
	\emph{The Annals of Applied Probability} 30(1):80--144.
	
	\bibitem[{Bikhchandani et~al.(1992)Bikhchandani, Hirshleifer, \protect\BIBand{}
		Welch}]{Bikhchandani.et.al.1992}
	Bikhchandani S, Hirshleifer D, Welch I (1992) A theory of fads, fashion,
	custom, and cultural change as informational cascades. \emph{Journal of
		Political Economy} 100(5):992--1026.
		
	\bibitem[{Billingsley(1999)}]{Billingsley.1999}
	Billingsley P (1999) \emph{Convergence of Probability Measures} (New York:
	Wiley), 2nd edition.
	
	\bibitem[{Braverman(2020)}]{Braverman.2020}
	Braverman A (2020) Steady-state analysis of the join-the-shortest-queue model
	in the {H}alfin--{W}hitt regime. \emph{Mathematics of Operations Research}
	45(3):1069--1103.
	
	\bibitem[{Braverman(2023)}]{Braverman.2023}
	Braverman A (2023) The join-the-shortest-queue system in the {H}alfin--{W}hitt
	regime: Rates of convergence to the diffusion limit. \emph{Stochastic
		Systems} 13(1):1--39.
	
	\bibitem[{Braverman et~al.(2019)Braverman, Dai, Liu, \protect\BIBand{}
		Ying}]{Braverman.et.al.2019}
	Braverman A, Dai JG, Liu X, Ying L (2019) Empty-car routing in ridesharing
	systems. \emph{Operations Research} 67(5):1437--1452.
	
	\bibitem[{Breuer(2003)}]{Breuer.2003}
	Breuer L (2003) \emph{From {M}arkov Jump Processes to Spatial Queues}
	(Dordrecht: Kluwer).
	
	\bibitem[{Budhiraja et~al.(2021)Budhiraja, Friedlander, \protect\BIBand{}
		Wu}]{Budhiraja.et.al.2021}
	Budhiraja A, Friedlander E, Wu R (2021) Many-server asymptotics for
	join-the-shortest-queue: large deviations and rare events. \emph{The Annals
		of Applied Probability} 31(5):2376--2419.
	
	\bibitem[{Cao et~al.(2021)Cao, He, Huang, \protect\BIBand{}
		Liu}]{Cao.et.al.2021}
	Cao P, He S, Huang J, Liu Y (2021) To pool or not to pool: Queueing design for
	large service systems. \emph{Operations Research} 69(6):1866--1885.
	
	\bibitem[{Carlsson et~al.(2024)Carlsson, Peng, \protect\BIBand{}
		Ryzhov}]{Carlsson.et.al.2024}
	Carlsson JG, Peng X, Ryzhov IO (2024) Demand equilibria in spatial service
	systems. \emph{Manufacturing \& Service Operations Management}
	26(6):2305--2321.
	
	\bibitem[{Chakraborty \protect\BIBand{}
		Honnappa(2022)}]{Chakraborty.Honnappa.2022}
	Chakraborty P, Honnappa H (2022) Strong embeddings for transitory queueing
	models. \emph{Mathematics of Operations Research} 47(2):1048--1081.
	
	\bibitem[{Chen(1996)}]{Chen.1996}
	Chen H (1996) Rate of convergence of the fluid approximation for generalized
	{J}ackson networks. \emph{Journal of Applied Probability} 33(3):804--814.
	
	\bibitem[{Chen \protect\BIBand{} Mandelbaum(1994)}]{Chen.Mandelbaum.1994}
	Chen H, Mandelbaum A (1994) Hierarchical modeling of stochastic networks, part
	{II}: Strong approximations. Yao DD, ed., \emph{Stochastic Modeling and
		Analysis of Manufacturing Systems}, 107--131 (New York: Springer--Verlag).
	
	\bibitem[{Chen \protect\BIBand{} Shen(2000)}]{Chen.Shen.2000}
	Chen H, Shen X (2000) Strong approximations for multiclass feedforward queueing
	networks. \emph{The Annals of Applied Probability} 10(3):828--876.
	
	\bibitem[{Chen \protect\BIBand{} Yao(2001)}]{Chen.Yao.2001}
	Chen H, Yao DD (2001) \emph{Fundamentals of Queueing Networks} (New York:
	Springer--Verlag).
	
	\bibitem[{{\c{C}}inlar(1995)}]{Cinlar.1995}
	{\c{C}}inlar E (1995) An introduction to spatial queues. Dshalalow JH, ed.,
	\emph{Advances in Queueing: Theory, Methods, and Open Problems}, 103--118
	(Boca Raton, FL: CRC Press).
	
	\bibitem[{Cs\"{o}rg\H{o} et~al.(1987)Cs\"{o}rg\H{o}, Deheuvels,
		\protect\BIBand{} Horv\'{a}th}]{Csorgo.et.al.1987b}
	Cs\"{o}rg\H{o} M, Deheuvels P, Horv\'{a}th L (1987) An approximation of stopped
	sums with applications in queueing theory. \emph{Advances in Applied
		Probability} 19(3):674--690.
	
	\bibitem[{Cs\"{o}rg\H{o} et~al.(1987)Cs\"{o}rg\H{o}, Horv\'{a}th,
		\protect\BIBand{} Steinebach}]{Csorgo.et.al.1987a}
	Cs\"{o}rg\H{o} M, Horv\'{a}th L, Steinebach J (1987) Invariance principles for
	renewal processes. \emph{The Annals of Probability} 15(4):1441--1460.
	
	\bibitem[{Cs\"{o}rg\H{o} \protect\BIBand{}
		R\'{e}v\'{e}sz(1979)}]{Csorgo.Revesz.1979}
	Cs\"{o}rg\H{o} M, R\'{e}v\'{e}sz P (1979) How big are the increments of a
	{W}iener process? \emph{The Annals of Probability} 7(4):731--737.
	
	\bibitem[{Dai \protect\BIBand{} Lin(2005)}]{Dai.Lin.2005}
	Dai JG, Lin W (2005) Maximum pressure policies in stochastic processing
	networks. \emph{Operations Research} 53(2):197--218.
	
	\bibitem[{Dai \protect\BIBand{} Lin(2008)}]{Dai.Lin.2008}
	Dai JG, Lin W (2008) Asymptotic optimality of maximum pressure policies in
	stochastic processing networks. \emph{The Annals of Applied Probability}
	18(6):2239--2299.
	
	\bibitem[{Dong et~al.(2019)Dong, Yom-Tov, \protect\BIBand{}
		Yom-Tov}]{Dong.et.al.2019}
	Dong J, Yom-Tov E, Yom-Tov GB (2019) The impact of delay announcements on
	hospital network coordination and waiting times. \emph{Management Science}
	65(5):1969--1994.
	
	\bibitem[{Eryilmaz \protect\BIBand{} Srikant(2012)}]{Eryilmaz.Srikant.2012}
	Eryilmaz A, Srikant R (2012) Asymptotically tight steady-state queue length
	bounds implied by drift conditions. \emph{Queueing Systems} 72(3-4):311--359.
	
	\bibitem[{Eschenfeldt \protect\BIBand{}
		Gamarnik(2018)}]{Eschenfeldt.Gamarnik.2018}
	Eschenfeldt P, Gamarnik D (2018) Join the shortest queue with many servers.
	{T}he heavy-traffic asymptotics. \emph{Mathematics of Operations Research}
	43(3):867--886.
	
	\bibitem[{Foschini \protect\BIBand{} Salz(1978)}]{Foschini.Salz.1978}
	Foschini G, Salz J (1978) A basic dynamic routing problem and diffusion.
	\emph{IEEE Transactions on Communications} 26(3):320--327.
	
	\bibitem[{Freedman(1975)}]{Freedman.1975}
	Freedman DA (1975) On tail probabilities for martingales. \emph{The Annals of
	Probability} 3(1):100--118.
	
	\bibitem[{Glynn(1998)}]{Glynn.1998}
	Glynn PW (1998) Strong approximations in queueing theory. Szyszkowicz B, ed.,
	\emph{Asymptotic Methods in Probability and Statistics}, 135--150 (Amsterdam:
	North-Holland).
	
	\bibitem[{Glynn \protect\BIBand{} Whitt(1991)}]{Glynn.Whitt.1991}
	Glynn PW, Whitt W (1991) A new view of the heavy-traffic limit theorem for
	infinite-server queues. \emph{Advances in Applied Probability}
	23(1):188--209.
	
	\bibitem[{Guo \protect\BIBand{} Liu(2015)}]{Guo.Liu.2015}
	Guo Y, Liu Y (2015) A law of iterated logarithm for multiclass queues with
	preemptive priority service discipline. \emph{Queueing Systems}
	79(3--4):251--291.
	
	\bibitem[{Gupta \protect\BIBand{} Walton(2019)}]{Gupta.Walton.2019}
	Gupta V, Walton N (2019) Load balancing in the nondegenerate slowdown regime.
	\emph{Operations Research} 67(1):281--294.
	
	\bibitem[{Gut(2009)}]{Gut.2009}
	Gut A (2009) \emph{Stopped Random Walks: Limit Theorems and Applications} (New
	York: Springer), 2nd edition.
	
	\bibitem[{Harrison \protect\BIBand{} L\'{o}pez(1999)}]{Harrison.Lopez.1999}
	Harrison JM, L\'{o}pez MJ (1999) Heavy traffic resource pooling in
	parallel-server systems. \emph{Queueing Systems} 33(4):339--368.
	
	\bibitem[{Horv\'ath(1992)}]{Horvath.1992}
	Horv\'ath L (1992) Strong approximations of open queueing networks.
	\emph{Mathematics of Operations Research} 17(2):487--508.
	
	\bibitem[{Hosseini et~al.(2024)Hosseini, Milner, \protect\BIBand{}
		Romero}]{Hosseini.et.al.2024}
	Hosseini M, Milner J, Romero G (2024) Dynamic relocations in car-sharing
	networks. \emph{Operations Research} Forthcoming.
	
	\bibitem[{Huang \protect\BIBand{} Serfozo(1999)}]{Huang.Serfozo.1999}
	Huang X, Serfozo RF (1999) Spatial queueing processes. \emph{Mathematics of
		Operations Research} 24(4):865--886.
		
	\bibitem[{Huggins(1985)}]{Huggins.1985}
	Huggins RM (1985) Laws of the iterated logarithm for time changed {B}rownian
	motion with an application to branching processes. \emph{The Annals of
		Probability} 13(4):1148--1156.
	
	\bibitem[{Hurtado-Lange \protect\BIBand{}
		Maguluri(2022)}]{Hurtado-Lange.Maguluri.2022}
	Hurtado-Lange D, Maguluri ST (2022) A load balancing system in the many-server
	heavy-traffic asymptotics. \emph{Queueing Systems} 101(3--4):353--391.
	
	\bibitem[{Iglehart \protect\BIBand{} Whitt(1970)}]{Iglehart.Whitt.1970a}
	Iglehart DL, Whitt W (1970) Multiple channel queues in heavy traffic. {I}.
	\emph{Advances in Applied Probability} 2(1):150--177.
	
	\bibitem[{Jhunjhunwala et~al.(2025)Jhunjhunwala, Zubeldia, \protect\BIBand{}
		Maguluri}]{Jhunjhunwala.et.al.2025}
	Jhunjhunwala PR, Zubeldia M, Maguluri ST (2025) Join-the-shortest queue with
	abandonment: Critically loaded and heavily overloaded regimes.
	\emph{Mathematics of Operations Research} Forthcoming.
	
	\bibitem[{Kabra et~al.(2020)Kabra, Belavina, \protect\BIBand{}
		Girotra}]{Kabra.et.al.2020}
	Kabra A, Belavina E, Girotra K (2020) Bike-share systems: Accessibility and
	availability. \emph{Management Science} 66(9):3803--3824.
	
	\bibitem[{Kanoria \protect\BIBand{} Qian(2024)}]{Kanoria.Qian.2024}
	Kanoria Y, Qian P (2024) Blind dynamic resource allocation in closed networks
	via mirror backpressure. \emph{Management Science} 70(8):5445--5462.
	
	\bibitem[{Kroese \protect\BIBand{} Schmidt(1996)}]{Kroese.Schmidt.1996}
	Kroese DP, Schmidt V (1996) Light-traffic analysis for queues with spatially
	distributed arrivals. \emph{Mathematics of Operations Research}
	21(1):135--157.
	
	\bibitem[{Kuri \protect\BIBand{} Kumar(1995)}]{Kuri.Kumar.1995}
	Kuri J, Kumar A (1995) Optimal control of arrivals to queues with delayed queue
	length information. \emph{IEEE Transactions on Automatatic Control}
	40(8):1444--1450.
	
	\bibitem[{Laws(1992)}]{Laws.1992}
	Laws CN (1992) Resource pooling in queueing networks with dynamic routing.
	\emph{Advances in Applied Probability} 24(3):699--726.
	
	\bibitem[{Lipshutz(2019)}]{Lipshutz.2019}
	Lipshutz D (2019) Open problem---load balancing using delayed information.
	\emph{Stochastic Systems} 9(3):305--306.
	
	\bibitem[{Litvak \protect\BIBand{} Yechiali(2003)}]{Litvak.Yechiali.2003}
	Litvak N, Yechiali U (2003) Routing in queues with delayed information.
	\emph{Queueing Systems} 43(1--2):147--165.
	
	\bibitem[{Mandelbaum \protect\BIBand{} Massey(1995)}]{Mandelbaum.Massey.1995}
	Mandelbaum A, Massey WA (1995) Strong approximations for time-dependent queues.
	\emph{Mathematics of Operations Research} 20(1):33--64.
	
	\bibitem[{Mandelbaum et~al.(1998)Mandelbaum, Massey, \protect\BIBand{}
		Reiman}]{Mandelbaum.et.al.1998}
	Mandelbaum A, Massey WA, Reiman MI (1998) Strong approximations for {M}arkovian
	service networks. \emph{Queueing Systems} 30(1--2):149--201.
	
	\bibitem[{Mandelbaum \protect\BIBand{} Pats(1995)}]{Mandelbaum.Pats.1998b}
	Mandelbaum A, Pats G (1995) State-dependent queues: Approximations and
	applications. Kelly FP, Williams RJ, eds., \emph{Stochastic Networks},
	239--282 (New York: Springer--Verlag).
	
	\bibitem[{Massey \protect\BIBand{} Whitt(1993)}]{Massey.Whitt.1993}
	Massey WA, Whitt W (1993) Networks of infinite-server queues with nonstationary
	{P}oisson input. \emph{Queueing Systems} 13(1--3):183--250.
	
	\bibitem[{{Mirchandaney} et~al.(1989){Mirchandaney}, {Towsley},
		\protect\BIBand{} {Stankovic}}]{Mirchandaney.et.al.1989}
	{Mirchandaney} R, {Towsley} D, {Stankovic} JA (1989) Analysis of the effects of
	delays on load sharing. \emph{IEEE Transactions on Computers}
	38(11):1513--1525.
	
	\bibitem[{{Mitzenmacher}(2000)}]{Mitzenmacher.2000}
	{Mitzenmacher} M (2000) How useful is old information? \emph{IEEE Transactions
		on Parallel and Distributed Systems} 11(1):6--20.
	
	\bibitem[{Mukherjee et~al.(2016)Mukherjee, Borst, van Leeuwaarden,
		\protect\BIBand{} Whiting}]{Mukherjee.et.al.2016}
	Mukherjee D, Borst SC, van Leeuwaarden JSH, Whiting PA (2016) Universality of
	load balancing schemes on the diffusion scale. \emph{Journal of Applied
		Probability} 53(4):1111--1124.
	
	\bibitem[{\"{O}zkan \protect\BIBand{} Ward(2020)}]{Ozkan.Ward.2020}
	\"{O}zkan E, Ward AR (2020) Dynamic matching for real-time ride sharing.
	\emph{Stochastic Systems} 10(1):29--70.
	
	\bibitem[{Pender \protect\BIBand{} Ko(2017)}]{Pender.Ko.2017}
	Pender J, Ko YM (2017) Approximations for the queue length distributions of
	time-varying many-server queues. \emph{INFORMS Journal on Computing}
	29(4):688--704.
	
	\bibitem[{Pender et~al.(2020)Pender, Rand, \protect\BIBand{}
		Wesson}]{Pender.et.al.2020}
	Pender J, Rand R, Wesson E (2020) A stochastic analysis of queues with customer
	choice and delayed information. \emph{Mathematics of Operations Research}
	45(3):1104--1126.
	
	\bibitem[{Pender et~al.(2017)Pender, Rand, \protect\BIBand{}
		Wesson}]{Pender.et.al.2017}
	Pender J, Rand RH, Wesson E (2017) Queues with choice via delay differential
	equations. \emph{International Journal of Bifurcation and Chaos in Applied
		Sciences and Engineering} 27(4):1730016, 20.
	
	\bibitem[{Reiman(1984)}]{Reiman.1984a}
	Reiman MI (1984) Some diffusion approximations with state space collapse.
	Baccelli F, Fayolle G, eds., \emph{Modelling and Performance Evaluation
		Methodology}, 209--240 (Berlin: Springer--Verlag).
	
	\bibitem[{Rosenkrantz(1980)}]{Rosenkrantz.1980}
	Rosenkrantz WA (1980) On the accuracy of {K}ingman's heavy traffic
	approximation in the theory of queues. \emph{Zeitschrift f\"ur
		Wahrscheinlichkeitstheorie und Verwandte Gebiete} 51(1):115--121.
			
	\bibitem[{Ross(2010)}]{Ross.2010}
	Ross SM (2010) \emph{Introduction to Probability Models} (Burlington, MA:
	Elsevier/Academic Press), 10th edition.
	
	\bibitem[{Shu et~al.(2013)Shu, Chou, Liu, Teo, \protect\BIBand{}
		Wang}]{Shu.et.al.2013}
	Shu J, Chou MC, Liu Q, Teo CP, Wang IL (2013) Models for effective deployment
	and redistribution of bicycles within public bicycle-sharing systems.
	\emph{Operations Research} 61(6):1346--1359.
	
	\bibitem[{Simatos \protect\BIBand{} Tibi(2010)}]{Simatos.Tibi.2010}
	Simatos F, Tibi D (2010) Spatial homogenization in a stochastic network with
	mobility. \emph{The Annals of Applied Probability} 20(1):312--355.
	
	\bibitem[{Stolyar(2004)}]{Stolyar.2004}
	Stolyar AL (2004) {MaxWeight} scheduling in a generalized switch: State space
	collapse and workload minimization in heavy traffic. \emph{The Annals of
		Applied Probability} 14(1):1--53.
	
	\bibitem[{Tassiulas \protect\BIBand{}
		Ephremides(1992)}]{Tassiulas.Ephremides.1992}
	Tassiulas L, Ephremides A (1992) Stability properties of constrained queueing
	systems and scheduling policies for maximum throughput in multihop radio
	networks. \emph{IEEE Transactions on Automatic Control} 37(12):1936--1948.
	
	\bibitem[{Turner(2000)}]{Turner.2000}
	Turner SRE (2000) A join the shorter queue model in heavy traffic.
	\emph{Journal of Applied Probability} 37(1):212--223.
	
	\bibitem[{Veeraraghavan \protect\BIBand{} Debo(2009)}]{Veeraraghavan.Debo.2009}
	Veeraraghavan S, Debo L (2009) Joining longer queues: Information externalities
	in queue choice. \emph{Manufacturing \& Service Operations Management}
	11(4):543--562.
	
	\bibitem[{Veeraraghavan \protect\BIBand{} Debo(2011)}]{Veeraraghavan.Debo.2011}
	Veeraraghavan SK, Debo LG (2011) Herding in queues with waiting costs:
	Rationality and regret. \emph{Manufacturing \& Service Operations Management}
	13(3):329--346.
	
	\bibitem[{Weber(1978)}]{Weber.1978}
	Weber RR (1978) On the optimal assignment of customers to parallel servers.
	\emph{Journal of Applied Probability} 15(2):406--413.
	
	\bibitem[{Whitt(1986)}]{Whitt.1986}
	Whitt W (1986) Deciding which queue to join: Some counterexamples.
	\emph{Operations Research} 34(1):55--62.
	
	\bibitem[{Whitt(2007)}]{Whitt.2007}
	Whitt W (2007) Proofs of the martingale {FCLT}. \emph{Probability Surveys}
	4:268--302.
	
	\bibitem[{Winston(1977)}]{Winston.1977}
	Winston W (1977) Optimality of the shortest line discipline. \emph{Journal of
		Applied Probability} 14(1):181--189.
		
	\bibitem[{Yang(2023)}]{Yang.2023}
	Yang Y (2023) \emph{Load Balancing for Remote Queues}. PhD thesis, National University of Singapore, Singapore.
	
	\bibitem[{Yin \protect\BIBand{} Zhang(2007)}]{Yin.Zhang.2007}
	Yin G, Zhang H (2007) Singularly perturbed {M}arkov chains: Limit results and
	applications. \emph{The Annals of Applied Probability} 17(1):207--229.
	
	\bibitem[{Ying \protect\BIBand{} Shakkottai(2011)}]{Ying.Shakkottai.2011}
	Ying L, Shakkottai S (2011) On throughput optimality with delayed network-state
	information. \emph{IEEE Transactions on Information Theory} 57(8):5116--5132.
	
	\bibitem[{Yu(2017)}]{Yu.2017}
	Yu Y (2017) \emph{Performance Approximations and Controls to Queueing Networks}. PhD thesis, North Carolina State University, Raleigh, NC.
	
	\bibitem[{Zhang(1997)}]{Zhang.1997}
	Zhang H (1997) Strong approximations of irreducible closed queueing networks.
	\emph{Advances in Applied Probability} 29(2):498--522.
	
	\bibitem[{Zhang et~al.(1990)Zhang, Hsu, \protect\BIBand{}
		Wang}]{Zhang.et.al.1990}
	Zhang HQ, Hsu GH, Wang RX (1990) Strong approximations for multiple channel
	queues in heavy traffic. \emph{Journal of Applied Probability}
	27(3):658--670.
\end{thebibliography}
\end{document}